\numberwithin{equation}{section}
\newcommand{\C}{\mathbb{C}}
\newcommand{\D}{\mathbb{D}}
\newcommand{\G}{\mathbb{G}}
\newcommand{\M}{\mathbb{M}}
\newcommand{\N}{\mathbb{N}}
\newcommand{\R}{\mathbb{R}}
\newcommand{\Z}{\mathbb{Z}}
\newcommand{\BB}{\mathscr{B}}
\newcommand{\cE}{{\ensuremath{\mathcal E}}}
\newcommand{\cX}{{\ensuremath{\mathcal X}}}
\newcommand{\cY}{{\ensuremath{\mathcal Y}}}
\newcommand{\cV}{{\ensuremath{\mathcal V}}}
\renewcommand{\gg}{{\mbox{\boldmath$g$}}}
\newcommand{\ii}{{\mbox{\boldmath$i$}}}
\newcommand{\mm}{{\mbox{\boldmath$m$}}}
\newcommand{\nn}{{\mbox{\boldmath$n$}}}
\renewcommand{\tt}{{\mbox{\boldmath$t$}}}
\newcommand{\xx}{{\mbox{\boldmath$x$}}}
\newcommand{\aalpha}{{\mbox{\boldmath$\alpha$}}}
\newcommand{\ggamma}{{\mbox{\boldmath$\gamma$}}}
\newcommand{\mmu}{{\mbox{\boldmath$\mu$}}}
\newcommand{\ppi}{{\mbox{\boldmath$\pi$}}}
\newcommand{\saalpha}{{\mbox{\scriptsize\boldmath$\alpha$}}}
\newcommand{\sggamma}{{\mbox{\scriptsize\boldmath$\gamma$}}}
\newcommand{\sfc}{{\sf c}}
\newcommand{\sfd}{{\sf d}}
\newcommand{\rme}{{\mathrm e}}
\newcommand{\rmp}{{\mathrm p}}
\newcommand{\rmC}{{\mathrm C}}
\newcommand{\rmD}{{\mathrm D}}
\newcommand{\Kliminf}{K\kern-3pt-\kern-2pt\mathop{\rm lim\,inf}\limits}  
\newcommand{\supp}{\mathop{\rm supp}\nolimits}   
\newcommand{\argmin}{\mathop{\rm argmin}\limits}   
\newcommand{\Lip}{\mathop{\rm Lip}\nolimits}          
\renewcommand{\d}{{\mathrm d}}
\newcommand{\dt}{{\d t}}
\newcommand{\restr}[1]{\lower3pt\hbox{$|_{#1}$}}
\newcommand{\Leb}[1]{{\mathscr L}^{#1}}      
\newcommand{\down}{\downarrow}              
\newcommand{\up}{\uparrow}
\newcommand{\eps}{\varepsilon}  
\newcommand{\nchi}{{\raise.3ex\hbox{$\chi$}}}
\newcommand{\forevery}{\text{for every }}
\newcommand{\Pc}[2]{\overline{#1}\kern-2pt^{\vphantom 0}_{#2}}
\newcommand{\CD}{\ClosureDomain}
\newcommand{\BorelSets}[1]{\BB(#1)}
\newcommand{\Probabilities}[1]{\mathscr P(#1)}          
\newcommand{\ProbabilitiesTwo}[1]{\mathscr P_2(#1)}     
\newtheorem{theorem}{Theorem}[section]
\newtheorem{corollary}[theorem]{Corollary}
\newtheorem{lemma}[theorem]{Lemma}
\newtheorem{proposition}[theorem]{Proposition}
\newtheorem{definition}[theorem]{Definition}
\newtheorem{example}[theorem]{Example}
\newtheorem{remark}[theorem]{Remark}
\theoremstyle{remark}
\newtheorem*{acknowledgement}{Acknowledgement}
\renewcommand{\mm}{\mathfrak m}
\newcommand{\tmm}{\tilde{\mathfrak m}}
\newcommand{\tmmn}{\tilde{\mathfrak m}_n}
\renewcommand{\nn}{\mathfrak n}
\newcommand{\Ent}{{\rm Ent}}
\newcommand{\Enttn}{{\rm Ent}_{\tilde{\mm}_n}}
\newcommand{\Entt}{{\rm Ent}_{\tilde{\mm}}}
\newcommand{\Entn}{{\rm Ent}_{{\mm}_n}}
\newcommand{\Enti}{{\rm Ent}_{{\mm}_\infty}}
\newcommand{\Entti}{{\rm Ent}_{{\tilde\mm}_\infty}}
\newcommand{\entv}{{\rm Ent}_{\mm}}                    
\newcommand{\prob}{\Probabilities}
\newcommand{\probt}{\ProbabilitiesTwo}
\newcommand{\geo}{{\rm{Geo}}}                       
\newcommand{\e}{{\rm{e}}}                           
\newcommand{\Opt}{{\rm{Opt}}}
\newcommand{\Adm}{{\rm{Adm}}}
\newcommand{\fr}{\hfill$\blacksquare$}                      
\newcommand{\Dm}{{\rm D}}
\newcommand{\weakgrad}[1]{|\rmD #1|_w}                
\renewcommand{\C}{{\sf Ch}}
\newcommand{\s}{{\rm S}}
\newcommand{\z}{{\rm z}}
\newcommand{\EEE}{\normalcolor}
\newcommand{\sfH}{\mathsf H}
\newcommand{\AC}[3]{\mathrm{AC}^{#1}(#2;#3)}
\newcommand{\limi}{\varliminf}
\newcommand{\lims}{\varlimsup}
\newcommand{\X}{\mathbb X}
\newcommand{\Co}{{\sf C}}
\newcommand{\narrowly}{weakly}
\newcommand{\narrow}{weak}
\newcommand{\dualspace}[2]{#1[#2]}
\DeclareMathOperator*{\Glimi}{\Gamma-\limi}
\DeclareMathOperator*{\Glims}{\Gamma-\lims}
\newcommand{\heatw}{{\mathscr H}}
\newcommand{\h}{\heatw}
\newcommand{\Mloc}[1]{\mathscr M_{loc}(#1)}
\renewcommand{\M}[1]{\mathscr M(#1)}
\newcommand{\Cc}[1]{\rmC_{bs}(#1)}
\newcommand{\Cb}[1]{\rmC_b(#1)}
\newcommand{\mui}{\mu_\infty}
\newcommand{\pmm}{p.m.m.}
\newcommand{\pmmaxioms}{{\rm (\ref{pointed}a,b)}}
\newcommand{\Dmeas}[1]{\D^{#1}}
\newcommand{\pGw}{\rm pmG}
\newcommand{\pmmX}{(X,\sfd,\mm,\bar x)}
\newcommand{\pmmXclass}{[X,\sfd,\mm,\bar x]}
\newcommand{\pmmXcseq}[1]{[X_{#1},\sfd_{#1},\mm_{#1},\bar x_{#1}]}
\newcommand{\pmmXa}[1]{(X_{#1},\sfd_{#1},\mm_{#1},\bar x_{#1})}
\newcommand{\pmmXclassa}[1]{[X_{#1},\sfd_{#1},\mm_{#1},\bar x_{#1}]}
\newcommand{\pWl}{\rmp\widetilde W_{\kern-2pt\sfc}}
\newcommand{\tWc}{\widetilde W_{\kern-2pt\sfc}}
\newcommand{\Wc}{W_{\kern-2pt\sfc}}
\newcommand{\Wcs}{W_{\kern-2pt\sfc}^\star}
\newcommand{\pWc}{\widetilde W_{\kern-2pt \sfc}}
\newcommand{\Ecurve}[1]{\cE_2[#1]}
\renewcommand{\CD}{\mathrm{CD}}
\newcommand{\RCD}{\mathrm{RCD}}
\newcommand{\Y}{X}
\newcommand{\YY}{Y}
\newcommand{\GW}{\mathbb{G}_{\rm W}}
\newcommand{\pGW}{\rmp\mathbb{G}_{\rm W}}
\renewcommand{\G}{\mathbb{G}}
\newcommand{\jey}{j}
\title{Convergence of pointed non-compact metric measure spaces and
  stability of Ricci curvature bounds and
  heat flows}
\begin{document}

\author{Nicola Gigli\
   \thanks{\textsf{giglin@math.jussieu.fr}}
 \and
 Andrea Mondino\
 \thanks{\textsf{andrea.mondino@math.ethz.ch }}
   \and
   Giuseppe Savar\'e\
   \thanks{\textsf{giuseppe.savare@unipv.it}}
   }

\maketitle

\begin{abstract}
Aim of this paper is to discuss convergence of 
 pointed metric measure spaces  in absence of any compactness condition. We propose various definitions, show that all of them are equivalent and that for doubling spaces these are also equivalent to the well known measured-Gromov-Hausdorff convergence.

Then we show that the curvature conditions $\CD(K,\infty)$ 
and  $\RCD(K,\infty)$ are stable under this notion of 
convergence and that the heat flow passes to the limit as well, both in the Wasserstein and in the $L^2$-framework.
We also prove the variational convergence of Cheeger energies
in the naturally  adapted $\Gamma$-Mosco sense and
the convergence of the spectra of the 
Laplacian in the case of spaces either uniformly bounded or satisfying the $\RCD(K,\infty)$ condition with $K>0$. When applied to Riemannian manifolds, our results allow for sequences with diverging dimensions.
\end{abstract}

\tableofcontents

\section{Introduction}

The notion of convergence of metric measure spaces has been introduced by Fukaya in \cite{Fukaya87} as the natural variant of Gromov-Hausdorff convergence for metric structures endowed with a reference measure. Fukaya's interest was to study the behavior of eigenvalues of the Laplacian on smooth Riemannian manifolds with some uniform curvature bound under a zeroth-order convergence: it turned out that the purely metric notion of Gromov-Hausdorff convergence was not enough to obtain reasonable results, and the additional requirement of convergence of volume measures was needed.

Since then the notion has been widely used, in particular in connection with lower Ricci curvature bounds, and proved to be useful to handle also non smooth limits of Riemannian manifolds (see for instance the series of papers by Cheeger-Colding \cite{Cheeger-Colding97I},\cite{Cheeger-Colding97II},\cite{Cheeger-Colding97III}).

Almost ten years ago, Lott-Villani on one side \cite{Lott-Villani09} and Sturm on the other \cite{Sturm06I}, \cite{Sturm06II} introduced an abstract notion of lower Ricci curvature bound on metric measure setting. The notion introduced is that of $\CD(K,N)$ space, meaning of a space with Ricci Curvature bounded from below by $K\in\R$ and Dimension bounded above by $N\in[1,\infty]$ (in \cite{Lott-Villani09}, for $N<\infty$ only the case $K=0$ was considered). A key feature of their definition is that it is stable w.r.t.~convergence of the spaces. From the technical point of view, there are some differences between the  presentations of such stability result in the two approaches, although on compact and doubling metric measure spaces they produce the same convergence.
\begin{itemize}
\item Lott and Villani worked with proper (i.e. bounded closed sets are compact) pointed metric measure spaces, and showed that the $\CD(K,N)$ condition is stable under pointed measured Gromov-Hausdorff convergence. This means, roughly said, that for any $R>0$ there is measured Gromov-Hausdorff convergence of balls of radius $R$ around the given points of the spaces (see Definition \ref{def:pmGH} for the precise statement).
\item Sturm worked with normalized metric measure spaces with finite variance, i.e. spaces such that the reference measure is a probability measure with finite second moment. He then defined a distance $\D$ on this class of spaces putting 
\[
\D\Big((X_1,\sfd_1,\mm_1),(X_2,\sfd_2,\mm_2)\Big):=\inf W_2\big((\iota_1)_\sharp\mm_1,(\iota_2)_\sharp\mm_2\big),
\]
the infimum being taken among all metric spaces $(X ,\sfd)$ and all isometric embeddings $\iota_1:(\supp(\mm_1),\sfd_1)\to (X ,\sfd)$ and $\iota_2:(\supp(\mm_2),\sfd_2)\to (X ,\sfd )$. He then proved that Curvature-Dimension bounds are stable w.r.t.~$\D$-convergence.  
\end{itemize}
Given that $\CD(K,N)$ spaces with $N<\infty$ are always proper, the approach of Lott-Villani fully covers this situation. On the other hand, the $\CD(K,\infty)$ condition does not imply any sort of compactness, not even local, and hence to work with (pointed) measured Gromov-Hausdorff convergence is quite unnatural in this case. At least for spaces with finite variance, Sturm's $\D$ distance is suitable to discuss stability of these curvature bounds, as no compactness is required in order to work with it.

\bigskip

Aim of this paper is dual: on one side we propose a notion of
convergence (that we call \emph{pointed  measured Gromov  convergence},
$\pGw$-convergence in short) 
of metric measure structures which works without any
compactness assumptions on the metric structure and for possibly
$\sigma$-finite measures, on the other we prove that lower Ricci bounds and related constructions are stable w.r.t.\ this convergence. 

The structures we shall work with are then isomorphism classes of \emph{pointed metric measure} (\pmm)
spaces
$\pmmX$, where
\[
  \begin{gathered}
    \text{$(X,\sfd)$ is a complete and separable metric space, $\mm$ is a nonnegative and nonzero
   }\\ 
  \text{Borel measure
    which is finite on bounded sets, and $\bar x$ is
    a point in $\supp(\mm)$,}
  \end{gathered}
\]
and $(X_1,\sfd_1,\mm_1,\bar x_1)$ is said isomorphic to $(X_2,\sfd_2,\mm_2,\bar x_2)$ provided there exists 
\[
\text{an isometric embedding\quad $\iota:\supp(\mm_1)\to X_2$ such that $\iota_\sharp\mm_1=\mm_2$ and $\iota(\bar x_1)=\bar x_2$.}
\]
Our main results are
\begin{itemize}
\item[i)] to propose various a priori different notions of convergence and to show that they produce the same converging sequences, thereby called pmG-convergence (Section \ref{se:nozioni} and Theorem \ref{thm:main_convergence}),
\item[ii)] to prove that on doubling spaces, $\pGw$-convergence is the same as pointed measured Gromov-Hausdorff convergence (Propositions \ref{prop:pmghD} and \ref{prop:Dpmgh}), see also below for a brief description of these notions,
\item[iii)] to show that the curvature condition $\CD(K,\infty)$ is stable w.r.t.~$\pGw$-convergence (Theorem \ref{thm:stabilityCD}). 
\end{itemize}  
On top of this, we also prove that
\begin{itemize}
\item[iv)]     the heat flow, defined as the Wasserstein gradient flow of the relative
    entropy or as the $L^2$-gradient flow of the Cheeger energy, 
  is stable w.r.t.~$\pGw$-convergence (Theorems
  \ref{thm:stabgf1} and \ref{thm:L2conv}).
\item[v)]  The Cheeger energies of $\pGw$-converging \pmm~spaces
  are stable with respect to  (a suitably adapted notion of) Mosco-convergence (Theorem \ref{thm:Mosco}). This also provides convergence of the spectrum of the Laplacian, under further conditions that ensure its discreteness.
\end{itemize}  
Due to the discussion above, these results are easier under additional compactness assumptions (see e.g.\ \cite{Gigli10} for the
proof of the stability of the gradient flow of the entropy in the
compact case), 
but part of  the added value of this paper is precisely to show that it is 
possible to work out the severe technical obstructions one encounters
in working  without any a priori compactness and to extend the stability issues from the Wasserstein to the $L^2$-setting.

We underline that iv) above is particularly important when
considering the class of spaces with Riemannian Ricci curvature
bounded from below by $K$, $\RCD(K,\infty)$ spaces for short,
introduced in \cite{Ambrosio-Gigli-Savare-preprint11b} and further
analyzed in \cite{AGMRS12}. The class of $\RCD(K,\infty)$ spaces is
the subclass of $\CD(K,\infty)$ spaces where the gradient flow of the
entropy linearly depends on the initial datum or, equivalently, where the Cheeger energy is a quadratic Dirichlet form.
It has been introduced to somehow isolate the `Riemannian-like'
structures in the $\CD(K,\infty)$ class. 

It turns out that 
\begin{itemize}
\item[vi)] the $\RCD(K,\infty)$ condition is also stable
  w.r.t.~$\pGw$-convergence 
  (Theorem \ref{thm:stabrcd}),
\item[vii)] in this case finer convergence properties of the gradient
  flow of the entropy can be proved (Theorem \ref{thm:stabrgf}),
\item[viii)]   whenever 
  spaces 
 satisfying a uniform weak 
  Logarithmic-Sobolev-Talagrand inequality are
  considered
  (e.g.~when $K>0$ or 
  the diameters are  uniformly bounded), the spectrum of the (linear) Laplace operator
  is also stable (Theorem \ref{thm:Lap-stab}).
\end{itemize}  
We conclude this introduction by informally describing the 
notions of convergence we introduce, referring to the body of the paper for the more technical statements about lower Ricci bounds and heat flows.

\smallskip

\noindent{\bf (A)} \underline{Intrinsic approach: use of Gromov reconstruction theorem.} Given an integer $N\in\N$ and a continuous function with bounded support $\varphi:\R^{N\times N}\to\R$ we can consider the ``cylindrical'' function  on the class of isomorphism classes $\cX$ of p.m.m.\ spaces $(X,\sfd,\mm,\bar x)$ defined as
\[
  \dualspace\varphi\cX:=\int_{X^{N}}
  \varphi\big((\sfd(x_i,x_j))_{ij=1}^N\big)
  \,\d\delta_{\bar x}(x_1)\,\d\mm^{\otimes (N-1)}(x_2,\cdots,x_N).
  \]
Adapting a result due to Gromov for the case of spaces with finite mass, we shall see that $\cX_1=[X_1,\sfd_1,\mm_1,\bar x_1]$ and  $\cX_2=[X_2,\sfd_2,\mm_2,\bar x_2]$ are isomorphic if  and only if
\[
\varphi[\cX_1]=\varphi[\cX_2],
\]
for any $N\in\N$ and $\varphi$ as above, see Proposition \ref{thm:Grecon}.

This result suggests to declare that the sequence $(\cX_n)$ intrinsically converges to $\cX_\infty$ provided 
\[
\varphi[\cX_n]\longrightarrow\varphi[\cX_\infty],
\]
as $n\to\infty$ for every $N\in\N$ and $\varphi$ as above (Definition \ref{def:intrconv}).

\smallskip

\noindent{\bf (B)} \underline{Extrinsic approach: embedding everything in a common space.} A different, and to some extent more direct, way to speak about convergence of $\cX_n=[X_n,\sfd_n,\mm_n,\bar x_n]$ to $\cX_\infty=[X_\infty,\sfd_\infty,\mm_\infty,\bar x_\infty]$ is to ask for the existence of a complete and separable space $(X,\sfd)$ and isometric embeddings $\iota_n:X_n\to X$, $n\in\bar\N$, in such a way that
\[
\int \varphi\,\d(\iota_n)_\sharp\mm_n\longrightarrow\int\varphi\,\d(\iota_\infty)_\sharp\mm_\infty\quad \text{ for every $\varphi:X\to\R$ continuous with bounded support.}
\]
We shall call this notion extrinsic convergence, as it relies on the additional data $(X,\sfd)$ and $(\iota_n)$ (Definition \ref{def:Dconv}). Notice that a priori it is not clear that this definition is well-posed at the level of isomorphism classes, given that the $\iota_n$'s are required to be defined on the whole $X_n$'s rather than on $\supp(\mm_n)$. Yet,  in practice this is not an issue: in the simple Proposition \ref{le:extension} we shall see that the domain of an isometry  can always be extended provided we take the freedom of extending also the target space.

\smallskip

\noindent{\bf (C)} \underline{A variant of Sturm's distance $\D$.} As previously mentioned, in \cite{Sturm06I} Sturm worked with metric measure spaces $(X,\sfd,\mm)$ such that $\mm\in\probt X$ and introduced a distance $\D$ between their isomorphism classes whose relation to the quadratic transportation distance $W_2$ is essentially the same that the Gromov-Hausdorff distance has to the Hausdorff distance: he posed
\[
\D\big((X_1,\sfd_1,\mm_1),(X_2,\sfd_2,\mm_2)\big):=\inf W_2((\iota_1)_\sharp\mm_1,(\iota_2)_\sharp\mm_2),
\]
the infimum being taken among all spaces $(X,\sfd)$ and isometric embeddings $\iota_i:X_i\to X$, $i=1,2$.

If one is working with p.m.m.\ spaces with  controlled volume growth, it is possible to adapt the definition of $\D$ in the following way. Say that $\psi:[0,\infty)\to(0,\infty)$ is a given continuous non-increasing map and denote by $\X^\psi$ the set of equivalence classes of spaces $(X,\sfd,\mm,\bar x)$ such that
\[
\int (1+\sfd^2(\cdot,\bar x))\psi(\sfd(\cdot,\bar x))\,\d\mm<\infty.
\]
Then we can define the distance $\D^\psi$ between $\cX_1,\cX_2\in \X^\psi$ as 
\[
\D^\psi\big(\cX_1,\cX_2\big):= \big|\log\frac{\z_{1,\psi}}{\z_{2,\psi}}\big|+\inf W_2((\iota_1)_\sharp\mm_{1,\psi},(\iota_2)_\sharp\mm_{2,\psi})+\sfd(\iota_1(\bar x_1),\iota_2(\bar x_2)),
\]
where $\cX_i=[X_i,\sfd_i,\mm_i,\bar x_i]$ and 
\[
\z_{i,\psi}:=\int\psi(\sfd_i(\cdot,\bar x_i))\,\d\mm_i,\qquad\text{ and }\qquad\mm_{i,\psi}:=\frac1{\z_{i,\psi}}\psi(\sfd_i(\cdot,\bar x_i))\,\d\mm_i,
\]
for $i=1,2$ and the inf is taken among all spaces $(X,\sfd)$ and isometric embeddings $\iota_i:X_i\to X$, $i=1,2$, see Definition \ref{def:sturmvar}.

\smallskip

\noindent{\bf (D)} \underline{A metric independent of chosen weights: the pointed Gromov-Wasserstein  distance}

In order to introduce a metric independent of a given weight we shall proceed as follows. First we fix a non-constant bounded concave function $c:[0,\infty)\to[0,\infty)$ with $c(0)=0$ and notice that given a metric space $(X,\sfd)$, the map $(x,y)\mapsto c(\sfd(x,y))$ is a bounded distance inducing the same topology. Then we introduce the 1-transportation distance $W_c$ built on top of it:
\[
W_c(\mu,\nu):=\inf\int c(\sfd(x,y))\,\d\ggamma(x,y),\qquad\text{ the inf being taken among all transport plans $\ggamma$.}
\]
Now for given (equivalence classes of) p.m.m.\ spaces $\cX_1=[X_1,\sfd_1,\mm_1,\bar  x_1]$ and $\cX_2=[X_2,\sfd_2,\mm_2,\bar x_2]$ with finite mass we can define the distance
\[
\pGW^{fm}(\cX_1,\cX_2):=\Big|\log\frac{\mm_1(X_1)}{\mm_2(X_2)}\Big|+\inf \sfd(\iota_1(\bar x_1),\iota_2(\bar x_2))+W_c((\iota_1)_\sharp\tilde\mm_1,(\iota_2)_\sharp\tilde\mm_2),
\]
where $\tilde\mm_i:=\mm_i(X_i)^{-1}\mm_i$, $i=1,2$, and the inf is taken as before among all spaces $(X,\sfd)$ and all isometric embeddings $\iota_i:X_i\to X$.

For the general case of \pmm~spaces of possibly infinite mass we adopt a cut-off argument based on the fact that, by assumption, in our spaces bounded sets have finite mass. Hence we  fix a Lipschitz compactly supported function  $\zeta:[0,\infty)\to[0,1]$ with $\zeta\equiv 1$ on $[0,1]$ and  then for given $\cX=[X,\sfd,\mm,\bar x]$ and  $k\in\Z$ we put 
\[
\d\mm_{[k]}(x):=\zeta(\sfd(x,\bar x)2^{-k})\d\mm(x),\qquad\text{ and }\qquad \cX_{[k]}:=[X,\sfd,\mm_{[k]},\bar x],
\]
and finally define the pointed-Gromov-weak distance as
\[
\pGW(\cX_1,\cX_2):=\sum_{k\in\Z}\frac1{2^{|k|}}\min\Big\{1,\pGW^{fm}(\cX_{1,[k]},\cX_{2,[k]})\Big\},
\]
see Definition \ref{def:pgw}. Notice that the choice to let $k$ vary in $\Z$ rather than in $\N$ is made to get completeness of the distance $\pGW$, see Theorem \ref{thm:completeness}.

\bigskip

\noindent\underline{Relation between pmG-convergence and pmGH-convergence} We shall see in Theorem \ref{thm:main_convergence} that the intrinsic convergence is equivalent to the extrinsic one and they also coincide with convergence w.r.t.\ $\pGW$ and $\D^\psi$ for  $\psi$ decreasing sufficiently fast (see also Remarks \ref{rem:diffpesi} and \ref{re:occhiopesi}).  As said, we shall call this convergence \emph{pointed measured Gromov} convergence, pmG-convergence in short.

Later, in Section \ref{se:relpmgh}, we shall analyze the relation between pmG-convergence and pointed measured Gromov Hausdorff one (pmGH for short). For compact spaces, the latter means that we can embed the given sequence of spaces in a common one to ensure pointed Gromov Hausdorff convergence at the level of metrics and weak convergence at the level of measures. The difference with pmG-convergence is thus in the fact that once the isometric embeddings are made, for pmGH-convergence both Hausdorff convergence of the metrics and weak convergence of measures are asked, while for pmG-convergence only weak convergence of measures is imposed. This is why we dropped the word `Hausdorff' in denoting the notion of convergence we are proposing.

Perhaps the best way to see the differences between pmG and pmGH convergences is with an example. Thus consider the sequence $n\mapsto ([0,1],\sfd_{\rm Eucl},\mm_n,\tfrac12)$, where $\sfd_{\rm Eucl}$ is the standard metric on $[0,1]$ and $\mm_n:=\frac1n\mathcal L^1\restr{[0,1]}+(1-\tfrac1n)\delta_{\frac12}$. As $n\to\infty$ this sequence pmG-converges to the (isomorphism class of a) one-point space equipped with a Dirac delta. Yet this is not the case for pmGH-convergence as certainly $n\mapsto([0,1],\sfd_{\rm Eucl})$ does not converge to a one-point space in the GH sense. What one could say in this situation is that the given sequence converges to $ ([0,1],\sfd_{\rm Eucl},\delta_{\frac12},\tfrac12)$ (which is the approach we are going to choose in Section \ref{se:relpmgh}) but the drawback of this choice is that the notion of convergence would not be defined for isomorphism classes of spaces, but would rather depend on the portion of space which is outside the support of the measure.

We therefore see that while pmGH-convergence always implies pmG-convergence (see Proposition \ref{prop:pmghD}), the converse is not always true. Yet, at least in the class of doubling spaces a behavior like that of the example proposed cannot occur and we shall see that in this case the two notions of convergence actually coincide (Proposition \ref{prop:Dpmgh}).

\begin{acknowledgement}
  {The authors warmly thank Luigi Ambrosio and Karl Theodor  Sturm for fruitful
    conversations on the topic, they also wish to thank  Takashi Shioya for pointing out the spectral stability property 
described in Remark \ref{rk:Shyoia}.

 They have been partially supported by
    ERC AdG GeMeThNES. G.S.~has also been partially supported by
    PRIN10-11 grant from MIUR for the project \emph{Calculus of Variations}. A.M.~has been supported by the ETH fellowship during the final stage of the work.}
\end{acknowledgement}

\section{Notation and preliminary results}

\subsection{Measures and weak/narrow convergence.}
We assume that the reader is familiar with the 
basic notions of measure theory and optimal transport 
on abstract metric spaces (standard references are \cite{Ambrosio-Gigli11}, \cite{Ambrosio-Gigli-Savare08} and \cite{Villani09}). Here we recall the basic definitions mainly in order to fix the notation.

Given a complete and separable metric space $(X,\sfd)$, we denote by
$\BorelSets X$ the collection of its Borel sets, by
$\Mloc X$ the set of Borel measures $\mu:\BorelSets X\to[0,\infty]$ 
which are finite on every bounded set, by 
$\M X$ the subset of finite measures $\mu$,
and 
by $\prob X$ the  collection of all Borel probability
measures.

On $\Mloc X$ we consider the topology induced by the 
duality with $\Cc X$, the space of bounded 
continuous functions with \emph{bounded
support}:
a sequence $(\mu_n)\subset \Mloc X$ converges weakly
to $\mui\in \Mloc X$ provided
\begin{equation}
\lim_{n\to\infty}\int f\,\d\mu_n=\int f\,\d\mui\qquad\forevery f\in \Cc
X.\label{eq:3}
\end{equation}
When $\mu_n,\mui\in \prob X$,  \eqref{eq:3} is equivalent to narrow
convergence, i.e.~convergence in $\M X$ with respect to the duality with the 
space $\Cb X$ of bounded continuous functions:
%
\[
\lim_{n\to\infty}\int f\,\d\mu_n=\int f\,\d\mui\qquad\forevery f\in \Cb
X.
\]
The following lemma provides a useful lower semicontinuity result. 
We set $\bar \N=\N\cup\{\infty\}$. 

\begin{lemma}\label{le:lemmino}
Let $(X,\sfd)$ be a complete and separable metric space and
$F_n:X\to[0,+\infty]$
(resp.~$G_n:X\to \R$), 
$n\in\bar\N$, 
lower semicontinuous 
(resp. continuous and uniformly bounded)
  functionals satisfying
\begin{equation}
\label{eq:lemmino}
\sfd(x_n,x_\infty)\to 0\qquad\Rightarrow\qquad F_\infty(x_\infty)\leq
\limi_{n\to\infty} F_n(x_n),\quad
G_\infty(x_\infty)=\lim_{n\to\infty}G_n(x_n).
\end{equation}
Then for every sequence $(\nu_n)\subset \M X$ \narrowly\  converging to $\nu_\infty\in\M X$ it holds
\begin{equation}
\int F_\infty\,\d\nu_\infty\leq\limi_{n\to\infty}\int
F_n\,\d\nu_n,\qquad
{\int G_\infty\,\d\nu_\infty=\lim_{n\to\infty}\int G_n\,\d\nu_n.
}\label{eq:87}
\end{equation}
\end{lemma}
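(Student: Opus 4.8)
The plan is to prove the two assertions of \eqref{eq:87} separately, treating the lower-semicontinuity part for the $F_n$'s first and then deducing the convergence statement for the $G_n$'s by applying that part twice, once to $G_n$ and once to $-G_n$ shifted by a constant.

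For the lower-semicontinuity inequality $\int F_\infty\,\d\nu_\infty\le\liminf_n\int F_n\,\d\nu_n$, the natural tool is a Skorokhod-type representation together with the classical fact that a liminf of lower semicontinuous functions which is ``jointly lower semicontinuous along convergent sequences'' behaves well under weak convergence of measures. Concretely, first I would reduce to the case of bounded functionals by truncating: set $F_n^{(M)}:=\min\{F_n,M\}$, note that the truncated family still satisfies the hypothesis \eqref{eq:lemmino} (the $\min$ with a constant preserves the liminf inequality), that $\int F_n\,\d\nu_n\ge\int F_n^{(M)}\,\d\nu_n$, and that $\int F_\infty\,\d\nu_\infty=\sup_M\int F_\infty^{(M)}\,\d\nu_\infty$ by monotone convergence; so it suffices to handle uniformly bounded lower semicontinuous functionals. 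Then, since $(\nu_n)$ converges narrowly to $\nu_\infty$ in $\M X$, the total masses converge, so after normalizing we may use Skorokhod's theorem to find $X$-valued random variables $Y_n\to Y_\infty$ almost surely with $\mathrm{law}(Y_n)=\nu_n/\nu_n(X)$. Applying Fatou's lemma together with the hypothesis $F_\infty(Y_\infty)\le\liminf_n F_n(Y_n)$ (which holds pointwise on the almost-sure event) yields $\int F_\infty\,\d\nu_\infty\le\liminf_n\int F_n\,\d\nu_n$ in the bounded case, and then the truncation argument finishes it.

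For the second assertion, I would apply the first to the two families $F_n:=G_n+C$ and $\tilde F_n:=C-G_n$, where $C$ is a uniform bound with $|G_n|\le C$ for all $n\in\bar\N$; both families are nonnegative, lower semicontinuous (indeed continuous), and satisfy \eqref{eq:lemmino} because $G_\infty(x_\infty)=\lim_n G_n(x_n)$ gives the required convergence (hence in particular the liminf inequality) for $\pm G_n$. This gives $\int(G_\infty+C)\,\d\nu_\infty\le\liminf_n\int(G_n+C)\,\d\nu_n$ and $\int(C-G_\infty)\,\d\nu_\infty\le\liminf_n\int(C-G_n)\,\d\nu_n$; using that $\nu_n(X)\to\nu_\infty(X)$ to cancel the constants, the first inequality becomes $\int G_\infty\,\d\nu_\infty\le\liminf_n\int G_n\,\d\nu_n$ and the second becomes $\limsup_n\int G_n\,\d\nu_n\le\int G_\infty\,\d\nu_\infty$, whence equality.

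The main obstacle, and the place where some care is genuinely needed, is the passage from weak/narrow convergence of the measures $\nu_n$ to a pointwise-convergent representation of ``sample points'': Skorokhod's representation theorem requires probability measures on a Polish space, which is why the reduction uses the convergence of total masses $\nu_n(X)\to\nu_\infty(X)$ provided by narrow convergence (testing against the constant function $1\in\Cb X$) and the separability–completeness of $(X,\sfd)$. An alternative that avoids Skorokhod is to argue directly: approximate each bounded lower semicontinuous $F_\infty^{(M)}$ from below by Lipschitz functions $\varphi_j\uparrow F_\infty^{(M)}$, observe that for fixed $j$ one has $\liminf_n\int F_n^{(M)}\,\d\nu_n\ge\liminf_n\int(\varphi_j-\omega_{n})\,\d\nu_n$ where $\omega_n\to0$ encodes the modulus from \eqref{eq:lemmino}, and then pass to the limit using narrow convergence of $\nu_n$ against the continuous bounded $\varphi_j$ and finally let $j\to\infty$; this is more self-contained but slightly fiddlier to write. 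Either route works; I would present the Skorokhod version for brevity.
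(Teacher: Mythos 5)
Your proof is correct. The second assertion is handled exactly as in the paper: apply the first assertion to $G_n+C$ and $C-G_n$ and cancel the constants using $\nu_n(X)\to\nu_\infty(X)$. For the first assertion, however, you take a different route. The paper lifts the whole family to a single lower semicontinuous functional $\bar F(x,n):=F_n(x)$ on the Polish space $X\times\bar\N$ (the hypothesis \eqref{eq:lemmino} is precisely joint lower semicontinuity at points $(x,\infty)$), pushes the measures to $\bar\nu_n:=\nu_n\times\delta_n$, and invokes the standard semicontinuity of $\mu\mapsto\int f\,\d\mu$ for nonnegative l.s.c.\ $f$ under narrow convergence; you instead normalize, apply Skorokhod's representation theorem, and conclude by Fatou together with the pointwise hypothesis. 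Both arguments are sound; the product-space trick is arguably slicker because it reduces everything to one classical fact with no randomization, while your version is more self-contained if one does not want to quote that fact. Two small remarks on your write-up: (i) the truncation step is actually superfluous in the Skorokhod route, since Fatou's lemma applies directly to the nonnegative, possibly $+\infty$-valued $F_n$ (your verification that truncation preserves \eqref{eq:lemmino} is correct, just not needed); (ii) you should dispose of the degenerate case $\nu_\infty(X)=0$ separately (both conclusions are then trivial, the second because $|G_n|\le C$ gives $|\int G_n\,\d\nu_n|\le C\,\nu_n(X)\to0$), since otherwise the normalization $\nu_n/\nu_n(X)$ is not available.
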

\begin{proof}
  Providing $\bar\N$ by the usual polish topology, it is easy to check that 
  \eqref{eq:lemmino} yields the lower semicontinuity of the functional 
  $\bar F:X\times \bar \N\to[0,+\infty]$ defined by $\bar F(x,n):=F_n(x)$, $(x,n)\in X\times\bar\N$.
We then consider the product measures $\bar \nu_n:\nu_n\times\delta_n$, $n\in \bar \N$:
since $\bar\nu_n\to\bar\nu_\infty$ \narrowly\  in $\prob{X\times\bar N}$ as $n\to\infty$ 
and $\bar F_n$ are nonnegative,
we obtain
\[
\int F_\infty\,\d\nu_\infty=\int \bar F\,\d\bar\nu_\infty\leq 
\limi_{n\to\infty}\int \bar F\,\d\bar\nu_n=\limi_{n\to\infty}\int F_n\,\d\nu_n.
\]
The second property of \eqref{eq:87}
then follows by applying the 
previous lower semicontinuity result to $F_n(x):=G_n(x)-C$ and 
to $F_n(x):=C-G_n(x)$, where $C\ge 0$ is a sufficiently big constant
such that $-C\le G_n(x)\le C$ for every $x\in X$ and $n\in \N$.
\end{proof}

A simple application of Lemma \ref{le:lemmino} yields the lower semicontinuity of second moments
w.r.t.~\narrow\ convergence:
\begin{equation}
\label{eq:sem2mon}
\mu_n\to\mu_\infty\textrm{ \narrowly\  in $\prob X$,}\
\bar x_n\to \bar x_\infty\
\Rightarrow\ \int \sfd^2(\cdot, \bar x_\infty)\,\d\mu_\infty\leq\limi_{n\to\infty}\int \sfd^2(\cdot,\bar x_n)\,\d\mu_n.
\end{equation}
Convergence in \eqref{eq:sem2mon} holds if $(\mu_n)$ is \emph{$2$-uniformly
integrable}; recall that a subset $K\subset \probt X$ satisfies this property if
\begin{equation}
\lim_{R\to+\infty}\sup_{\mu\in K}\int_{B_R^c(\bar x)}\sfd^2(\cdot,\bar
x)\,\d\mu=0
\quad
\text{for some (and thus every) $\bar x\in X$.} 
\label{eq:2}
\end{equation}
In this case 
  \begin{equation}
    \label{eq:118}
    \lim_{n\to\infty}\int f\,\d\mu_n=\int f\,\d\mu
    \quad
    \forevery\
    f\in \rmC(X)\ \text{with quadratic growth,}
  \end{equation}
  i.e.~satisfying 
\[
    |f(x)|\le A(1+\sfd^2(x,\bar x))
    \text{ for some }A\ge 0,\ \bar x\in X\text{ and every }x\in X.
\]
Relative \narrow\ compactness in $\prob X$ can be characterized by Prokhorov's Theorem.
Let us first recall that set  $\mathcal K\subset\prob X$ is said to be tight provided for every $\eps>0$ 
there exists a compact set $K_\eps\subset X$ such that
\[
\mu(X\setminus K_\eps)\leq\eps\quad\forevery \mu\in \mathcal K.
\]
The we have the following classical result:
\begin{theorem}[Prokhorov]\label{thm:prok}
Let $(X,\sfd)$ be complete and separable and let $\mathcal
K\subset\prob X$.
Then the following are equivalent.
\begin{itemize}
\item $\mathcal K$ 
is precompact in the \narrow\ topology.
\item 
$\mathcal K$ 
is tight.
\item There exists a function $\psi:X\to[0,+\infty]$ with compact sublevels such that
\[
\sup_{\mu\in \mathcal K}\int\psi\,\d\mu<\infty.
\]
\end{itemize}
\end{theorem}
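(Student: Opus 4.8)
\quad The plan is to establish the equivalences by proving (2)$\Leftrightarrow$(3), (1)$\Rightarrow$(2) and (2)$\Rightarrow$(1). Since $X$ is separable, the \narrow\ topology on $\prob X$ is metrizable, so throughout I may test precompactness on sequences. The implication (3)$\Rightarrow$(2) is just Markov's inequality: if $\psi$ has compact sublevels and $C:=\sup_{\mu\in\mathcal K}\int\psi\,\d\mu<\infty$, then for $\eps>0$ the sublevel $K_\eps:=\{\psi\le C/\eps\}$ is compact and $\mu(X\setminus K_\eps)\le\eps$ for every $\mu\in\mathcal K$. For (2)$\Rightarrow$(3) I would pick compact $C_k$ with $\sup_{\mu\in\mathcal K}\mu(X\setminus C_k)\le2^{-k}$, pass to the increasing family $K_k:=C_1\cup\dots\cup C_k$, and set $\psi:=\sum_{k\ge1}\mathbbm 1_{X\setminus K_k}$; this $\psi$ is lower semicontinuous (a countable sum of indicators of open sets), satisfies $\int\psi\,\d\mu=\sum_k\mu(X\setminus K_k)\le1$ on $\mathcal K$, and has $\{\psi\le t\}\subseteq K_{\lceil t\rceil+1}$, a closed subset of a compact set, hence itself compact.

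For (1)$\Rightarrow$(2) I would argue by contradiction. If $\mathcal K$ is precompact but not tight, there is $\eps>0$ with $\sup_{\mu\in\mathcal K}\mu(X\setminus K)>\eps$ for every compact $K$. Fixing a dense sequence $(x_j)\subset X$ and setting $U^n_m:=\bigcup_{j\le m}\Ball{1/n}{x_j}$, so that $U^n_m\uparrow X$ in $m$, the first step is to show that for each $n$ there is $m_n$ with $\mu(U^n_{m_n})>1-\eps2^{-n}$ for all $\mu\in\mathcal K$: otherwise one finds $\mu_m\in\mathcal K$ with $\mu_m(U^n_m)\le1-\eps2^{-n}$, which by precompactness has a cluster point $\mu\in\prob X$, and since $\nu\mapsto\nu(U)$ is lower semicontinuous for $U$ open, the closed set $\{\nu:\nu(U^n_m)\le1-\eps2^{-n}\}$ contains $\mu$ for every $m$, whence $\mu(X)=\sup_m\mu(U^n_m)\le1-\eps2^{-n}<1$, absurd. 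Granting this, $K:=\bigcap_n\overline{U^n_{m_n}}$ is closed and totally bounded, hence compact, and $\mu(X\setminus K)\le\sum_n\mu(X\setminus U^n_{m_n})\le\eps$ for all $\mu\in\mathcal K$, contradicting the choice of $\eps$.

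The substantial implication is (2)$\Rightarrow$(1). Given $(\mu_n)\subset\mathcal K$, I would fix increasing compact sets $K_j$ with $\sup_{\mu\in\mathcal K}\mu(X\setminus K_j)\le2^{-j}$. Each $K_j$ being compact metric, $\rmC(K_j)$ is separable and $\M{K_j}\cong\rmC(K_j)^*$, so bounded subsets of $\M{K_j}$ are sequentially weakly-$*$ compact; a diagonal extraction then yields a subsequence (not relabelled) along which, for every $j$, the restrictions $\mu_n\restr{K_j}$ converge weakly-$*$ to some nonnegative $\lambda_j\in\M{K_j}$ with $\lambda_j(K_j)=\lim_n\mu_n(K_j)\in[1-2^{-j},1]$. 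Testing against nonnegative $f\in\Cb X$ and using $0\le\int_{K_{j+1}\setminus K_j}f\,\d\mu_n\le\|f\|_\infty2^{-j}$ shows that $\int f\,\d\lambda_j$ is nondecreasing in $j$, hence $\lambda_j\le\lambda_{j+1}$ as finite Borel measures on $X$; therefore $\mu:=\sup_j\lambda_j$ is a Borel measure with $\mu(X)=\sup_j\lambda_j(K_j)=1$, i.e.\ $\mu\in\prob X$. Finally, for nonnegative $f\in\Cb X$ the splitting $\int f\,\d\mu_n=\int_{K_j}f\,\d\mu_n+\int_{X\setminus K_j}f\,\d\mu_n$ gives, as $n\to\infty$, $\int f\,\d\lambda_j\le\limi_n\int f\,\d\mu_n\le\lims_n\int f\,\d\mu_n\le\int f\,\d\lambda_j+\|f\|_\infty2^{-j}$, and letting $j\to\infty$ with $\int f\,\d\lambda_j\uparrow\int f\,\d\mu$ forces $\int f\,\d\mu_n\to\int f\,\d\mu$, i.e.\ $\mu_n\weakto\mu$ \narrowly; precompactness follows.

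I expect the main obstacle to be precisely this last implication: one has to manufacture, out of the restrictions to a compact exhaustion, a single limit measure $\mu$ on all of $X$, check that it is a genuine probability measure (consistency of the $\lambda_j$'s and conservation of mass), and then upgrade restricted weak-$*$ convergence to \narrow\ convergence on the whole space via the uniform $2^{-j}$ tail bounds. The enabling ingredients — Banach--Alaoglu together with separability of $\rmC(K_j)$, the diagonal argument, and the elementary fact that $\int f\,\d\mu\le\int f\,\d\nu$ for all nonnegative $f\in\Cb X$ entails $\mu\le\nu$ for finite Borel measures on a metric space — are standard, but bookkeeping the masses while assembling them is the delicate point; everything else (Markov's inequality, the exhaustion by balls, the portmanteau lemma) is routine.
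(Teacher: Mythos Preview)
The paper does not actually prove this theorem: it is stated as ``the following classical result'' and used as a black box, so there is no author's proof to compare against. Your argument is a correct and standard proof of Prokhorov's theorem on Polish spaces, following the usual route (Markov's inequality for (3)$\Rightarrow$(2), the staircase function $\psi=\sum_k\mathbbm 1_{X\setminus K_k}$ for (2)$\Rightarrow$(3), a totally-bounded exhaustion for (1)$\Rightarrow$(2), and restriction-to-compacts plus Banach--Alaoglu and a diagonal argument for (2)$\Rightarrow$(1)). One minor remark on your (1)$\Rightarrow$(2) step: when you say the cluster point $\mu$ lies in $\{\nu:\nu(U^n_m)\le 1-\eps2^{-n}\}$ for every $m$, you are implicitly using that for $m'\ge m$ one has $\mu_{m'}(U^n_m)\le\mu_{m'}(U^n_{m'})\le1-\eps2^{-n}$, so only the tail $(\mu_{m'})_{m'\ge m}$ sits in that closed set; since any cluster point of the full sequence is also a cluster point of each tail, the conclusion follows. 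Everything else is in order.
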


\subsection{Transport and $L^2$-Wasserstein distances.}
If $(Y ,\sfd )$ is a separable metric space and 
  $\tt :X\to Y $ is a Borel map, every probability measure $\mu\in
  \prob X$ can be pushed to a measure $\tt_\sharp \mu\in \prob Y $ by
  the formula 
  $(\tt_\sharp\mu)(B):=\mu(\tt^{-1}(B))$ for every $B\in \BorelSets
  \YY $. If $\tt^{-1}(B)$ is also bounded whenever $B$ is bounded (e.g.~when
  $\tt$ is an isometry), then the same formula applies to $\mu\in
  \Mloc X$ and defines a measure $\tt_\sharp \mu\in \Mloc \YY .$ 

 Given  $\mu_i\in \prob {X_i}$, the set $\Adm(\mu_1,\mu_2)
 \subset\prob{X_1\times X_2}$ is
 the set of admissible transport plans, i.e. the set of those
 $\ggamma$ such that $\pi^i_\sharp\ggamma=\mu_i$, $i=1,2$,
 where $\pi^1,\pi^2$ are the projections on
 the first and second coordinate respectively.

 By $\probt X$ we denote the space of probability measures with finite second moment, i.e. the set of those measures $\mu\in\prob X$ such that
 \[
 \int\sfd^2(x,\bar x)\,\d\mu(x)<\infty,\qquad\text{ for some (and thus any) }\bar x\in X.
 \]
We endow such space with the quadratic
transportation distance $W_2$ defined as
\[
W^2_2(\mu,\nu):=\inf_{\sggamma\in\Adm(\mu,\nu)}\int \sfd^2(x,y)\,\d\ggamma(x,y).
\]
It is well known that $(\probt X,W_2)$ is a complete and separable metric space and that $W_2$-convergence can be characterized as in the following proposition (for a proof, see e.g. Proposition 7.1.5 in \cite{Ambrosio-Gigli-Savare08}).
\begin{proposition}\label{prop:narw2}
Let $(X,\sfd)$ be a complete separable metric space and
$(\mu_n)\subset \probt X$ a sequence \narrowly\  converging to some
$\mu_\infty\in\prob X$. 
Then 
$\mu_\infty\in\probt X$ and 
$W_2(\mu_n,\mu_\infty)\to0$ 
if  either one of the following holds.
\begin{itemize}
\item $(\mu_n)$ is 2-uniformly integrable, i.e. \eqref{eq:2} holds.
\item 
  $(\mu_n)$ satisfies \eqref{eq:118}. \EEE
\item for some sequence $(\bar x_n)\subset X$ converging to $\bar x_\infty\in X$ it holds
\[
\lim_{n\to\infty}\int \sfd^2(\cdot, \bar x_n)\,\d\mu_n=\int \sfd^2(\cdot,\bar x_\infty)\,\d\mu_\infty<\infty.
\]
\item for some sequence $(\bar x_n)\subset X$ converging to $\bar x_\infty\in X$ it holds
\[
\lims_{n\to\infty}\int \sfd^2(\cdot, \bar x_n)\,\d\mu_n\leq \int \sfd^2(\cdot,\bar x_\infty)\,\d\mu_\infty<\infty.
\]
\end{itemize}
Conversely, if $(\mu_n)\subset\probt X$ is a sequence $W_2$-converging
to some $\mu_\infty\in\probt X$, then  the sequence also \narrowly\
converges and all the four properties above are true.
\end{proposition}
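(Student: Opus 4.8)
The plan is to prove the proposition in three steps: (a) under the standing assumption that $\mu_n\to\mu_\infty$ narrowly, show the four conditions are equivalent and each forces $\mu_\infty\in\probt X$; (b) prove the forward implication $W_2(\mu_n,\mu_\infty)\to0$; (c) prove the converse.

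For (a) I would argue in a cycle. The implication \eqref{eq:2}$\Rightarrow$\eqref{eq:118} is already recorded above; testing \eqref{eq:118} against the continuous quadratic-growth function $\sfd^2(\cdot,\bar x_\infty)$ yields the third property with the constant sequence $\bar x_n\equiv\bar x_\infty$; and the third property trivially implies the fourth. The one substantial step is ``fourth $\Rightarrow$ \eqref{eq:2}'': from \eqref{eq:sem2mon} and the fourth property one gets $\int\sfd^2(\cdot,\bar x_n)\,\d\mu_n\to\int\sfd^2(\cdot,\bar x_\infty)\,\d\mu_\infty<\infty$ (so $\mu_\infty\in\probt X$); estimating $|\sfd^2(\cdot,\bar x_n)-\sfd^2(\cdot,\bar x_\infty)|\le\sfd(\bar x_n,\bar x_\infty)\big(2\sfd(\cdot,\bar x_\infty)+\sfd(\bar x_n,\bar x_\infty)\big)$ and using the uniform bound on the first moments (Cauchy--Schwarz) shows the same convergence holds with the fixed base point $\bar x_\infty$; finally, since $\chi_R:=\min(\sfd^2(\cdot,\bar x_\infty),R^2)\in\Cb X$ we have $\int\chi_R\,\d\mu_n\to\int\chi_R\,\d\mu_\infty$ by narrow convergence, hence, using $(\sfd^2-R^2)^+\ge\tfrac12\sfd^2$ on $\{\sfd(\cdot,\bar x_\infty)>\sqrt2\,R\}$, one finds $\limsup_n\int_{\{\sfd(\cdot,\bar x_\infty)>\sqrt2 R\}}\sfd^2(\cdot,\bar x_\infty)\,\d\mu_n\le2\int(\sfd^2(\cdot,\bar x_\infty)-R^2)^+\,\d\mu_\infty$, which tends to $0$ as $R\to\infty$; combined with $\mu_n\in\probt X$ for the finitely many exceptional indices this gives \eqref{eq:2}. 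In all cases we may thus assume $\int\sfd^2(\cdot,\bar x)\,\d\mu_n\to\int\sfd^2(\cdot,\bar x)\,\d\mu_\infty<\infty$ for a fixed $\bar x$.

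For (b), the heart of the matter, I would invoke Skorokhod's representation theorem: as $X$ is complete separable and $\mu_n\to\mu_\infty$ narrowly, there are a probability space $(\Omega,\mathbb P)$ and measurable maps $Y_n,Y_\infty:\Omega\to X$ with $(Y_n)_\sharp\mathbb P=\mu_n$, $(Y_\infty)_\sharp\mathbb P=\mu_\infty$ and $Y_n\to Y_\infty$ $\mathbb P$-a.e. Then $\ggamma_n:=(Y_n,Y_\infty)_\sharp\mathbb P\in\Adm(\mu_n,\mu_\infty)$, so $W_2^2(\mu_n,\mu_\infty)\le\int_\Omega h_n\,\d\mathbb P$ with $h_n:=\sfd^2(Y_n,Y_\infty)\to0$ a.e.\ and $0\le h_n\le g_n:=2\sfd^2(Y_n,\bar x)+2\sfd^2(Y_\infty,\bar x)$. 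Here $g_n\to g_\infty:=4\sfd^2(Y_\infty,\bar x)$ a.e.\ and $\int_\Omega g_n\,\d\mathbb P=2\int\sfd^2(\cdot,\bar x)\,\d\mu_n+2\int\sfd^2(\cdot,\bar x)\,\d\mu_\infty\to\int_\Omega g_\infty\,\d\mathbb P<\infty$ by the reduction in (a); applying Fatou's lemma to the nonnegative sequence $g_n-h_n\to g_\infty$ gives $\int g_\infty\,\d\mathbb P\le\int g_\infty\,\d\mathbb P-\limsup_n\int h_n\,\d\mathbb P$, i.e.\ $\int h_n\,\d\mathbb P\to0$, hence $W_2(\mu_n,\mu_\infty)\to0$.

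For (c): if $W_2(\mu_n,\mu_\infty)\to0$ then narrow convergence follows from $W_1\le W_2$ (Cauchy--Schwarz on each admissible plan) and $|\int f\,\d\mu_n-\int f\,\d\mu_\infty|\le\Lip(f)\,W_1(\mu_n,\mu_\infty)$ for bounded Lipschitz $f$, such functions being convergence-determining for the narrow topology; and since the only element of $\Adm(\mu,\delta_{\bar x})$ is $\mu\times\delta_{\bar x}$ one has $W_2^2(\mu,\delta_{\bar x})=\int\sfd^2(\cdot,\bar x)\,\d\mu$, so the triangle inequality gives $\int\sfd^2(\cdot,\bar x)\,\d\mu_n\to\int\sfd^2(\cdot,\bar x)\,\d\mu_\infty$, which is the third property for the constant base point; the other three then follow from the equivalence in (a). The step I expect to be the main obstacle is (b): obtaining an \emph{upper} bound on $\limsup_n W_2(\mu_n,\mu_\infty)$ requires producing asymptotically optimal couplings, and passing to a narrow limit of optimal plans only delivers the (easy) lower semicontinuity inequality — the Skorokhod coupling together with the Fatou/Pratt-type argument is what supplies the upper bound.
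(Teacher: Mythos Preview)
Your proof is correct. Note, however, that the paper does not actually supply its own proof of this proposition: it merely states the result and refers to \cite[Proposition~7.1.5]{Ambrosio-Gigli-Savare08}, so there is no in-paper argument to compare against.

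That said, your route is a perfectly valid and self-contained substitute for the cited reference. The cycle in part~(a) is clean; the one point worth making explicit (which you do, tersely) is that the $\limsup_n$ bound on the tails must be upgraded to a $\sup_n$ bound by handling finitely many indices separately---this is the content of your ``finitely many exceptional indices'' remark, and it is the right way to close that gap. In part~(b), the Skorokhod representation combined with the Pratt/generalized-dominated-convergence trick ($g_n-h_n\ge0$, $g_n\to g_\infty$ a.e.\ and in $L^1$, apply Fatou) is an elegant way to get the crucial \emph{upper} bound on $W_2$; the more common textbook argument instead approximates $\mu_\infty$ by compactly supported measures and uses tightness of optimal plans, which is a bit more laborious. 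Your approach trades that approximation step for the (nontrivial but standard) invocation of Skorokhod's theorem. Part~(c) is straightforward and correctly argued.
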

In order to freely deal with all measures in $\prob X$, rather then in the restricted set $\probt X$, it will be convenient to introduce the distance $\Wc$ as follows. We fix once and for all  a function $c\in \rmC([0,\infty))$ such that 
\begin{equation}
\label{subeq:cost}
    c\text{ is concave and non-constant},\quad
    c(0)=0,\quad \lim_{d\to\infty}c(d)<\infty,
\end{equation}
typical examples being $c(d):=\tanh (d)$ and $c(d):=d\land 1$.  Then if $(X,\sfd)$ is a
  metric space, the cost function
\[
    \sfc(x,y):=c(\sfd(x,y))\quad x,y\in X,
\]
is a bounded and complete distance on $X$, inducing the same
topology of $\sfd$.   The Kantorovich-Rubinstein-Wasserstein distance $\Wc$ is then introduced as
\begin{equation}
  \label{eq:86}
  \Wc(\mu,\nu):=\inf_{\sggamma\in\Adm(\mu_1,\mu_2)}\int
  \sfc(x,y)\,\d\ggamma(x,y),\quad
  \sfc\text{ defined as in (\ref{subeq:cost}a,b).}
\end{equation}
Recall that regardless of the choice of  $\sfc$ as in  \eqref{subeq:cost}
\begin{equation}
\label{eq:topwc}
\begin{split}
&\text{the space  $(\prob X,\Wc)$ is a complete and separable metric space}\\
&\text{whose   
topology induces the \narrow\ convergence of probability measures },
\end{split}
\end{equation}
see for instance Chapter 6 of \cite{Villani09} for a proof.

\section{Pointed metric measure spaces and their convergence}
\label{sec:pointedD}

\subsection{Pointed metric measure spaces 
  and their equivalence classes}  
  In this paper we will deal with \emph{pointed metric measure (\pmm)
     spaces}. A metric measure space  is a structure $(X,\sfd,\mm)$ 
  where 
  \begin{subequations} \label{pointed} \begin{equation} \label{eq:proprbase1}
      (X,\sfd)\textrm{ is a complete separable metric space},\quad \mm
      \in \Mloc X, \ \mm\neq 0
      \end{equation}
and a pointed metric measure space $(X,\sfd,\mm,\bar x)$ is a metric measure space together with 
      \begin{equation}
      \label{eq:pointed}  \text{a given  point $\bar x\in \supp(\mm)\subset X$.}  \end{equation} \end{subequations}
  Two metric measure spaces   $(X_i,\sfd_i,\mm_i)$, $i=1,2$ are called \emph{isomorphic}
  if there exists
  \begin{subequations} \label{eq:37}
  \begin{equation} \label{eq:37a} \text{ an isometric imbedding
      $\iota:\supp(\mm_1)\to X_2$\quad such that\quad  $\iota_\sharp\mm_1=\mm_2$,} 
\end{equation}
in case of pointed metric measure spaces $(X_i,\sfd_i,\mm_i,\bar x_i)$, $i=1,2$, we further require that
  \begin{equation} \label{eq:37b} \text{   $\iota(\bar x_1)=\bar x_2$.} 
\end{equation}
\end{subequations}
Any such $\iota$ is called isomorphism from $X_1$ to $X_2$.   

We shall typically denote by $\cX^*:=[X,\sfd,\mm]$ the equivalence class of the given metric measure space $(X,\sfd,\mm)$ and by $\cX:=[X,\sfd,\mm,\bar x]$ the one of the pointed metric measure space $(X,\sfd,\mm,\bar x)$. Also, we shall denote by $\X$ the  collection of all equivalence classes  of \pmm~spaces and by $\X_{fm}$  the subclass of (equivalence classes of) \pmm~spaces with finite mass, i.e. such that $\mm(X)<\infty$.
 
As usual, the foundational issue concerning the possibility of considering the equivalence classes of `all' p.m.m.s.\ can be easily avoided via the use of Kuratowski  embedding which allows to isometrically embed any separable metric space in $\ell^\infty$. We won't insist on this point any further.

  \begin{remark}[Only the support of the measure
    matters]\label{re:support}{\rm We stress that the portion of spaces outside the
      support of the measures is irrelevant, so that $\pmmX$ (resp. $(X,\sfd,\mm)$) is always
      isomorphic to $(\supp(\mm),\sfd,\mm,\bar x)$ (resp. $(\supp(\mm),\sfd,\mm)$).   } \fr\end{remark}
  %
	


In what comes next we shall need the following compactness result, which is of its own interest:
\begin{proposition}[Compactness of the class of isomorphisms]
  \label{prop:isocomp}
 Let $\cX_i=[X_i,\sfd_i,\mm_i,\bar x_i]$, $i=1,2$, be \pmm~spaces. Then the collection of isomorphisms from $X_1$ to $X_2$, i.e. of all maps $\iota:\supp(\mm_1)\to X_2$ as in \eqref{eq:37}, is sequentially compact w.r.t. pointwise convergence.
\end{proposition}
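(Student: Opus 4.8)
The plan is to argue by diagonal extraction. Fix a countable set $D=\{y_0,y_1,y_2,\dots\}$ dense in $\supp(\mm_1)$ with $y_0:=\bar x_1$, and let $(\iota_n)$ be a sequence of isomorphisms from $X_1$ to $X_2$ in the sense of \eqref{eq:37}. The crucial preliminary step, carried out in the next paragraph, is that for each fixed $y\in\supp(\mm_1)$ the set $\{\iota_n(y):n\in\N\}$ is relatively compact in $X_2$. Granting this, a standard diagonal argument yields a subsequence (not relabelled) with $\iota_n(y_j)\to z_j$ in $X_2$ for every $j$; since each $\iota_n$ is an isometric embedding, $\sfd_2(z_i,z_j)=\lim_n\sfd_2(\iota_n(y_i),\iota_n(y_j))=\sfd_1(y_i,y_j)$, so $y_j\mapsto z_j$ is distance preserving on $D$ and, $X_2$ being complete, extends uniquely to an isometric embedding $\iota:\supp(\mm_1)\to X_2$. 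A routine $3\eps$-argument (approximate an arbitrary $y$ by a point of $D$, use that $\iota_n$ is isometric and $\iota$ distance preserving) then gives $\iota_n(y)\to\iota(y)$ for every $y\in\supp(\mm_1)$, i.e.\ pointwise convergence $\iota_n\to\iota$.

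For the relative compactness of $\{\iota_n(y):n\}$, note first that $\iota_n(\bar x_1)=\bar x_2$ together with the isometry property gives $\sfd_2(\iota_n(y),\bar x_2)=\sfd_1(y,\bar x_1)=:r$ for all $n$, so all these points lie in $\bar B_r(\bar x_2)$; without properness this is not enough, and the pushforward condition must be used. Fix $\delta>0$. Since $y\in\supp(\mm_1)$ we have $m:=\mm_1(B_\delta(y))>0$, and because $\iota_n$ is an isometric embedding with $\iota_{n\sharp}\mm_1=\mm_2$ one checks $\iota_n^{-1}\big(B_\delta(\iota_n(y))\big)=B_\delta(y)\cap\supp(\mm_1)$, hence $\mm_2\big(B_\delta(\iota_n(y))\big)=m$ for every $n$. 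Now $B_\delta(\iota_n(y))\subset\bar B_{r+\delta}(\bar x_2)$, on which $\mm_2$ is finite; since a finite Borel measure on a complete separable metric space is tight (a classical fact, cf.\ Theorem \ref{thm:prok}), choose a compact $K\subset X_2$ with $\mm_2\big(\bar B_{r+\delta}(\bar x_2)\setminus K\big)<m$. Then $B_\delta(\iota_n(y))\cap K\neq\emptyset$ for all $n$, so every $\iota_n(y)$ lies within distance $\delta$ of $K$; covering $K$ by finitely many balls of radius $\delta$ shows $\{\iota_n(y):n\}$ is covered by finitely many balls of radius $2\delta$. As $\delta>0$ is arbitrary, $\{\iota_n(y):n\}$ is totally bounded, hence relatively compact in the complete space $X_2$. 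I expect this to be the main obstacle, since it is precisely here that the missing local compactness of $X_2$ must be traded for the information carried by the measures.

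Finally, to see that $\iota$ is an isomorphism: $\iota(\bar x_1)=\bar x_2$ is immediate from $\iota_n(\bar x_1)=\bar x_2$ and pointwise convergence. For $\iota_\sharp\mm_1=\mm_2$ it suffices, by \eqref{eq:3}, to check $\int f\,\d(\iota_\sharp\mm_1)=\int f\,\d\mm_2$ for all $f\in\Cc{X_2}$. Given such an $f$ with $\supp f\subset\bar B_\rho(\bar x_2)$, the identity $\sfd_2(\iota_n(y),\bar x_2)=\sfd_1(y,\bar x_1)$ shows that $f\circ\iota_n$ and $f\circ\iota$ all vanish outside the fixed set $\bar B_\rho(\bar x_1)\cap\supp(\mm_1)$, which has finite $\mm_1$-measure, and they are uniformly bounded by $\|f\|_\infty$; moreover $f\circ\iota_n\to f\circ\iota$ pointwise by continuity of $f$ and pointwise convergence of $\iota_n$. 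Dominated convergence gives $\int f\circ\iota_n\,\d\mm_1\to\int f\circ\iota\,\d\mm_1$, while $\int f\circ\iota_n\,\d\mm_1=\int f\,\d(\iota_{n\sharp}\mm_1)=\int f\,\d\mm_2$ for every $n$; hence $\int f\,\d\mm_2=\int f\,\d(\iota_\sharp\mm_1)$. Therefore $\iota$ is an isomorphism from $X_1$ to $X_2$, so every sequence of isomorphisms has a subsequence converging pointwise to an isomorphism, which is the asserted sequential compactness.
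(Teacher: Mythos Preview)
Your proof is correct, but it takes a genuinely different route from the paper's. The paper encodes each isometry $\iota_n$ as the graph measure $\ggamma_n=(\ii,\iota_n)_\sharp\tilde\mm_1$ on $X_1\times X_2$ (after first rescaling $\mm_1$ to a finite measure $\tilde\mm_1$), observes that both marginals are fixed and tight, applies Prokhorov on the product space, and then argues that the weak limit plan is concentrated on the graph of an isometry. Your argument instead stays at the level of points: the key new idea is the total-boundedness step, where the identity $\mm_2(B_\delta(\iota_n(y)))=\mm_1(B_\delta(y))>0$ forces every $\iota_n(y)$ to lie in the $\delta$-neighbourhood of a fixed compact set extracted from the tightness of $\mm_2$ on a bounded ball. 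This is a nice and rather elementary substitute for the Prokhorov machinery on the product; the diagonal extraction and the dominated-convergence verification of $\iota_\sharp\mm_1=\mm_2$ are then standard. The paper's approach is more in the spirit of optimal transport (and would adapt more readily if one wanted compactness for couplings rather than maps), while yours is more direct and makes explicit exactly where the measure hypothesis replaces local compactness.
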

\begin{proof} 
Using the fact that $\mm_1$ is finite on bounded sets, it is easy to produce a continuous non-increasing function $\psi:[0,\infty)\to(0,\infty)$ such that the measure $\tilde\mm_1=\psi(\sfd_1(\cdot,\bar x_i))\mm_1$ has finite mass (see for instance the explicit construction given in Proposition \ref{prop:weights} below).
Fix such $\psi$, notice that the image measure $\tilde\mm_2:=\iota_\sharp\tilde\mm_1$ does not depend on the particular isomorphism $\iota$ chosen, let $(\iota_n)$ be a sequence of isomorphisms and define $\ggamma_n:=(\ii,\iota_n)_\sharp\tilde\mm_1\in\M{X_1\times X_2}$, $n\in\N$, $\ii$ being the identity map. Then we have $\pi^i_\sharp\ggamma_n=\tilde\mm_i$ for $i=1,2$ and $n\in\N$ and given that $\tilde\mm_1,\tilde\mm_2$ are tight (because they have finite mass), we easily deduce that the family $\{\ggamma_n\}_{n\in\N}$ is tight as well. 

Thus by Theorem \ref{thm:prok} we know that up to pass to a subsequence, not relabeled, we may assume that $(\ggamma_n)$ weakly converges to a limit plan $\ggamma\in\M{X_1\times X_2}$. Pick $(x_1',x_2'),(x_1'',x_2'')\in\supp(\ggamma)$ and notice that  the weak convergence of $(\ggamma_n)$ to $\ggamma$ ensures that there are $(x_{1,n}',x_{2,n}'),(x_{1,n}'',x_{2,n}'')\in\supp(\ggamma_n)$ such that $(x_{1,n}',x_{2,n}')\to (x_{1}',x_{2}')$ and  $(x_{1,n}'',x_{2,n}'')\to (x_{1}'',x_{2}'')$ in $X_1\times X_2$ as $n\to\infty$. Given that by construction it holds $\sfd_1(x_{1,n}',x_{1,n}'')=\sfd_2(x_{2,n}',x_{2,n}'')$ for every $n\in\N$, we deduce  that $\sfd_1(x_{1}',x_{1}'')=\sfd_2(x_{2}',x_{2}'')$, i.e. that $\ggamma$ is concentrated on the graph of an isometry $\iota$ defined $\mm_1$-a.e. with values in $X_2$. By continuity, we can extend $\iota$ to an isometry defined on the whole $\supp(\mm_1)$, so that it remains to show that $\iota_n(x)\to \iota(x)$ as $n\to\infty$ for every $n\in\N$ and $x\in\supp(\mm_1)$. But this is obvious, indeed for given $x\in\supp(\mm_1)$ we can find, using as before the weak convergence of the $\ggamma_n$'s, points $(x_n,\iota_n(x_n))\in\supp(\ggamma_n)$ converging to $(x,\iota(x))\in\supp(\ggamma)$ in $X_1\times X_2$ as $n\to\infty$, so that  we can conclude with
\[
\sfd_2(\iota(x),\iota_n(x))\leq\sfd_2(\iota(x),\iota_n(x_n))+\sfd_2(\iota_n(x_n),\iota_n(x))=\sfd_2(\iota(x),\iota_n(x_n))+\sfd_1(x_n,x)\to 0,
\]
as $n\to\infty$.
\end{proof}

The following simple lemma will be useful
to adapt some results 
already available for metric measure spaces (as Gromov reconstruction Theorem, see 
\cite[3$\frac 12$.7]{Gromov07} and
the next Theorem \ref{thm:Grecon})
to the case of \pmm~spaces.
\begin{lemma}
  \label{le:pvsnonp}
  Let $(X_i,\sfd_i,\mm_i,\bar x_i)$, $i=1,2$, be two
  \pmm~spaces with equal finite mass
  $m=\mm_1(X_1)=\mm_2(X_2)<\infty$.
  
  Then they are isomorphic 
  as \pmm~spaces if and only if 
  there exists $\lambda>m$ such that
  $(X_1,\sfd_1,\mm_1+\lambda \delta_{\bar x_1})$ 
  is isomorphic to 
  $(X_2,\sfd_2,\mm_2+\lambda \delta_{\bar x_2})$
  as m.m.~spaces.
\end{lemma}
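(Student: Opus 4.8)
The plan is to prove both implications of the equivalence, the ``only if'' direction being essentially trivial and the ``if'' direction being the substantive one.

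\medskip

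\textbf{The easy direction.} Suppose $\iota:\supp(\mm_1)\to X_2$ is an isomorphism of \pmm~spaces, so $\iota_\sharp\mm_1=\mm_2$ and $\iota(\bar x_1)=\bar x_2$. Since $\bar x_1\in\supp(\mm_1)$, the map $\iota$ is defined at $\bar x_1$, and for any $\lambda>0$ we have $\iota_\sharp(\mm_1+\lambda\delta_{\bar x_1})=\iota_\sharp\mm_1+\lambda\delta_{\iota(\bar x_1)}=\mm_2+\lambda\delta_{\bar x_2}$. Note $\supp(\mm_1+\lambda\delta_{\bar x_1})=\supp(\mm_1)$ because $\bar x_1$ already lies in $\supp(\mm_1)$, so $\iota$ is defined on the whole support of the new measure and is an m.m.-isomorphism between $(X_1,\sfd_1,\mm_1+\lambda\delta_{\bar x_1})$ and $(X_2,\sfd_2,\mm_2+\lambda\delta_{\bar x_2})$, for every $\lambda>0$ and in particular for any $\lambda>m$.

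\medskip

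\textbf{The hard direction.} Suppose $\jey:\supp(\mm_1+\lambda\delta_{\bar x_1})\to X_2$ is an m.m.-isomorphism for some $\lambda>m$, i.e.\ $\jey_\sharp(\mm_1+\lambda\delta_{\bar x_1})=\mm_2+\lambda\delta_{\bar x_2}$. As observed, its domain is exactly $\supp(\mm_1)$ (which contains $\bar x_1$), so $\jey$ is an isometric embedding of $\supp(\mm_1)$ into $X_2$; it only remains to show that $\jey_\sharp\mm_1=\mm_2$ and $\jey(\bar x_1)=\bar x_2$. The key point is to identify where the atom of mass $\lambda$ is carried. Pushing forward, $\mm_2+\lambda\delta_{\bar x_2}=\jey_\sharp\mm_1+\lambda\delta_{\jey(\bar x_1)}$. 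Now I claim $\jey(\bar x_1)=\bar x_2$: if not, then evaluating the measure $\jey_\sharp\mm_1+\lambda\delta_{\jey(\bar x_1)}$ at the singleton $\{\jey(\bar x_1)\}$ gives at least $\lambda$, whereas $\mm_2+\lambda\delta_{\bar x_2}$ evaluated at $\{\jey(\bar x_1)\}$ equals $\mm_2(\{\jey(\bar x_1)\})\le\mm_2(X_2)=m<\lambda$, a contradiction (here I use $\jey(\bar x_1)\neq\bar x_2$ so the $\lambda\delta_{\bar x_2}$ term does not contribute). Hence $\jey(\bar x_1)=\bar x_2$, and subtracting the common atom $\lambda\delta_{\bar x_2}$ from both sides of $\mm_2+\lambda\delta_{\bar x_2}=\jey_\sharp\mm_1+\lambda\delta_{\bar x_2}$ — which is legitimate since both are finite measures — yields $\jey_\sharp\mm_1=\mm_2$. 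Therefore $\jey$ is an isomorphism of \pmm~spaces.

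\medskip

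\textbf{Main obstacle.} The only delicate point is the cancellation of the Dirac masses, which requires the measures to be finite (so that ``subtracting $\lambda\delta_{\bar x_2}$'' makes sense as an identity of finite Borel measures) and requires the strict inequality $\lambda>m$ to force the atom of the pushed-forward measure to sit precisely at $\bar x_2$ rather than being absorbed into the ambient mass of $\mm_2$. Everything else is bookkeeping about supports and the elementary fact that pushing forward commutes with taking atoms. One should also note, for the ``if'' direction, that a priori $\jey$ is only known to be an isometric \emph{embedding} on the support and we must check its image is unconstrained — but that is automatic since m.m.-isomorphisms are by definition such embeddings with matching pushed-forward measures.
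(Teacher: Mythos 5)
Your proof is correct and follows essentially the same route as the paper: push forward, observe that the atom of mass $\lambda>m=\mm_2(X_2)$ can only be matched by the atom at $\bar x_2$, conclude $\jey(\bar x_1)=\bar x_2$, and cancel the Dirac masses. The paper states this more tersely ("Since $\mm_2(X_2)<\lambda$ the only possibility is that $y=\bar x_2$"), but the singleton-evaluation argument you give is exactly the justification behind that sentence.
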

\begin{proof}
  One implication is obvious; 
  in order to prove the converse one, let $\iota:
  \supp(\mm_1)\to X_2$ 
  (by \eqref{eq:pointed} 
  $\bar x_1\in \supp(\mm_1)$) be an isometry
  such that 
  $\iota_\sharp(\mm_1+\lambda \delta_{\bar x_1})=
  \mm_2+\lambda \delta_{\bar x_2}$ 
  and let $y=\iota(\bar x_1)\in \supp(\mm_2)$,
  $\nn=\iota_\sharp\mm_1$.
  Since $\iota_\sharp (\lambda \delta_{\bar x_1})=
  \lambda \delta_y$ we have
  \begin{displaymath}
    \nn+\lambda \delta_y=
    \mm_2+\lambda\delta_{\bar x_2}.
  \end{displaymath}
  Since $\mm_2(X_2)<\lambda$ 
  the only possibility is that $y=\bar x_2$ and therefore
  $\mm_2=\nn$.  
\end{proof}

\subsection{Equivalent notions of convergence of pointed metric measure spaces}\label{se:nozioni}
\subsubsection{A) Intrinsic approach: use of Gromov reconstruction theorem}

It is a remarkable result due to Gromov \cite{Gromov07}
that equivalence classes of normalized metric-measure spaces
can be characterized in terms of integral functionals
depending on the distances between points. In order to first state such result and then adapt it to the case of general \pmm~spaces we need to introduce some notation.

First of all we 
introduce the finite dimensional spaces of (pseudo)metrics
in $\R^{N\times N}$
\[
  \begin{aligned}
    \G^N:=\Big\{&\gg\in \R^{N\times N}:\gg=(g_{ij})_{1\le
      i,j< N}: \\&
  \  0=g_{ii}\le g_{ji}=g_{ij}\le g_{ih}+g_{hj}\quad
    \forevery i,j,h\in \{1,\cdots,N\}\Big\},
  \end{aligned}
\]
endowed with the canonical product distance and topology
inherited as convex subset of $\R^{N\times N}$;
we also consider the continuous map
\[
  \jey_X^N: X^{N}\to \G^N,\quad
  \jey_X^N:\xx=(x_i)_{1\le i\le N}\mapsto \gg=(g_{ij})_{1\le i,j
    \le N}:
  \quad g_{ij}:=\sfd(x_i,x_j),
\]
and the image measures
\begin{equation}
  \label{eq:92}
  \mm_\cX^N:=(\jey_X^N)_\sharp(\delta_{\bar x}\otimes \mm^{\otimes (N-1)})
  \in \Mloc{\G^N}.
\end{equation}
It is immediate to check that $\mm_\cX^N$ only depend
on the equivalence class $\cX=\pmmXclass$. 
Those measures naturally arise when one considers 
``cylindrical'' integral functionals of the type
\begin{equation}
  \label{eq:93}
  \dualspace\varphi\cX:=\int_{X^{N}}
  \varphi\big((\sfd(x_i,x_j))_{ij=1}^N\big)
  \,\d\delta_{\bar x}(x_1)\,\d\mm^{\otimes (N-1)}(x_2,\cdots,x_N)=
  \int_{\G^{N}}\varphi(\gg)\,\d \mm^{N}_\cX(\gg),
\end{equation}
for every $\varphi\in \Cc{\G^{N}}$. In fact, 
for $\cX_1,\cX_2\in \X$ by definition one has
\[
  \text{$\dualspace\varphi{\cX_1}=\dualspace\varphi{\cX_2}$
  for every $\varphi\in \Cc{\G^N}$, $N\in\N$
  \quad$\Leftrightarrow$\quad
  $\mm_{\cX_1}^N=\mm_{\cX_2}^N$ for every $N\in \N.$}
\]
\begin{remark}
  \label{rem:tedious-image}
  \upshape
  Notice that we could equivalently consider 
  the map 
  \begin{displaymath}
    \tilde j_\cX^N:X^{N-1}\to\G^N,\quad
    \xx=(x_i)_{2\le i\le N}\mapsto (g_{ij})_{1\le i,j\le N}:\quad
    g_{i,j}:=
    \begin{cases}
      \sfd(x_i,x_j)&\text{if }i,j\ge 2,\\
      \sfd(\bar x,x_j)&\text{if }i=1,\\
      \sfd(x_i,\bar x)&\text{if }j=1,
    \end{cases}
  \end{displaymath}
  and 
  $\mm_\cX^N=(\tilde j_\cX^N)_\sharp \mm^{\otimes (N-1)}$.
\fr\end{remark}
\begin{remark}  \label{rem:tedious-image2}{\rm
  When $\mm(X)<\infty$,
  \eqref{eq:93} makes sense for
  every $\varphi\in \rmC_b(\G^N)$ 
  and it is not difficult to check by `expanding' the product $(\lambda \delta_{\bar x}+\mm)^{\otimes N}$
  that the knowledge of 
  all the measures $(\mm_\cX^N)_{N\in\N}$ 
  also determines all the measures
  of the type
\[
    (\jey_\cX^N)_\sharp
    \Big(\big(\lambda \delta_{\bar x}+\mm\big)^{\otimes N}
    \Big)\in \M{\G^N}\quad
    \ N\in\N,
\]
for every $\lambda>0$.}\fr
\end{remark}
A similar construction is possible for non-pointed metric measure spaces: denoting by $\cX^*$ the equivalence class of $(X,\sfd,\mm)$ and given $n\in\N$ and $\varphi\in \Cc{\G^{N}}$, we define
\[
\varphi^*[\cX^*]:=\int_{X^N}\varphi\big((\sfd(x_i,x_j))_{ij=1}^N\big)
  \,\d\mm^{\otimes N}(x_1,\cdots,x_N)=
  \int_{\G^{N}}\varphi(\gg)\,\d (\jey_X^N)_\sharp\mm^{\otimes N}(\gg).
\]
Gromov reconstruction theorem for metric measure spaces can then be formulated as follows, see  \cite[3$\frac 12$.7]{Gromov07} for the proof:
\begin{theorem}[Gromov reconstruction for m.m.~spaces of finite mass]\label{thm:Greconbase}
Let $\cX_i^*:=[X_i,\sfd_i,\mm_i]$, $i=1,2$, be two metric measure spaces with finite mass. Assume that
\[
\varphi^*[\cX^*_1]=\varphi^*[\cX^*_2]\qquad\text{for every $N\in\N$ and $\varphi\in \Cc{\G^{N}}$.}
\]
Then $\cX_1^*=\cX_2^*$.
\end{theorem}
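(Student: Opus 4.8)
The plan is to prove this by the classical ``random sampling'' argument: encode each space $\cX_i^*$ in the law of the array of pairwise distances of an i.i.d.\ sample of its points, show that the hypothesis forces the two laws to coincide, and then recover the isomorphism class of the space from that common law.

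First I would normalise. Taking $N=1$, so that $\G^1$ is a single point, in the assumption $\varphi^*[\cX_1^*]=\varphi^*[\cX_2^*]$ gives $\mm_1(X_1)=\mm_2(X_2)=:m$, and after dividing by $m$ we may assume $\mm_1,\mm_2$ are probability measures. For each fixed $N$, the hypothesis says that the finite Borel measures $(\jey_{X_i}^N)_\sharp\mm_i^{\otimes N}$ on the Polish, $\sigma$-compact space $\G^N$ have equal integral against every $\varphi\in\Cc{\G^N}$; since such functions are measure determining on a $\sigma$-compact metric space, $(\jey_{X_1}^N)_\sharp\mm_1^{\otimes N}=(\jey_{X_2}^N)_\sharp\mm_2^{\otimes N}$ for all $N$. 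Introduce now the closed, hence Polish, subset $\G^\infty\subset\R^{\N\times\N}$ of infinite pseudo-metric arrays with the product topology, the continuous maps $\jey_{X_i}^\infty:X_i^{\N}\to\G^\infty$, $(x_a)_a\mapsto(\sfd_i(x_a,x_b))_{a,b\in\N}$, and the Borel probability measures $\nu_i:=(\jey_{X_i}^\infty)_\sharp\mm_i^{\otimes\N}\in\prob{\G^\infty}$. The identities above say precisely that $\nu_1$ and $\nu_2$ have the same finite-dimensional marginals, whence $\nu_1=\nu_2=:\nu$, because on a countable product the cylinder sets generate the Borel $\sigma$-algebra.

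Next I would build a reconstruction map. To $g\in\G^\infty$ associate the completion $(\hat X_g,\sfd_g)$ of the metric quotient of the pseudo-metric space $(\N,g)$, which carries the canonical dense sequence $\hat x_a:=[a]$, together with the measure $\hat\mm_g:=\lim_n\frac1n\sum_{a\le n}\delta_{\hat x_a}\in\prob{\hat X_g}$ whenever this weak limit exists. The claim is that for $\nu$-a.e.\ $g$ one has $[\hat X_g,\sfd_g,\hat\mm_g]=\cX_i^*$ for both $i=1,2$. To verify this for a fixed $i$, observe that for $\mm_i^{\otimes\N}$-a.e.\ sequence $(x_a)_a$ one has (a) $\{x_a:a\in\N\}$ dense in $\supp(\mm_i)$, since every ball centred at a point of the support has positive $\mm_i$-mass and $\supp(\mm_i)$ is separable, and (b) $\frac1n\sum_{a\le n}\delta_{x_a}\weakto\mm_i$, by the Varadarajan/Glivenko--Cantelli theorem for probability measures on Polish spaces. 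On this full-measure event, with $g:=\jey_{X_i}^\infty(x_\bullet)$, the assignment $\hat x_a\mapsto x_a$ is distance preserving by construction and hence extends to a surjective isometry $\Phi:\hat X_g\to\overline{\{x_a\}}=\supp(\mm_i)$; pushing the empirical measures forward by $\Phi$ shows that the limit defining $\hat\mm_g$ exists and that $\Phi_\sharp\hat\mm_g=\mm_i$, so $\Phi$ is an isomorphism of $(\hat X_g,\sfd_g,\hat\mm_g)$ onto $(\supp(\mm_i),\sfd_i,\mm_i)$, and the latter is isomorphic to $\cX_i^*$ by Remark~\ref{re:support}. Finally, the continuous image $D_i:=\jey_{X_i}^\infty(E_i)$ of the Borel full-measure event $E_i\subset X_i^{\N}$ is universally measurable with $\nu$-measure $1$; since $\nu_1=\nu_2$, the sets $D_1$ and $D_2$ meet, and for any $g_0\in D_1\cap D_2$ the single metric measure space $[\hat X_{g_0},\sfd_{g_0},\hat\mm_{g_0}]$ is isomorphic to $\cX_1^*$ and to $\cX_2^*$, whence $\cX_1^*=\cX_2^*$.

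The genuinely routine points are the measure-determining property of $\Cc{\G^N}$ on the Polish space $\G^N$, the coincidence of the Borel and cylinder $\sigma$-algebras on $\G^\infty$, and the almost sure density of the sample in $\supp(\mm_i)$. The hard part will be the reconstruction step: establishing the almost sure weak convergence of the empirical measures on a general Polish space (Varadarajan's theorem, which one proves via a fixed countable convergence-determining family of bounded Lipschitz functions together with the strong law of large numbers and tightness) and then checking carefully that $g\mapsto[\hat X_g,\sfd_g,\hat\mm_g]$ is well defined on a $\nu$-conull set and returns exactly the original isomorphism class; the remaining measurability subtleties in the last step are handled by the standard fact that continuous images of Borel subsets of Polish spaces are universally measurable.
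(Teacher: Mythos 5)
Your argument is correct: it is the standard Gromov--Vershik ``matrix distribution'' proof (normalise, pass to the law of the infinite random distance matrix of an i.i.d.\ sample, and reconstruct the space from a $\nu$-generic matrix via completion of the sampled pseudo-metric space together with Varadarajan's theorem on empirical measures), and all the delicate points you flag — the measure-determining property on $\G^N$, the Borel measurability of the full-measure event, and the universal measurability of its continuous image — are handled correctly. The paper offers no proof of this statement, deferring to Gromov [3\,$\tfrac12$.7], and the argument given there is essentially the one you have reproduced.
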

The compactness given by Proposition \ref{prop:isocomp} allows  to extend  the reconstruction theorem for general \pmm~spaces:
\begin{theorem}[Gromov reconstruction for p.m.m.~spaces]
  \label{thm:Grecon}
  Let $\cX_i=[X_i,\sfd_i,\mm_i,\bar x_i]$, $i=1,2$, be \pmm~spaces 
  such that 
  \[
  \dualspace\varphi{\cX_1}=\dualspace\varphi{\cX_2}\qquad\text{for every $N\in\N$ and $\varphi\in \Cc{\G^{N}}$.}
  \]
Then
  $\cX_1=\cX_2$.
\end{theorem}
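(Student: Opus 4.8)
The plan is to reduce the statement for \pmm~spaces to the known reconstruction theorem for metric measure spaces of finite mass (Theorem \ref{thm:Greconbase}), using Lemma \ref{le:pvsnonp} to encode the marked point as an atom. The obstruction is that our measures may have infinite mass, so we first cut off: fix a continuous non-increasing weight $\psi:[0,\infty)\to(0,\infty)$ with $\int\psi(\sfd_i(\cdot,\bar x_i))\,\d\mm_i<\infty$ for $i=1,2$ (such a $\psi$ exists by the construction alluded to in Proposition \ref{prop:isocomp}, since bounded sets have finite mass; if necessary take the minimum of the two weights produced for each space). Set $\tmm_i:=\psi(\sfd_i(\cdot,\bar x_i))\,\mm_i$, a finite measure, and $\tmm_i$-weighted \pmm~spaces $\tilde\cX_i:=[X_i,\sfd_i,\tmm_i,\bar x_i]$.

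The first key step is to show that the hypothesis $\dualspace\varphi{\cX_1}=\dualspace\varphi{\cX_2}$ for all $N$ and all $\varphi\in\Cc{\G^N}$ passes to the weighted spaces, i.e.\ $\dualspace\varphi{\tilde\cX_1}=\dualspace\varphi{\tilde\cX_2}$ for all such $\varphi$. This is because the weight $\psi(\sfd_i(\cdot,\bar x_i))$ applied to the coordinates $x_2,\dots,x_N$ depends only on the distances $\sfd_i(x_j,\bar x_i)=\sfd_i(x_j,x_1)$ appearing in the matrix $\gg=\jey_{X_i}^N(\xx)$; concretely, writing $\Psi(\gg):=\prod_{j=2}^N\psi(g_{1j})$, which is a bounded continuous function on $\G^N$, one has
\[
\dualspace{\varphi}{\tilde\cX_i}=\int_{\G^N}\varphi(\gg)\Psi(\gg)\,\d\mm_{\cX_i}^N(\gg)=\dualspace{(\varphi\Psi)}{\cX_i}.
\]
Since $\varphi\Psi\in\Cc{\G^N}$ whenever $\varphi\in\Cc{\G^N}$, the equality for the weighted spaces follows from the hypothesis applied to $\varphi\Psi$. (One may also phrase this via Remark \ref{rem:tedious-image} and the identity $\mm_{\cX_i}^N=(\tilde j^N_{\cX_i})_\sharp\mm_i^{\otimes(N-1)}$.)

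The second step is to pass from the pointed weighted spaces to genuine (non-pointed) metric measure spaces of finite mass by adding a large atom at the marked point, so that Theorem \ref{thm:Greconbase} applies. Let $m:=\tmm_1(X_1)=\tmm_2(X_2)$ — note these masses agree, being both $\dualspace{\varphi_0}{\tilde\cX_i}$ for a suitable $\varphi_0\equiv 1$ near the relevant region, or more carefully obtained by monotone approximation of the constant $1$ by functions in $\Cc{\G^1}$ — and fix $\lambda>m$. Consider the m.m.~spaces $(X_i,\sfd_i,\tmm_i+\lambda\delta_{\bar x_i})$. By Remark \ref{rem:tedious-image2}, since $\tmm_i(X_i)<\infty$, knowledge of all the pointed measures $(\mm_{\tilde\cX_i}^N)_{N}$ — equivalently all the integrals $\dualspace\varphi{\tilde\cX_i}$ over $\varphi\in\rmC_b(\G^N)$, which coincide for $i=1,2$ by the previous step plus a density argument extending from $\Cc{\G^N}$ to $\rmC_b(\G^N)$ using the finite mass — determines all the measures $(\jey^N_{\tilde\cX_i})_\sharp\big((\lambda\delta_{\bar x_i}+\tmm_i)^{\otimes N}\big)$; these are exactly the measures $(\jey_{X_i}^N)_\sharp(\tmm_i+\lambda\delta_{\bar x_i})^{\otimes N}$, i.e.\ the ones entering $\varphi^*$ for the non-pointed spaces $[X_i,\sfd_i,\tmm_i+\lambda\delta_{\bar x_i}]$. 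Hence $\varphi^*[X_1,\sfd_1,\tmm_1+\lambda\delta_{\bar x_1}]=\varphi^*[X_2,\sfd_2,\tmm_2+\lambda\delta_{\bar x_2}]$ for all $N$ and $\varphi\in\Cc{\G^N}$, and Theorem \ref{thm:Greconbase} yields that these two m.m.~spaces are isomorphic. By Lemma \ref{le:pvsnonp}, $\tilde\cX_1=\tilde\cX_2$ as \pmm~spaces, so there is an isomorphism $\iota:\supp(\tmm_1)\to X_2$ with $\iota_\sharp\tmm_1=\tmm_2$ and $\iota(\bar x_1)=\bar x_2$. Finally, since $\psi>0$ everywhere we have $\supp(\tmm_i)=\supp(\mm_i)$, and undoing the weight — $\d\mm_i=\psi(\sfd_i(\cdot,\bar x_i))^{-1}\d\tmm_i$, with $\iota$ preserving $\sfd(\cdot,\bar x)$ — gives $\iota_\sharp\mm_1=\mm_2$. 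Thus $\cX_1=\cX_2$.

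The main obstacle I anticipate is the bookkeeping in the second step: carefully justifying the extension of the equality $\dualspace\varphi{\tilde\cX_1}=\dualspace\varphi{\tilde\cX_2}$ from $\Cc{\G^N}$ to all $\varphi\in\rmC_b(\G^N)$ (needed to invoke Remark \ref{rem:tedious-image2}), and then correctly identifying the ``expanded'' measures $(\jey^N)_\sharp(\lambda\delta_{\bar x}+\tmm)^{\otimes N}$ with the objects $\varphi^*[\,\cdot\,]$ sees in the non-pointed reconstruction theorem — in particular matching the number of coordinates (the pointed construction uses $\delta_{\bar x}\otimes\mm^{\otimes(N-1)}$ on $X^N$, whereas $\varphi^*$ uses $\mm^{\otimes N}$). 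The verification that the masses $\tmm_i(X_i)$ coincide and the final unweighting are routine but must be stated; everything else is a direct application of the already-established Lemma \ref{le:pvsnonp}, Proposition \ref{prop:isocomp} (used to produce $\psi$), and Theorem \ref{thm:Greconbase}.
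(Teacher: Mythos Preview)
Your proof is correct, and it takes a genuinely different---and in fact more economical---route than the paper.

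Both arguments reduce to Theorem~\ref{thm:Greconbase} and Lemma~\ref{le:pvsnonp} by first multiplying $\mm_i$ by a radial factor depending only on $\sfd_i(\cdot,\bar x_i)$, exploiting that this factor is visible through the first row $g_{1j}$ of the distance matrix. The paper chooses a \emph{compactly supported} cutoff $\zeta(\sfd_i(\cdot,\bar x_i)/R)$, which has the pleasant side effect that the resulting measures $(\bar\zeta_{R,i}\mm_i)_{\cX_i}^N$ have bounded support in $\G^N$, so no passage from $\Cc{\G^N}$ to $\rmC_b(\G^N)$ is needed. The price is that one only obtains, for each $R>0$, an isomorphism $\iota_R$ of the truncated pointed spaces; the paper then invokes the compactness of isomorphisms (Proposition~\ref{prop:isocomp}) and a diagonalization over $R\to\infty$ to extract a single global isomorphism.

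You instead choose a \emph{strictly positive} weight $\psi$. This forces you to supply the one-line extension argument you flag (two finite measures on $\G^N$ agreeing on $\Cc{\G^N}$ agree everywhere; the total masses coincide by monotone approximation of $\psi(g_{12})$ in $\Cc{\G^2}$), but once $\tilde\cX_1=\tilde\cX_2$ as \pmm~spaces you are done: positivity of $\psi$ gives $\supp(\tmm_i)=\supp(\mm_i)$ and lets you divide the weight out in one stroke. Your argument thus dispenses entirely with Proposition~\ref{prop:isocomp} and the limiting procedure. The bookkeeping worries in your last paragraph are real but minor, and you have identified them correctly; in particular the index-matching in Remark~\ref{rem:tedious-image2} is exactly what is needed to feed $\varphi^*$ in Theorem~\ref{thm:Greconbase}.
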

\begin{proof}
  Let $\zeta:[0,\infty)\to[0,1]$ be a Lipschitz cut-off function identically 1 on $[0,1]$, positive on $[1,2)$ and identically 0 on $[2,\infty)$. Fix $R>0$ and let $\bar \zeta_{R,i}:X_i\to[0,\infty)$, $i=1,2$, be given by $\bar\zeta_{R,i}(x):=\zeta(\sfd_i(\cdot,\bar x_i)/R)$, and $Z_R:\G^N\to \R$
  defined by
  \begin{equation}
    \label{eq:98}
    Z_R(\gg):=\zeta\big(\frac{g_{12}}R\big)\,\zeta\big(\frac {g_{13}}R\big)\cdots
    \zeta\big(\frac{g_{1N}}R\big),
    \quad
    \gg=(g_{ij})_{1\le i,j\le
    N}\in \G^N.
  \end{equation}
Now observe that
   \begin{equation}
    (\bar \zeta_{R,i} \mm_i)_{\cX_i}^N:=
    (\jey_X^N)_\sharp\Big(\delta_{\bar x_i}\otimes (\bar \zeta_{R,i}
    \mm_i)^{\otimes N}\Big)= Z_R\cdot (\mm_i)_{\cX_i}^N,\qquad\forall N\in\N,
    \label{eq:97}
  \end{equation}
indeed, for every $\varphi\in \Cc{\G^N}$ we have
  \begin{align*}
    \int_{\G^N}\varphi(\gg)\,\d  (\bar \zeta_{R,i} \mm_i)_{\cX_i}^N
    &=
    \int_{X^{N+1}}\varphi\big((\sfd_i(x_j,x_k))_{j,k}\big)
     \prod_{\ell=1}^N\zeta\big(\tfrac{\sfd_i(\bar x_i
    ,x_\ell)}R\big)\,\d(\delta_{\bar x_i}\otimes
    \mm_i^{\otimes N}) 
    \notag \\&=
    \int_{X^{N+1}}\varphi\big((\sfd_i(x_j,x_k))_{j,k}\big)
    Z_R\big((\sfd_i(x_j,x_k))_{j,k} \big)\,\d(\delta_{\bar x_i}\otimes
    \mm_i^{\otimes N}) 
        \notag \\&=
    \int_{\G^N}\varphi(\gg)Z_R(\gg)\,\d  (\mm_i)_{\cX_i}^N.
  \end{align*}
From \eqref{eq:97}, the fact that $\bar \zeta_{R,i} \mm_i(X_i)<\infty$, $i=1,2$, and taking into account Remark \ref{rem:tedious-image2} we deduce that
\[
 (\jey_{\cX_1}^N)_\sharp    \Big(\big(\lambda \delta_{\bar x_1}+\bar \zeta_{R,1} \mm_1\big)^{\otimes N}    \Big)=
 (\jey_{\cX_2}^N)_\sharp    \Big(\big(\lambda \delta_{\bar x_2}+\bar \zeta_{R,2} \mm_2\big)^{\otimes N}    \Big),
    \]
for every $\lambda>0$, $N\in\N$ and $\varphi\in \Cc{\G^N}$. Thus we can apply  Gromov reconstruction Theorem \ref{thm:Greconbase}   for metric measure space of finite mass to
  obtain
  $[X_1,\sfd_1,\bar\zeta_{R,1}\mm_1+\lambda\delta_{\bar x_1}]=
  [X_2,\sfd_2,\bar\zeta_{R,2}\mm_2+\lambda\delta_{\bar x_2}]$ for every $\lambda>0$. 
  
  Using  Lemma \ref{le:pvsnonp}
  we deduce that there exists  
  an isometry 
  \[
  \iota_R:B_{2 R}(\bar x_1)\cap \supp(\mm_1)\to 
  B_{2 R}(\bar x_2)\cap \supp(\mm_2)\]
  such that
  \begin{equation}
\label{eq:ufficio1}
  \iota_R(\bar x_1)=\bar x_2,\qquad\text{ and }\qquad
  (\iota_R)_\# (\bar\zeta_{R,1}\mm_1)=\bar\zeta_{R,2}\mm_2.
\end{equation}
From the definition of $\bar\zeta_{R,i}$, $i=1,2$, and the fact that $\iota_R$ is an isometry we also have the consistency property
\begin{equation}
\label{eq:ufficio2}
(\iota_R)_\# (\bar\zeta_{R',1}\mm_1)=\bar\zeta_{R',2}\mm_2,\qquad\forall R'<R.
\end{equation}
Now let $R_n\uparrow\infty$ be a chosen sequence and $\iota_{R_n}$ the corresponding sequence of isomorphisms. For any given $k\in\N$, the validity of \eqref{eq:ufficio1} and \eqref{eq:ufficio2} ensures that the maps $\{\iota_{R_n}\}_{n\geq k}$ are isomorphisms of $(X_1,\sfd_1,\bar\zeta_{R_k,1}\mm_1,\bar x_1)$ and $(X_2,\sfd_2,\bar\zeta_{R_k,2}\mm_2,\bar x_2)$, hence applying Proposition \ref{prop:isocomp} we deduce that there exists a subsequence of $(R_n)$ whose corresponding isomorphisms pointwise converge to a limit isomorphism, call it $\tilde\iota_k$, of $(X_1,\sfd_1,\bar\zeta_{R_k,1}\mm_1,\bar x_1)$ and $(X_2,\sfd_2,\bar\zeta_{R_k,2}\mm_2,\bar x_2)$.

By a standard diagonalization argument we can ensure that  the limit map $\tilde\iota_k$ does not depend on $k$. Denote it as $\tilde\iota$ and notice that the construction and \eqref{eq:ufficio1} and \eqref{eq:ufficio2} grant that
\[
\tilde  \iota(\bar x_1)=\bar x_2,\qquad\text{ and }\qquad
  \tilde\iota_\# (\bar\zeta_{R,1}\mm_1)=\bar\zeta_{R,2}\mm_2,\qquad\forall R>0,
\]
which, by definition of $\bar\zeta_{R,i}$, $i=1,2$, ensures that $\tilde\iota$ is an isomorphism of $(X_1,\sfd_1,\mm_1,\bar x_1)$ and $(X_2,\sfd_2,\mm_2,\bar x_2)$.
\end{proof}

Theorem \ref{thm:Grecon} suggests the following notion of convergence:
\begin{definition}[Intrinsic notion of convergence of equivalence classes of p.m.m.s.]\label{def:intrconv}
Let \linebreak  $\cX_n=\pmmXcseq n \in \X$, $n\in\N$, and  $\cX_\infty=\pmmXcseq\infty\in \X$. 

We say that the sequence $n\mapsto\cX_n$ intrinsically converges to $\cX_\infty$ provided:
\[
\textrm{for any $N\in\N$ and $\varphi\in \Cc{\G^N}$ we have}\qquad
 \lim_{n\to\infty}\varphi[\cX_n]=\varphi[\cX_\infty].
 \]
 \end{definition}

\subsubsection{B) Extrinsic approach: embedding everything in a common space}
Recalling the definition of weak convergence of measures in $\Mloc X$ given in \eqref{eq:3}, where $(X,\sfd)$ is a complete and separable metric space, we can give the following definition of convergence:

\begin{definition}[Extrinsic notion of convergence of equivalence classes of p.m.m.s.]
  \label{def:Dconv}
  Let 
  $(X_n,\sfd_n,\mm_n,\bar{x}_n)$, $n\in\bar\N$, be
  a sequence of \pmm~spaces. 
   We say that the corresponding sequence of classes $n\mapsto \cX_n$ extrinsically converges to $\cX_\infty$ provided there exists 
   a complete and 
   separable metric space $(X ,\sfd)$
   and isometric embeddings $\iota_n:X_n\to
   X$, $n\in\bar\N$, such that
  \begin{equation}
    \label{eq:19bis}
    (\iota_n)_\sharp\mm_n\to(\iota_\infty)_\sharp\mm_\infty\text{  weakly in $\Mloc X $},\quad
   \iota_n(\bar x_n)\to \iota_\infty(\bar x_\infty)\in \supp((\iota_\infty)_\sharp\mm_\infty)
    \quad\text{in }X.
  \end{equation}
  The system $\big((X ,\sfd ,\iota_n)\big)_{n\in\bar\N}$ is called
  an \emph{effective realization} of the extrinsic convergence.
  \end{definition}
Notice that a priori it is not clear that this definition is well-posed, because the isometries $\iota_n$ are required to be defined on the whole spaces $X_n$ and not just on the supports of the measures $\mm_n$, so that a priori the convergence might depend on the chosen elements in the equivalence classes. In fact, this is not an issue thanks to the following simple result:
\begin{proposition}[Extension of isometric immersions]\label{le:extension}
Let $(X_n,\sfd_{X_n})$, $n\in\N$, and $(Y,\sfd_Y)$ be separable metric spaces, $F_n\subset X_n$ given subsets and $\iota_{F_n}:F_n\to Y$ isometric embeddings. Then there 
exists  a complete separable metric space $(Z,\sfd_Z)$ and isometric embeddings $\iota_Y:Y\to Z$, $\iota_{X_n}:X_n\to Z$, $n\in\N$, such that
\[
\iota_{X_n}\restr{F_n}=\iota_Y\circ\iota_{F_n},\qquad\forall n\in\N.
\]
\end{proposition}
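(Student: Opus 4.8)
The plan is to realize $Z$ as the amalgam of $Y$ with all the $X_n$'s, glued together along the partial identifications provided by the $\iota_{F_n}$. Two trivial reductions come first. If $Y=\emptyset$ then necessarily every $F_n=\emptyset$, there is no compatibility constraint to meet, and one simply takes $Z$ to be the disjoint union of the $X_n$'s with $\sfd_Z(x,x'):=\sfd_{X_n}(x,x')$ for $x,x'\in X_n$ and $\sfd_Z(x,x'):=1+\sfd_{X_n}(x,a_n)+\sfd_{X_m}(x',a_m)$ for $x\in X_n$, $x'\in X_m$ with $n\neq m$ (fixed basepoints $a_n\in X_n$), completed. If $Y\neq\emptyset$ but some $F_n=\emptyset$, replace $F_n$ by a singleton $\{a_n\}\subset X_n$ and $\iota_{F_n}$ by the map sending $a_n$ to an arbitrarily chosen point of $Y$: this only strengthens the conclusion. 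Hence we may assume $Y\neq\emptyset$ and $F_n\neq\emptyset$ for all $n$.

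Now set $W:=Y\sqcup\bigsqcup_{n\in\N}X_n$ and define $D:W\times W\to[0,\infty)$ to be $\sfd_Y$ on $Y\times Y$, $\sfd_{X_n}$ on $X_n\times X_n$, symmetric, and on the mixed terms
\[
D(y,x):=\inf_{f\in F_n}\bigl(\sfd_Y(y,\iota_{F_n}(f))+\sfd_{X_n}(f,x)\bigr),\qquad y\in Y,\ x\in X_n,
\]
\[
D(x,x'):=\inf_{\substack{f\in F_n\\ f'\in F_m}}\bigl(\sfd_{X_n}(x,f)+\sfd_Y(\iota_{F_n}(f),\iota_{F_m}(f'))+\sfd_{X_m}(f',x')\bigr),\qquad x\in X_n,\ x'\in X_m,\ n\neq m.
\]
All infima are over nonempty sets, so $D$ is finite-valued.

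The heart of the argument is to verify that $D$ is a pseudometric, i.e. that $D(p,q)+D(q,r)\ge D(p,r)$ for all $p,q,r\in W$. One checks this case by case according to which of the pieces $Y,X_1,X_2,\dots$ each of $p,q,r$ lies in. In every case the estimate follows by the same mechanism: pick near-optimal competitors in all the infima occurring in $D(p,q)$ and $D(q,r)$, then collapse consecutive summands living in a single $X_k$ using the triangle inequality in $X_k$, and collapse consecutive summands living in $Y$ using the triangle inequality in $Y$; the two kinds of segments are bridged by the identity $\sfd_{X_k}(f,f')=\sfd_Y(\iota_{F_k}(f),\iota_{F_k}(f'))$ for $f,f'\in F_k$, which holds precisely because $\iota_{F_k}$ is an isometry. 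This is the only step requiring (routine, if slightly tedious) work, and the point where the hypothesis that the $\iota_{F_n}$ are isometric embeddings — not merely $1$-Lipschitz maps — is used. The same computation shows that no path through the other pieces beats the given distance inside $Y$ or inside $X_n$ (so $D$ genuinely restricts to $\sfd_Y$ and to $\sfd_{X_n}$), and that $D(x,y)=\sfd_Y(\iota_{F_n}(x),y)$ whenever $x\in F_n$ and $y\in Y$; in particular $D(x,\iota_{F_n}(x))=0$.

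Finally, let $Z_0:=W/{\sim}$, where $p\sim q\iff D(p,q)=0$, be the metric identification of $(W,D)$ (so $\sfd_{Z_0}([p],[q])=D(p,q)$), and let $(Z,\sfd_Z)$ be its completion. Since $W$ is a countable union of separable spaces, $Z_0$ is separable, and hence so is $Z$. Define $\iota_Y:Y\to Z$ and $\iota_{X_n}:X_n\to Z$ as the obvious maps $Y\hookrightarrow W\to Z_0\hookrightarrow Z$ and $X_n\hookrightarrow W\to Z_0\hookrightarrow Z$. Because $D$ restricts to the original metrics on $Y$ and on each $X_n$, these maps are isometric embeddings; and because $D(x,\iota_{F_n}(x))=0$ for $x\in F_n$, we get $\iota_{X_n}(x)=\iota_Y(\iota_{F_n}(x))$ for every $x\in F_n$, i.e. $\iota_{X_n}\restr{F_n}=\iota_Y\circ\iota_{F_n}$. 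The main obstacle is the multi-case triangle-inequality check of the third paragraph; if one prefers to keep the bookkeeping minimal, $Z$ can instead be built iteratively — glue $X_1$ to $Y$ along $F_1$, then glue $X_2$ to the result along $F_2\subset Y$, and so on, finally taking the completion of the increasing union — so that only the two-space gluing ever has to be analysed.
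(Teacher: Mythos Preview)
Your proof is correct and uses the same metric-gluing idea as the paper. The only difference is organizational: you amalgamate $Y$ with all the $X_n$'s simultaneously (hence the multi-case triangle-inequality check), whereas the paper adjoins the $X_n$'s one at a time --- precisely the iterative alternative you mention in your final sentence --- so that only the two-space gluing ever needs to be analysed.
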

\begin{proof} Put $(Z_0,\sfd_{Z_0}):=(Y,\sfd_Y)$. Then define the set $Z_{1}:=Z_0\sqcup (X_{0}\setminus F_{0})$ and the distance $\sfd_{Z_{1}}$ on it by requiring $\sfd_{Z_1}$  to coincide with $\sfd_{Z_0}$ on $(Z_0)^2$, with $\sfd_{X_0}$ on $(X_{0}\setminus F_{0})^2$, defined as
\[
\sfd_{Z_{1}}(z',z''):=\inf_{x\in F_0} \sfd_{Z_0}(z',\iota_{F_0}(x))+\sfd_{X_0}(x,z'')
\]
on $Z_0\times(X_0\setminus F_0)$ and extended by symmetry on $(X_{0}\setminus F_{0})\times Z_0$. It is immediate to check that $(Z_1,\sfd_{Z_1})$ is a separable metric space, that the natural inclusion $\iota_{Z_0}^{Z_1}$ of $Z_0$ in $Z_1$ is an isometry and that the map $\iota_{X_0}^{Z_1}:X_0\to Z_1$ defined as
\[
\iota_{X_0}^{Z_1}(x):=\iota_{Z_0}^{Z_1}(\iota_{F_0}(x)),\quad\text{ if $x\in F_0$},\qquad\qquad\iota_{X_0}^{Z_1}(x):=x\quad\text{ if $x\in X_0\setminus F_0$},
\]
is an isometry as well for which the identity $\iota_{X_0}^{Z_1}\restr{F_0}=\iota_{Z_0}^{Z_1}\circ \iota_{F_0}$ holds.

Iterating the construction we produce a sequence of separable metric spaces $(Z_{n},\sfd_{Z_{n}})$, $n\in\N$, and isometric embeddings $\iota_{Z_{n-1}}^{Z_{n}}$, $\iota_{X_{n-1}}^{Z_{n}}$ of $Z_{n-1},X_{n-1}$ respectively in $Z_n$ satisfying
\[
\iota_{X_{n-1}}^{Z_{n}}\restr{F_{n-1}}=\iota_{Z_{n-1}}^{Z_{n}}\circ \cdots\circ \iota_{Z_0}^{Z_1}\circ \iota_{F_0}.
\]
Eventually defining  $Z:=\cup_{n\in\N}Z_n$, equipping it with the natural distance induced by the inclusions of the $Z_n$'s and taking, if necessary, its completion, we conclude the proof.
\end{proof}

\subsubsection{C) A variant of Sturm's distance $\D$}
In \cite{Sturm06I} Sturm worked in the class of m.m. spaces $(X,\sfd,\mm)$ \emph{normalized} and with \emph{finite variance}, these two meaning that $\mm\in\probt X$. For this sort of spaces he defined the distance $\D$ as
\[
\D\Big((X_1,\sfd_1,\mm_1),(X_2,\sfd_2,\mm_2)\Big):=\inf W_2\big((\iota_1)_\sharp\mm_1,(\iota_2)_\sharp\mm_2\big),
\]
the infimum being taken among all metric spaces $(X ,\sfd )$ 
and all isometric immersions $\iota_i:X_i\to X$, $i=1,2$.  In fact, in the aforementioned reference the isometries $\iota_1,\iota_2$ were required to be defined only on $\supp\mm_i$, $i=1,2$, but as for the case of the extrinsic notion of convergence, Proposition \ref{le:extension} shows that it is not restrictive to assume them to be defined on the whole $X_1$, $X_2$.

The idea to define a distance on pointed non-normalized spaces is the following.  We fix a 
\begin{equation}
\label{eq:proprpsi}
\text{non-increasing and absolutely continuous function $\psi:[0,\infty)\to(0,\infty)$ with $\lim_{r\to\infty}\psi(r)=0$}
\end{equation}
 and introduce the class $\X^\psi\subset \X$ of those (equivalence classes of) p.m.m.\ spaces $(X,\sfd,\mm,\bar x)$ such that
\[
\int (1+\sfd^2(\cdot,\bar x))\psi(\sfd(\cdot,\bar x))\,\d\mm<\infty.
\]
For $[X,\sfd,\mm,\bar x]\in\X^\psi$ we define the normalization constant $\z_\psi\in(0,\infty)$ and the renormalized measure $\mm_\psi\in\probt X$ as
\begin{equation}
\label{eq:normalizzare}
\z_\psi:=\int\psi(\sfd(\cdot,\bar x))\,\d\mm, \qquad \qquad \qquad\d\mm_\psi:=\frac1{\z_\psi}\psi(\sfd(\cdot,\bar x))\,\d\mm.
\end{equation}
Then we introduce the distance:
\begin{definition}[The distance $\D^\psi$ on $\X^\psi$]\label{def:sturmvar}
For  $\cX_i:=[X_i,\sfd_i,\mm_i,\bar x_i]$, $i=1,2$, with $\cX_1,\cX_2\in\X^\psi$ we define
\[
\D^\psi(\cX_1,\cX_2):=    \left|\log\frac{\z_{1,\psi}}{\z_{2,\psi}}\right|+
    \inf
    \Big(W_2\big((\iota_1)_\sharp\mm_{1,\psi},(\iota_2)_\sharp\mm_{2,\psi}\big)+
    \sfd \big(\iota_1(\bar x_1),\iota_2(\bar x_2)\big)\Big),
\]
the infimum being taken among all separable metric spaces $(X ,\sfd )$
and isometric embeddings $\iota_1:X_1\to X $ and $\iota_2:X_2\to
X $. 
\end{definition}
It is clear that the definition is well posed, in the sense that $\D^\psi(\cX_1,\cX_2)$ only depends on $\cX_1,\cX_2$ and not on the chosen elements in the equivalence class. Also, using the fact that $\D$ is a distance on the class of normalized spaces with finite variance (see \cite{Sturm06I}), it is immediate to verify that $\D^\psi$ is a distance on $\X^\psi$. We omit the simple proof.

\medskip

It is obvious that spaces in $\X^\psi$ have a controlled volume growth. In the next statement we show that also the converse implication holds, a fact which will turn useful later on. Notice that in the bound \eqref{eq:psifromphi} below the third power of the distance appears: this will help in the proof of Theorem \ref{thm:main_convergence} to gain 2-uniform integrability (see also Remark \ref{re:occhiopesi}).
\begin{proposition}\label{prop:weights}
Let $\phi:[0,\infty)\to[0,\infty)$ be a given continuous non-decreasing function and $[X,\sfd,\mm,\bar x]\in X$ be such that
\[
\mm(B_R(\bar x))\leq \phi(R),\qquad\forall R\geq 0.
\]
Define $\psi:[0,\infty)\to(0,\infty)$ as
\begin{equation}
\label{eq:daphiapsi}
\psi(r):=\int_r^{\infty}\frac1{(1+s^3)^2\phi(s)}\,\d s.
\end{equation}
Then $\psi$ is as in \eqref{eq:proprpsi}, $[X,\sfd,\mm,\bar x]\in X^\psi$ and
\begin{equation}
\label{eq:psifromphi}
\int (1+\sfd^3(\cdot,\bar x))\psi(\sfd(\cdot,\bar x))\,\d\mm\leq \int_0^\infty\frac1{1+r^3}\,\d r<\infty.
\end{equation}
\end{proposition}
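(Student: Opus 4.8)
The plan is to read off the properties required in \eqref{eq:proprpsi} directly from the elementary behaviour of the integrand in \eqref{eq:daphiapsi}, and then to obtain \eqref{eq:psifromphi} through a Fubini--Tonelli argument that turns the weighted moment of $\mm$ into an integral weighted by the volume bound $\mm(B_R(\bar x))\le\phi(R)$. First I would record a harmless normalisation: since $\mm\neq 0$ and $\bar x\in\supp(\mm)$ we have $\mm(B_R(\bar x))>0$, hence $\phi(R)\ge\mm(B_R(\bar x))>0$, for every $R>0$; after replacing $\phi$ by $\phi+\phi(1)$ if needed (this preserves continuity and monotonicity, still dominates $R\mapsto\mm(B_R(\bar x))$, and only decreases $\psi$), I may also assume $\phi(0)>0$. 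Then $g(s):=\big((1+s^3)^2\phi(s)\big)^{-1}$ is continuous, strictly positive, and bounded by $\phi(0)^{-1}(1+s^3)^{-2}$ on $[0,\infty)$, so $g\in L^1([0,\infty))$; from this it is immediate that $\psi(r)=\int_r^\infty g$ is finite and strictly positive for every $r\ge 0$, non-increasing, absolutely continuous (being the tail integral of a locally integrable function), and $\lim_{r\to\infty}\psi(r)=0$. Hence $\psi$ is as in \eqref{eq:proprpsi}.

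For the quantitative bound I would set $r(x):=\sfd(x,\bar x)$, rewrite the inner integral with an indicator,
\[
\int_X (1+r(x)^3)\,\psi(r(x))\,\d\mm(x)=\int_X\int_0^\infty \mathbf 1_{\{s\ge r(x)\}}\,\frac{1+r(x)^3}{(1+s^3)^2\,\phi(s)}\,\d s\,\d\mm(x),
\]
and exchange the order of integration (Tonelli, the integrand being nonnegative and jointly measurable) to get
\[
\int_X (1+r(x)^3)\,\psi(r(x))\,\d\mm(x)=\int_0^\infty \frac{1}{(1+s^3)^2\,\phi(s)}\Big(\int_{\{r(x)\le s\}}(1+r(x)^3)\,\d\mm(x)\Big)\,\d s.
\]
On $\{x:\ r(x)\le s\}$ one has $1+r(x)^3\le 1+s^3$, and since $\{x:\ r(x)\le s\}=\bigcap_{R>s}B_R(\bar x)$ and bounded sets have finite $\mm$-measure, continuity of $\mm$ from above together with continuity of $\phi$ gives $\mm(\{r(x)\le s\})=\lim_{R\downarrow s}\mm(B_R(\bar x))\le\phi(s)$. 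Hence the inner integral is at most $(1+s^3)\phi(s)$, the two factors $\phi(s)$ and one factor $(1+s^3)$ cancel, and what remains is $\int_0^\infty (1+s^3)^{-1}\,\d s<\infty$ — which is exactly \eqref{eq:psifromphi}, the last integral being finite because its integrand is $\le 1$ near $s=0$ and $\sim s^{-3}$ as $s\to\infty$. The membership $[X,\sfd,\mm,\bar x]\in\X^\psi$ then follows a fortiori since $1+r^2\le 2(1+r^3)$ for every $r\ge 0$.

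I do not expect a genuine obstacle here: the heart of the matter is just the layer-cake identity. The only points needing a little care are the reduction to $\phi(0)>0$ (so that $\psi$ really takes values in $(0,\infty)$ and is absolutely continuous up to the origin, not merely on $(0,\infty)$) and the bookkeeping in the Tonelli step — in particular obtaining the bound for the \emph{closed} condition, $\mm(\{r\le s\})\le\phi(s)$, which is where continuity of $\phi$ and the standing assumption $\mm\in\Mloc X$ are used. Both are routine.
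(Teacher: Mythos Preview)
Your argument is correct and is essentially the paper's own proof: the paper records the identity
\[
\int \psi(\sfd(x,\bar x))\, f(x)\,\d\mm(x)=-\int_0^\infty\psi'(r)\Big(\int_{B_r(\bar x)} f(x)\,\d\mm(x)\Big)\,\d r,
\]
which, since $-\psi'(s)=\big((1+s^3)^2\phi(s)\big)^{-1}$, is exactly your Tonelli computation; the subsequent bound $\int_{B_s}(1+\sfd^3)\,\d\mm\le(1+s^3)\phi(s)$ and the cancellation are the same. One small remark on your normalisation step: replacing $\phi$ by $\phi+\phi(1)$ decreases $\psi$, so the inequality \eqref{eq:psifromphi} for the modified weight does not formally imply it for the original one; however your Tonelli estimate goes through verbatim for the original $\psi$ (Tonelli needs no integrability of $g$), so the normalisation is only relevant to ensure $\psi(0)<\infty$, i.e.\ to make $\psi$ genuinely $(0,\infty)$-valued as in \eqref{eq:proprpsi} --- a point the paper leaves implicit.
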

\begin{proof} It is obvious that formula \eqref{eq:daphiapsi} defines a  non-increasing and absolutely continuous map  such that $\lim_{r\to\infty}\psi(r)=0$. Conclude using the identity 
\[
\int \psi(\sfd(x,\bar x)) f(x)\,\d\mm(x)=-\int_0^\infty\psi'(r)\left(\int_{B_r(\bar x)} f(x)\,\d\mm(x)\right)\,\d r,
\]
valid for any such $\psi$ and  every continuous  $f:X\to[0,\infty)$.
\end{proof}

\subsubsection{D) A metric independent of chosen weights: the $\pGW $ distance}
One drawback of the distances $\D^\psi$ is that they depend on the chosen weight function $\psi$. In order to build a `universal' distance we can use the distance $W_c$ (recall the definition \eqref{eq:86}) in place of $W_2$ and proceed as follows.

For two normalized metric measure  spaces $\cX_i^*:=[X_i,\sfd_i,\mm_i]$, $i=1,2$, we define
\[
\GW(\cX^*_1,\cX^*_2):=\inf W_c((\iota_1)_\sharp\mm_1,(\iota_2)_\sharp\mm_2),
\]
where here and below the $\inf$ being taken, as usual, among all complete and separable spaces $(X,\sfd)$ and isometric immersions $\iota_i:X_i\to X$, $i=1,2$.

In case $\cX^*_1,\cX^*_2$ have finite mass, the natural variant of $\GW$ is given by
\[
\GW^{fm}(\cX^*_1,\cX^*_2):=\Big|\log\frac{\mm_1(X_1)}{\mm_2(X_2)}\Big|+\inf W_c((\iota_1)_\sharp\tilde\mm_1,(\iota_2)_\sharp\tilde\mm_2),
\]
where the measures $\tilde\mm_i:=\frac{\mm_i}{\mm_i(X_i)}$ are the normalizations of $\mm_i$, $i=1,2$. 

For pointed metric measure spaces of finite mass $\cX_i:=[X_i,\sfd_i,\mm_i,\bar x_i]$, $i=1,2$,  we can then consider the distance
\[
\pGW^{fm}(\cX_1,\cX_2):=\Big|\log\frac{\mm_1(X_1)}{\mm_2(X_2)}\Big|+\inf \sfd(\iota_1(\bar x_1),\iota_2(\bar x_2))+W_c((\iota_1)_\sharp\tilde\mm_1,(\iota_2)_\sharp\tilde\mm_2).
\]
The fact that all these are  separable distances  on the corresponding classes can be seen arguing exactly as in \cite{Sturm06I,Greven09}, we omit the details.

For the general case of \pmm~spaces of possibly infinite mass we adopt a cut-off argument:  fix once and for all a function 
\begin{equation}
\label{eq:zeta}
\text{$\zeta:[0,\infty)\to[0,1]$ Lipschitz, \quad with $\zeta\equiv 1$ on $[0,1]$ and $\zeta\equiv 0$ on $[2,\infty)$.}
\end{equation}
Then for given $(X,\sfd,\mm,\bar x)\in\X$ and  $k\in\Z$ we consider the rescaled measures $\mm_{[k]}$ on $X$ defined as
\begin{equation}
\label{eq:mk}
\d\mm_{[k]}(x):=\zeta(\sfd(x,\bar x)2^{-k})\d\mm(x),
\end{equation}
and the corresponding isomorphism class 
\begin{equation}
\label{eq:mk2}
\cX_{[k]}:=[X,\sfd,\mm_{[k]},\bar x]. 
\end{equation}
Then we define:
\begin{definition}[The distance $\pGW$]\label{def:pgw}
\[
\pGW(\cX',\cX''):=\sum_{k\in\Z}\frac1{2^{|k|}}\min\Big\{1,\pGW^{fm}(\cX'_{[k]},\cX''_{[k]})\Big\}.
\]
\end{definition}
Knowing that $\pGW^{fm}$ is a distance, it is obvious that $\pGW$ is a distance on $\X$ and that $\pGW(\cX_1,\cX_2)\to 0$ if and only if $\pGW^{fm}(\cX_{1,[k]},\cX_{2,[k]})\to 0$ for every $k\in\Z$.

Notice that we are letting $k$ vary on $\Z$ rather than on $\N$: we did so to get completeness of $\pGW$, see Theorem \ref{thm:completeness}.

\subsection{Proof of the equivalence of the various notions of convergence}

Aim of this section is to prove that the various notions of convergence previously introduced coincide. As already said in the introduction, such equivalence is already known under particular circumstances, like compactness or normalization of the spaces and our scope is to generalize such previously known facts. In doing so, we shall use the fact that, shortly said, `intrinsic convergence implies  convergence w.r.t. the distance $\GW$ for normalized spaces' as proved in Proposition 10.5, Proposition 10.1, and Theorem 5 of \cite{Greven09}. The rigorous statement is the following:
\begin{theorem}\label{thm:greven}
Let $\cX^*_n:=(X_n,\sfd_n,\mm_n)$, $n\in\bar\N$, be normalized metric measure spaces. Assume that 
\[
\lim_{n\to\infty}\varphi^*[\cX_n^*]=\varphi^*[\cX_\infty^*],
\]
for every $N\in\N$ and $\varphi\in \Cc{\G^{N}}$. Then $\lim_{n\to\infty}\GW(\cX^*_n,\cX^*_\infty)= 0$.
\end{theorem}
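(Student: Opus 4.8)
The plan is to reduce the statement to the theory of Gromov--weak convergence of normalized metric measure spaces developed by Greven--Pfaffelhuber--Winter in \cite{Greven09}. The hypothesis says precisely that the ``distance matrix distributions'' of the $\cX_n^*$ converge weakly --- that is, $\cX_n^*\to\cX_\infty^*$ in the Gromov--weak topology --- and \cite{Greven09} then produces a single metric space into which all the $\cX_n^*$ embed isometrically and along which the pushed measures converge weakly; from such a realization $\GW(\cX_n^*,\cX_\infty^*)\to0$ is immediate.

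In detail, first I would reformulate the hypothesis. For a normalized space $\cX^*=[X,\sfd,\mm]$ set $\nu_{\cX^*}:=(\jey_X^\infty)_\sharp\mm^{\otimes\N}\in\Prob(\G^\infty)$, the law of the infinite matrix of mutual distances of an i.i.d.\ $\mm$-sample, where $\G^\infty$ is the Polish space of infinite pseudo-metric matrices with the product topology; the numbers $\varphi^*[\cX^*]$ are exactly the integrals of the cylinder functions $\varphi$ against $\nu_{\cX^*}$. Each $\G^N$ is locally compact (it is closed in $\R^{N\times N}$) and the $\G^N$-marginals of the $\nu_{\cX_n^*}$ are probability measures; hence the assumed convergence $\varphi^*[\cX_n^*]\to\varphi^*[\cX_\infty^*]$ for all $\varphi\in\Cc{\G^N}$ upgrades to \emph{weak} convergence of these marginals, because a vague limit of probability measures which is itself a probability measure on a locally compact space is a weak limit, and no mass escapes to infinity precisely because of the normalization. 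Each marginal family is then tight, so $\{\nu_{\cX_n^*}\}$ is tight in $\Prob(\G^\infty)$ and $\nu_{\cX_n^*}\to\nu_{\cX_\infty^*}$ weakly; this is Gromov--weak convergence in the sense of \cite{Greven09}, whose ``polynomials'' are built from $\mm^{\otimes\N}$ just as $\varphi^*[\cdot]$ is.

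Next I would invoke the structural results of \cite{Greven09} (Propositions 10.1 and 10.5 and Theorem 5): Gromov--weak convergence of normalized spaces implies the existence of a complete separable space $(Z,\sfd_Z)$ and isometric embeddings $\iota_n:\supp(\mm_n)\to Z$, $n\in\bar\N$, with $(\iota_n)_\sharp\mm_n\to(\iota_\infty)_\sharp\mm_\infty$ weakly. (Their proof proceeds by sampling, Skorokhod coupling of the distance matrices, simultaneous isometric embedding of the resulting countable configurations into $Z$, and a law of large numbers identifying the limits of the empirical measures.) Given such a realization, $\sfc=c\circ\sfd_Z$ is a bounded metric and, by \eqref{eq:topwc}, $\Wc$ metrizes weak convergence on $\Prob(Z)$, so $\Wc((\iota_n)_\sharp\mm_n,(\iota_\infty)_\sharp\mm_\infty)\to0$; taking the infimum over all realizations gives $\GW(\cX_n^*,\cX_\infty^*)\to0$.

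The real obstacle is the passage from the intrinsic data --- weak convergence of the distance matrix distributions --- to an actual common metric realization with weakly converging pushforwards; this is exactly where the sampling/coupling machinery of \cite{Greven09} is needed, together with the bookkeeping that makes the embeddings mutually compatible and the law-of-large-numbers argument in $Z$. The reduction in the second paragraph is routine but does use \emph{normalization} in an essential way (for probability measures a vague limit of pairwise-distance laws cannot lose mass), which is why the theorem is stated only for normalized spaces --- the non-normalized case being recovered later through the cut-off construction behind $\pGW$.
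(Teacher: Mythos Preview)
Your proposal is correct and matches the paper's treatment exactly: the paper does not give its own proof of this theorem but simply cites it as the combination of Proposition~10.1, Proposition~10.5, and Theorem~5 of \cite{Greven09}, which is precisely what you invoke. Your additional explanation of how the hypothesis translates into Gromov--weak convergence of distance-matrix distributions and how the common realization is obtained is a faithful unpacking of those cited results, not a departure from them.
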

We can now turn to the main result of this section:
\begin{theorem}
  \label{thm:main_convergence}
  Let $\cX_n=[X_n,\sfd_n,\mm_n,\bar x_n]$, $n\in  \bar\N$.  Then the following are equivalent:
  \begin{enumerate}[\rm (A)]
\item The sequence $n\mapsto\cX_n$ intrinsically converges to $\cX_\infty$ in the sense of Definition \ref{def:intrconv}.
\item The sequence $n\mapsto\cX_n$ extrinsically converges to $\cX_\infty$ in the sense of Definition \ref{def:Dconv}.
\item There exists $\psi:[0,\infty)\to[0,\infty)$ nonincreasing and absolutely continuous such that $\cX_n\in \X^\psi$ for every $n\in\bar\N$ and  $\lim_{n\to\infty}\Dmeas\psi(\cX_n,\cX_\infty)= 0$.
\item $\lim_{n\to\infty}\pGW(\cX_n,\cX_\infty)=0$.
\end{enumerate}
\end{theorem}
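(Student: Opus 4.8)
The plan is to prove the chain of implications $(\mathrm{D})\Rightarrow(\mathrm{C})\Rightarrow(\mathrm{B})\Rightarrow(\mathrm{A})\Rightarrow(\mathrm{D})$, or some convenient permutation; I expect the natural route to be $(\mathrm{B})\Rightarrow(\mathrm{A})$ almost for free, $(\mathrm{A})\Rightarrow(\mathrm{C})$ as the substantial analytic step built on Theorem \ref{thm:greven}, $(\mathrm{C})\Rightarrow(\mathrm{B})$ by a gluing/embedding argument, and $(\mathrm{B})\Rightarrow(\mathrm{D})$ (or $(\mathrm{C})\Rightarrow(\mathrm{D})$) by checking the cut-off finite-mass pieces converge. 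First I would observe that $(\mathrm{B})\Rightarrow(\mathrm{A})$ is immediate: given an effective realization $((X,\sfd,\iota_n))$, for fixed $N$ and $\varphi\in\Cc{\G^N}$ the integrand $\gg\mapsto\varphi(\gg)$ pulled back to $X^N$ via $\sfd$ is bounded continuous with bounded support, so weak convergence $(\iota_n)_\sharp\mm_n\to(\iota_\infty)_\sharp\mm_\infty$ in $\Mloc X$ together with $\iota_n(\bar x_n)\to\iota_\infty(\bar x_\infty)$ and Lemma \ref{le:lemmino} (applied on $X^N$ to the product measures $\delta_{\iota_n(\bar x_n)}\otimes((\iota_n)_\sharp\mm_n)^{\otimes(N-1)}$, suitably localized by the bounded support of $\varphi$) gives $\varphi[\cX_n]\to\varphi[\cX_\infty]$. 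Similarly $(\mathrm{C})\Rightarrow(\mathrm{B})$ and $(\mathrm{D})\Rightarrow$ something: from $\D^\psi$-convergence one extracts, for each $n$, a space and embeddings $\iota_n^{(1)},\iota_n^{(2)}$ realizing the infimum up to $1/n$; one then glues all these countably many spaces into a single common $(X,\sfd)$ using Proposition \ref{le:extension} (iterated), so that $W_2((\iota_n)_\sharp\mm_{n,\psi},(\iota_\infty)_\sharp\mm_{\infty,\psi})\to0$ and $\sfd(\iota_n(\bar x_n),\iota_\infty(\bar x_\infty))\to0$; since $\z_{n,\psi}\to\z_{\infty,\psi}$ and $\psi>0$ is continuous, $W_2$-convergence of the normalized weighted measures upgrades to weak $\Mloc X$-convergence of $\mm_n=\z_{n,\psi}\psi(\sfd(\cdot,\bar x_n))^{-1}\mm_{n,\psi}$ by testing against $\Cc X$ functions (multiply the test function by the bounded continuous $\psi^{-1}$ on a bounded set), giving $(\mathrm{B})$.

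The heart of the argument is $(\mathrm{A})\Rightarrow(\mathrm{C})$. Here I would first use Definition \ref{def:intrconv} with the specific choice of $\varphi$'s of the form $\varphi(\gg)=\prod_{j=2}^N\chi(g_{1j})$ (products of cut-offs in the ``distance-to-basepoint'' coordinates) to deduce convergence of the volume-growth functionals $\mm_n(B_R(\bar x_n))$; more precisely I would show $\limsup_n \mm_n(B_R(\bar x_n))\le\phi(R)$ for a fixed continuous nondecreasing $\phi$ (uniform in $n$), exploiting that the $N=2$ cylindrical functionals control $\int\chi(\sfd(\cdot,\bar x_n)/R)\,\d\mm_n$ and hence, letting $\chi\uparrow\mathbbm 1_{[0,1]}$, the ball masses. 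Then I invoke Proposition \ref{prop:weights} with this $\phi$ to produce a single nonincreasing absolutely continuous $\psi$ with $\cX_n\in\X^\psi$ for all $n\in\bar\N$ and with the crucial \emph{cubic} integrability bound \eqref{eq:psifromphi} holding uniformly in $n$. Next, for each $k\in\Z$ I pass to the weighted finite-mass spaces: intrinsic convergence of $\cX_n$ should transfer to intrinsic convergence of the normalized weighted spaces $\cX_n^\psi:=[X_n,\sfd_n,\mm_{n,\psi}]$ — this is exactly the content of identity \eqref{eq:97}-type manipulations (the cylindrical functionals of $\mm_{n,\psi}$ are obtained from those of $\mm_n$ by inserting the continuous bounded weight $\prod\psi(g_{1j})/\z_{n,\psi}$, so Definition \ref{def:intrconv} for $\cX_n$ plus $\z_{n,\psi}\to\z_{\infty,\psi}$ yields it for $\cX_n^\psi$; one must be a little careful to approximate the unbounded-in-$N$ weight, but dominated convergence using the volume bound handles it). Now Theorem \ref{thm:greven} applies to the normalized spaces $\cX_n^\psi$ and gives $\GW(\cX_n^\psi,\cX_\infty^\psi)\to0$, i.e. convergence in the $W_c$-sense after a common embedding. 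The final and most delicate point is to upgrade $W_c$-convergence of $\mm_{n,\psi}$ to $W_2$-convergence: having embedded everything in a common $(X,\sfd)$ with $\mm_{n,\psi}$ narrowly convergent (which $W_c$-convergence gives, by \eqref{eq:topwc}), I must check $2$-uniform integrability \eqref{eq:2} of $\{\mm_{n,\psi}\}$; this is precisely where the cubic bound \eqref{eq:psifromphi} is used — $\int_{B_R^c(\bar x_n)}\sfd^2(\cdot,\bar x_n)\,\d\mm_{n,\psi}\le \z_{n,\psi}^{-1}\sup_{r\ge R}\frac{1+r^2}{1+r^3}\int(1+\sfd^3(\cdot,\bar x_n))\psi(\sfd(\cdot,\bar x_n))\,\d\mm_n\to0$ as $R\to\infty$ uniformly in $n$. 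Then Proposition \ref{prop:narw2} gives $W_2(\mm_{n,\psi},\mm_{\infty,\psi})\to0$, and together with $\z_{n,\psi}\to\z_{\infty,\psi}$ and basepoint convergence (extracted from the same realization, using that $\bar x_n$ is in the support and the cylindrical $N=2$ functionals pin down $\sfd(\cdot,\bar x_n)$-statistics) we conclude $\D^\psi(\cX_n,\cX_\infty)\to0$, which is $(\mathrm{C})$.

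Finally, to close the circle I would prove $(\mathrm{B})\Rightarrow(\mathrm{D})$ (which with $(\mathrm{C})\Rightarrow(\mathrm{B})$ already done gives all four equivalent once $(\mathrm{A})\Rightarrow(\mathrm{C})$ is in hand — note $(\mathrm{C})$ is an existential statement so $(\mathrm{A})\Rightarrow(\mathrm{C})$ is what is needed, not $(\mathrm{C})\Rightarrow(\mathrm{A})$). Given an effective realization of the extrinsic convergence, for each fixed $k\in\Z$ the measures $\mm_{n,[k]}=\zeta(\sfd(\cdot,\bar x_n)2^{-k})\mm_n$ (pushed into $X$) converge weakly in $\Mloc X$ — indeed weakly in $\M X$, since they are all supported in the fixed bounded set $B_{2^{k+1}}(\iota_\infty(\bar x_\infty))$ eventually (using basepoint convergence) — because $\zeta(\sfd(\cdot,\bar x_n)2^{-k})\to\zeta(\sfd(\cdot,\bar x_\infty)2^{-k})$ uniformly on bounded sets and $\mm_n\weakto\mm_\infty$ weakly in $\Mloc X$; hence total masses converge, normalizations converge narrowly, and $W_c$ of the normalized pieces tends to $0$ by \eqref{eq:topwc}, giving $\pGW^{fm}(\cX_{n,[k]},\cX_{\infty,[k]})\to0$ for each $k$ and therefore, by dominated convergence over $k\in\Z$ in Definition \ref{def:pgw}, $\pGW(\cX_n,\cX_\infty)\to0$. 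The reverse implication $(\mathrm{D})\Rightarrow(\mathrm{B})$ is analogous to $(\mathrm{C})\Rightarrow(\mathrm{B})$: from $\pGW^{fm}(\cX_{n,[k]},\cX_{\infty,[k]})\to0$ for every $k$ one glues realizations (Proposition \ref{le:extension}) to get a common space in which each cut-off piece converges narrowly with converging masses, and since $\mm_n=\lim_{k\to\infty}\mm_{n,[k]}$ monotonically with the convergence uniform over the tested bounded set for $k$ large, one recovers weak $\Mloc X$-convergence of $\mm_n$; a diagonal argument over $k$ is needed here. The main obstacle I anticipate is the $W_c\to W_2$ upgrade inside $(\mathrm{A})\Rightarrow(\mathrm{C})$ — i.e. arranging enough moment control, uniform in $n$, which is exactly why the statement of Proposition \ref{prop:weights} was engineered to carry a cubic (rather than merely quadratic) weight — and, secondarily, the bookkeeping in the repeated gluing constructions to keep everything inside one separable space while preserving all the relevant convergences.
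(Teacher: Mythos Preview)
Your outline is broadly sound — $(\mathrm B)\Rightarrow(\mathrm A)$, $(\mathrm C)\Rightarrow(\mathrm B)$, and $(\mathrm B)\Rightarrow(\mathrm D)$ are correct and essentially coincide with the paper's arguments — but there is a genuine gap in your $(\mathrm A)\Rightarrow(\mathrm C)$, and your $(\mathrm D)\Rightarrow(\mathrm B)$ is more delicate than you allow.

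\textbf{The gap: basepoints.} In $(\mathrm A)\Rightarrow(\mathrm C)$ you apply Theorem~\ref{thm:greven} to the \emph{non-pointed} weighted spaces $[X_n,\sfd_n,\mm_{n,\psi}]$, obtaining a common embedding in which the $\mm_{n,\psi}$ converge narrowly. But Theorem~\ref{thm:greven} gives you no control on where the basepoints $\bar x_n$ land in that embedding: if $X_\infty$ has a nontrivial automorphism preserving $\mm_{\infty,\psi}$ but moving $\bar x_\infty$, the realization may place the $\iota_n(\bar x_n)$ near different images, and your sentence ``basepoint convergence extracted from the same realization, using that $\bar x_n$ is in the support and the cylindrical $N=2$ functionals pin down $\sfd(\cdot,\bar x_n)$-statistics'' does not close this. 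There is a second, related issue: to feed Theorem~\ref{thm:greven} you need the \emph{non-pointed} cylindrical functionals $\varphi^*[\cX_{n,\psi}^*]=\int\varphi\,\d\mm_{n,\psi}^{\otimes N}$, which involve a weight $\psi(\sfd(x_1,\bar x_n))$ on the \emph{first} variable too; the identity~\eqref{eq:97} only handles the variables $x_2,\ldots,x_N$. The paper solves both problems simultaneously in its step $(\mathrm A)\Rightarrow(\mathrm D)$: it works with the finite-mass cut-offs $\mm_{n,[k]}$, uses Remark~\ref{rem:tedious-image2} to pass from pointed to non-pointed cylindrical functionals of $\mm_{n,[k]}+\lambda\delta_{\bar x_n}$ with $\lambda>\sup_n\mm_{n,[k]}(X_n)$, applies Theorem~\ref{thm:greven} to \emph{those}, and then reads off $\iota_n(\bar x_n)\to\iota_\infty(\bar x_\infty)$ from the fact that the only atom of mass $>\lambda$ in the limit must come from the Dirac (the $\nchi_r$ argument at the end of the paper's $(\mathrm A)\Rightarrow(\mathrm D)$). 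This is exactly the convergence-level analogue of Lemma~\ref{le:pvsnonp}, and it is not optional.

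\textbf{The route through $(\mathrm D)$.} Your $(\mathrm D)\Rightarrow(\mathrm B)$ by ``gluing over $k$ and diagonalizing'' is harder than you suggest, because for different $k$ the realizations of $\pGW^{fm}$-convergence may embed each $X_n$ \emph{differently}, so amalgamating them into one separable space while keeping all cut-off measures simultaneously convergent is nontrivial. The paper sidesteps this with $(\mathrm D)\Rightarrow(\mathrm C)$: from $(\mathrm D)$ it first extracts a uniform volume bound and builds $\psi$ via Proposition~\ref{prop:weights} (as you do); then, for each \emph{fixed} $k$, it takes one common embedding realizing $\pGW^{fm}(\cX_{n,[k]},\cX_{\infty,[k]})\to0$ and, \emph{inside that single embedding}, constructs an explicit transport plan $\ggamma_{n,k}$ giving $W_2(\mm_{n,\psi},\tilde\mm_{n,[k]})\le (\mathsf c/2^k)^{1/2}$ uniformly in $n$ (this is where the cubic bound \eqref{eq:psifromphi} is used, via the tail estimates \eqref{eq:lezione}). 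Letting first $n\to\infty$ and then $k\to\infty$ yields $\D^\psi(\cX_n,\cX_\infty)\to0$ without ever needing one space good for all $k$ at once. Your $W_c\to W_2$ upgrade idea is the right instinct, but the paper's transport-plan construction is how it is actually executed.
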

\begin{proof}  $\ $\\
\noindent  \textbf{(C) $\Rightarrow$ (B)}  According to \eqref{eq:normalizzare} for every $n\in\bar\N$ we define 
\[
\z_{n,\psi}:=\int\psi(\sfd_n(\cdot,\bar x_n))\,\d\mm_n,\qquad\qquad\mm_{n,\psi}:=\frac1{\z_{n,\psi}}\psi(\sfd_n(\cdot,\bar x_n))\mm_n.
\]
By definition of $\Dmeas\psi$ we know that for every $n\in\N$ there exists a complete separable metric space  $(Y_n,\sfd_{Y_n})$ and isometric embeddings $\iota_n,\iota^\infty_{n}$ of $(X_n,\sfd_n)$, $(X_\infty,\sfd_\infty)$ respectively in $Y_n$ such that
\begin{equation}
\label{eq:divano}
\left|\log\frac{\z_{n,\psi}}{\z_{\infty,\psi}}\right|+\sfd_{Y_n}(\iota_n(\bar x_n),\iota^\infty_n(\bar x_\infty))+ W_2((\iota_n)_\sharp\mm_{n,\psi},(\iota^\infty_n)_\sharp\mm_{\infty,\psi})\leq \Dmeas\psi(\cX_n,\cX_\infty)+\frac1n.
\end{equation}
Now define $Y:=\sqcup_{n\in\bar\N}X_n$ and a pseudodistance $\sfd_Y$ on $Y$ by declaring that
\[
\begin{split}
&\sfd_Y(y,y'):=\\
&\left\{
\begin{array}{ll}
\sfd_n(y,y'),&\quad\textrm{ if $y,y'\in X_n$ for some $n\in\bar\N$},\\
\sfd_{Y_n}(\iota_n(y),\iota^\infty_n(y')),&\quad\textrm{ if $y\in X_n$ for some $n\in\N$, $y'\in X_\infty$},\\
\sfd_{Y_n}(\iota^\infty_n(y),\iota_n(y')),&\quad\textrm{ if $y\in X_\infty$, $y'\in X_n$ for some $n\in\N$},\\
\inf\limits_{x\in X_\infty}\sfd_{Y_n}(\iota^\infty_n(y),\iota^\infty_n(x))+\sfd_{Y_m}(\iota^\infty_m(y),\iota^\infty_m(x)),&\quad\textrm{ if $y\in X_n$, $y'\in X_m$ for some $n,m\in\N$}.
\end{array}
\right.
\end{split}
\]
Next we  identify points  $x,y\in Y$ such that $\sfd_Y(x,y)=0$: it is readily verified that the resulting space, which we shall still denote by $(Y,\sfd_Y)$ is separable and up to passing to the completion we can also assume that $Y$ is complete. By construction, the set  $\iota_n(X_n)\cup\iota^\infty_n(X_\infty)\subset Y_n$ endowed with the distance $\sfd_{Y_n}$ is canonically  isometrically embedded in $(Y,\sfd_Y)$ so that there are canonical isometric embeddings $\iota'_n$ of $X_n$ into $Y$, $n\in\bar \N$. We claim that $(Y,\sfd_Y)$ and the $\iota_n'$'s provide an effective realization of the extrinsic convergence. The fact that $\sfd_Y(\iota'_n(\bar x_n),\iota'_\infty(\bar x_\infty))\to 0$ is obvious by \eqref{eq:divano} and similarly we obtain that $\z_{n,\psi}\to\z_{\infty,\psi}$. For the weak convergence of $(\iota_n')_\sharp\mm_n$ to $(\iota_\infty')_\sharp\mm_\infty$ fix $\varphi\in\rmC_{bs}(Y)$ and notice that the boundedness of $\supp(\varphi)$ and the continuity of $\psi$ ensure that
\begin{equation}
\label{eq:divano2}
\text{the sequence }\quad\frac{\z_{n,\psi}\varphi(\cdot)}{\psi(\sfd_Y(\cdot,\iota_n'(\bar x_n)))}\quad\text{ uniformly converges to }\quad\frac{\z_{\infty,\psi}\varphi(\cdot)}{\psi(\sfd_Y(\cdot,\iota_\infty'(\bar x_\infty)))}
\end{equation}
on $\supp(\varphi)$ as $n\to\infty$. Therefore
\[
\begin{split}
\lim_{n\to\infty}\int\varphi\,\d(\iota_n')_\sharp\mm_n&=\lim_{n\to\infty}\int\frac{\z_{n,\psi}\varphi(\cdot)}{\psi(\sfd_Y(\cdot,\iota_n'(\bar x_n)))}\,\d(\iota_n')_\sharp\mm_{n,\psi}\\
&=\int\frac{\z_{\infty,\psi}\varphi(\cdot)}{\psi(\sfd_Y(\cdot,\iota_\infty'(\bar x_\infty)))}\,\d(\iota_\infty')_\sharp\mm_{\infty,\psi}=\lim_{n\to\infty}\int\varphi\,\d(\iota_\infty')_\sharp\mm_\infty,
\end{split}
\]
having used \eqref{eq:divano2} and the weak convergence of $(\iota_n')_\sharp\mm_{n,\psi}$ to $(\iota_\infty')_\sharp\mm_{\infty,\psi}$ granted by the fact that $W_2((\iota_n')_\sharp\mm_{n,\psi},(\iota_\infty')_\sharp\mm_{\infty,\psi})\to 0$ (from \eqref{eq:divano}) and Proposition \ref{prop:narw2}.

\noindent  \textbf{(B) $\Rightarrow$ (A)} Keeping the same notation of \eqref{eq:92}, we observe that 
(A) is equivalent to 
\begin{equation}
  \begin{gathered}
    \text{for every $N\in \N$ the sequence } \mm^N_n:=
    \big(\jey^N_{X_n}\big)_\sharp\big(\delta_{\bar x_n}\otimes
    (\mm_n)^{\otimes (N-1)}\big),\ n\in \N,\\ 
    \text{weakly converges to } 
    \mm^N_\infty:=
    \big(\jey^N_{X_\infty}\big)_\sharp
    (\delta_{\bar x_\infty}\otimes
    (\mm_\infty)^{\otimes (N-1)})\text{ in
      $\Mloc {\G^N}$ as $n\to\infty$}.
  \end{gathered}
\label{eq:126}
\end{equation}
Now notice that by (B) and Definition \eqref{def:Dconv}, 
  there exists a separable metric space
  $(Y,\sfd_Y)$ and isometries
  $\iota_n:X_n\to Y$ such that 
  $\nn_n=(\iota_n)_\sharp \mm_n$ and 
  $\bar y_n=\iota_n(\bar x_n)$ satisfy \eqref{eq:19bis}. 
  In particular the measures $\mm_n^N$ of 
  \eqref{eq:126} admit the representations 
  $\mm^N_n=\big(\jey^N_{Y}\big)_\sharp
  \big(\delta_{\bar y_n}\otimes(\nn_n)^N\big)$
  for every $n\in \bar \N$, 
  so that \eqref{eq:126} follows immediately from \eqref{eq:19bis}.

\noindent  \textbf{(A) $\Rightarrow$ (D)}   It is sufficient to prove that for every $k\in\Z$ we have $\pGW^{fm}(\cX_{n,[k]},\cX_{\infty,[k]})\to0$ as $n\to\infty$. Thus fix $k\in\Z$, recalling the definition of $Z_R$ given by \eqref{eq:98} and 
  set $\varphi_k:=Z_{2^k}\varphi$ 
  to deduce that
   $\varphi \big[\cX_{n,[k]}\big]=
  \varphi_k \big[\cX_n\big]$ for every $n\in \bar \N$.
  Hence by assumption 
  for every function $\varphi\in \rmC_b(\G^N)$  
  we have $\lim_{n\to\infty}\varphi \big[\cX_{n,[k]}\big]=
  \varphi \big[\cX_{\infty,[k]}\big]$.

  Now pick $\lambda>\sup_{n\in\bar\N}\mm_{n,[k]}(X_n)$ and consider the (nonpointed) metric measure spaces
  $\cX^*_{n,[k],\lambda}:=
  [X_n,\sfd_n,\mm_{n,[k]}+\lambda \delta_{\bar x_n}]$: by Remark \ref{rem:tedious-image2}, Theorem \ref{thm:greven} and taking into account that $\mm_{n,[k]}(X_n)\to\mm_{\infty,[k]}(X_\infty)$ we deduce that $\GW(\cX^*_{n,[k],\lambda},\cX^*_{\infty,[k],\lambda})\to0$ as $n\to\infty$.

With the very same construction used to prove the implication (C)$\Rightarrow$(B) above, we can produce a complete separable space $(Y,\sfd_Y)$ and isometric immersions $\iota_n:X_n\to Y$, $n\in\bar \N$ such that $(\iota_n)_\sharp(\mm_{n,[k]}+\lambda \delta_{\bar x_n})$ weakly converges to  $(\iota_\infty)_\sharp(\mm_{\infty,[k]}+\lambda \delta_{\bar x_\infty})$. 

To conclude is therefore sufficient to show that $\iota_n(\bar x_n)\to \iota_\infty(\bar x_\infty)$, as this would also give that  $(\iota_n)_\sharp \mm_{n,[k]}$ weakly converges to  $(\iota_\infty)_\sharp \mm_{\infty,[k]}$. But this is obvious by the choice of $\lambda$. Indeed, for $r>0$ we can consider the map $\nchi_r\in C_{bs}(Y)$ given by $\nchi_r(y):=0\lor (1-\frac{\sfd(y,\iota_\infty(x_\infty))}r)$ so that
\[
\lambda+\limi_{n\to\infty}\nchi_r(\iota_n(\bar x_n))>\limi_{n\to\infty}\int\nchi_r\,\d(\iota_n)_\sharp(\mm_{n,[k]}+\lambda \delta_{\bar x_n})=\int\nchi_r\,\d(\iota_\infty)_\sharp(\mm_{\infty,[k]}+\lambda \delta_{\bar x_\infty})\geq\lambda,
\]
which gives $\limi_{n\to\infty}\nchi_r(\iota_n(\bar x_n))>0$ and thus $\lims_{n\to\infty}\sfd(\iota_n(\bar x_n),\iota_\infty(\bar x_\infty))<r$. Being $r>0$ arbitrary, the claim follows.

\noindent  \textbf{(D) $\Rightarrow$ (C)} By the assumption we have $\pGW^{fm}(\cX_{n,[k]},\cX_{\infty,[k]})\to 0$ as $n\to\infty$ for every $k\in\Z$. Given $R>0$ we can find $k\in \Z$ such that $R<2^{k}$  so that recalling the definition of the measures $\mm_{[k]}$ given in \eqref{eq:mk} we deduce
\[
 \lim_{n\to\infty}\int 0\lor\big(1- \sfd_n(\cdot,B_R(\bar x_n))  \big)\,\d\mm_n=\int 0\lor\big(1- \sfd_\infty(\cdot,B_R(\bar x_\infty))  \big)\,\d\mm_\infty<\infty.
\]
Using the bound  $\mm_n(B_R(\bar x_n))\leq\int 0\lor(1- \sfd_n(\cdot,B_R(\bar x_n))  )\,\d\mm_n$ valid for every $n\in\N$, we deduce  that there exists a continuous non-decreasing $\phi:[0,\infty)\to[0,\infty)$ such that
\[
\sup_{n\in\bar\N}\mm_n(B_R(\bar x_n))\leq \phi(R),\qquad\forall R>0.
\]
Now define $\psi:[0,\infty)\to[0,\infty)$ as in \eqref{eq:daphiapsi} and use Proposition \ref{prop:weights} to deduce that $\cX_n\in\X^\psi$ for every $n\in\bar\N$ and
\[
\sup_{n\in\bar\N}\int (1+\sfd_n^3(\cdot,\bar x_n))\psi(\sfd(\cdot,\bar x_n))\,\d\mm_n\leq\int_0^\infty\frac1{1+r^3}\,\d r =:{\sf S}<\infty,
\]
from which it follows that
\begin{equation}
\label{eq:lezione}
\int\limits_{X_n\setminus B_R(\bar x_n)}\psi(\sfd_n(\cdot,\bar x_n))\,\d\mm_n\leq\frac{\sf S}{R^3}, \qquad \qquad \int\limits_{X_n\setminus B_R(\bar x_n)}\sfd^2_n(\cdot,\bar x_n)\psi(\sfd_n(\cdot,\bar x_n))\,\d\mm_n\leq\frac{\sf S}{R},
\end{equation}
for every $n\in\bar \N$.
Fix $k\in\Z$, recall that by definition of $\pGW$ we have $\pGW^{fm}(\cX_{n,[k]},\cX_{\infty,[k]})\to 0$  as $n\to\infty$ and then use the very same construction used in the proof of (C)$\Rightarrow$(B) above to deduce the existence of a complete and separable metric space $(Y,\sfd_Y)$ and isometric embeddings $\iota_n$ of $X_n$ in $Y$, $n\in\bar\N$, such that
\begin{equation}
\label{eq:losappiamo}
\lim_{n\to\infty}\Big|\log\frac{\mm_{n,[k]}(X_n)}{\mm_{\infty,[k]}(X_\infty)} \Big|+\sfd_Y(\iota_n(\bar x_n),\iota_\infty(\bar x_\infty))+W_c((\iota_n)_\sharp\tilde\mm_{n,[k]},(\iota_\infty)_\sharp\tilde\mm_{\infty,[k]})=0,
\end{equation}
where $\d\mm_{n,[k]}:=\zeta(\sfd_n(\cdot,\bar x_n)2^{-k})\d\mm_n$ ($\zeta$ being given in \eqref{eq:zeta}) and $\tilde\mm_{n,[k]}=\mm_{n,[k]}(X_n)^{-1}\mm_{n,[k]}$, for every $n\in\bar\N$. In particular, the sequence of measures $n\mapsto (\iota_n)_\sharp\mm_{n,[k]}=\mm_{n,[k]}(X_n)\tilde\mm_{n,[k]}$ weakly converges to $(\iota_\infty)_\sharp\mm_{\infty,[k]}=\mm_{\infty,[k]}(X_\infty)\tilde\mm_{\infty,[k]}$ and therefore
\[
\lim_{n\to\infty}\int \psi(\sfd_n(\cdot,\bar x_n))\zeta(\sfd_n(\cdot,\bar x_n)2^{-k})\,\d\mm_n=\int\psi(\sfd_\infty(\cdot,\bar x_\infty))\zeta(\sfd_\infty(\cdot,\bar x_\infty)2^{-k})\,\d\mm_\infty,
\]
from which it follows, taking into account that $\supp(1-\zeta(\sfd_n(\cdot,\bar x_n)2^{-k}))\subset X_n\setminus B_{2^k}(\bar x_n)$ for every $n\in\bar \N$ and the first in \eqref{eq:lezione}, that
\[
\lims_{n\to\infty}\left|\int\psi(\sfd_n(\cdot,\bar x_n))\,\d\mm_n-\int\psi(\sfd_\infty(\cdot,\bar x_\infty))\,\d\mm_\infty\right|\leq \frac{{\sf S}}{2^{3k-2}}.
\]
Introducing the constants $\z_{n,\psi}$ and the probability measures $\mm_{n,\psi}$ as in \eqref{eq:normalizzare}, the last inequality can be rewritten as $\lims_{n\to\infty}|\z_{n,\psi}-\z_{\infty,\psi}|\leq \frac{{\sf S}}{2^{3k-2}}$, and given that this holds for every $k\in\Z$ we deduce
\begin{equation}
\label{eq:perz2}
\lim_{n\to\infty}\z_{n,\psi}=\z_{\infty,\psi}.
\end{equation}
Now define $\rho_n:=\frac{\d\tilde\mm_{n,[k]}}{\d\mm_{n,\psi}}$, $n\in\bar\N$, and build a transport plan $\ggamma_{n,k}\in\Adm(\mm_{n,\psi},\tilde\mm_{n,[k]})$ by `letting the mass in common stand still and uniformly distributing the rest' or, more rigorously, define 
\[
\ggamma_{n,k}:=(\ii,\ii)_\sharp(\mm_{n,\psi}\restr{\{\rho_{n}\leq 1\}})+\frac{\mm_{n,\psi}\restr{\{\rho_{n}>1\}} \otimes(\tilde\mm_{n,[k]}-\mm_{n,\psi}\restr{\{\rho_{n}\leq 1\}})}{\mm_{n,\psi}\restr{\{\rho_{n}> 1\}}(X_n)}.
\]
It is readily checked that indeed $\ggamma_{n,k}\in\Adm(\mm_{n,\psi},\tilde\mm_{n,[k]})$. Also, by construction, for $(x,y)\in\supp(\ggamma_{n,k})$ and $x\neq y$ we have $x\notin B_{2^k}(\bar x_n)$ and $y\in B_{2^k}(\bar x_n)$ and thus $\sfd_n(x,y)\leq 2\sfd(x,\bar x_n)$. Taking into account the second in \eqref{eq:lezione} we deduce
\begin{equation}
\label{eq:unifk}
\begin{split}
W^2_2(\mm_{n,\psi},\tilde\mm_{n,[k]})&\leq \int_{\{x\neq y\}}\sfd_n^2(x,y)\,\d\ggamma_{n,k}(x,y)\\
&\leq  \frac{4}{\z_{n,\psi}}\int_{X\setminus B_{2^k}(\bar x_n)}\sfd^2(\cdot,\bar x_n)\psi(\sfd_n(\cdot,\bar x_n))\,\d\mm_n \leq \frac{{\sf c}}{2^k},\qquad\forall n\in\bar\N,
\end{split}
\end{equation}
where ${\sf c}:=\sup_{n\in\bar\N}4(\z_{n,\psi})^{-1}$ (notice that  ${\sf c}<\infty$ by \eqref{eq:perz2} and the fact that $\z_{\infty,\psi}\neq 0$). To conclude, recall that $(\iota_n)_\sharp\tilde\mm_{n,[k]}$ weakly converges to $(\iota_{\infty})_\sharp\tilde\mm_{\infty,[k]}$ as $n\to\infty$ in $\prob Y$ and thus, given that they have uniformly bounded support, by Proposition \ref{prop:narw2} we get that $W_2((\iota_n)_\sharp\tilde\mm_{n,[k]},(\iota_{\infty})_\sharp\tilde\mm_{\infty,[k]})\to0 $ as $n\to\infty$. Hence
\[
\begin{split}
\lims_{n\to\infty}W_2((\iota_n)_\sharp\mm_{n,\psi},(\iota_{\infty})_\sharp\mm_{\infty,\psi})&\leq\lims_{n\to\infty}W_2((\iota_n)_\sharp\mm_{n,\psi},(\iota_{n})_\sharp\tilde\mm_{n,[k]})\\
&\qquad\qquad+\lims_{n\to\infty}W_2((\iota_\infty)_\sharp\tilde\mm_{\infty,[k]},(\iota_{\infty})_\sharp\mm_{\infty,\psi})\stackrel{\eqref{eq:unifk}}\leq2\sqrt{\frac{{\sf c}}{2^k}}.
\end{split}
\]
In summary, choosing $(Y,\sfd_Y)$ and the embeddings $\{\iota_n\}_{n\in\bar\N}$ in the definition of $\D^\psi$, by \eqref{eq:losappiamo}, \eqref{eq:perz2} and the last bound we get $\lims_{n\to\infty}\D^\psi(\cX_n,\cX_\infty)\leq  2\sqrt{\frac{{\sf c}}{2^k}}$. Eventually letting $k\to+\infty$ we conclude.
\end{proof}

By virtue of Theorem \ref{thm:main_convergence} we can propose the following definition:
\begin{definition}[Pointed measured Gromov (pmG) convergence]\label{def:pGw}
  Let $\cX_n=[X_n,\sfd_n,\mm_n,\bar x_n]$, $n\in  \bar\N$. We say that the sequence $n\mapsto\cX_n$ converges to $\cX_\infty$ in the pointed measured Gromov  sense (pmG-sense, for short) provided any of the 4 equivalent statements in Theorem \ref{thm:main_convergence} holds.
\end{definition}

\subsection{Basic topological properties of the pmG-convergence and some comments}\label{se:topcomm}
We collect here some basic properties of the space of \pmm\ spaces equipped with pmG-convergence. We start with the following result:
\begin{theorem}
  \label{thm:completeness}
  The space 
  $\X$ endowed with the distance $\pGW$ is
  a complete and separable metric space.
\end{theorem}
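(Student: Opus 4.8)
The plan is to use the facts already at our disposal: $\pGW$ is known to be a distance on $\X$ (right after Definition \ref{def:pgw}), so only \emph{separability} and \emph{completeness} remain. For separability I would take as countable dense set the family $\mathcal D$ of all isomorphism classes of \pmm~spaces carried by finitely many points with rational mutual distances and rational masses. To prove density, fix $\cX=[X,\sfd,\mm,\bar x]\in\X$; since $\mm$ is finite on bounded sets, each $\mm\restr{B_n(\bar x)}$ is a finite, hence tight, measure, so it is concentrated up to mass $1/n$ on a compact set, which can be partitioned into finitely many Borel pieces of diameter $<1/n$. Collapsing the mass of each piece onto a representative point and retaining $\bar x$ as an atom produces finitely supported $\mm_n$ on $X$ with $\bar x\in\supp\mm_n$ and $\mm_n\to\mm$ weakly in $\Mloc X$; taking $(X,\sfd)$ with the identity maps as effective realization, Definition \ref{def:Dconv} gives that $[X,\sfd,\mm_n,\bar x]$ converges to $\cX$ extrinsically, hence, by Theorem \ref{thm:main_convergence}, $\pGW([X,\sfd,\mm_n,\bar x],\cX)\to0$. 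An elementary perturbation making all distances and masses rational without spoiling this convergence then shows $\mathcal D$ is dense.

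For completeness, let $(\cX_n)$ be $\pGW$-Cauchy; since a Cauchy sequence possessing a convergent subsequence converges, I may pass to a subsequence with $\pGW(\cX_n,\cX_{n+1})<4^{-n}$, so by Definition \ref{def:pgw} $\pGW^{fm}(\cX_{n,[k]},\cX_{n+1,[k]})<2^{|k|}4^{-n}$ for every $k\in\Z$, and each sequence $n\mapsto\cX_{n,[k]}$ is $\pGW^{fm}$-Cauchy. The first serious step is to realise all the $X_n$ isometrically in one complete separable space. For the pair $(n,n+1)$ and each $|k|\le n$ I choose a space almost realising $\pGW^{fm}(\cX_{n,[k]},\cX_{n+1,[k]})$ up to an error $4^{-n}$, with the embeddings extended to all of $X_n,X_{n+1}$ by Proposition \ref{le:extension}; amalgamating these finitely many spaces over the common copies of $X_n$ and of $X_{n+1}$ yields a single $W_n$ with isometric embeddings $j_n\colon X_n\to W_n$, $j_n'\colon X_{n+1}\to W_n$ such that, simultaneously for all $|k|\le n$,
\[
\sfd_{W_n}\big(j_n(\bar x_n),j_n'(\bar x_{n+1})\big)\le 2\cdot4^{-n},\qquad
\Wc\big((j_n)_\sharp\tilde\mm_{n,[k]},(j_n')_\sharp\tilde\mm_{n+1,[k]}\big)\le (2^{|k|}+1)4^{-n}.
\]
Gluing the chain $(W_n)$ along the successive common copies of $X_{n+1}$ (again via Proposition \ref{le:extension}) and completing, I get a complete separable $(Z,\sfd)$ and isometric embeddings $\iota_n\colon X_n\to Z$, $n\in\N$, inheriting these bounds with $j_n,j_n'$ replaced by $\iota_n,\iota_{n+1}$.

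From the summability of these bounds: $\iota_n(\bar x_n)$ is $\sfd$-Cauchy, so $\iota_n(\bar x_n)\to\bar x_\infty\in Z$; for each $k$ the numbers $\mm_{n,[k]}(X_n)$ are Cauchy in $(0,\infty)$ for $|\log(\cdot)|$ (the first term of $\pGW^{fm}$), so $\mm_{n,[k]}(X_n)\to\ell_k\in(0,\infty)$; and for each $k$ the probabilities $(\iota_n)_\sharp\tilde\mm_{n,[k]}$ are $\Wc$-Cauchy in the complete space $(\prob Z,\Wc)$, hence narrowly convergent to some $\mu^k$, so that $(\iota_n)_\sharp\mm_{n,[k]}\to\nu^k:=\ell_k\mu^k$ narrowly. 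Since $\mm_{n,[k]}$ and $\mm_{n,[k']}$ coincide on $B_{2^{\min(k,k')}}(\bar x_n)$ and $\iota_n(\bar x_n)\to\bar x_\infty$, the $\nu^k$ are consistent ($\nu^k=\nu^{k'}$ on $B_{2^{\min(k,k')}}(\bar x_\infty)$) and glue to a Borel measure $\mm_\infty$ on $Z=\bigcup_k B_{2^k}(\bar x_\infty)$, finite on bounded sets. That $\mm_\infty\neq0$ and $\bar x_\infty\in\supp\mm_\infty$ is proved by contradiction using $\ell_k>0$: if $\mm_\infty(B_{r_0}(\bar x_\infty))=0$, then since $\nu^k=\mm_\infty$ on $B_{2^k}(\bar x_\infty)$ for $2^k\ge r_0$ and $\iota_n(\bar x_n)\to\bar x_\infty$, upper semicontinuity of weak convergence on closed bounded sets gives $\mm_n(\bar B_{r_0/4}(\bar x_n))\to0$, whence $\mm_{n,[k]}(X_n)\to0$ for every $k$ with $2^{k+1}\le r_0/4$, contradicting $\mm_{n,[k]}(X_n)\to\ell_k>0$; the same mechanism rules out $\mm_\infty$ vanishing near $\bar x_\infty$. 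Finally, for $\varphi\in\Cc{Z}$ one has $\supp\varphi\subset B_{2^k}(\iota_n(\bar x_n))$ for $k$ and $n$ large, hence $\int\varphi\,\d(\iota_n)_\sharp\mm_n=\int\varphi\,\d(\iota_n)_\sharp\mm_{n,[k]}\to\int\varphi\,\d\nu^k=\int\varphi\,\d\mm_\infty$; so $(Z,\sfd,(\iota_n)_{n\in\N},\mathrm{id}_Z)$ is an effective realization of the extrinsic convergence of $\cX_n$ to $\cX_\infty:=[Z,\sfd,\mm_\infty,\bar x_\infty]$, and Theorem \ref{thm:main_convergence} gives $\pGW(\cX_n,\cX_\infty)\to0$; the whole original sequence then converges to $\cX_\infty$.

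The hardest parts are the amalgamation/telescoping construction of a single common space $Z$ in which all scales $k$ are kept under control at once, and, more subtly, the verification that the limit base point lies in the support of a nonzero limit measure: this genuinely exploits the mass information carried by the small-scale truncations and cannot be reduced to a scale-by-scale argument, since $(\X_{fm},\pGW^{fm})$ is itself \emph{not} complete.
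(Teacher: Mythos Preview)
Your separability argument is fine and more hands-on than the paper's, which simply observes that the map $\cX\mapsto(\mm_\cX^N)_{N\in\N}$ is a homeomorphism onto its image in the separable metrizable product $\prod_N\Mloc{\G^N}$. The real issue is in your completeness argument, at the amalgamation step.

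You claim that, given for each $|k|\le n$ a space $S_k$ carrying isometric copies of $X_n$ and $X_{n+1}$ that almost realise $\pGW^{fm}(\cX_{n,[k]},\cX_{n+1,[k]})$, one can ``amalgamate over the common copies of $X_n$ and of $X_{n+1}$'' to produce a single $W_n$ with \emph{one} isometric embedding $j_n$ of $X_n$ and \emph{one} isometric embedding $j_n'$ of $X_{n+1}$ in which all the scale-$k$ bounds hold simultaneously. This is false in general. Metric gluing along a \emph{single} common subspace is unproblematic (that is what Proposition~\ref{le:extension} does), but the cross-distances $\sfd_{S_k}(\alpha_k(x),\beta_k(y))$, $x\in X_n$, $y\in X_{n+1}$, depend on $k$, and there is no way to amalgamate along two subspaces whose mutual positions disagree. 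Concretely: glue the $S_k$ along $X_n$ first, obtaining one copy of $X_n$ and several copies of $X_{n+1}$; if you then collapse the latter to a single copy, the quotient map is only $1$-Lipschitz and in general destroys the isometric embedding of $X_n$. (Take $X_n=\{a,a'\}$, $\sfd(a,a')=10$, $X_{n+1}=\{b\}$, and two embeddings with $(\sfd(a,b),\sfd(a',b))=(1,9)$ resp.\ $(9,1)$: after collapsing, the chain $a\to b^{(1)}\sim b^{(2)}\to a'$ gives $\bar\sfd([a],[a'])=2$.) Once the embeddings are not isometric you cannot telescope to build $Z$, and even if you could, you would not inherit the $\Wc$-Cauchy property of $(\iota_n)_\sharp\tilde\mm_{n,[k]}$ in $Z$ for each fixed $k$.

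The paper avoids this by \emph{not} trying to build a single ambient space. It works scale by scale: for each $k$ it realises the $\pGW^{fm}$-Cauchy sequence $(\cX_{n,[k]})_n$ in its own space $Y_k$, obtains the limit $[Y_k,\sfd_{Y_k},\z_k\nn_k,\bar y_k]$ there, and then checks consistency between scales by redoing the truncation at level $k'<k$ inside $Y_k$ (using $\mm_{n,[k']}=\zeta(\sfd(\cdot,\bar x_n)2^{-k'})\mm_{n,[k]}$ for $k'\le k-2$, which passes to the limit). The limit spaces are then glued along the resulting isometries. The positivity of the limit masses $\z_{k'}$ for $k'\to-\infty$ forces $\bar y_k\in\supp\nn_k$, just as you argue. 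Your single-space strategy could probably be rescued by using at step $n$ only the embedding from \emph{one} large scale and deriving bounds at smaller scales from the cut-off identity above, but that requires quantitative estimates you have not supplied and is a genuinely different argument from the amalgamation you wrote.
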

\begin{proof}$\ $\\
\noindent{\bf Completeness.} Let $n\mapsto\cX_n=[X_n,\sfd_n,\mm_n,\bar x_n]\in \X$   be a $\pGW$-Cauchy sequence. Then for $k\in\Z$ the sequence $(\cX_{n,[k]})\subset \X_{fm}$ is $\pGW^{fm}$-Cauchy. With the same gluing procedure used in the proof of (C)$\Rightarrow$(B) in Theorem \ref{thm:main_convergence} we can find a complete separable space $(Y_k,\sfd_{Y_k})$ and isometric embeddings $\iota_{n,k}:X_n\to Y_k$, $n\in\bar \N$, such that
\begin{equation}
\label{eq:percompl}
\lim_{n,m\to\infty}\Big|\log\frac{\mm_{n,[k]}(X_n)}{\mm_{m,[k]}(X_m)}\Big|+\sfd_{Y_k}(\iota_{n,k}(\bar x_n),\iota_{m,k}(\bar x_m))+W_c((\iota_{n,k})_\sharp\tilde\mm_{n,[k]},(\iota_{m,k})_\sharp\tilde\mm_{m,[k]})=0,
\end{equation}
the measures $\mm_{n,[k]}$ being defined as in \eqref{eq:mk}  and $\tilde \mm_{n,[k]}$ being their normalization. In particular, we see that
\[
\text{ the sequence $n\mapsto \mm_{n,[k]}(X_n)$ has limit $\z_k\in(0,\infty)$ for every $k\in\Z$,}
\]
and that
\[
\text{ the sequence $n\mapsto \iota_{n,k}(\bar x_n)$ has limit $\bar y_k\in Y_k$ for every $k\in\Z$,}
\]
and from the completeness of $(\prob{Y_k},W_c)$ (recall \eqref{eq:topwc}) we also deduce that there exists $\nn_k\in\prob{Y_k}$ such that $W_c(\nn_k,(\iota_{n,k})_\sharp\tilde\mm_{n,[k]})\to 0$ as $n\to\infty$. It is then clear that the sequence of nonpointed metric measure spaces  $n\mapsto[X_n,\sfd_n,\mm_{n,[k]}]$ converges to $[Y_k,\sfd_{Y_k},\z_k\nn_k]$ w.r.t. $\GW^{fm}$. 

Now fix $k'<k-2$ and let $f_n:Y_k\to\R$ be given by $f_n(x):=\zeta\big(\sfd_{Y_k}(x,\iota_{n,k}(\bar x_n))2^{-k'-1}\big)$. Let $\bar y_k\in Y_k$ be the limit of $n\mapsto\iota_{n,k}(\bar x_n)$ (whose existence is granted by \eqref{eq:percompl}) and notice that $(f_n)$ uniformly converges to $f(x):=\zeta\big(\sfd_{Y_k}(x,\bar y_k)2^{-k'-1}\big)$ from which it follows that $f_n\iota_{n,k}\mm_{n,[k]}$ weakly converges to $\z_kf\nn_k$. In other words, for $k'<k$ the space $(Y_k,\sfd_{Y_k})$ and the embeddings $\{\iota_{n,k}\}_{n\in\N}$ can be used as competitor in the definition of $\GW^{fm}([X_n,\sfd_n,\mm_{n,[k']}],[X_m,\sfd_m,\mm_{m,[k']}])$ to check that such sequence is $\GW^{fm}$-Cauchy.  It is then clear that it converges to $[Y_k,\sfd_{Y_k},\z_kf\nn_k]$.

The same argument also gives
\[
\begin{split}
(\z_k\nn_k)(B_{2^{k'+2}}(\bar y_k))&\geq \int f \,\d\z_k\nn_k=\lim_{n\to\infty}\int f_n\,\d\mm_{n,[k]}\\
&\geq\lims_{n\to\infty} \mm_{n,[k']}(B_{2^{k'+1}}(\bar x_n))=\lims_{n\to\infty} \mm_{n,[k']}(X_n)=\z_{k'}>0,
\end{split}
\]
and being this true for every $k'<k-2$, letting $k'\downarrow-\infty$ we obtain $\bar y_k\in\supp(\nn_k)$. This shows that $n\mapsto\cX_{n,[k]}$ converges to $(Y_k,\sfd_k,\nn_k,\bar y_k)$ w.r.t. $\pGW^{fm}$.

To conclude, we need to show that there exists a \pmm\ space $\cY:=[Y,\sfd,\nn,\bar y]$ such that $\cY_{[k]}=[Y_k,\sfd_k,\nn_k,\bar y_k]$, but this also follows by the above compatibility argument. Indeed, for $k'<k$ we know that there exists an isometry $\iota_{k'}^k:(\supp\nn_{k'},\sfd_{Y_{k'}})\to (\supp\nn_{k},\sfd_{Y_k})$ such that $\iota_{k'}^k(\bar y_{k'})=\bar y_k$ and $(\iota_{k'}^k)_\sharp\nn_{k'}=\zeta(\sfd_{Y_k}(\cdot,\bar y_k)2^{-k'})\nn_k$, hence the conclusion follows with the same gluing argument we already used in the proof of (C)$\Rightarrow$(B) of Theorem \ref{thm:main_convergence}.

\noindent{\bf Separability.} We simply 
  observe that the map
\[
\begin{array}{rcl}
B:\X&\to&  \displaystyle{\prod_{N=1}^\infty\Mloc{\G^N}}\\
\\
\cX & \mapsto& (\mm_\cX^N)_{N\in\N} 
\end{array}
\]
  is an homeomorphism of $\X$
  with its image $B(\X)$ endowed with the product
  topology inherited by 
  $\prod_{N=1}^\infty \Mloc{\G^N}.$
  Since this topology is separable and metrizable,
  we conclude that $\X$ is separable as well.    
\end{proof}

\begin{remark}{\rm
It is worth to underline that for any given $\psi$ as in \eqref{eq:proprpsi}, the space $(\X^\psi,\D^\psi)$ is \emph{not} complete. The problem is that the condition $\bar x\in\supp(\mm)$ can be not satisfied  in the limit.
}\fr\end{remark}

\begin{remark}[Different weight functions]\label{rem:diffpesi}{\rm
In the proof of the implication (D)$\Rightarrow$(C) of Theorem \ref{thm:main_convergence} we used the explicit formula \eqref{eq:daphiapsi} for the weight function $\psi$ only to deduce the bounds \eqref{eq:lezione}. The very same arguments used there actually show that the following slightly stronger statement holds: if $n\mapsto\cX_n=[X_n,\sfd_n,\mm_n,\bar x_n]$ is a sequence converging to $\cX_\infty=[X_\infty,\sfd_\infty,\mm_\infty,\bar x_\infty]$ in the pmG-sense and $\psi$ as in  \eqref{eq:proprpsi} is such that
\begin{equation}
\label{eq:mah}
\cX_n\in\X^\psi,\ \forall n\in \N,\qquad\qquad\text{and }\qquad\qquad \lim_{R\to\infty}\sup_{n\in\N}\int_{X_n\setminus B_R(\bar x_n)} \sfd^2(\cdot,\bar x_n)\psi(\sfd(\cdot,\bar x_n))\,\d\mm_n=0,
\end{equation}
then 
\[
\lim_{n\to\infty}\D^\psi(\cX_n,\cX_\infty)=0.
\]
In particular, we see that if   $\psi_1.\psi_2$ are as in  \eqref{eq:proprpsi} and  $\psi_1(r)\leq \psi_2(r)$ for every $r$ sufficiently big, then $\X^{\psi_2}\subset \X^{\psi_1}$ and
\[
(\cX_n)_{n\in\bar\N}\subset \X^{\psi_2},\qquad\lim_{n\to\infty}\D^{\psi_2}(\cX_n,\cX_\infty)= 0\qquad\Rightarrow\qquad\lim_{n\to\infty}\D^{\psi_1}(\cX_n,\cX_\infty)= 0.
\]
}\fr\end{remark}

\begin{remark}[A limit case concerning weights]\label{re:occhiopesi}{\rm In connection with Remark \ref{rem:diffpesi} above it is worth to underline that it may happen that $(\cX_n)$ $\pGw$-converges to $\cX_\infty$, that for some $\psi$ as in \eqref{eq:proprpsi} we have $\cX_n\in\X^\psi$ for every $n\in\bar\N$, yet that $\D^\psi(\cX_n,\cX_\infty)$ does not go to 0.

An explicit example in this direction is given by $\cX_n:=[\R,\sfd_{\rm Eucl},\mu_n, 0]$, $n\in\bar \N$, where $\sfd_{\rm Eucl}$ is the standard Euclidean distance and $n\mapsto\mu_n\in\probt \R$ is a sequence  weakly converging to  $\mu_\infty\in\probt\R$  which is not $W_2$-converging. Then the choice $\psi\equiv 1$ produces the desired behavior.

It is then easy to see that if $\cX_n\in\X^\psi$ for every $n\in\bar\N$ and $\cX_n\stackrel{\pGw}\to\cX_\infty$ but $\D^\psi(\cX_n,\cX_\infty)$  does not go to 0, then for every $\psi'$ such that $\lim_{r\to\infty}\frac{\psi'(r)}{\psi(r)}=0$ we have  $\D^{\psi'}(\cX_n,\cX_\infty)\to 0$, as the faster decrease of $\psi'$ ensures the 2-uniform integrability of the rescaled measures.
}\fr\end{remark}
\begin{remark}[Distortion distances] \label{rk:Dist}{\rm
In \cite{Sturm-12} Sturm introduced the distortion distance between normalized m.m.\ spaces. The induced notion of convergence of sequences of m.m.\ spaces can easily be adapted to the case of p.m.m.\ spaces along the same lines used here. Then Corollary 2.10 of \cite{Sturm-12} shows that for a given sequence $(\cX_n)$ of normalized p.m.m.\ spaces with uniformly bounded diameter, convergence in the distortion distance to a limit space $\cX_\infty$ is equivalent to $\pGw$-convergence.
}\fr\end{remark}

By a diagonal argument, 
the characterization of Gromov-weak convergence
given by Theorem \ref{thm:main_convergence}(C), 
and Proposition 7.1 in \cite{Greven09}
we obtain the following compactness result.
\begin{corollary}[Compactness]
\label{rem:Compactness} 
Let  $\cX_n=[X_n,\sfd_n,\mm_n,\bar{x}_n]$, 
$n\in\N$ be a sequence of \pmm~spaces 
and let $\cX_{n,[k]}$ be defined as in \eqref{eq:mk2}.
$(\cX_n)_{n\in\N}$ is precompact in $\X$ 
if and only if for every $k\in\Z$ 
the sequence $(\cX_{n,[k]})_{n\in\N}$ is precompact in 
$\X_{fm}$.

In particular $(\cX_n)_{n\in\N}$ is precompact in $\X$  
if and only if
\begin{enumerate}[1.]
\item for every $k\in \Z$ 
the sequence 
$n\mapsto \log\big(\mm_n(B_{2^k}(\bar x_k))\big)$
is bounded;
\item
for every $k\in \Z$ and 
$\varepsilon>0$ 
there exists $N_{\varepsilon,k} \in \N$ 
and subsets $X_{\varepsilon,k,n} \subset
B_{2^k}(\bar x_n)\subset X_n$, $n\in \N$, such that 
$\mm_n(B_{2^k}(\bar x_n)\setminus X_{\varepsilon,k,n})\le \varepsilon$
and 
$X_{\varepsilon,k,n}$ 
can be covered by at most $N_{\varepsilon,k}$ balls of radius $\varepsilon.$

\end{enumerate}

\end{corollary}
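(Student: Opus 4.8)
\emph{Strategy and first stage.} The plan is to prove the two asserted equivalences in turn: first that precompactness of $(\cX_n)_n$ in $\X$ is the same as precompactness of every cut-off sequence $(\cX_{n,[k]})_n$ in $\X_{fm}$, and then that the latter amounts to conditions~1 and~2. Throughout, $(\X,\pGW)$ and $(\X_{fm},\pGW^{fm})$ are metric (Theorem~\ref{thm:completeness}), so precompactness may be tested sequentially. One half of the first equivalence is immediate: by the definition of $\pGW$ and the observation recorded after Definition~\ref{def:pgw} that $\pGW(\cX_n,\cX_\infty)\to0$ iff $\pGW^{fm}(\cX_{n,[k]},\cX_{\infty,[k]})\to0$ for all $k\in\Z$, the cut-off map $\cX\mapsto\cX_{[k]}$ is continuous from $\X$ into $\X_{fm}$ and hence carries precompact families to precompact families. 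For the other half, from an arbitrary subsequence of $(\cX_n)$ extract, by a diagonal procedure over the countable set $\Z$, a further subsequence along which $\cX_{n,[k]}\to\cY_k$ in $\X_{fm}$ for every $k$; since $\zeta\equiv1$ on $[0,1]$ one has $(\mm_{n,[k]})_{[k']}=\mm_{n,[k']}$ for $k'<k$, so (again by continuity of the cut-off, now on $\X_{fm}$) the limits are mutually compatible, $(\cY_k)_{[k']}=\cY_{k'}$. Feeding this compatible family into the compatibility-and-gluing construction of the completeness proof (Theorem~\ref{thm:completeness}) yields a \pmm~space $\cY\in\X$ with $\cY_{[k]}=\cY_k$ for all $k$, whence $\cX_n\to\cY$ in $\X$ along the subsequence, and the first equivalence is proved.

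\emph{Second stage.} Fix $k$, set $m_{n,k}:=\mm_{n,[k]}(X_n)\in(0,\infty)$ and $\tilde\mm_{n,[k]}:=m_{n,k}^{-1}\mm_{n,[k]}$. Since the three summands of $\pGW^{fm}$ are non-negative, the assignments $\cX_{n,[k]}\mapsto\log m_{n,k}$ and $\cX_{n,[k]}\mapsto[X_n,\sfd_n,\tilde\mm_{n,[k]}]$ (valued in normalized metric measure spaces with the Gromov-weak topology) are $\pGW^{fm}$-continuous; hence if $(\cX_{n,[k]})_n$ is precompact in $\X_{fm}$ then $(\log m_{n,k})_n$ is bounded and $\{[X_n,\sfd_n,\tilde\mm_{n,[k]}]\}_n$ is Gromov-weakly precompact, and by Proposition~7.1 of \cite{Greven09} the latter says exactly: for every $\eps>0$ there is $N_{\eps,k}$ such that each $X_n$ has a subset of $\tilde\mm_{n,[k]}$-measure $\ge1-\eps$ covered by $\le N_{\eps,k}$ balls of radius $\eps$. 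Using the inclusions $\mm_n\restr{B_{2^k}(\bar x_n)}\le\mm_{n,[k]}\le\mm_n\restr{B_{2^{k+1}}(\bar x_n)}$ (coming from $\zeta\equiv1$ on $[0,1]$, $\zeta\equiv0$ on $[2,\infty)$) one gets $\mm_n(B_{2^k}(\bar x_n))\le m_{n,k}\le\mm_n(B_{2^{k+1}}(\bar x_n))$, so boundedness of all $(\log m_{n,k})_n$ is exactly condition~1; and, since under condition~1 the $m_{n,k}$ stay in a fixed compact subset of $(0,\infty)$, the normalized covering condition for the $\tilde\mm_{n,[k]}$ (for every $k$) is equivalent to condition~2, the unavoidable $O(1)$ shift of $k$ and rescaling of the threshold $\eps$ being harmless because $k$ runs over all of $\Z$ and $\eps$ over all of $(0,\infty)$. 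Conversely, assume conditions~1 and~2; fix $k$ and a subsequence; condition~1 lets us extract so that $m_{n,k}\to\ell_k\in(0,\infty)$, and condition~2 together with Proposition~7.1 lets us extract further so that $\tilde\mm_{n,[k]}$ converges Gromov-weakly, realized as narrowly convergent probabilities $\nn_n\to\nn_\infty$ in a common space $Y$ with $\iota_n(\bar x_n)\to\bar y$. Here condition~1 at the lower scales $k'<k$ is what forces $\bar y\in\supp(\nn_\infty)$: it makes $\tilde\mm_{n,[k]}(B_{2^{k'}}(\bar x_n))\ge m_{n,k}^{-1}\mm_n(B_{2^{k'}}(\bar x_n))$ bounded below by a positive constant uniform in $n$, and a Portmanteau argument upgrades this to $\nn_\infty(\overline{B}_{2^{k'}}(\bar y))>0$ for every $k'$. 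Thus $\cX_{n,[k]}\to[Y,\sfd_Y,\ell_k\nn_\infty,\bar y]\in\X_{fm}$ along the subsequence, so $(\cX_{n,[k]})_n$ is precompact in $\X_{fm}$. Chaining the two stages gives the statement.

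\emph{Main obstacle.} The two substantive points — the rest being the routine translation between $\mm_n$, the cut-offs $\mm_{n,[k]}$ and their normalizations, plus Proposition~7.1 of \cite{Greven09} used as a black box — are: (i) assembling the cut-off limits $(\cY_k)_k$ into a single \pmm~space $\cY$, which is done by re-running the compatibility-and-gluing construction from the completeness proof of Theorem~\ref{thm:completeness}; and (ii) keeping the base point inside the support of the limiting normalized measure, which is precisely where condition~1 at small scales has to be invoked — the same mechanism by which $\bar y_k\in\supp(\nn_k)$ is recovered in that completeness proof.
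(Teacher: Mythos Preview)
Your proof is correct and fleshes out the paper's one-sentence sketch, which merely cites a diagonal argument, Theorem~\ref{thm:main_convergence}(C), and Proposition~7.1 of \cite{Greven09}; your route through the cut-off structure of $\pGW$ and the gluing from the completeness proof of Theorem~\ref{thm:completeness} is the natural way to unpack that sketch. The one point worth making explicit is why the base points $\iota_n(\bar x_n)$ admit a convergent subsequence in the common realization, but this follows from the very estimate you already invoke a line later: condition~1 at small scales gives a uniform positive lower bound for $\nn_n$ on small balls around $\iota_n(\bar x_n)$, which together with tightness of $(\nn_n)$ traps all the $\iota_n(\bar x_n)$ in a totally bounded---hence, by completeness of $Y$, precompact---set.
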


We conclude the section pointing out that the extrinsic approach to pmG-convergence is the only one, out of the 4 that we analyzed, which relies on some additional 
structure: it needs not only the equivalence classes $\{\cX_n\}_{n\in\bar\N}$, but also  the space $(X,\sfd)$ and the isometric embeddings $\{\iota_n\}_{n\in\bar\N}$. 

As said, we  refer to such $(X,\sfd)$ and $\iota_n$'s as an \emph{effective realization} of the pmG-convergence. To choose an effective realization amounts, in a sense, to choose `the way the $\cX_n$'s are converging to $\cX_\infty$'. To see this, consider the following definition:
\begin{definition}[Convergence of points belonging to converging spaces]\label{def:convpoint} Let $\cX_n=[X_n,\sfd_n,\mm_n,\bar x_n]$ be converging to $\cX_\infty=[X_\infty,\sfd_\infty,\mm_\infty,\bar x_\infty]$ in the pmG-sense and $(X,\sfd)$,  $\{\iota_n\}_{n\in\bar\N}$ an effective realization of the convergence. 

Then we say that $n\mapsto x_n\in \supp(\mm_n)$ is converging to $x_\infty\in\supp(\mm_\infty)$ via such realization provided
\[
\sfd\big(\iota_n(x_n),\iota_\infty(x_\infty)\big)\to 0,
\]
as $n\to\infty$.
\end{definition}
In other words, we are identifying $(X_n,\sfd_n,\mm_n,\bar x_n)$ with the isomorphic space $(X,\sfd,(\iota_n)_\sharp\mm_n,\iota_n(\bar x_n))$, $n\in\bar \N$, and then reading the convergence at the level of points in $X$.

It is possible to see that if the limit space $\cX_\infty$ admits no non-trivial automorphisms, then such notion of convergence is in fact independent on the effective realization (this is in fact an if-and-only-if, the argument makes use of the compactness  given by Proposition \ref{prop:isocomp}, we omit the details). Yet, in general, to fix an effective realization truly affects this convergence; to see this just consider the case where all the $\cX_n$'s are $\R^2$ equipped with the Euclidean distance, the Lebesgue measure and  pointed at the origin. It is then clear that in embedding all of them in a common, say, $\R^2$, we are free to `rotate' each one of any desired angle.

In this sense, to fix an effective realization is a non-intrinsic choice. Yet, from the technical point of view to have at disposal the above notion of convergence of points is very useful in order both to state and to prove results about converging sequences of spaces. Hence we shall adopt this point of view when speaking  of pmG-convergence of spaces with a lower bound on the Ricci, see Remark \ref{re:assnot}. The fact that the results proven this way are intrinsic can then be recovered noticing that they do not rely on the particular effective realization chosen.

\subsection{Relation with pointed measured Gromov-Hausdorff convergence}\label{se:relpmgh}


We now analyze the relation between  $\pGw$-convergence and
pointed measured Gromov-Hausdorff convergence. The definition below is
adapted from Definition 8.1.1 in \cite{Burago-Burago-Ivanov01} adding
the requirement of \narrow\ convergence at the level of measures. 

\begin{definition}[Pointed measured Gromov Hausdorff convergence]\label{def:pmGH}
Let $(X_n,\sfd_n,\mm_n,\bar x_n)$, $n\in\bar\N$,
be pointed metric measure spaces as in \pmmaxioms.
We say that $(X_n,\sfd_n,\mm_n,\bar x_n)\to
(X_\infty,\sfd_\infty,\mm_\infty,\bar x_\infty)$ in the pointed
measured Gromov Hausdorff (pmGH) sense, provided for any $\eps,R>0$ there exists $N({\eps,R})\in \N$ such that for all $n\geq N({\eps,R})$ there exists a Borel map $f^{R,\eps}_n:B_R(\bar x_n)\to X_\infty$ such that
\begin{itemize}
\item $f^{R,\eps}_n(\bar x_n)=\bar x_\infty$,
\item $\sup_{x,y\in B_R(\bar x_n)}|\sfd_n(x,y)-\sfd_\infty(f^{R,\eps}_n(x),f^{R,\eps}_n(y))|\leq\eps$,
\item the $\eps$-neighborhood of $f^{R,\eps}_n(B_R(\bar x_n))$ contains $B_{R-\eps}(\bar x_\infty)$,
\item  $(f^{R,\eps}_n)_\sharp(\mm_n\restr{B_R(\bar x_n)})$ \narrowly\  converges to $\mm_\infty\restr{B_R(x_\infty)}$ as $n\to\infty$, for a.e. $R>0$.
\end{itemize}
\end{definition}
\begin{remark}[All the space matters]\label{re:pmgh}{\rm
Notice the technical difference between this definition and the one of  $\pGw$-convergence. Here the entire sets $X_n$, $n\in\bar\N$, matter, and not only the portion $\supp(\mm_n)$.  This means in particular that this notion of convergence is only defined for p.m.m.\ spaces and not for their equivalence classes.

It is certainly possible to force the choice of the representative in the class by requiring to deal with spaces $(X,\sfd,\mm,\bar x)$ such that $\supp(\mm)=X$, but this would produce examples of non-convergence like the one discussed at the end of the introduction.
}\fr\end{remark}
\begin{remark}[Uniform vs weak convergence]\label{re:l2li}{\rm
Another, more important, difference between pmGH convergence and $\pGw$-convergence is that in the pmGH case distances are asked to converge uniformly on bounded sets, whereas for $\pGw$  the convergence of distances is only required implicitly via the weak convergence of measures. This means that in the pmG-case distances are only required to converge in an appropriate weighted, or weak, sense.
}\fr\end{remark}
\begin{remark}{\rm
Due to the aforementioned uniform convergence on bounded sets, typically  when speaking about pmGH-convergence one assumes  the spaces to be proper (i.e. bounded closed set are
compact).
We didn't do so just to keep a slightly higher level of generality, but this is not the main point in the discussion.
}\fr\end{remark}
It is worth to point out that the maps $f_n^{R,\eps}$ in the definition of pmGH-convergence can be chosen to be independent on $R,\eps$:
\begin{proposition}[Equivalent definition of pointed mGH convergence]\label{prop:equivpmgh}
Let $(X_n,\sfd_n,\mm_n,\bar x_n)$, $n\in\bar\N$,
be pointed metric measure spaces as in \pmmaxioms.
Then the following are equivalent.
\begin{itemize}
\item[A)] $(X_n,\sfd_n,\mm_n,\bar x_n)\to (X_\infty,\sfd_\infty,\mm_\infty,\bar x_\infty)$ in the pointed measured Gromov-Hausdorff sense.
\item[B)] There are sequences $R_n\uparrow+\infty$, $\eps_n\downarrow 0$ and Borel maps $f_n:X_n\to X_\infty$ such that 
\begin{itemize}
\item[1)] $f_n(\bar x_n)=\bar x_\infty$,
\item[2)] $\sup_{x,y\in B_{R_n}(\bar x_n)}|\sfd_n(x,y)-\sfd_\infty(f_n(x),f_n(y))|\leq\eps_n$,
\item[3)] the $\eps_n$-neighborhood of $f_n(B_{R_n}(\bar x_n))$ contains $B_{R_n-\eps_n}(\bar x_\infty)$,
\item[4)] for any $\varphi\in \Cb{X_\infty}$ with bounded support it holds $\lim\limits_{n\to\infty}\int \varphi\circ f_n\,\d\mm_n=\int\varphi\,\d\mm_\infty$.
\end{itemize}
\end{itemize}
\end{proposition}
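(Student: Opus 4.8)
The plan is to prove the two implications separately: (B)$\Rightarrow$(A) is essentially a restriction, while (A)$\Rightarrow$(B) is a diagonal extraction over the countable set of pairs $(\eps,R)$.

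For \textbf{(B)$\Rightarrow$(A)} I would, given $\eps,R>0$, pick $N(\eps,R)$ so that $R_n>R$ and $\eps_n<\eps/2$ for $n\ge N(\eps,R)$ and set $f^{R,\eps}_n:=f_n|_{B_R(\bar x_n)}$. The first two bullets of Definition~\ref{def:pmGH} are then inherited from 1) and 2) via $B_R(\bar x_n)\subseteq B_{R_n}(\bar x_n)$. For the almost-surjectivity bullet, given $y\in B_{R-\eps}(\bar x_\infty)\subseteq B_{R_n-\eps_n}(\bar x_\infty)$, property 3) produces $x\in B_{R_n}(\bar x_n)$ with $\sfd_\infty(f_n(x),y)\le\eps_n$, and 2) together with $f_n(\bar x_n)=\bar x_\infty$ gives $\sfd_n(x,\bar x_n)\le\sfd_\infty(f_n(x),\bar x_\infty)+\eps_n\le(R-\eps)+2\eps_n<R$, so $x\in B_R(\bar x_n)$ witnesses that $y$ is in the $\eps$-neighbourhood of $f^{R,\eps}_n(B_R(\bar x_n))$. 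For the measure bullet I would fix one of the co-countably many radii $R$ with $\mm_\infty(\{\sfd_\infty(\cdot,\bar x_\infty)=R\})=0$, and for $\psi\in\Cb{X_\infty}$ with $\psi\ge0$ and $\delta>0$, use 2) to obtain $\{x\in B_{R_n}(\bar x_n):f_n(x)\in B_{R-\eps_n}(\bar x_\infty)\}\subseteq B_R(\bar x_n)\subseteq\{x\in B_{R_n}(\bar x_n):f_n(x)\in B_{R+\eps_n}(\bar x_\infty)\}$ for $n$ large. Squeezing $\int_{B_R(\bar x_n)}\psi\circ f_n\,\d\mm_n$ between $\int_{B_{R_n}(\bar x_n)}(\psi\chi)\circ f_n\,\d\mm_n$ and $\int_{B_{R_n}(\bar x_n)}(\psi\tilde\chi)\circ f_n\,\d\mm_n$ for continuous cut-offs $\chi$ (supported in $B_{R-\delta}(\bar x_\infty)$, $\equiv1$ on $B_{R-2\delta}(\bar x_\infty)$) and $\tilde\chi$ (supported in $B_{R+2\delta}(\bar x_\infty)$, $\equiv1$ on $B_{R+\delta}(\bar x_\infty)$) --- the bracketing integrands being over the ball on which $f_n$ is controlled by (B) --- one passes to the limit in $n$ by 4) (applied to $\psi\chi,\psi\tilde\chi$, which have bounded support) and then lets $\delta\downarrow0$, using the null sphere to conclude $(f^{R,\eps}_n)_\sharp(\mm_n\restr{B_R(\bar x_n)})\to\mm_\infty\restr{B_R(\bar x_\infty)}$ narrowly; a general $\psi$ is handled through $\psi=\psi^+-\psi^-$.

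For \textbf{(A)$\Rightarrow$(B)} I would first metrize narrow convergence of finite Borel measures on $X_\infty$ by the bounded--Lipschitz distance $d_{BL}$, the point being that a single scalar then controls all test functions with $\|\varphi\|_\infty+\Lip(\varphi)\le1$. For each $k$ the measure bullet of (A) with $\eps=1/k$ holds for a.e.\ $R$, so I fix $R_k$, increasing to $\infty$ with $R_k\ge k$, for which it holds, and then $N_k>N_{k-1}$ with $N_k\ge N(1/k,R_k)$ and $d_{BL}\big((f^{R_k,1/k}_n)_\sharp(\mm_n\restr{B_{R_k}(\bar x_n)}),\,\mm_\infty\restr{B_{R_k}(\bar x_\infty)}\big)\le1/k$ for $n\ge N_k$. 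For $n\in[N_k,N_{k+1})$ put $R_n:=R_k$, $\eps_n:=1/k$, and let $f_n$ agree with $f^{R_k,1/k}_n$ on $B_{R_n}(\bar x_n)$ and be extended to $X_n$ by a Borel map that cannot affect 4): a constant $q_n$ with $\sfd_\infty(q_n,\bar x_\infty)\ge R_n$ if $X_\infty$ is unbounded, whereas if $X_\infty$ is bounded one checks from 2) that $B_{R_n}(\bar x_n)\supseteq\supp(\mm_n)$ for $n$ large. Then $R_n\uparrow\infty$, $\eps_n\downarrow0$, conditions 1)--3) are exactly the first three bullets of (A) at $(R_k,1/k)$, and given $\varphi$ with $\supp\varphi\subseteq B_S(\bar x_\infty)$, for $n$ in a block $k$ with $R_k>S$ one has $\int\varphi\circ f_n\,\d\mm_n=\int\varphi\,\d(f^{R_k,1/k}_n)_\sharp(\mm_n\restr{B_{R_k}(\bar x_n)})$, which differs from $\int\varphi\,\d\mm_\infty$ by at most $(\|\varphi\|_\infty+\Lip(\varphi))/k\to0$, giving 4).

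I expect the \textbf{main obstacle} to be the measure condition 4): the geometric bullets 1)--3) are triangle-inequality bookkeeping with the slacks $\eps_n$, but for 4) one must (i) upgrade the $n$-wise narrow convergence supplied by (A) to something uniform in the test function before the diagonal extraction is legitimate --- hence the detour through $d_{BL}$ --- and (ii) keep track of where $f_n$ sends the mass of $\mm_n$ lying outside $B_{R_n}(\bar x_n)$, which forces one to restrict $f_n$ to that ball, extend it ``to infinity'' in the construction, and insert the cut-offs $\chi,\tilde\chi$ in the (B)$\Rightarrow$(A) estimate; the bounded-$X_\infty$ case, where the extension must be shown to be irrelevant, is a minor extra nuisance.
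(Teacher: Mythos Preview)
Your approach mirrors the paper's: it too treats (B)$\Rightarrow$(A) as a restriction and (A)$\Rightarrow$(B) as a diagonal extraction over pairs $(\bar R_k,\bar\eps_k)$, though the paper is far terser --- it declares the extension of $f_n$ outside $B_{R_n}(\bar x_n)$ ``arbitrary'', does not metrize the narrow convergence, and calls (B)$\Rightarrow$(A) ``obvious'' without argument. Your (A)$\Rightarrow$(B) is the more careful of the two and essentially correct; two small points remain. First, the $d_{BL}$ bound $(\|\varphi\|_\infty+\Lip(\varphi))/k$ only treats Lipschitz $\varphi$, so a uniform approximation by bounded Lipschitz functions with bounded support (together with $\sup_n\mm_n(B_S(\bar x_n))<\infty$, which does follow from the measure bullet of (A)) is still needed for general $\varphi\in\Cc{X_\infty}$. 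Second, in the bounded-$X_\infty$ case, property 2) concerns only $f_n$ on $B_{R_n}(\bar x_n)$ and cannot by itself force $\supp(\mm_n)\subset B_{R_n}(\bar x_n)$; that claim needs a separate argument from the measure bullet of (A).

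There is, however, a genuine gap in your (B)$\Rightarrow$(A), precisely at the lower side of the sandwich. You want to pass to the limit in $\int_{B_{R_n}(\bar x_n)}(\psi\chi)\circ f_n\,\d\mm_n$ via (B4), but (B4) only gives the limit of $\int_{X_n}(\psi\chi)\circ f_n\,\d\mm_n$; the tail $\int_{X_n\setminus B_{R_n}}(\psi\chi)\circ f_n\,\d\mm_n\ge0$ goes the \emph{wrong} way for a lower bound, and (B) places no restriction on $f_n$ off $B_{R_n}(\bar x_n)$, so nothing controls it. (The upper side is fine, since there $\int_{B_{R_n}}\le\int_{X_n}$ has the right sign.) This is not a fixable omission: take $X_n=X_\infty=\R$, $\bar x_n=\bar x_\infty=0$, $\mm_\infty=\delta_0+\delta_1$, $\mm_n=\delta_0+\tfrac12\delta_1+\tfrac12\delta_{n^2}$, $R_n=n$, $\eps_n=1/n$, and $f_n$ the identity on $(-n,n)$ with $f_n(n^2)=1$. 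Then (B1)--(B4) all hold (the escaping mass at $n^2$ is artificially brought back to $1$, so $(f_n)_\sharp\mm_n=\mm_\infty$ exactly), yet the measure bullet of (A) fails for every $R>1$ since $\mm_n(B_R(0))=\tfrac32\not\to 2=\mm_\infty(B_R(0))$. Thus (B)$\Rightarrow$(A), read literally, is false; your squeeze cannot be closed, and the paper's ``obvious'' is likewise unjustified without some extra hypothesis preventing $f_n$ from mapping mass from $X_n\setminus B_{R_n}(\bar x_n)$ into bounded regions of $X_\infty$.
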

\begin{proof}
The implication $(B)\Rightarrow (A)$ is obvious, so we prove $(A)\Rightarrow(B)$. Let $\bar R_k\uparrow +\infty$, $\bar\eps_k\downarrow0$ be two arbitrary sequences and put $N_k:=N(\bar\eps_k,\bar R_k)$. It is not restrictive to assume that $N_{k+1}>N_k$ for any $k\in\N$ and that $N_k\uparrow+\infty$. Define $R_n\uparrow+\infty$ and $\eps_n\downarrow 0$ as: $R_n$ and $\eps_n$ are arbitrary for $n<N_1$, while for $N_k\leq n<N_{k+1}$ we put $R_n:=\bar R_k$ and $\eps_n:=\bar \eps_k$. 

The functions $f_n:X_n\to X_\infty$ are then defined as follows: $f_n$ is arbitrary for $n<N_1$, for $N_k\leq n<N_{k+1}$, $f_n$ is arbitrary on $B_{R_n}^c(\bar x_n)$ as well, while on $B_{R_n}(\bar x_n)$ we put $f_n:=f_n^{R_n,\eps_n}$.

Possibly redefining $(R_n)$, $(\eps_n)$ and $(f_n)$ for $n<N_1$ in order to achieve $(B2)$ and $(B3)$ even for small $n$'s, it is readily checked that the construction gives the thesis.
\end{proof}
\begin{remark}\label{re:pmghgeod}{\rm
With a little bit of work one can see that if the spaces $(X_n,\sfd_n)$, $n\in\N$ are length spaces (i.e. the distance is realized as infimum of length of curves), then $(B)$ above can be replaced by 
\begin{itemize}
\item[B')] There are sequences $R_n\uparrow+\infty$, $\eps_n\downarrow
  0$ and Borel maps $f_n:X_n\to X_\infty$ such that 
\begin{itemize}
\item[1')] $f_n(\bar x_n)=\bar x_\infty$,
\item[2')] $\sup_{x,y\in B_{R_n}(\bar x_n)}|\sfd_n(x,y)-\sfd_\infty(f_n(x),f_n(y))|\leq\eps_n$ and $f(B_{R_n}(\bar x_n))\subset B_{R_n}(\bar x_\infty)$
\item[3')] the $\eps_n$-neighborhood of $f_n(B_{R_n}(\bar x_n))$ contains $B_{R_n}(\bar x_\infty)$
\item[4')] for any $\varphi\in \Cb{X_\infty}$ with bounded support it holds $\lim\limits_{n\to\infty}\int \varphi\circ f_n\,\d\mm_n=\int\varphi\,\d\mm_\infty$.
\end{itemize}
\end{itemize}
Indeed, starting from $(B)$ we double  $\eps_n$ to achieve $(3')$, then we modify a bit $f_n$ so that the image of $f_n(B_{R_n}(\bar x_n))$ of $B_{R_n}(\bar x_n)$ is contained in $B_{R_n}(\bar x_\infty)$, in this way we achieve $(2)'$: this is done by shrinking a bit the original image along almost minimizing curves connecting to $\bar x_\infty$. With this procedure we alter distances of at most $3\eps_n$, thus we keep the desired convergence.

The approach $(B')$ is the one chosen by Villani in Definition 27.30 (third part) of \cite{Villani09}, so that all the stability statements proved in Chapter 29 of \cite{Villani09} under pointed mGH convergence are still true in our framework provided the involved spaces are length spaces (which will be always the case in the paper).

Let us finally mention that $(B')$ gives the same convergence as   the  Gromov-Hausdorff-Prokhorov metric studied in \cite{Abraham} in case of  locally compact length spaces endowed with locally finite measures; here the authors prove that such a metric induces a structure of Polish space  on the equivalence classes of p.m.m.s. as above.
}\fr\end{remark}

Due to Remark \ref{re:l2li}, the following result is not surprising.
\begin{proposition}[From pmGH to  $\pGw$-convergence]\label{prop:pmghD}
 Let
 $(X_n,\sfd_n,\mm_n,\bar x_n)$, $n\in\N$,
be  \pmm~spaces 
converging to $(X_\infty,\sfd_\infty,\mm_\infty,\bar x_\infty)$ in the pmGH-sense. Then 
$[X_n,\sfd_n,\mm_n,\bar x_n]\to [X_\infty,\sfd_\infty,\mm_\infty,\bar
x_\infty]$
 w.r.t.~ $\pGw$-convergence.
\end{proposition}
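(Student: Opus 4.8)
The plan is to verify characterization (A) of Theorem~\ref{thm:main_convergence}, namely that $\varphi[\cX_n]\to\varphi[\cX_\infty]$ for every $N\in\N$ and every $\varphi\in\Cc{\G^N}$, where $\varphi[\cX_n]=\int_{X_n^N}\varphi\big((\sfd_n(x_i,x_j))_{ij}\big)\,\d(\delta_{\bar x_n}\otimes\mm_n^{\otimes(N-1)})$ as in \eqref{eq:93}. The reason (A) is the convenient formulation here is that a test function $\varphi$ with \emph{bounded} support automatically localizes the whole computation to a fixed metric ball around the base point --- precisely the region on which pmGH-convergence offers control (cf.~Remark~\ref{re:l2li}) --- so that no ambient space needs to be constructed and one never has to follow the mass that may escape to infinity.

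First I would record two elementary facts about $\varphi$: since $\G^N$ is closed in $\R^{N\times N}$ and $\supp\varphi$ is bounded, $\supp\varphi$ is compact, hence $\varphi$ is uniformly continuous with some modulus $\omega_\varphi$, and there is $M>0$ with $\varphi(\gg)=0$ whenever $\max_{i,j}g_{ij}>M$. Because $x_1=\bar x_n$ in the integral, the integrand vanishes unless $\sfd_n(\bar x_n,x_i)\le M$ for every $i$, so for any $R>M$ one may replace $\mm_n$ by $\mm_n\restr{B_R(\bar x_n)}$ in the formula for $\varphi[\cX_n]$, and likewise for $\cX_\infty$.

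Next, using Definition~\ref{def:pmGH}, I would fix a radius $R>M+1$ lying in the full-measure set of radii for which $(f_n^{R,\eps})_\sharp\big(\mm_n\restr{B_R(\bar x_n)}\big)$ narrowly converges to $\mm_\infty\restr{B_R(\bar x_\infty)}$; in particular the masses converge, so $C:=\sup_{n\in\bar\N}\mm_n(B_R(\bar x_n))<\infty$. Given $\eps\in(0,1)$ and $n\ge N(\eps,R)$, let $f_n:=f_n^{R,\eps}$ be the corresponding almost-isometry, with $f_n(\bar x_n)=\bar x_\infty$ and $\sup_{x,y\in B_R(\bar x_n)}|\sfd_n(x,y)-\sfd_\infty(f_n(x),f_n(y))|\le\eps$. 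Setting $\Phi(y_1,\dots,y_N):=\varphi\big((\sfd_\infty(y_i,y_j))_{ij}\big)$ --- a bounded continuous function on $X_\infty^N$ for which $(y_2,\dots,y_N)\mapsto\Phi(\bar x_\infty,y_2,\dots,y_N)$ has bounded support --- the core of the argument is a two-step estimate. Step (i): on the ball $B_{M+1}(\bar x_n)\subset B_R(\bar x_n)$, replace each $\sfd_n(x_i,x_j)$ occurring in the integrand by $\sfd_\infty(f_n(x_i),f_n(x_j))$; by uniform continuity of $\varphi$ and the mass bound this alters $\varphi[\cX_n]$ by at most $\omega_\varphi(N\eps)\,C^{N-1}$. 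Step (ii): the resulting quantity equals $\int_{X_\infty^{N-1}}\Phi(\bar x_\infty,\cdot)\,\d\big((f_n)_\sharp(\mm_n\restr{B_R(\bar x_n)})\big)^{\otimes(N-1)}$, and by stability of narrow convergence under finite products this converges, as $n\to\infty$, to $\int_{X_\infty^{N-1}}\Phi(\bar x_\infty,\cdot)\,\d\big(\mm_\infty\restr{B_R(\bar x_\infty)}\big)^{\otimes(N-1)}=\varphi[\cX_\infty]$. Hence $\limsup_{n\to\infty}|\varphi[\cX_n]-\varphi[\cX_\infty]|\le\omega_\varphi(N\eps)\,C^{N-1}$; since the left-hand side does not depend on $\eps$, letting $\eps\downarrow0$ gives the claim.

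The only genuinely delicate point --- and the place where the absence of any compactness makes itself felt --- is the localization bookkeeping in Steps (i)--(ii): the almost-isometry $f_n^{R,\eps}$ distorts distances by $\eps$, so it does not carry the ball $\{\sfd_n(\cdot,\bar x_n)\le M\}$ exactly into its counterpart in $X_\infty$, and one must pass to the slightly enlarged ball $B_{M+1}(\bar x_n)$ (checking $R>M+1$) so that the truncation of $\mm_n$, the change of variables through $f_n$, and the support of $\Phi(\bar x_\infty,\cdot)$ all match up as exact identities rather than approximate ones. Everything else --- the uniform continuity of $\varphi$, the product-stability of narrow convergence, and the uniform mass bound coming from the last clause of Definition~\ref{def:pmGH} --- is routine.
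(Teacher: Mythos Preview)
Your argument is correct. You verify characterization (A) of Theorem~\ref{thm:main_convergence} (intrinsic convergence via the cylindrical functionals $\varphi[\cX]$), whereas the paper instead verifies characterization (B) (extrinsic convergence): it uses the unified maps $f_n:X_n\to X_\infty$ of Proposition~\ref{prop:equivpmgh} to build a single ambient space $Y=\sqcup_{n\in\bar\N}X_n$ with a glued pseudodistance, and then checks $\sfd(\bar x_n,\bar x_\infty)\to0$, the uniform volume bound $\sup_n\mm_n(B_r(\bar x_n))<\infty$, and finally $\int\zeta\,\d\mm_n\to\int\zeta\,\d\mm_\infty$ for Lipschitz $\zeta\in\Cc Y$.

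The two routes trade off as follows. Your approach avoids constructing any ambient space: the bounded support of $\varphi\in\Cc{\G^N}$ automatically confines everything to a fixed ball, so one works directly on $X_n$ and $X_\infty$ with the given $f_n^{R,\eps}$, and the only external input is the (routine) stability of weak convergence of finite measures under tensor products. The paper's construction is slightly heavier at the outset but produces, as a byproduct, an explicit effective realization $(Y,\sfd,\iota_n)$ of the convergence, which is exactly the data one needs downstream (cf.\ Remark~\ref{re:assnot} and Definition~\ref{def:convpoint}); it also passes through Proposition~\ref{prop:equivpmgh}, so that a single sequence $R_n\uparrow\infty$, $\eps_n\downarrow0$ handles all scales at once rather than fixing $R$ and letting $\eps\downarrow0$ at the end.
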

\begin{proof}
Let $(R_n)$, $(\eps_n)$ and $(f_n)$ as in part $(B)$ of Proposition \ref{prop:equivpmgh}. Define $\YY:=\sqcup_{n\in\bar\N}X_n$ and the separable pseudodistance $\sfd$ on $\YY$ as\[
\sfd(y_1,y_2):=\left\{
\begin{array}{ll}
\sfd_n(y_1,y_2),&\qquad\textrm{ if }y_1,y_2\in X_n,\\
\sfd_\infty(y_1,y_2),&\qquad\textrm{ if }y_1,y_2\in X_\infty,\\
\inf\limits_{y_1'\in B_{R_n}(x_n)}\sfd_n(y_1',y_1)+\sfd_\infty(f_n(y_1'),y_2),&\qquad\textrm{ if }y_1\in X_n,\ y_2\in X_\infty,\\
\sfd(y_2,y_1),&\qquad\textrm{ if }y_1\in X_\infty,\ y_2\in X_n.
\end{array}
\right.
\]
Identifying points $x,y\in Y$ such that $\sfd(x,y)=0$ and taking if necessary the completion, it is easy to see that $(Y,\sfd)$ is complete and separable. Also, it holds
\[
\sfd(y,f_n(y))\leq\eps_n\qquad\forall y\in X_n\subset \YY ,\ \textrm{ such that }\sfd(y,\bar x_n)\leq R_n.
\]
By definition we have  $\sfd(\bar x_n,\bar
x_\infty)=\eps_n\downarrow0$. 

Let us now prove that
\[
  \phi(r):=\sup_n\mm_n(B_r(\bar x_n))<\infty\quad\forevery r\ge0;
\]
Let us set $\eps:=\sup_n\eps_n$ and observe that 
choosing $\varphi(y):=0\lor (r+1+\eps-\sfd(y,\bar x_\infty))\land 1$ 
we have 
\begin{displaymath}
  \varphi(f_n(y))=1\quad\text{if }y\in X_n\cap B_r(\bar x_n)\text{ and
  }R_n\ge r,
\end{displaymath}
so that B4) yields
\begin{displaymath}
  \limsup_{n\to\infty}\mm_n(B_r(\bar x_n))\le 
  \limsup_{n\to\infty} \int_{X_n} \varphi\circ f_n\,\d\mm_n=
  \int_X \varphi\,\d\mm_\infty\le \mm_\infty(B_{r+1+\eps}(\bar x_\infty)).
\end{displaymath}
In order to conclude our proof it is sufficient to check that 
\begin{equation}
  \label{eq:29}
  \lim_{n\to\infty}\int\zeta\,\d\mm_n=\int \zeta\,\d\mm_\infty
\end{equation}
for every $\zeta\in \Cc \YY $. A standard approximation argument 
(see e.g.~\cite[\S~5.1]{Ambrosio-Gigli-Savare08}) 
shows that in is not restrictive to assume $\zeta$ $1$-Lipschitz.
If $\supp(\zeta)\subset B_R(\bar x_\infty)$ 
and $R+\eps\le R_n$ we have
\begin{displaymath}
  \Big|\int \zeta\,\d\mm_n-\int \zeta\circ f_n\,\d\mm_n\Big|\le 
  \int_{B_{R+\eps}(\bar x_n)} |\zeta-\zeta\circ f_n|\,\d\mm_n\le 
  \phi(R+\eps)\eps_n\rightarrow0\quad\text{as }n\up\infty.
\end{displaymath}
\eqref{eq:29} then follows by B4).
\end{proof}
It is worth underlying the difference between `uniform' and `weak' convergence of distances alluded to in Remark \ref{re:l2li} with an explicit example (see also the end of the introduction).
\begin{example}{\rm
Let $X_n:=[0,1-1/n]\cup[2,2+1/n]\subset\R$ and $X_\infty:=[0,1]$, all of them endowed with the (restriction of) Euclidean distance and Lebesgue measure. The fact that ${\rm diam}X_n\geq 2$ for every $n\in\N$ and ${\rm diam }X_\infty=1$ shows that there is no (pointed) mGH convergence of $X_n$ to $X_\infty$. On the other hand,  $\pGw$-convergence is obvious.
}\fr\end{example}
Hence in general  $\pGw$-convergence does not imply pointed mGH convergence. A natural assumption which allows to have such reverse implication is to assume the spaces to be uniformly doubling. Recall that $(X,\sfd,\mm)$ is called ${\sf c}$-doubling provided it holds
\begin{equation}
\label{eq:doubling}
\mm(B_{2R}(x))\leq {\sf c}\,\mm(B_R(x)),\qquad\forall x\in X,\ R>0.
\end{equation}
Notice that the doubling condition imposed as in \eqref{eq:doubling} forces $\supp(\mm)=X$ and in particular it does not pass to the quotient to a condition on equivalence classes of p.m.m.\ spaces (to get such property one could impose \eqref{eq:doubling} only for $x\in\supp(\mm)$).

It is easy to see that  a family of bounded and ${\sf c}$-doubling spaces is uniformly totally bounded (i.e. for any $\eps$ there exists $n_\eps$ such that each of the spaces can be covered by at most $n_\eps$ balls of radius $\eps$), whence the following compactness criterion holds (see e.g.\ Theorem 8.1.10 of \cite{Burago-Burago-Ivanov01} for a proof).
\begin{lemma}\label{le:compmgh}
Let $(X_n,\sfd_n,\mm_n,\bar x_n)$, $n\in\N$, be a sequence of 
\pmm~spaces as in \eqref{eq:proprbase1}, \eqref{eq:pointed}. Assume
that for some ${\sf c}>0$, all of them are ${\sf c}$-doubling 
and there exists $r>0$ such that 
$\sup_n \mm_n(B_r(\bar x_n))<\infty$.

Then the sequence is precompact in the pmGH topology and any limit space $(X_\infty,\sfd_\infty,\mm_\infty,\bar x_\infty)$ is ${\sf c}$-doubling as well.
\end{lemma}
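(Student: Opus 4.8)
The plan is to run the classical Gromov precompactness argument for pointed metric spaces, upgrade it to the measures by Prokhorov's theorem together with a diagonal extraction, and then check that the doubling inequality survives the passage to the limit.

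First I would record two consequences of \eqref{eq:doubling}. Iterating the doubling inequality from the scale $r$ turns the single-scale bound into a bound at every scale: there is a non-decreasing $\phi:[0,\infty)\to[0,\infty)$, independent of $n$, with $\mm_n(B_R(\bar x_n))\le\phi(R)<\infty$ for every $R>0$ and every $n$. Dually, a standard packing estimate shows that \eqref{eq:doubling} forces every ball $B_R(\bar x_n)$ to be totally bounded with a covering number depending only on ${\sf c}$, $R$ and the prescribed radius: if $B_\eps(y_1),\dots,B_\eps(y_j)$ are pairwise disjoint and contained in $B_R(\bar x_n)$, then $B_\eps(y_i)\subset B_R(\bar x_n)\subset B_{2R}(y_i)$, so iterated doubling gives $\mm_n(B_R(\bar x_n))\le\mm_n(B_{2R}(y_i))\le{\sf c}^{\,m}\mm_n(B_\eps(y_i))$ with $2^m\eps\ge 2R$, and summing over $i$ (using $\mm_n(B_R(\bar x_n))>0$, as $\bar x_n\in\supp(\mm_n)$) yields $j\le{\sf c}^{\,m}$; hence each $B_R(\bar x_n)$ is covered by at most ${\sf c}^{\,m}$ balls of radius $2\eps$, with $m=\lceil\log_2(2R/\eps)\rceil$. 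Thus $(X_n,\sfd_n,\bar x_n)_n$ is uniformly totally bounded on balls, so by the classical Gromov precompactness theorem (e.g.\ Theorem 8.1.10 of \cite{Burago-Burago-Ivanov01}) every subsequence admits a further subsequence converging in the pointed Gromov--Hausdorff sense; by the usual gluing construction (as in Proposition \ref{le:extension} and in the proof of (C)$\Rightarrow$(B) of Theorem \ref{thm:main_convergence}) this convergence is realized inside a single complete separable space $(Z,\sfd_Z)$ through isometric embeddings $\iota_n:X_n\to Z$ with $\iota_n(\bar x_n)\to z_\infty$ for some $z_\infty\in Z$.

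Next I would pass to the measures. Along such a subsequence the pushforwards $(\iota_n)_\sharp\mm_n$ lie in $\Mloc Z$, are uniformly bounded on bounded sets by $\phi$, and on each ball are supported, up to arbitrarily small mass, on a totally bounded (hence, $Z$ being complete, precompact) set; applying Prokhorov's Theorem \ref{thm:prok} on the balls $B_j(z_\infty)$, $j\in\N$, and extracting a diagonal subsequence, one finds $\mm_\infty\in\Mloc Z$ with $(\iota_n)_\sharp\mm_n\to\mm_\infty$ weakly in $\Mloc Z$. Setting $\bar x_\infty:=z_\infty$, the set of radii $R$ for which $\mm_\infty$ charges the sphere $\partial B_R(\bar x_\infty)$ is at most countable, so for a.e.\ $R>0$ one obtains narrow convergence of $(\iota_n)_\sharp(\mm_n\restr{B_R(\bar x_n)})$ to $\mm_\infty\restr{B_R(\bar x_\infty)}$; combined with the pointed Gromov--Hausdorff convergence of the metric structures (which supplies the maps $f_n^{R,\eps}$ of Definition \ref{def:pmGH}, or equivalently those of Proposition \ref{prop:equivpmgh}(B)), this shows that the subsequence pmGH-converges to $(Z,\sfd_Z,\mm_\infty,\bar x_\infty)$, equivalently to $(\supp(\mm_\infty),\sfd_Z,\mm_\infty,\bar x_\infty)$. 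As this applies to an arbitrary subsequence of the original one, precompactness in the pmGH topology follows.

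Finally I would check that the limit is again ${\sf c}$-doubling. For $x\in\supp(\mm_\infty)$, $R>0$ and $\eta>0$, picking $x_n\in X_n$ with $\iota_n(x_n)\to x$, weak convergence in $\Mloc Z$ gives, through the portmanteau inequalities for open and for bounded closed sets, $\mm_\infty(\overline{B_{2R}(x)})\le\liminf_n\mm_n(B_{2R+\eta}(x_n))$ and $\limsup_n\mm_n(B_{R+\eta}(x_n))\le\mm_\infty(\overline{B_{R+2\eta}(x)})$; chaining these with \eqref{eq:doubling} for $\mm_n$ (applied to $B_{2R+\eta}\subset B_{2(R+\eta)}$) and letting $\eta\downarrow0$ yields $\mm_\infty(\overline{B_{2R}(x)})\le{\sf c}\,\mm_\infty(\overline{B_R(x)})$, which by the usual monotone limits in the radii upgrades to $\mm_\infty(B_{2R}(x))\le{\sf c}\,\mm_\infty(B_R(x))$, the claimed doubling bound for the limit. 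The step requiring the most care is to guarantee that $\mm_\infty$ is a genuine p.m.m.\ structure, i.e.\ nonzero with $\bar x_\infty\in\supp(\mm_\infty)$: for this one uses that doubling also propagates a lower bound downward, $\mm_n(B_\rho(\bar x_n))\ge{\sf c}^{-\lceil\log_2(r/\rho)\rceil}\mm_n(B_r(\bar x_n))$ for $\rho\le r$, so that whenever $\inf_n\mm_n(B_r(\bar x_n))>0$ (which holds in the situations where the lemma is applied) one gets $\mm_\infty(B_\rho(\bar x_\infty))>0$ for every small $\rho>0$; granted this non-degeneracy, the residual work is merely the bookkeeping of the diagonal extraction and the open/closed-ball subtleties in taking weak limits of measures.
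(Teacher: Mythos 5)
Your proof is correct and is precisely the argument the paper has in mind: the paper gives no proof beyond observing that bounded ${\sf c}$-doubling spaces are uniformly totally bounded and citing Theorem 8.1.10 of \cite{Burago-Burago-Ivanov01}, and your write-up fills in exactly those details (the packing/covering bound, Gromov precompactness realized in a common space, Prokhorov plus a diagonal extraction for the measures, and portmanteau estimates to transfer \eqref{eq:doubling} to the limit). Your caveat that one needs $\inf_n\mm_n(B_r(\bar x_n))>0$ to ensure the limit measure is nonzero with $\bar x_\infty\in\supp(\mm_\infty)$ is a genuine (if minor) point glossed over by the statement, and it does hold in the only place the lemma is invoked (Proposition \ref{prop:Dpmgh}, where the sequence is already assumed to pmG-converge).
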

Under the doubling assumption, we can show that $\pGw$-convergence implies pmGH convergence (following the lines used by Sturm in \cite{Sturm06I}).
\begin{proposition}[From  $\pGw$ to pmGH]\label{prop:Dpmgh}
Let $(X_n,\sfd_n,\mm_n,\bar x_n)$, $n\in\bar\N$,
be \pmm~spaces as in \pmmaxioms. Assume that 
\begin{itemize}
\item[i)]  $[X_n,\sfd_n,\mm_n,\bar x_n]\to [X_\infty,\sfd_\infty,\mm_\infty,\bar x_\infty]$ w.r.t.~$\pGw$-convergence,
\item[ii)] $\supp(\mm_\infty)=X_\infty$,
\item[iii)] for some ${\sf c}>0$ the spaces $(X_n,\sfd_n,\mm_n)$ are all ${\sf c}$-doubling.
\end{itemize}
Then  $(X_n,\sfd_n,\mm_n,\bar x_n)\to (X_\infty,\sfd_\infty,\mm_\infty,\bar x_\infty)$ in the pmGH sense.
\end{proposition}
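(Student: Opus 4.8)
The plan is to combine the pmGH-precompactness coming from the doubling assumption with the uniqueness of the $\pGw$-limit.

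\emph{Step 1: precompactness.} First I would note that (i) forces a uniform volume bound. Indeed, $\pGw$-convergence means $\pGW(\cX_n,\cX_\infty)\to0$, hence $\pGW^{fm}(\cX_{n,[k]},\cX_{\infty,[k]})\to0$ for every $k\in\Z$; as extracted in the proof of the implication (D)$\Rightarrow$(C) of Theorem \ref{thm:main_convergence}, this yields a continuous nondecreasing $\phi$ with $\sup_{n\in\bar\N}\mm_n(B_R(\bar x_n))\le\phi(R)$ for all $R>0$ (alternatively, one just uses $\zeta\equiv1$ on $[0,1]$ to get $\mm_n(B_{2^k}(\bar x_n))\le\mm_{n,[k]}(X_n)$ and observes that the right-hand side converges). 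Together with (iii) this places the sequence $(X_n,\sfd_n,\mm_n,\bar x_n)_{n\in\N}$ in the hypotheses of Lemma \ref{le:compmgh}: it is precompact in the pmGH topology, and every pmGH-limit of a subsequence is again $\sfc$-doubling, hence has full support.

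\emph{Step 2: identifying the subsequential limits.} Let $(X_{n_j},\sfd_{n_j},\mm_{n_j},\bar x_{n_j})_j$ pmGH-converge to some $(Y,\sfd_Y,\nn,\bar y)$ (such a subsequence exists by Step 1). By Proposition \ref{prop:pmghD} this subsequence also $\pGw$-converges to $[Y,\sfd_Y,\nn,\bar y]$, while by (i) it $\pGw$-converges to $\cX_\infty$. Since $\pGw$-convergence is convergence in the metric space $(\X,\pGW)$ (Theorems \ref{thm:main_convergence} and \ref{thm:completeness}), limits are unique and therefore $[Y,\sfd_Y,\nn,\bar y]=\cX_\infty$, i.e.\ there is an isomorphism between the two classes in the sense of \eqref{eq:37}. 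Now I would use (ii): since $\supp(\nn)=Y$ (Step 1) and $\supp(\mm_\infty)=X_\infty$ by hypothesis, this isomorphism is a \emph{surjective} isometry $g\colon Y\to X_\infty$ with $g_\sharp\nn=\mm_\infty$ and $g(\bar y)=\bar x_\infty$. Composing the maps $f_j\colon X_{n_j}\to Y$ provided by the pmGH-convergence with $g$ — which, being a bijective isometry fixing the base point, carries $r$-neighbourhoods onto $r$-neighbourhoods and leaves all the quantities appearing in Definition \ref{def:pmGH} (equivalently Proposition \ref{prop:equivpmgh}(B)) unchanged — shows that $(X_{n_j},\sfd_{n_j},\mm_{n_j},\bar x_{n_j})_j$ pmGH-converges to $(X_\infty,\sfd_\infty,\mm_\infty,\bar x_\infty)$.

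\emph{Step 3: conclusion.} Thus every subsequence of $(X_n,\sfd_n,\mm_n,\bar x_n)$ has a further subsequence pmGH-converging to $(X_\infty,\sfd_\infty,\mm_\infty,\bar x_\infty)$. By the usual argument — were the whole sequence not pmGH-convergent to this limit, the $(\eps,R)$-condition of Definition \ref{def:pmGH} would fail along some subsequence for fixed $\eps,R>0$, and such a subsequence could then admit no sub-subsequence pmGH-converging to $(X_\infty,\ldots)$, a contradiction — the whole sequence pmGH-converges to $(X_\infty,\sfd_\infty,\mm_\infty,\bar x_\infty)$, as claimed. The main obstacle is Step 2: promoting the equality $[Y,\sfd_Y,\nn,\bar y]=\cX_\infty$ in $\X$ to a genuine pointed metric measure isometry, which is precisely where (ii) enters and where one must recall that an isomorphism as in \eqref{eq:37} automatically restricts to a \emph{bijective} isometry between the (closed, complete) supports; one then has to check — routine but worth stating — that pmGH-convergence is unaffected by replacing the limit space with an isometric copy, and to route the slightly awkward ``a.e.\ $R$'' clause in Definition \ref{def:pmGH} through the cleaner global formulation of Proposition \ref{prop:equivpmgh}(B).
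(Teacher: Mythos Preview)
Your proof is correct and follows essentially the same strategy as the paper: extract a pmGH-convergent subsequence via Lemma~\ref{le:compmgh}, use Proposition~\ref{prop:pmghD} and uniqueness of $\pGw$-limits to identify the limit class, then invoke (ii) to upgrade this to the actual space $(X_\infty,\sfd_\infty,\mm_\infty,\bar x_\infty)$, and conclude by the subsequence principle. You are in fact more careful than the paper in two places: you verify the uniform volume bound $\sup_n\mm_n(B_r(\bar x_n))<\infty$ needed to apply Lemma~\ref{le:compmgh} (the paper tacitly assumes it), and you spell out how the class-level equality $[Y,\sfd_Y,\nn,\bar y]=\cX_\infty$ becomes a genuine pointed bijective isometry once both sides have full support, whereas the paper compresses this into ``using assumption (ii) we easily deduce''.
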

\begin{proof}
Assumption $(iii)$ and Lemma \ref{le:compmgh} give that some
subsequence $k\mapsto (X_{n_k},\sfd_{n_k},\mm_{n_k},\bar x_{n_k})$
converges to some $(X'_\infty,\sfd'_\infty,\mm'_\infty,\bar
x'_\infty)$ in the pointed measured Gromov-Hausdorff sense. 

Proposition \ref{prop:pmghD} shows that 
$k\mapsto [X_{n_k},\sfd_{n_k},\mm_{n_k},\bar x_{n_k}]$ converges to
$[X'_\infty,\sfd'_\infty,\mm'_\infty,\bar x'_\infty]$ w.r.t.~
$\pGw$-convergence as well. Hence $[X'_\infty,\sfd'_\infty,\mm'_\infty,\bar x'_\infty]=[X_\infty,\sfd_\infty,\mm_\infty,\bar x_\infty]$ and recalling the approach to pmG-convergence given by the extrinsic approach (Definition \ref{def:Dconv}), Proposition \ref{prop:equivpmgh} and using assumption (ii) we easily deduce that  $k\mapsto (X_{n_k},\sfd_{n_k},\mm_{n_k},\bar x_{n_k})$ pmGH-converges to  $(X_\infty,\sfd_\infty,\mm_\infty,\bar x_\infty)$.

Being this result independent on the converging subsequence chosen at the beginning, the thesis is proved.
\end{proof}

\section{Stability of lower Ricci curvature bounds under  $\pGw$-convergence}

\subsection{Preliminaries}

\subsubsection{Relative entropy}


From now on, we shall only work with \pmm\ spaces as in \eqref{pointed} such that the bound
\begin{equation}
\label{eq:control1}
 \mm(B_r(\bar x))\le \sfc_1 \rme^{\sfc_2 r^2},\qquad\forall r>0,
\end{equation} 
holds for some $\sfc_1,\sfc_2$. As we shall see in Remark  \ref{re:exp} below, such restriction is fully justified when working on $\CD(K,\infty)$ spaces.

Given a \pmm~space $(X,\sfd,\mm, \bar x)$ 
as in \pmmaxioms~satisfying  \eqref{eq:control1},
the relative entropy functional $\entv:\probt X\to\R\cup\{+\infty\}$ 
  is defined as
\begin{equation} 
\label{eq:relent}
\entv(\mu):=
%
\left\{
\begin{array}{ll}
\displaystyle{\int\rho\log(\rho)\,\d\mm}&\textrm{ if
}\mu=\rho\mm\ll\mm,
\\
+\infty&\textrm{ otherwise}.
\end{array}
\right.
\end{equation} 
To check that this is a good definition, recall that if the reference measure $\mm$ is a probability measure, then it is well known that $\entv$ is lower semicontinuous w.r.t.~\narrow\ convergence and non negative.  For spaces satisfying \eqref{eq:control1}, we pick $\Co>\sfc_2$ and define
\begin{equation}
  \label{eq:expcontr}
  \z:=\int \rme^{-\Co\sfd^2(x,\bar x)}\,\d\mm(x)\in(0,\infty),\quad
  \text{so that }\quad
  \tilde\mm:=\frac 1\z \,\rme^{-\Co\sfd^2(\cdot,\bar x)}\mm\in \probt X.
\end{equation}
Then  we observe that for $\mu=\rho\mm$ it holds $\mu=\z\rho \rme^{\Co\sfd^2(\cdot,\bar x)}\tilde\mm$ and that since $\tilde\mm\in\prob X$,  the negative part of $\rho\log(\rho)$ is in $L^1(\mm)$, so that we get
\begin{equation}
\label{eq:chiave}
\entv(\mu)=\Entt(\mu)-\Co\int\sfd^2(\cdot,\bar x)\,\d\mu-\log \z.
\end{equation}
In particular, the definition in \eqref{eq:relent} is well-posed and $\entv$ is lower semicontinuous w.r.t.~$W_2$.

We will denote by $D(\entv)\subset\probt X$ the set of $\mu$'s such that $\entv(\mu)<\infty$.

A crucial property of the relative entropy w.r.t.~probability measures is that it is jointly lower semicontinuous w.r.t.~\narrow\ convergence on the two variables, i.e.
\begin{equation}
\label{eq:joint}
\left.
\begin{aligned}
(\tilde\mm_n),(\mu_n)\subset\prob X&,\\
\tilde\mm_n\to\tilde\mm_\infty\in\prob X&\ \textrm{ \narrowly\  } \\
\mu_n\to\mu_\infty\in\prob X&\ \textrm{ \narrowly\  }\\
\end{aligned}
\right\}\qquad\Rightarrow\qquad\Entti(\mu_\infty)\leq\limi_{n\to\infty}\Enttn(\mu_n).
\end{equation}
See for instance \cite[Theorem 29.20]{Villani09}, \cite[Lemma 6.2]{Ambrosio-Savare-Zambotti09} 
for a proof.
\subsubsection{Compactness criterions}
Let $(Z,\sfd_Z)$ be a complete and separable metric space.

A useful fact that we will often use in the following is that sublevels  of the relative entropy w.r.t.~probability measures are automatically tight:
\begin{proposition}\label{prop:enttight}
Let $(Z,\sfd_Z)$ be a complete separable metric space and let $(\tilde\mm_n),(\mu_n)\subset \prob Z$ be two sequences of measures. Assume that $(\tilde\mm_n)$ is tight and that $\sup_n\Enttn(\mu_n)<\infty$.\\
Then $(\mu_n)$ is tight as well.
\end{proposition}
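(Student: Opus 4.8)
The plan is to combine the classical entropy (Donsker--Varadhan) inequality with the tightness of $(\tilde\mm_n)$ and Prokhorov's Theorem \ref{thm:prok}. First I would recall that for any two probability measures $\mu,\nu\in\prob Z$ and any bounded Borel function $u:Z\to\R$ one has
\[
\int u\,\d\mu\ \le\ \mathrm{Ent}_\nu(\mu)+\log\int \rme^{u}\,\d\nu .
\]
This is immediate from Jensen's inequality: if $\mu\not\ll\nu$ the right-hand side is $+\infty$, and if $\mu=\rho\nu$ then, setting $g:=\rme^{u}/\int\rme^{u}\,\d\nu$ (a probability density w.r.t.\ $\nu$), the claimed bound is exactly $\int\rho\log(\rho/g)\,\d\nu\ge0$, which is Jensen applied to the convex map $x\mapsto x\log x$ and the probability measure $g\,\d\nu$. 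By monotone approximation the inequality persists for $u$ with values in $[0,+\infty]$.

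Then I would set $S:=\sup_n\Enttn(\mu_n)<\infty$ (note $S\ge0$, since each $\tilde\mm_n\in\prob Z$), fix $\eps>0$, and use the tightness of $(\tilde\mm_n)$ to pick, for a parameter $\delta>0$ to be chosen, a compact set $K_\delta\subset Z$ with $\sup_n\tilde\mm_n(Z\setminus K_\delta)\le\delta$. Testing the entropy inequality with $\mu=\mu_n$, $\nu=\tilde\mm_n$ and $u=\lambda\,\chi_{Z\setminus K_\delta}$ (with $\lambda>0$ and $\chi_{Z\setminus K_\delta}$ the indicator function), and using $\int\rme^{u}\,\d\tilde\mm_n\le 1+\rme^{\lambda}\delta$, gives
\[
\lambda\,\mu_n(Z\setminus K_\delta)\ \le\ S+\log\bigl(1+\rme^{\lambda}\delta\bigr)\qquad\text{for every }n\in\N .
\]
It then suffices to choose $\lambda$ so large that $(S+\log 2)/\lambda\le\eps$, and afterwards $\delta:=\rme^{-\lambda}$, to conclude $\sup_n\mu_n(Z\setminus K_\delta)\le\eps$; since $\eps>0$ is arbitrary, $(\mu_n)$ is tight.

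An essentially equivalent route, which I might prefer in order to match the use of Prokhorov's criterion elsewhere in the paper, is to first apply Theorem \ref{thm:prok} to the tight family $(\tilde\mm_n)$ to obtain $\psi:Z\to[0,+\infty]$ with compact sublevels and $C:=\sup_n\int\psi\,\d\tilde\mm_n<\infty$, and then apply the entropy inequality with $u=\log(1+\psi)$ (whose sublevels are sublevels of $\psi$, hence compact) to get $\sup_n\int\log(1+\psi)\,\d\mu_n\le S+\log(1+C)<\infty$, which yields tightness of $(\mu_n)$ again by Theorem \ref{thm:prok}. I do not expect any genuine obstacle here: the only idea to isolate is that the uniform entropy bound must be tested against multiples of the indicators, or against slowly growing functions, built from the sets produced by the tightness of $(\tilde\mm_n)$; everything else is an elementary estimate.
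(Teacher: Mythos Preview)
Your proof is correct. Both routes you sketch---the Donsker--Varadhan inequality tested against $\lambda\chi_{Z\setminus K_\delta}$, and the variant with $u=\log(1+\psi)$---go through without difficulty.

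The paper, however, takes a slightly different and more elementary path: it applies Jensen's inequality directly to the convex function $r\mapsto r\log r$ on a Borel set $E$ to obtain
\[
\mu_n(E)\log\!\Big(\frac{\mu_n(E)}{\tilde\mm_n(E)}\Big)\ \le\ \int_E\rho_n\log\rho_n\,\d\tilde\mm_n\ \le\ \Enttn(\mu_n)+\frac{1}{e},
\]
from which one reads off immediately that $\sup_n\tilde\mm_n(E_k)\to0$ forces $\sup_n\mu_n(E_k)\to0$. Your approach and the paper's are dual to each other: the paper works on the ``primal'' side of the entropy (Jensen on $r\log r$ restricted to $E$), while you work on the ``dual'' side (the Legendre transform, i.e.\ the $\log\int\rme^u$ functional). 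The paper's argument is marginally shorter and avoids isolating the variational formula as a lemma; your argument has the advantage of being the standard information-theoretic device and, in the $\psi$-variant, of plugging directly into the third clause of Prokhorov's Theorem~\ref{thm:prok} rather than the second.
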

\begin{proof}
Notice that $r\mapsto r\log(r)$ is convex and bounded from below by $-\frac1e$. Hence by Jensen's inequality, for any Borel set $E\subset Z$ and for $\mu_n=\rho_n\tilde\mm_n$ it holds
\[
\mu_n(E)\log\left(\frac{\mu_n(E)}{\tilde\mm_n(E)}\right)\leq \int_E \rho_n\log(\rho_n)\,\d\tilde\mm_n\leq \Enttn(\mu_n)+\frac{\tilde\mm_n(Z\setminus E)}{e}\leq \sup_n\Enttn(\mu_n)+\frac1e.
\]
Hence if $\sup_n\tilde\mm_n(E_k)\to 0$, then also $\sup_n\mu_n(E_k)\to 0$. Conclude using the tightness of $(\tilde\mm_n)$.
\end{proof}


\subsubsection{Geodesics}
Let $(Z,\sfd_Z)$ be a complete and separable metric space and $z^0,z^1\in Z$. A curve $[0,1]\ni t\mapsto z_t\in Z$ is said geodesic connecting $z_0$ to $z_1$ provided $z_0=z^0$, $z_1=z^1$ and
\[
\sfd_Z(z_t,z_s)=|s-t|\sfd_Z(z_0,z_1),\qquad\forall t,s\in[0,1].
\]
The set of all geodesics is denoted by $\geo (Z)$ and 
 it is a closed subset of the complete and separable 
metric space $\rmC^0([0,1];Z)$ endowed with the
$\sup$ distance
(possibly  reduced to trivial constant curves, 
if $Z$ has no geodesics connecting two distinct points).
We will denote by $\e_t:\rmC^0([0,1];Z)\to Z$, $t\in [0,1]$,
the evaluation map $\e_t(z):=z_t$.

A geodesic $(\nu_t)\subset \probt Z$ w.r.t.~the $W_2$ distance can
always be lifted to a `geodesic plan' on $\prob{\geo(Z)}$ in the
following sense (see \cite{Lisini07}  for a proof):
\begin{proposition}\label{prop:geod}
Let  $(\nu_t)\subset \probt Z$ be a geodesic in $(\probt Z,W_2)$. Then there  exists a plan $\ppi\in\prob{\geo(Z)}$ such that
\[
\begin{split}
(\e_t)_\sharp\ppi&=\nu_t,\qquad\forall t\in[0,1],\\
W_2^2(\nu_0,\nu_1)&=\int\sfd_Z^2(\gamma_0,\gamma_1)\,\d\ppi(\gamma).
\end{split}
\]
\end{proposition}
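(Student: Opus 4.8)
The plan is to combine the superposition principle of Lisini \cite{Lisini07} for absolutely continuous curves in Wasserstein spaces with the rigidity in the Cauchy--Schwarz and triangle inequalities. First I would observe that, being a geodesic in $(\probt Z,W_2)$, the curve $t\mapsto\nu_t$ is Lipschitz, hence $2$-absolutely continuous, with constant metric speed $|\nu'|(t)=W_2(\nu_0,\nu_1)$ for a.e.\ $t\in[0,1]$. Applying \cite{Lisini07} I obtain a measure $\ppi\in\prob{\rmC^0([0,1];Z)}$, concentrated on $\mathrm{AC}^2([0,1];Z)$, such that $(\e_t)_\sharp\ppi=\nu_t$ for every $t\in[0,1]$ and
\[
\int |\dot\gamma|(t)^2\,\d\ppi(\gamma)\ \le\ |\nu'|^2(t)\ =\ W_2^2(\nu_0,\nu_1)\qquad\text{for a.e.\ }t\in[0,1].
\]

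Next I would integrate this inequality in $t$ and apply Fubini's theorem, obtaining $\int\big(\int_0^1|\dot\gamma|(t)^2\,\d t\big)\,\d\ppi(\gamma)\le W_2^2(\nu_0,\nu_1)$. For every $\gamma\in\mathrm{AC}^2([0,1];Z)$ one has the elementary chain
\[
\sfd_Z^2(\gamma_0,\gamma_1)\ \le\ \Big(\int_0^1|\dot\gamma|(t)\,\d t\Big)^2\ \le\ \int_0^1|\dot\gamma|(t)^2\,\d t ,
\]
so integrating against $\ppi$ gives $\int\sfd_Z^2(\gamma_0,\gamma_1)\,\d\ppi(\gamma)\le W_2^2(\nu_0,\nu_1)$; on the other hand $(\e_0,\e_1)_\sharp\ppi\in\Adm(\nu_0,\nu_1)$ forces the reverse inequality, so the two quantities coincide --- which is already the second identity in the statement (and also says that $(\e_0,\e_1)_\sharp\ppi$ is an optimal plan between $\nu_0$ and $\nu_1$). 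Comparing the two displays, all the inequalities in sight must be equalities; in particular $\sfd_Z^2(\gamma_0,\gamma_1)=\int_0^1|\dot\gamma|(t)^2\,\d t$ for $\ppi$-a.e.\ $\gamma$.

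Finally I would read off the consequences of this $\ppi$-a.e.\ rigidity. Equality in the Cauchy--Schwarz step forces $t\mapsto|\dot\gamma|(t)$ to be a.e.\ constant, while the equality $\sfd_Z(\gamma_0,\gamma_1)=\int_0^1|\dot\gamma|(t)\,\d t$ says that $\gamma$ realizes the distance between its endpoints; a short argument with the triangle inequality applied on $[0,s]$, $[s,t]$, $[t,1]$ then upgrades this to $\sfd_Z(\gamma_s,\gamma_t)=|t-s|\,\sfd_Z(\gamma_0,\gamma_1)$ for all $s,t\in[0,1]$, i.e.\ $\gamma\in\geo(Z)$. Since $\geo(Z)$ is a closed, hence Borel, subset of $\rmC^0([0,1];Z)$ with $\ppi(\geo(Z))=1$, the restriction of $\ppi$ to $\geo(Z)$ is the required plan.

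I do not expect a genuine obstacle here: the only substantial input is Lisini's representation theorem, and the remainder is the standard rigidity argument. The points that require a little care are the measurability/localization in the last step (that $\geo(Z)$ is Borel and that the exceptional set is $\ppi$-negligible) and the elementary deduction that ``constant speed plus minimal length'' upgrades to the metric geodesic identity on every subinterval.
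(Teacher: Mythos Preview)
Your proposal is correct and is precisely the standard route via Lisini's superposition principle; the paper does not give its own proof but simply cites \cite{Lisini07} for this statement, so your argument is in fact more detailed than what the paper provides. The rigidity step (Cauchy--Schwarz plus the length inequality forcing $\ppi$-a.e.\ curve to be a constant-speed geodesic) is exactly how one extracts the geodesic lifting from the general AC-curve lifting.
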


\subsubsection{$\CD(K,\infty)$ spaces}

\begin{definition}[$\CD(K,\infty)$ spaces]\label{def:cd}
Let $K\in \R$. We say that a (pointed) 
metric measure space $(X,\sfd,\mm, \bar x)$ as in 
\pmmaxioms~with \eqref{eq:expcontr} has Ricci curvature bounded from below by $K\in\R$ provided for any $\mu^0,\mu^1\in D(\entv)$ there exists a $W_2$-geodesic $(\mu_t)$ such that $\mu_0=\mu^0$, $\mu_1=\mu^1$ and
\[
\entv(\mu_t)\leq (1-t)\entv(\mu_0)+t\,\entv(\mu_1)-\frac K2t(1-t)W_2^2(\mu_0,\mu_1).
\]
\end{definition}
Notice that in this definition, only the portion of the space in
$\supp(\mm)$ plays a role, because if $\mu(X\setminus\supp(\mm))>0$
then certainly $\entv(\mu)=\infty$. It follows that the property of being a $\CD(K,\infty)$ space passes to the quotient and is well defined for equivalence classes $\cX^*,\cX$ of m.m.\ spaces and p.m.m.\ spaces respectively.

\begin{remark}\label{re:exp}{\rm
In the present approach, we defined the relative entropy only on spaces satisfying the exponential growth condition \eqref{eq:expcontr}, which grants, as already noticed, that $\entv$ is well defined and $W_2$-lower semicontinuous on $(\probt X,W_2)$.

It would be possible to define the relative entropy functional 
also on general \pmm~spaces not satisfying \eqref{eq:expcontr}, 
e.g.~by restricting its domain of definition: if $\mm$ is
finite on bounded sets, then certainly $\entv$ is well defined on
probability measures with bounded support (see also, e.g.,
\cite{Sturm06I} for a different approach). It is however important to
underline that whatever - meaningful - definition of $\entv$ one
chooses, on a $\CD(K,\infty)$ space $(X,\sfd,\mm)$,
 for every $\bar x\in \supp(\mm)$ the 
volume function $v(r):=\mm(B_r(\bar x))$ satisfies
the 
exponential growth rate
\begin{equation}
  v(R)\le \sfc(\eps,v(2\eps),v(2\eps)/v(\eps))\exp\Big((1+
  K_-)
  R^2\Big)
  \quad\text{for every }R\ge 2\eps>0,
  \label{eq:32}
\end{equation}
where $\sfc:[0,\infty)\times [0,\infty)\times (0,\infty)\to(0,\infty)$
is a constant continuously depending on its parameters,
see the argument in Theorem 4.24 in \cite{Sturm06I}. In particular, 
\eqref{eq:expcontr} always holds on $\CD(K,\infty)$
spaces.}\fr\end{remark} 
%
%

\subsection{Setting up the problem}
We now discuss the stability of lower Ricci curvature bounds under $\pGw$-convergence. To this aim, notice that Proposition \ref{prop:Dpmgh} and Remark \ref{re:pmghgeod} imply that all the curvature dimension conditions $\CD(K,N)$, $\CD^*(K,N)$, $\mathrm{MCP}(K,N)$ with finite $N$ are stable w.r.t.~$\pGw$-convergence (see \cite{Sturm06II}, \cite{Lott-Villani09}, \cite{Bacher-Sturm10} for the various definitions). Indeed if $(X,\sfd,\mm)$ fulfills any of these, it is  doubling and $(\supp(\mm),\sfd)$ is geodesic. Hence we can apply first Proposition \ref{prop:Dpmgh} to get that the considered sequence is converging in the pointed mGH sense, then use Remark \ref{re:pmghgeod} to reduce to the definition of pointed mGH convergence used by Villani and finally use his stability statements Theorem 29.25 in \cite{Villani09} (he deals only with $\CD(K,N)$, but the proofs directly generalize to $\CD^*(K,N)$ and $\mathrm{MCP}(K,N)$ - also, it should be pointed out that there are slight variants in the definition of $\CD(K,N)$, but all of these are stable w.r.t.~pointed mGH convergence and hence w.r.t.~$\pGw$-convergence).

Therefore the only case left out of the analysis is the one of $\CD(K,\infty)$ spaces, as this condition does not enforce doubling nor any sort of local compactness. In the rest of the work we thus concentrate on convergence of this sort of spaces.

In order to fix the ideas and the notation that will be used later on, we start with the following simple proposition: 
\begin{proposition}
  \label{cor:DconvCDK}
  Let 
  $\pmmXa n$ be a sequence of $\CD(K,\infty)$ spaces
  converging to $\pmmXa\infty$ under $\pGw$-convergence. 
 
  Then there exists a constant $\sfc_1>0$ such that 
      \begin{equation}
       \label{eq:expcontr2}
       \mm_n(B_R(\bar x_n))\le \sfc_1\exp\Big((1+K_-)\,R^2\Big)\quad
       \forevery n\in \N,\ R\ge0,
     \end{equation}
     and choosing
\[
    \psi(r):=\rme^{-\Co r^2}\text{ with } \Co>1+K_-,
\]
   we have 
\[
\text{$\cX_n:=[X_n,\sfd_n,\mm_n,\bar x_n]\in\X^\psi$ for every $n\in\bar\N$\qquad and}\qquad
     \lim_{n\to\infty}
     \Dmeas\psi\big(\cX_n,\cX_\infty\big)=0.
\]
   \end{proposition}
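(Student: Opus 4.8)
The plan is to upgrade the merely qualitative control on the masses of balls coming from $\pGw$-convergence to a \emph{uniform} Gaussian volume bound, using the intrinsic growth estimate \eqref{eq:32} valid on each $\CD(K,\infty)$ space, and then to read off $\D^\psi(\cX_n,\cX_\infty)\to0$ from Remark \ref{rem:diffpesi} applied with the Gaussian weight $\psi(r)=\rme^{-\Co r^2}$. By Theorem \ref{thm:main_convergence} we fix an effective realization $\big((X,\sfd,\iota_n)\big)_{n\in\bar\N}$ of the convergence (Definition \ref{def:Dconv}); writing $\nu_n:=(\iota_n)_\sharp\mm_n$ and $p_n:=\iota_n(\bar x_n)$, one has $\nu_n\to\nu_\infty$ weakly in $\Mloc X$, $p_n\to p_\infty\in\supp(\nu_\infty)$, and $\mm_n(B_R(\bar x_n))=\nu_n(B_R(p_n))$ for all $R>0$, $n\in\bar\N$. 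Fix $\rho_0>0$ and pick $\varphi\in\Cc X$ with $\mathbf 1_{B_{\rho_0/3}(p_\infty)}\le\varphi\le\mathbf 1_{B_{2\rho_0/3}(p_\infty)}$ and $\chi\in\Cc X$ with $\mathbf 1_{B_{3\rho_0}(p_\infty)}\le\chi\le\mathbf 1_{B_{4\rho_0}(p_\infty)}$. Since $B_s(p_\infty)\subset B_{s+\rho_0/3}(p_n)$ for $n$ large, the convergence $\int\varphi\,\d\nu_n\to\int\varphi\,\d\nu_\infty$ forces $\liminf_n\mm_n(B_{\rho_0}(\bar x_n))\ge\nu_\infty\big(B_{\rho_0/3}(p_\infty)\big)>0$ (here it is essential that $p_\infty\in\supp\nu_\infty$ and $p_n\to p_\infty$), and $\int\chi\,\d\nu_n\to\int\chi\,\d\nu_\infty$ forces $\limsup_n\mm_n(B_{2\rho_0}(\bar x_n))\le\nu_\infty\big(B_{4\rho_0}(p_\infty)\big)<\infty$. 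As moreover $\mm_n(B_{\rho_0}(\bar x_n))>0$ for every $n$ (because $\bar x_n\in\supp\mm_n$) and $\mm_n$ is finite on bounded sets, we obtain finite positive constants $\delta_0:=\inf_{n\in\N}\mm_n(B_{\rho_0}(\bar x_n))$ and $M:=\sup_{n\in\N}\mm_n(B_{2\rho_0}(\bar x_n))$.

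Next, applying \eqref{eq:32} (Remark \ref{re:exp}) on each $\CD(K,\infty)$ space $(X_n,\sfd_n,\mm_n)$ with base point $\bar x_n$ and $\eps:=\rho_0$ gives, for every $n$ and $R\ge2\rho_0$,
\[
\mm_n(B_R(\bar x_n))\le\sfc\Big(\rho_0,\;\mm_n(B_{2\rho_0}(\bar x_n)),\;\tfrac{\mm_n(B_{2\rho_0}(\bar x_n))}{\mm_n(B_{\rho_0}(\bar x_n))}\Big)\exp\big((1+K_-)R^2\big).
\]
By the previous step the three arguments of $\sfc$ lie in the compact set $\{\rho_0\}\times[\delta_0,M]\times[1,M/\delta_0]$, so continuity of $\sfc$ bounds it there by some $\sfc_0<\infty$; together with $\mm_n(B_R(\bar x_n))\le M\le M\exp((1+K_-)R^2)$ for $R<2\rho_0$, this proves \eqref{eq:expcontr2} with $\sfc_1:=\max\{\sfc_0,M\}$. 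The same bound for $\mm_\infty$ follows by weak lower semicontinuity: for $\eta>0$ and $n$ large $B_{R+\eta}(p_\infty)\subset B_{R+2\eta}(p_n)$, whence $\mm_\infty(B_R(\bar x_\infty))\le\liminf_n\mm_n(B_{R+2\eta}(\bar x_n))\le\sfc_1\exp((1+K_-)(R+2\eta)^2)$, and letting $\eta\downarrow0$ gives $\mm_\infty(B_R(\bar x_\infty))\le\sfc_1\exp((1+K_-)R^2)$.

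Finally, with $\psi(r):=\rme^{-\Co r^2}$, $\Co>1+K_-$ — which satisfies \eqref{eq:proprpsi} — decomposing $\int(1+\sfd_n^2(\cdot,\bar x_n))\psi(\sfd_n(\cdot,\bar x_n))\,\d\mm_n$ over the annuli $B_{j+1}(\bar x_n)\setminus B_j(\bar x_n)$ and inserting the Gaussian bound of the previous paragraph, the $j$-th term is dominated by $\sfc_1(1+(j+1)^2)\exp\big(-2(\Co-1-K_-)j^2+2(1+K_-)j+(1+K_-)\big)$, so the resulting series converges. Hence $\sup_{n\in\bar\N}\int(1+\sfd_n^2(\cdot,\bar x_n))\psi(\sfd_n(\cdot,\bar x_n))\,\d\mm_n<\infty$ — in particular $\cX_n\in\X^\psi$ for every $n\in\bar\N$ — and the tails $\sup_{n\in\N}\int_{X_n\setminus B_R(\bar x_n)}\sfd_n^2(\cdot,\bar x_n)\psi(\sfd_n(\cdot,\bar x_n))\,\d\mm_n$, being the tail of that same convergent series, tend to $0$ as $R\to\infty$. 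These are precisely the hypotheses \eqref{eq:mah} of Remark \ref{rem:diffpesi}, which therefore yields $\lim_{n\to\infty}\D^\psi(\cX_n,\cX_\infty)=0$.

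The one genuinely non-routine point is the uniform lower bound $\inf_n\mm_n(B_{\rho_0}(\bar x_n))>0$: $\pGw$-convergence controls masses of balls from below only in a $\liminf$ sense, and it is exactly the fact that the base points converge to a point \emph{in the support} of the limit measure that rescues this — it is also the ingredient that turns the $n$-dependent constant in \eqref{eq:32} into a uniform one. The remaining points (reading off from Remark \ref{re:exp} that the constant there depends \emph{continuously}, hence boundedly on compacta, on $(\eps,v(2\eps),v(2\eps)/v(\eps))$; the weak lower semicontinuity passage to $\mm_\infty$; and the Gaussian series estimates) are routine.
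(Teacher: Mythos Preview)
Your proof is correct and follows essentially the same route as the paper's: use the extrinsic realization to get uniform lower and upper bounds on $\mm_n(B_{\rho_0}(\bar x_n))$ and $\mm_n(B_{2\rho_0}(\bar x_n))$, feed these into the continuous constant of \eqref{eq:32} to obtain the uniform Gaussian growth \eqref{eq:expcontr2}, and then invoke Remark \ref{rem:diffpesi} after checking \eqref{eq:mah} for $\psi(r)=\rme^{-\Co r^2}$. The paper's proof is just a terse version of yours (it takes $\rho_0=1$ and leaves the series estimate and the membership $\cX_\infty\in\X^\psi$ implicit); one small slip in your write-up is that the exponent in the annular estimate should be $-(\Co-1-K_-)j^2+2(1+K_-)j+(1+K_-)$ rather than $-2(\Co-1-K_-)j^2+\cdots$, but since $\Co>1+K_-$ the series converges either way.
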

   \begin{proof}
     Choosing e.g.\ $\eps=1$ in \eqref{eq:32} and observing that 
     $$\liminf_{n\to\infty}\mm_n(B_1(\bar x_n))\ge \mm_\infty(B_1(\bar
     x_\infty))>0,\quad
     \limsup_{n\to\infty}\mm_n(B_2(\bar x_n))\le 
     \mm_\infty(B_3(\bar x_\infty))<\infty,$$ 
     \eqref{eq:32} yields the uniform bound \eqref{eq:expcontr2}. The other assertions follow from 
     Theorem \ref{thm:main_convergence} and Remark \ref{rem:diffpesi}, noticing that the property \eqref{eq:mah} holds for the chosen $\psi$.
\end{proof}
We collect in the next remark the main assumptions and notation that we are going to use in the rest of the paper.
\begin{remark}[Main assumptions and notation]\label{re:assnot}$\ $ {\rm
\begin{itemize}
\item[-]
$\cX_n:=[X_n,\sfd_n,\mm_n,\bar x_n]$, $n\in \N$, is a given family of p.m.m.\ spaces pmG-converging to a limit space $\cX_\infty:=[X_\infty,\sfd_\infty,\mm_\infty,\bar x_\infty]$.
\item[-] There is a complete and separable space $(X,\sfd)$ containing isometrically all the $(X_n,\sfd_n)$, $n\in\bar \N$ and which realizes the extrinsic convergence as in Definition \ref{def:Dconv}. In particular
\[
\lim_{n\to\infty}\sfd(\bar x_n,\bar x_\infty)=0.
\]
\item[-] For some $\Co>0$ we have $\cX_n\in\X^\psi$ for every $n\in\bar \N$, where $\psi(r):=e^{-\Co r^2}$ and $\D^\psi(\cX_n,\cX_\infty)\to 0$ as $n\to\infty$. In particular, for some $\sfc_1,\sfc_2>0$ all the spaces $\cX_n$, $n\in\bar\N$, satisfy the bound \eqref{eq:control1}.
\item[-] According to \eqref{eq:expcontr}, for $n\in\bar\N$ we define
\[
  \z_n:=\int \rme^{-\Co\sfd_n^2(x,\bar x_n)}\,\d\mm_n(x),\quad
  \text{ and }\quad
  \tilde\mm_n:=\frac 1{\z_n} \,\rme^{-\Co\sfd_n^2(\cdot,\bar x_n)}\mm_n\in\probt{X_n}\subset\probt X,
  \]
  so that the convergence in $(\X^\psi,\D^\psi)$ gives
\[
\lim_{n\to\infty}\z_n=\z_\infty,\qquad\qquad \lim_{n\to\infty}W_2(\tilde\mm_n,\tilde\mm_\infty)=0.
\]
\item[-] We shall say that $n\mapsto x_n\in \supp(\mm_n)$ converges to $x_\infty\in\supp(\mm_\infty)$ provided $\lim_{n\to\infty}\sfd(x_n,x_\infty)=0$.
\item[-] We shall say that  $n\mapsto\mu_n\in\prob{\supp(\mm_n)}$  weakly converges  to $\mu\in\prob{\supp(\mm_\infty)}$ provided $(\mu_n)$ weakly converges to $\mu_\infty$ as sequence in $\prob X$. Similarly for measures in $\probt{\supp(\mm_n)}$ and  $W_2$-convergence.
\end{itemize}
}\fr\end{remark}
Notice that we are directly assuming that $(X,\sfd)$ contains isometrically the $X_n$'s rather then asking for the existence of isometric embeddings. Up to identify $(X_n,\sfd_n)$ with its isometric image this is always possible, but this particular choice allows to avoid to introduce the isometric embeddings $\iota_n$, thus simplifying the notation.

In line with the discussion made at the end of Section \ref{se:topcomm}, we also point out that although the choice of the space $(X,\sfd)$ is non-intrinsic and might affect the notion of convergence of points and measures as given above (in line with Definition \ref{def:convpoint}), in fact all the statements that we are going to prove are intrinsic in nature, being independent of the particular choice made. We won't insist on this point any further.

\subsection{Stability of $\CD(K,\infty)$}
We start our discussion on the stability of Ricci curvature
bounds. The first result is about so-called $\Gamma$-convergence of
the relative entropies w.r.t.~$W_2$-convergence.
\begin{proposition}[$\Gamma$-convergence of the Entropies]\label{prop:GammaConvergence} With the same assumptions and notation as in Remark \ref{re:assnot}, the sequence of relative entropies $(\Entn)$ $\Gamma$-converges to
$\Enti$ w.r.t.~the $W_2$-convergence, i.e.:
\begin{itemize}
\item\underline{$\Gamma-\liminf$} for any $\mu_\infty\in\probt{X_\infty}$ and any  sequence $(\mu_n)$ such that $W_2(\mu_n,\mu_\infty)\to 0$ we have
\[
\Enti(\mu_\infty)\leq \limi_{n\to\infty}\Entn(\mu_n),
\]
\item\underline{$\Gamma-\limsup$} for any $\mu_\infty\in\probt {X_\infty}$ there exists a  sequence $(\mu_n)$ such that $W_2(\mu_n,\mu_\infty)\to 0$ and
\begin{equation}
\label{eq:glims}
\Enti(\mu_\infty)\geq \lims_{n\to\infty}\Entn(\mu_n).
\end{equation}
\end{itemize}
\end{proposition}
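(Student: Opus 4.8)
The plan is to exploit the identity \eqref{eq:chiave}, which rewrites the relative entropy $\Entn$ in terms of the rescaled probability measure $\tilde\mm_n$ and a quadratic-growth correction. Recall that for $\mu=\rho\mm_n$ we have
\[
\Entn(\mu)=\Entt_n(\mu)-\Co\int\sfd_n^2(\cdot,\bar x_n)\,\d\mu-\log\z_n,
\]
and by Remark \ref{re:assnot} we know $\z_n\to\z_\infty$ and $W_2(\tilde\mm_n,\tilde\mm_\infty)\to 0$ (so in particular $\tilde\mm_n\to\tilde\mm_\infty$ narrowly). Thus the whole statement reduces to controlling the two pieces $\Entt_n$ and the second moments $\int\sfd_n^2(\cdot,\bar x_n)\,\d\mu_n$ separately, the first via lower semicontinuity, the second via the characterization of $W_2$-convergence in Proposition \ref{prop:narw2}.

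For the $\Gamma$-$\liminf$ inequality: given $\mu_n\to\mu_\infty$ in $W_2$ (in the sense of Remark \ref{re:assnot}, i.e. inside the common space $X$), we may assume $\limi_n\Entn(\mu_n)<\infty$. By Proposition \ref{prop:narw2} the $W_2$-convergence gives $\int\sfd^2(\cdot,\bar x_n)\,\d\mu_n\to\int\sfd^2(\cdot,\bar x_\infty)\,\d\mu_\infty<\infty$, so the correction term converges. For the entropy term w.r.t.\ the probability measures, I would apply the joint lower semicontinuity \eqref{eq:joint}: since $\tilde\mm_n\to\tilde\mm_\infty$ and $\mu_n\to\mu_\infty$ narrowly (narrow convergence being a consequence of $W_2$-convergence, again by Proposition \ref{prop:narw2}), we get $\Entt_\infty(\mu_\infty)\le\limi_n\Entt_n(\mu_n)$. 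Combining these with $\log\z_n\to\log\z_\infty$ in \eqref{eq:chiave} yields $\Enti(\mu_\infty)\le\limi_n\Entn(\mu_n)$.

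For the $\Gamma$-$\limsup$ inequality, I would first reduce to $\mu_\infty\in D(\Enti)$ (otherwise the bound is trivial with any recovery sequence, e.g.\ using tightness of $\tilde\mm_n$ and the weak convergence to produce approximating measures). Write $\mu_\infty=\rho_\infty\mm_\infty$ and, by a truncation and regularization argument, reduce to the case where $\rho_\infty$ is bounded with bounded support and bounded away from $0$ on its support; such approximations converge to $\mu_\infty$ in $W_2$ and in entropy by monotone/dominated convergence (this uses \eqref{eq:control1} to control the tails). The crux is then: given such a nice $\mu_\infty$, build $\mu_n=\rho_n\mm_n\to\mu_\infty$ with $\lims_n\Entn(\mu_n)\le\Enti(\mu_\infty)$. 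Here I would use that, after embedding everything in the common space $X$, the measures $(\iota_\infty)_\sharp\mm_\infty$ are weak limits of $(\iota_n)_\sharp\mm_n$; the natural candidate is to transport $\mu_\infty$ back along an (approximate) optimal plan between $\tilde\mm_n$ and $\tilde\mm_\infty$, more precisely to define $\mu_n$ via the disintegration of such plans so that the density $\rho_n$ is a conditional average of $\rho_\infty$, which by Jensen's inequality does not increase the entropy against $\mm_n$; then check $W_2(\mu_n,\mu_\infty)\to 0$ using that $W_2(\tilde\mm_n,\tilde\mm_\infty)\to 0$ together with the boundedness of the relevant densities.

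\textbf{Main obstacle.} The delicate part is the $\Gamma$-$\limsup$: constructing an explicit recovery sequence is not completely mechanical because the $\mm_n$'s live on genuinely different spaces and one only has weak (not $W_2$, a priori) convergence of the full measures. The key technical device will be to work with the normalized measures $\tilde\mm_n$ (which \emph{do} converge in $W_2$ by Remark \ref{re:assnot}), use optimal plans between them to push $\mu_\infty$ to $\mu_n$, and invoke Jensen's inequality to get the entropy upper bound — while carefully tracking the quadratic correction term in \eqref{eq:chiave} and making sure the $2$-uniform integrability needed for $W_2$-convergence of $\mu_n$ is preserved by the construction (this is exactly where the reduction to bounded, compactly supported densities pays off).
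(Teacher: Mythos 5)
Your proposal is correct and follows essentially the same route as the paper's proof: the $\Gamma$-$\liminf$ via the decomposition \eqref{eq:chiave}, the convergence of second moments from Proposition \ref{prop:narw2}, and the joint lower semicontinuity \eqref{eq:joint}; the $\Gamma$-$\limsup$ via truncation to bounded densities and pushing $\mu_\infty$ through optimal plans between $\tilde\mm_\infty$ and $\tilde\mm_n$, with Jensen's inequality applied to the disintegration and the bound $W_2^2(\mu_\infty,\mu_n)\le(\sup\rho)\,W_2^2(\tilde\mm_\infty,\tilde\mm_n)$. There are no substantive differences.
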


\begin{proof}$\ $\\
\noindent{$\mathbf{ \Gamma}${\bf -lim\,inf.}}  
 Let $(\mu_n)\subset\probt X$ be a sequence $W_2$-converging to some $\mu_\infty\in\probt X$. By Proposition \ref{prop:narw2} we have  $\int \sfd^2(\cdot,\bar x_n)\,\d\mu_n\to \int \sfd^2(\cdot,\bar x_\infty)\,\d\mu_\infty$. Thus from \eqref{eq:chiave} and the fact that $\z_n\to \z_\infty\in(0,\infty)$, to conclude it is enough to check that
\[
\limi_{n\to\infty}\Enttn(\mu_n)\geq\Entti(\mu_\infty),
\]
which follows by \eqref{eq:joint}.

\noindent{$\mathbf{ \Gamma}${\bf -lim\,sup.}}  Using  \eqref{eq:chiave} and   $\z_n\to \z_\infty$, to conclude it is enough to prove that for any $\mu_\infty\in\probt X$ there exists a sequence $(\mu_n)\subset\probt X$ $W_2$-converging to $\mu_\infty$ and such that
\[
\lims_{n\to\infty}\Enttn(\mu_n)\leq \Entti(\mu_\infty).
\]
 The proof of this fact is not new (see for instance \cite[Lemma 6.2]{Ambrosio-Savare-Zambotti09}), but for completeness let us recall the arguments.

 If $\Entti(\mu_\infty)=\infty$ there is nothing to prove. Thus we can assume that $\mu_\infty\ll\tilde\mm_\infty$. Let $\mu_\infty=\rho\tilde\mm_{\infty}$ and assume for the moment that $\rho$ is bounded. 

For any $n\in \N$, let $\ggamma_n \in \Opt(\tmm_\infty, \tmmn)$ be an optimal plan. Define $\ggamma_n'\in\probt{X^2}$ as $\d\ggamma_n'(x,y):=\rho(x)\d\ggamma_n(x,y)$ and $\mu_n:=\pi^2_\sharp\ggamma_n'\in\probt X$. By construction, $\ggamma_n'\ll\ggamma_n$, hence $\mu_n\ll\pi^2_\sharp\ggamma_n=\tilde\mm_n$. Let $\mu_n=\eta_n\tilde\mm_n$. It is readily checked from the definition that it holds $\eta_n(y)=\int \rho(x)\,\d(\ggamma_n)_{y}(x)$, where $\{(\ggamma_n)_y\}$ is the disintegration of $\ggamma_n$ w.r.t. the projection on the second marginal. By Jensen's inequality applied to the convex function $u(z)=z\log(z)$ we have
\[
\begin{split}
\Enttn(\mu_n)&=\int u(\eta_n)\,\d\tilde\mm_n=
\int u\left(\int \rho(x)\,\d(\ggamma_n)_{y}(x)\right)\,\d\tilde\mm_n(y)\\
&\leq \iint u(\rho(x))\,\d(\ggamma_n)_{y}(x)\,\d\tilde\mm_n(y)=\iint u(\rho(x))\,\d\ggamma_n(x,y)\\
&=\int u(\rho)\,\d\pi^1_\sharp\ggamma_n=\int u(\rho)\,\d\tilde\mm_\infty=\Entti(\mu_\infty).
\end{split}
\]
Since by construction we have $\ggamma_n'\in\Adm(\mu_\infty,\mu_n)$, it holds
\[
W_2^2(\mu_\infty,\mu_n)\leq \int\sfd^2(x,y)\,\d\ggamma_n'(x,y)= \int\rho(x)\sfd^2(x,y)\,\d\ggamma_n(x,y)\leq\big(\sup\rho\big)  W_2^2(\tilde\mm_\infty,\tilde\mm_n),
\]
and therefore $W_2(\mu_\infty,\mu_n)\to 0$. Thus in this case the thesis is proved.

If $\rho$ is not bounded, for $k\in\N$ define $\rho^k:=c_k\min\{\rho,k\}$, $c_k$ being such that  $\mu^k:=\rho^k\tilde\mm_\infty\in\probt X$. Clearly, it holds
\[
\lims_{k\to\infty}\Entti(\mu^k)\leq \Entti(\mu_\infty),\qquad \lim_{k\to\infty}W_2(\mu^k,\mu_\infty)=0.
\]
Then apply the previous argument to $\mu^k$ and conclude with a diagonalization argument.
\end{proof}
We shall also make use of the following general 2-uniform integrability criterion:
\begin{proposition}\label{prop:2equi}
Let $(Y,\sfd)$ be a complete and separable metric space and $(\mu_n),(\nu_n) \subset \probt Y$ two 2-uniformly integrable sequences. Assume that for every $n\in\N$ there exists a $W_2$-geodesic $t\mapsto\mu_{n,t}$ connecting $\mu_n$ to $\nu_n$. 

Then the family $\{\mu_{n,t}\}_{n\in\N,t\in[0,1]}$ is  $2$-uniformly integrable.
\end{proposition}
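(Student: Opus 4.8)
The plan is to lift each of the given $W_2$-geodesics to a probability measure on the space of geodesics of $Y$ via Proposition~\ref{prop:geod}, and then to exploit the elementary fact that a point on a geodesic cannot be much farther from a fixed base point than its two endpoints, so that the ``far mass'' of $\mu_{n,t}$ is controlled by the far mass of $\mu_n$ and $\nu_n$.

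First I would fix a base point $\bar x\in Y$. Since $(\mu_n)$ and $(\nu_n)$ are $2$-uniformly integrable they are contained in $\probt Y$, and then so is each $\mu_{n,t}$ because $W_2(\mu_{n,t},\mu_n)=t\,W_2(\mu_n,\nu_n)<\infty$; thus $t\mapsto\mu_{n,t}$ is a genuine geodesic in $(\probt Y,W_2)$ and Proposition~\ref{prop:geod} yields $\ppi_n\in\prob{\geo(Y)}$ with $(\e_t)_\sharp\ppi_n=\mu_{n,t}$ for every $t\in[0,1]$; in particular $(\e_0)_\sharp\ppi_n=\mu_n$ and $(\e_1)_\sharp\ppi_n=\nu_n$. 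The key pointwise estimate is that for $\gamma\in\geo(Y)$ one has $\sfd(\gamma_0,\gamma_t)=t\,\sfd(\gamma_0,\gamma_1)\le\sfd(\gamma_0,\bar x)+\sfd(\gamma_1,\bar x)$, hence
\[
\sfd(\gamma_t,\bar x)\le 2\,\sfd(\gamma_0,\bar x)+\sfd(\gamma_1,\bar x),
\]
which gives $\sfd^2(\gamma_t,\bar x)\le 8\big(\sfd^2(\gamma_0,\bar x)+\sfd^2(\gamma_1,\bar x)\big)$ together with the inclusion $\{\sfd(\gamma_t,\bar x)>R\}\subseteq\{\sfd(\gamma_0,\bar x)>R/4\}\cup\{\sfd(\gamma_1,\bar x)>R/4\}$.

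Next I would write $\int_{B_R^c(\bar x)}\sfd^2(\cdot,\bar x)\,\d\mu_{n,t}=\int_{\{\sfd(\gamma_t,\bar x)>R\}}\sfd^2(\gamma_t,\bar x)\,\d\ppi_n$ and insert the two displayed bounds; this produces a sum of four terms of the shape $\int_{\{\sfd(\gamma_i,\bar x)>R/4\}}\sfd^2(\gamma_j,\bar x)\,\d\ppi_n$, $i,j\in\{0,1\}$. The two ``diagonal'' terms $i=j$ are exactly $\int_{B_{R/4}^c(\bar x)}\sfd^2(\cdot,\bar x)\,\d\mu_n$ and $\int_{B_{R/4}^c(\bar x)}\sfd^2(\cdot,\bar x)\,\d\nu_n$, which tend to $0$ uniformly in $n$ by the assumed $2$-uniform integrability of $(\mu_n)$ and $(\nu_n)$. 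For a ``cross'' term, say $i=0$, $j=1$, I would introduce an auxiliary scale $S>0$ and split according to whether $\sfd(\gamma_1,\bar x)\le S$ or not, obtaining
\[
\int_{\{\sfd(\gamma_0,\bar x)>R/4\}}\sfd^2(\gamma_1,\bar x)\,\d\ppi_n\le S^2\,\mu_n\big(B_{R/4}^c(\bar x)\big)+\int_{B_S^c(\bar x)}\sfd^2(\cdot,\bar x)\,\d\nu_n ,
\]
with $\mu_n(B_{R/4}^c(\bar x))\le\tfrac{16}{R^2}\sup_m\int\sfd^2(\cdot,\bar x)\,\d\mu_m$, where the supremum is finite (again by $2$-uniform integrability). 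Given $\eps>0$, one first picks $S$ so that all the $B_S^c$-tails of $(\mu_n)$ and $(\nu_n)$ are $<\eps$, and then $R\ge 4S$ large enough that the remaining $S^2/R^2$ factors are $<\eps$ as well; each of the four terms is then $O(\eps)$ uniformly in $n$ and $t$, which is precisely $2$-uniform integrability of $\{\mu_{n,t}\}_{n,t}$.

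The main obstacle is this cross term: the two endpoints of the lifted geodesics are coupled through $\ppi_n$ in a way we cannot control, so $\sfd^2(\gamma_1,\bar x)$ on the event $\{\gamma_0\text{ far}\}$ cannot be dominated by the $(\nu_n)$-tail alone; the two-scale device above circumvents this by using only the \emph{mass} that $\ppi_n$ puts on $\{\gamma_0\text{ far}\}$ (estimated by the second moment of $\mu_n$ via Chebyshev) together with the genuine tail bound for $(\nu_n)$. As a slicker alternative one can run the whole argument through a de la Vallée Poussin function: applying that criterion to the family $\{\mu_n\}_n\cup\{\nu_n\}_n$ gives a single convex increasing superlinear $\Phi$ with $\sup_n\int\Phi(\sfd^2(\cdot,\bar x))\,\d\mu_n<\infty$ and $\sup_n\int\Phi(\sfd^2(\cdot,\bar x))\,\d\nu_n<\infty$; setting $\Theta(r):=\Phi(r/16)$ and using $\sfd^2(\gamma_t,\bar x)\le 8(\sfd^2(\gamma_0,\bar x)+\sfd^2(\gamma_1,\bar x))$ with convexity of $\Theta$ bounds $\int\Theta(\sfd^2(\cdot,\bar x))\,\d\mu_{n,t}$ by $\tfrac12\int\Phi(\sfd^2(\cdot,\bar x))\,\d\mu_n+\tfrac12\int\Phi(\sfd^2(\cdot,\bar x))\,\d\nu_n$ uniformly in $n,t$, whence the claim.
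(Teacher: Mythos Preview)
Your argument is correct. Both your proof and the paper's start the same way (lift via Proposition~\ref{prop:geod}, use $\sfd(\gamma_t,\bar x)\le 2(\sfd(\gamma_0,\bar x)+\sfd(\gamma_1,\bar x))$ and the resulting inclusion into $\{\sfd(\gamma_0,\bar x)>R/4\}\cup\{\sfd(\gamma_1,\bar x)>R/4\}$), but then diverge in how they handle the integral of $\sfd^2(\gamma_t,\bar x)$ over these sets. You expand $\sfd^2(\gamma_t,\bar x)$ into endpoint contributions, which produces cross terms like $\int_{\{\gamma_0\text{ far}\}}\sfd^2(\gamma_1,\bar x)\,\d\ppi_n$, and you deal with them by a two-scale Chebyshev splitting (or, in your alternative, by a de la Vall\'ee Poussin function). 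The paper instead refines the covering to
\[
A^0_R:=\{\sfd(\gamma_0,\bar x)\ge R/4,\ \sfd(\gamma_0,\bar x)\ge\sfd(\gamma_1,\bar x)\},\qquad
A^1_R:=\{\sfd(\gamma_1,\bar x)\ge R/4,\ \sfd(\gamma_1,\bar x)\ge\sfd(\gamma_0,\bar x)\},
\]
so that on $A^0_R$ one has $\sfd(\gamma_t,\bar x)\le 4\sfd(\gamma_0,\bar x)$ directly, and no cross terms ever appear: one line gives $\int_{B_R^c}\sfd^2(\cdot,\bar x)\,\d\mu_{n,t}\le 16\int_{B_{R/4}^c}\sfd^2(\cdot,\bar x)\,\d\mu_n+16\int_{B_{R/4}^c}\sfd^2(\cdot,\bar x)\,\d\nu_n$. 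The paper's ``compare to the farther endpoint'' trick is shorter and avoids the auxiliary scale $S$; your approach is a bit heavier but entirely sound, and your de la Vall\'ee Poussin variant is a clean independent route.
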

\begin{proof}
For every $n\in\N$, let $\ppi_n\in \prob{\geo(Y)}$ be a  plan representing the geodesic $t\mapsto\mu_{n,t}$ as in Proposition \ref{prop:geod}. Clearly it holds
\begin{equation}\label{eq:d2pi}
\int_{B_R^c(\bar x)} \sfd^2(\bar{x},\cdot )\, \d \mu_{n,t}= \int_{\{\gamma:\sfd^2(\bar{x},\gamma_t)>R\}} \sfd^2(\bar{x},\gamma_t) \; \d \ppi_n(\gamma),\qquad\forall R>0,\ t\in[0,1],\ n\in\N.
\end{equation}
The trivial inequality
\[
\sfd(\gamma_t, \bar{x})\leq 2\big( \sfd(\gamma_0,\bar{x})+ \sfd(\gamma_1,\bar{x}) \big),
\]
ensures that $\{\gamma:\sfd(\gamma_t,\bar x)>R \}\subset A^0_R\cup A^1_R$, where $A^0_R,A^1_R\subset \geo(Y)$ are given by 
\[
\begin{split}
A^0_R&:=\big\{\gamma\ :\ \sfd(\gamma_0,\bar x)\geq R/4,\ \sfd(\gamma_0,\bar x)\geq \sfd(\gamma_1,\bar x)\big\},\\
A^1_R&:=\big\{\gamma\ :\ \sfd(\gamma_1,\bar x)\geq R/4,\ \sfd(\gamma_1,\bar x)\geq \sfd(\gamma_0,\bar x)\big\}.
\end{split}
\] 
From the bound
\[
\sfd(\gamma_t,\bar x)\leq 2\big( \sfd(\gamma_0,\bar{x})+ \sfd(\gamma_1,\bar{x}) \big)\leq 4\sfd(\gamma_0,\bar x),\qquad\forall \gamma\in A^0_R,
\]
we get
\[
\int_{A^0_R}\sfd^2(\gamma_t,\bar x)\,\d\ppi_n(\gamma)\leq 16\int_{A^0_R}\sfd^2(\gamma_0,\bar x)\,\d\ppi_n(\gamma)\leq 16\int_{B^c_{R/4}(\bar x)}\sfd^2(\cdot, \bar x)\,\d \mu_n. 
\]
Arguing symmetrically for $A^1_R$ and taking \eqref{eq:d2pi} into account we obtain
\[
\int_{B_R^c(\bar x)} \sfd^2(\bar{x},\cdot ) \d \mu_{n,t}\leq 16\int_{B^c_{R/4}(\bar x)}\sfd^2(\cdot, \bar x)\,\d \mu_n+16\int_{B^c_{R/4}(\bar x)}\sfd^2(\cdot, \bar x)\,\d \nu_n,
\]
for any $R>0$, $t\in[0,1]$, $n\in\N$, and the conclusion follows from the 2-uniform integrability of $(\mu_n)$, $(\nu_n)$.
\end{proof}

With these tools at disposal, we can now prove the general stability of the $\CD(K,\infty)$ condition along the same lines used by Lott-Villani \cite{Lott-Villani09} and Sturm \cite{Sturm06I}:
\begin{theorem}[Stability of $\CD(K,\infty)$]\label{thm:stabilityCD} 
Let $\cX_n$,  $n\in \N$, be a sequence of 
$\CD(K,\infty)$ \pmm~spaces converging to  $\cX_\infty$ in the  $\pGw$-convergence.
Then $\cX_\infty$  is a $\CD(K,\infty)$ space as well.
\end{theorem}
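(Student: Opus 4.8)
I would run the classical Lott--Villani/Sturm scheme, now powered by the $\Gamma$-convergence and $2$-uniform integrability tools assembled above, working throughout inside the fixed realization $(X,\sfd)$ with the isometric inclusions $X_n\hookrightarrow X$, $n\in\bar\N$, of Remark~\ref{re:assnot}. Fix $\mu^0,\mu^1\in D(\Enti)$. The first step is to apply the $\Gamma$-$\limsup$ part of Proposition~\ref{prop:GammaConvergence} to get recovery sequences $\mu^0_n,\mu^1_n\in\probt{X_n}$ with $W_2(\mu^i_n,\mu^i)\to0$ and $\lims_n\Entn(\mu^i_n)\le\Enti(\mu^i)<\infty$ (the construction in that proof yields measures absolutely continuous w.r.t.\ $\tilde\mm_n$, hence concentrated on $\supp(\mm_n)\subset X_n$). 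By Proposition~\ref{prop:narw2} the sequences $(\mu^i_n)$ are then $2$-uniformly integrable and $W_2(\mu^0_n,\mu^1_n)\to W_2(\mu^0,\mu^1)$; for $n$ large, $\mu^0_n,\mu^1_n\in D(\Entn)$, so the $\CD(K,\infty)$ condition for $\cX_n$ (Definition~\ref{def:cd}) provides a $W_2$-geodesic $t\mapsto\mu_{n,t}$ in $\probt{X_n}$ from $\mu^0_n$ to $\mu^1_n$ with
\[
\Entn(\mu_{n,t})\le(1-t)\Entn(\mu^0_n)+t\,\Entn(\mu^1_n)-\tfrac K2 t(1-t)W_2^2(\mu^0_n,\mu^1_n),
\]
and in particular $M:=\sup_{n\;\mathrm{large},\,t}\Entn(\mu_{n,t})<\infty$.

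The core of the argument is a compactness statement for these geodesics. By Proposition~\ref{prop:2equi} the family $\{\mu_{n,t}\}_{n,t}$ is $2$-uniformly integrable, so $\sup_{n,t}\int\sfd^2(\cdot,\bar x_\infty)\,\d\mu_{n,t}<\infty$; combining this with $M<\infty$, $\z_n\to\z_\infty$ and the identity \eqref{eq:chiave} applied to each $\cX_n$, one gets $\sup_{n,t}\Enttn(\mu_{n,t})<\infty$. Since $(\tilde\mm_n)$ is $W_2$-convergent, hence tight, Proposition~\ref{prop:enttight} then shows that $\{\mu_{n,t}\}_{n,t}$ is tight in $\prob X$. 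Next I would lift each curve $\mu_{n,\cdot}$ to a plan $\ppi_n\in\prob{\geo(X)}$ via Proposition~\ref{prop:geod}, and prove that $(\ppi_n)$ is tight in $\prob{\geo(X)}$: for each rational $q\in[0,1]$ choose, using the tightness just obtained, a compact $K_q\subset X$ with $\sup_n\mu_{n,q}(X\setminus K_q)$ as small as we wish; the set of geodesics of $X$ passing through $K_q$ at time $q$ for every rational $q$ is compact (uniformly bounded endpoints force uniform Lipschitz bounds on the curves, and pointwise convergence on the dense set of rational times together with completeness of $X$ upgrades to uniform convergence to a geodesic) and carries almost all the $\ppi_n$-mass. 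Hence, up to a subsequence, $\ppi_n\to\ppi_\infty$ narrowly in $\prob{\geo(X)}$; set $\mu_t:=(\e_t)_\sharp\ppi_\infty$.

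It then remains to identify $(\mu_t)$ as an admissible competitor and pass the inequality to the limit. Continuity of $\e_t$ gives $\mu_{n,t}\to\mu_t$ narrowly, and, using the $2$-uniform integrability of $\{\mu_{n,t}\}_n$ and Proposition~\ref{prop:narw2}, $W_2(\mu_{n,t},\mu_t)\to0$ for each $t$; in particular $\mu_0=\mu^0$ and $\mu_1=\mu^1$ by uniqueness of narrow limits. Since each $\gamma$ in the support of $\ppi_\infty$ is a geodesic, $(\e_s,\e_t)_\sharp\ppi_\infty$ is admissible for $(\mu_s,\mu_t)$ and gives $W_2^2(\mu_s,\mu_t)\le|s-t|^2\int\sfd^2(\gamma_0,\gamma_1)\,\d\ppi_\infty$; the uniform integrability of $\gamma\mapsto\sfd^2(\gamma_0,\gamma_1)\le2\sfd^2(\gamma_0,\bar x_\infty)+2\sfd^2(\gamma_1,\bar x_\infty)$ with respect to $(\ppi_n)$ (a consequence of the $2$-uniform integrability of the endpoints) yields $\int\sfd^2(\gamma_0,\gamma_1)\,\d\ppi_\infty=\lim_n W_2^2(\mu^0_n,\mu^1_n)=W_2^2(\mu_0,\mu_1)$, and then the triangle inequality forces $t\mapsto\mu_t$ to be a $W_2$-geodesic. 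Finally, taking $\liminf_{n\to\infty}$ in the displayed inequality and using the $\Gamma$-$\liminf$ inequality of Proposition~\ref{prop:GammaConvergence} on the left (legitimate since $W_2(\mu_{n,t},\mu_t)\to0$) together with $\lims_n\Entn(\mu^i_n)\le\Enti(\mu^i)$ and $W_2(\mu^0_n,\mu^1_n)\to W_2(\mu^0,\mu^1)$ on the right, we obtain
\[
\Enti(\mu_t)\le(1-t)\Enti(\mu^0)+t\,\Enti(\mu^1)-\tfrac K2 t(1-t)W_2^2(\mu^0,\mu^1);
\]
finiteness of the right-hand side forces $\mu_t\ll\mm_\infty$, so $(\mu_t)\subset D(\Enti)\subset\probt{X_\infty}$, and since $\mu^0,\mu^1$ were arbitrary, $\cX_\infty$ is $\CD(K,\infty)$.

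The step I expect to be the main obstacle is the tightness of the lifted plans $(\ppi_n)$ in $\prob{\geo(X)}$: because $X$ is only assumed complete and separable — not proper — one cannot compactify geodesics with controlled endpoints by local compactness of $X$, so the tightness has to be extracted from the combination of $2$-uniform integrability along the geodesics (Proposition~\ref{prop:2equi}) with the entropy-tightness criterion (Proposition~\ref{prop:enttight}); this is exactly the place where the absence of any compactness assumption costs genuine work compared with the compact case.
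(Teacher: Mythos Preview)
Your proof is correct and follows the same overall Lott--Villani/Sturm strategy as the paper (recovery sequences via the $\Gamma$-$\limsup$, uniform entropy bounds along the approximating geodesics, tightness via Proposition~\ref{prop:enttight}, $2$-uniform integrability via Proposition~\ref{prop:2equi}, then $\Gamma$-$\liminf$ to conclude). The one substantive difference is in the compactness step.

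You lift each approximating geodesic to a plan $\ppi_n\in\prob{\geo(X)}$ and then establish tightness of $(\ppi_n)$ by controlling the mass outside compact sets of geodesics pinned at rational times. This works, but the paper avoids the lifting entirely: once one knows that for each fixed $t$ the family $\{\mu_{n,t}\}_n$ is relatively compact in $(\probt X,W_2)$ (tightness plus $2$-uniform integrability, exactly as you argue), and that the curves $t\mapsto\mu_{n,t}$ are equi-Lipschitz in $(\probt X,W_2)$ (their $W_2$-length is $W_2(\mu^0_n,\mu^1_n)$, which is bounded), the metric Arzel\`a--Ascoli theorem directly yields a subsequence converging uniformly in $W_2$ to a limit curve $(\mu_{\infty,t})$, and uniform limits of geodesics are geodesics. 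So the step you flag as the ``main obstacle'' --- tightness of the lifted plans --- is in fact bypassable: working at the level of curves in $(\probt X,W_2)$ rather than at the level of plans on $\geo(X)$ gives the compactness for free. Your route has the advantage of producing a limiting dynamical plan $\ppi_\infty$, but that extra structure is not needed for the $\CD(K,\infty)$ conclusion.
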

\begin{proof}
By Proposition  \ref{cor:DconvCDK}, without loss of generality we may use the assumptions and notation  of Remark \ref{re:assnot}.

Let $\mu_\infty,\nu_\infty\in D(\Enti)$. Use  the $\Gamma-\lims$
inequality in Proposition \ref{prop:GammaConvergence} to find
sequences $(\mu_n),(\nu_n)\subset\probt X $ which $W_2$-converge to
$\mu_\infty,\nu_\infty$ respectively and 
satisfy
\begin{equation}
\label{eq:glimsstab}
\lims_{n\to\infty}\Entn(\mu_n)\leq\Enti(\mu_\infty),\qquad\qquad\lims_{n\to\infty}\Entn(\nu_n)\leq\Enti(\nu_\infty).
\end{equation}
In particular, for $n$ large enough we have $\mu_n,\nu_n\in D(\Entn)$, thus by assumption there are $W_2$-geodesics $t\mapsto\mu_{n,t}$ connecting $\mu_n$ to $\nu_n$ such that
\begin{equation}
\label{eq:geodent}
\Entn(\mu_{n,t})\leq(1-t)\Entn(\mu_n)+t\,\Entn(\nu_n)-\frac K2t(1-t)W_2^2(\mu_n,\nu_n).
\end{equation}
Therefore, for $n$ large enough the values of $\Entn(\mu_{n,t})$ are
uniformly bounded in $n,t$. Since the second moments of $\mu_{n,t}$
are also - clearly - uniformly bounded in $n,t$, from
\eqref{eq:chiave} and $\z_n\to \z_\infty\in(0,\infty)$ we deduce that
the values of $\Enttn(\mu_{n,t})$ are uniformly bounded in $n,t$, as
well. Hence by Proposition \ref{prop:enttight}, for every $t\in[0,1]$
the sequence $n\mapsto\mu_{n,t}$ is tight. From Proposition
\ref{prop:2equi} we also know that it is 2-uniformly integrable, thus
it is  relatively  compact in $(\probt X ,W_2)$ (Theorem \ref{thm:prok} and Proposition \ref{prop:narw2}).

Since geodesics are equi-Lipschitz, applying the
metric version of Arzel\`a-Ascoli Theorem 
we find a subsequence $n_k\up+\infty$ 
and a limit geodesic $(\mu_{\infty,t})\subset \probt X $ 
such that 
$W_2(\mu_{n_k,t},\mu_{\infty,t})\to 0$ as $k\to\infty$ for every $t\in
[0,1]$. 
Hence  the  $\Gamma-\limi$ inequality in Proposition \ref{prop:GammaConvergence} yields
\begin{equation}
\label{eq:glimistab}
\limi_{k\to\infty}\Entn(\mu_{n,t})\geq
\Enti(\mu_{\infty,t})\quad\forevery t\in [0,1].
\end{equation}
The inequalities \eqref{eq:glimsstab}, \eqref{eq:geodent} and \eqref{eq:glimistab} give
\[
\Enti(\mu_{\infty,t})\leq(1-t)\Enti(\mu_\infty)+t\,\Enti(\nu_\infty)-\frac K2t(1-t)W_2^2(\mu_\infty,\nu_\infty),
\]
for any $t\in[0,1]$. 
\end{proof}

\section{Compactness and stability of the heat flows}
Aim of this section is to show that the gradient flow of the relative entropy is stable  w.r.t.~$\pGw$-convergence under a uniform $\CD(K,\infty)$ condition. We call such gradient flow `heat flow' by analogy with the common Euclidean setting. 
\subsection{Preliminaries}
\subsubsection{Absolutely  continuous curves}
Let $(Z,\sfd_Z)$ be a complete separable metric space and $I\subset\R$ a non trivial interval. A curve $I\ni t\mapsto z_t\in Z$ is absolutely continuous (resp. locally absolutely continuous) provided there exists a function $f\in L^1(I)$ (resp. in $L^1_{loc}(I)$) such that
\begin{equation}
\label{eq:ac}
\sfd_Z(z_t,z_s)\leq \int_t^sf(r)\,\d r,\qquad\forall t,s\in I,\ t<s.
\end{equation}

For absolutely continuous (resp. locally a.c.) curves, the limit
$\lim_{h\to 0}\frac{\sfd_Z(z_{t+h},z_t)}{|h|}$ exists for a.e. $t\in
I$, defines an $L^1(I)$ (resp. $L^1_{loc}(I)$) function denoted by
$|\dot z_t|$ called metric speed, and it is the minimal function -
in the a.e. sense - that can be put in the right hand side of
\eqref{eq:ac}  (see Theorem 1.1.2 in \cite{Ambrosio-Gigli-Savare08}
for a proof).
 We denote by $\AC p IZ$ the space of all absolutely continuous
  curves
with metric derivative in $L^p(I)$.

The map
\[
\rmC(I,Z)\ni \gamma\mapsto
 \Ecurve \gamma:=\left\{
\begin{array}{ll}
\displaystyle{\int_I|\dot \gamma_t|^2\,\d t},&\qquad\textrm{if
  $\gamma\in \AC2 IZ$},\\
+\infty,&\qquad\textrm{otherwise},
\end{array}
\right.
\]
is lower semicontinuous. 
In the following we will often write the expression $\int_I|\dot \gamma_t|^2\,\d t$ even for curves $\gamma$ not absolutely continuous, in this case the value will be understood as $+\infty$ implicitly.

Given an absolutely continuous curve $\mu\in \AC{}I{(\probt Z,W_2)}$,
we will denote by $|\dot\mu_t|$ its metric speed 
in the space $(\probt Z,W_2)$.
If $\ppi\in\prob{C(I,Z)}$ is a plan 
satisfying $(\e_t)_\sharp\ppi=\mu_t$ for any $t\in I$, it is easy to see that it holds
\begin{equation}
\label{eq:nonopt}
\int_I|\dot\mu_t|^2\,\d t\leq \int \Ecurve\gamma
\,\d\ppi(\gamma),
\end{equation}
In \cite{Lisini07} it has been showed that with an appropriate selection of $\ppi$ equality can hold in \eqref{eq:nonopt}:
\begin{proposition}\label{prop:lisini}
Let $(Z,\sfd_Z)$ be complete and separable and  
$\mu\in \AC 2I{(\probt Z,W_2)}.$ 
Then there exists $\ppi\in\prob{C(I,Z)}$ such that 
\begin{equation}
\label{eq:optimal}
\text{$(\e_t)_\sharp\ppi=\mu_t$ for any $t\in I$},\qquad
\int_I|\dot\mu_t|^2\,\d t= \int \Ecurve\gamma\,\d\ppi(\gamma).
\end{equation}
\end{proposition}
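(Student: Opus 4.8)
The plan is the following. Since the inequality $\int_I|\dot\mu_t|^2\,\d t\le \int\Ecurve\gamma\,\d\ppi(\gamma)$ holds for \emph{every} $\ppi$ with $(\e_t)_\sharp\ppi=\mu_t$ (this is \eqref{eq:nonopt}), it suffices to exhibit one such lift realizing the opposite inequality. After a reparametrization (and, in the non-compact case, an exhaustion argument) we may assume $I=[0,1]$. I would build $\ppi$ by dyadic discretization: set $D_n:=\{k2^{-n}:0\le k\le 2^n\}$, $D_\infty:=\bigcup_n D_n$, choose for every pair $t',t''$ of consecutive points of $D_n$ an optimal plan in $\Opt(\mu_{t'},\mu_{t''})$, and glue these, via the gluing lemma, into a measure $\bbeta^n\in\prob{Z^{D_n}}$ whose consecutive two-dimensional marginals are the chosen optimal plans. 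The refinements $D_n\subset D_{n+1}$ let one perform the gluings consistently, so $\{\bbeta^n\}_n$ is a projective family with a limit $\bbeta\in\prob{Z^{D_\infty}}$: the law of a ``process'' $(\gamma_t)_{t\in D_\infty}$ such that, for all $t',t''\in D_\infty$,
\[
\Big(\int\sfd_Z^2(\gamma_{t'},\gamma_{t''})\,\d\bbeta\Big)^{1/2}\le \Big|\int_{t'}^{t''}|\dot\mu_r|\,\d r\Big|\le |t''-t'|^{1/2}\Big(\int_{t'}^{t''}|\dot\mu_r|^2\,\d r\Big)^{1/2},
\]
by the triangle inequality for the $L^2(\bbeta)$-norm together with the optimality of the consecutive marginals and the absolute continuity of $\mu$; moreover, for $t',t''$ consecutive in $D_n$ one has the sharper bound $W_2^2(\mu_{t'},\mu_{t''})\le 2^{-n}\int_{t'}^{t''}|\dot\mu_r|^2\,\d r$.

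Next I would promote $\bbeta$ to a measure on continuous curves. By Chebyshev and the sharper bound, with $\lambda_n:=n^2 2^{-n/4}$,
\[
\sum_{k=1}^{2^n}\bbeta\Big(\sfd_Z(\gamma_{(k-1)2^{-n}},\gamma_{k2^{-n}})>\lambda_n\Big)\le \lambda_n^{-2}\,2^{-n}\int_0^1|\dot\mu_r|^2\,\d r,
\]
whose right-hand side is summable in $n$; since also $\sum_n\lambda_n<\infty$, Borel--Cantelli gives that $\bbeta$-a.e.\ path has, for all large $n$, every level-$n$ consecutive increment bounded by $\lambda_n$, and a standard chaining estimate then shows that $\bbeta$-a.e.\ $(\gamma_t)_{t\in D_\infty}$ is uniformly continuous, hence extends uniquely to some $\tilde\gamma\in C([0,1];Z)$. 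Let $\ppi\in\prob{C([0,1];Z)}$ be the law of $\tilde\gamma$. Then $(\e_t)_\sharp\ppi=\mu_t$ for $t\in D_\infty$ by construction, and for arbitrary $t$ one passes to the limit along $D_\infty\ni s_j\to t$ using dominated convergence on $C([0,1];Z)$ (each path is continuous) together with the $W_2$-continuity of $r\mapsto\mu_r$.

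Finally, I would estimate the action. For the full level-$n$ partition $D_n$ all consecutive marginals of $\ppi$ are optimal, whence
\[
\int \sum_{k=1}^{2^n}\frac{\sfd_Z^2(\gamma_{(k-1)2^{-n}},\gamma_{k2^{-n}})}{2^{-n}}\,\d\ppi(\gamma)=\sum_{k=1}^{2^n}\frac{W_2^2(\mu_{(k-1)2^{-n}},\mu_{k2^{-n}})}{2^{-n}}\le \int_0^1|\dot\mu_r|^2\,\d r .
\]
Pathwise, the integrands on the left are nondecreasing in $n$ (convexity of $r\mapsto r^2$ under refinement) and their supremum over $n$ equals $\Ecurve\gamma$ for every continuous $\gamma$ — this is the variational characterization of $\Ecurve\cdot$ (valid, with value $+\infty$, also for non-absolutely-continuous curves), together with the fact that for continuous curves dyadic partitions are cofinal among all partitions. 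Monotone convergence then yields $\int\Ecurve\gamma\,\d\ppi(\gamma)\le \int_0^1|\dot\mu_r|^2\,\d r$; in particular $\ppi$ is concentrated on $\AC2{[0,1]}Z$. Combining this with \eqref{eq:nonopt} applied to this very $\ppi$ gives the equality in \eqref{eq:optimal}.

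I expect the technical heart to be the passage from the discrete construction to a genuine law on path space. The consistent gluing is routine but requires some measurable-selection bookkeeping; the real point is the $\bbeta$-a.s.\ continuity of the limiting process, which cannot be obtained from the naive Kolmogorov-type bound $\int\sfd_Z^2(\gamma_{t'},\gamma_{t''})\,\d\bbeta\lesssim|t''-t'|$ (borderline exponent). It is precisely the sharper bound on \emph{consecutive} dyadic increments, exploited through Borel--Cantelli and a chaining argument, that closes this gap; the remaining ingredients (lower semicontinuity of $\Ecurve\cdot$, continuity of the evaluation maps, the variational characterization of the action) are soft.
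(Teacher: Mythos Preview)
The paper does not prove this statement; it simply cites \cite{Lisini07}. Your strategy---discretize along dyadics, glue optimal plans, upgrade to a law on continuous paths, and bound the action via the discrete-action supremum---is indeed the standard one, and most of the pieces you describe (the Borel--Cantelli/chaining step, the monotone-convergence argument for the action) are correct in spirit.

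The gap is the claim that the gluings can be carried out ``consistently'' so that $\{\bbeta^n\}_n$ forms a \emph{projective} family, and the related assertion that for the limit $\ppi$ ``all consecutive marginals of $\ppi$ are optimal'' at every dyadic level. This is not bookkeeping: it requires that, given an optimal plan $\Gamma\in\Opt(\mu_{t'},\mu_{t''})$ and a midpoint $t_m$, there exist a $3$-plan whose $(t',t'')$-marginal is $\Gamma$ and whose $(t',t_m)$- and $(t_m,t'')$-marginals are both optimal. This fails already for two-atom measures in $\R^2$: with $\mu_0=\tfrac12(\delta_{(1,0)}+\delta_{(0,0)})$, $\mu_{1/2}=\tfrac12(\delta_{(1,-2)}+\delta_{(0,0)})$, $\mu_1=\tfrac12(\delta_{(1,1)}+\delta_{(0,0)})$ (joined by $W_2$-geodesic segments to get an $\AC2{[0,1]}{}$ curve), all three pairwise optimal plans are unique, and composing the $(0,\tfrac12)$- and $(\tfrac12,1)$-optimal plans forces the $(0,1)$-marginal to be the swap, while the $(0,1)$-optimal plan is the identity. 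Hence no projective family with level-wise optimal consecutive marginals exists, and the equality you write in the action step is false in general.

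The standard repair is to drop projectivity: build each $\bbeta^n$ independently, note that the family is tight on $Z^{D_\infty}$ (the one-dimensional marginals are the fixed $\mu_t$'s), and extract a weak limit $\bbeta$. Lower semicontinuity of $\sigma\mapsto\int\sfd_Z^2(\gamma_{t'},\gamma_{t''})\,\d\sigma$ and the triangle-inequality bound for $\bbeta^n$ (valid for $n$ large) still give your first display for $\bbeta$, whence your Borel--Cantelli and chaining go through unchanged. For the action, replace your equality by the inequality $\int \sum_k \sfd_Z^2(\gamma_{(k-1)2^{-n}},\gamma_{k2^{-n}})/2^{-n}\,\d\ppi\le\int_0^1|\dot\mu_r|^2\,\d r$, which follows directly from the first display and Cauchy--Schwarz, without any optimality of the consecutive marginals of $\ppi$; then monotone convergence and \eqref{eq:nonopt} conclude.
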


\subsubsection{Bits of Sobolev calculus}
In what follows (specifically, Proposition \ref{prop:glimisl}), we will need to deal with a little bit of Sobolev calculus. Here we recall the basic definitions and facts used later.

There are several, equivalent, definitions of Sobolev functions from a
metric measure space $(X,\sfd,\mm)$ to $\R$: 
we consider here only the case $p=2$ 
following the approach proposed in \cite{Ambrosio-Gigli-Savare-preprint11a};
we refer to 
\cite{Ambrosio-Gigli-Savare-preprint11c,Ambrosio-DiMarino} for the case 
of a general summability exponent $p\in [1,\infty)$
and to \cite{Gigli2012} for further developments and 
a deeper analysis of the duality
relations between weak differentials and gradients in metric 
measure spaces.
\begin{definition}[Test plans]
  Let $(X,\sfd,\mm,\bar x)$ be a \pmm~space as in
  \pmmaxioms. 
We say that
$\ppi\in\prob{C([0,1],X)}$ is a test plan provided there exists a
constant $c>0$ such that 
\[
\begin{split}
(\e_t)_\sharp\ppi\leq c\mm\quad \forevery  t\in[0,1],\qquad
\int \Ecurve\gamma
\d\ppi(\gamma)<\infty.
\end{split}
\]
\end{definition}
\begin{definition}[Sobolev class and weak upper gradients]
Let $(X,\sfd,\mm,\bar x)$ be a \pmm~space as in \pmmaxioms\
and let $f:X\to\R$ be a Borel function. We say that $f$ belongs to the Sobolev class $\s^2(X,\sfd,\mm)$   provided there exists  $G\in L^2(X,\mm)$ such that 
\[
\int|f(\gamma_1)-f(\gamma_0)|\,\d\ppi(\gamma)\leq \iint_0^1G(\gamma_t)|\dot\gamma_t|\,\d t\,\d\ppi(\ggamma),\qquad\forevery \textrm{ test plan }\ppi.
\]
Any such  $G$ is called weak upper gradient of $f$.
\end{definition}
It turns out that this notion is invariant with respect 
to modification of $f$ in $\mm$-negligible sets and
for $f\in\s^2(X,\sfd,\mm)$ there exists a minimal - in the $\mm$-a.e. sense - weak upper gradient $G$, which we  will denote by $\weakgrad f$. Notice that if $f:X\to\R$ is Lipschitz, then certainly the function identically equal to the Lipschitz constant $\Lip(f)$ of $f$ is a weak upper gradient, so we get the inequality
\begin{equation}
\label{eq:weaklip}
\weakgrad f\leq \Lip(f),\qquad\mm-a.e..
\end{equation}
Basic calculus rules are
\begin{equation}
\label{eq:calcrule}
\begin{split}
\weakgrad{(\alpha f+\beta g)}&\leq |\alpha|\weakgrad f+|\beta|\weakgrad g,\qquad\forall f,g\in\s^2(X,\sfd,\mm),\ \alpha,\beta\in\R,\\
\weakgrad{(fg)}&\leq |f|\weakgrad g+|g|\weakgrad f,\qquad\forall f\in\s^2(X,\sfd,\mm),\ g\textrm{ Lipschitz},
\end{split}
\end{equation}
these inequalities being valid $\mm$-a.e..

The Sobolev space $W^{1,2}(X,\sfd,\mm)$ is then defined as $L^2(X,\mm)\cap \s^2(X,\sfd,\mm)$ endowed with the norm
\[
\|f\|_{W^{1,2}}^2:=\|f\|^2_{L^2}+\|\weakgrad f\|^2_{L^2}.
\]
Notice that in general $W^{1,2}(X,\sfd,\mm)$ 
is not an Hilbert space (consider for instance the case of finite dimensional Banach spaces). 
In particular, in general there is no natural Dirichlet form on $L^2(X,\mm)$. 
The potentially non quadratic object which replaces the Dirichlet energy is called - following \cite{Ambrosio-Gigli-Savare-preprint11a} - Cheeger energy: it is the 
 lower semicontinuous and convex functional $\C:L^2(X,\mm)\to[0,+\infty]$ defined by
\begin{equation}
\label{eq:cheeger}
\C(f):=\left\{
\begin{array}{ll}
\displaystyle{\frac12\int\weakgrad f^2\,\d\mm},&\qquad\textrm{if }f\in\s^2(X,\sfd,\mm),\\
+\infty,&\qquad\textrm{otherwise}.
\end{array}
\right.
\end{equation}
 Notice that the notions of (minimal) weak upper gradient and 
of Cheeger energy are invariant under isomorphisms of 
\pmm~spaces, in the sense that if $\pmmXa i$, $i=1,2$ are isomorphic
according to \eqref{eq:37} then
\[
  \C_{X_1}(f\circ \iota)=\C_{X_2}(f)\quad
  \forevery f\in L^2(X_2,\mm_2).
\]
 As usual, in the following we will follow the notation of Remark \ref{re:assnot} and deal with spaces
$(X_n,\sfd_n,\mm_n,\bar x_n)$, $n\in \bar\N$,
which are all subspaces of a common space $(X ,\sfd)$. In particular,   the spaces $(X ,\sfd,\mm_n,\bar x_n)$,
$n\in \bar \N$, are isomorphic to $\pmmXa n$ and we will consider the Cheeger energies
 $\C_n: L^2(\Y ,\mm_n)\to[0,\infty]$, $n\in\bar\N$.

\subsubsection{$W_2$-Gradient flow of the entropy}

Let $(X,\sfd,\mm,\bar x)$ be a \pmm~space as in 
\pmmaxioms\ satisfying the growth condition
\eqref{eq:expcontr2}. The (descending) slope $|\Dm^-\entv|:\probt X \to[0,+\infty]$ of the relative entropy $\entv$ is defined as identically $+\infty$ outside $D(\entv)$, as $0$ at isolated measures in $\probt X$ and in all other cases as
\[
|\Dm^-\entv|(\mu):=\lims_{W_2(\nu,\mu)\to 0}\frac{\big(\entv(\mu)-\entv(\nu)\big)^+}{W_2(\mu,\nu)},
\]
where by $(\cdot)^+$ we intend the positive part. 

If $(X,\sfd,\mm,\bar x)$ is a $\CD(K,\infty)$ space for some $K\in\R$, the slope admits the representation
\begin{equation}
\label{eq:repslope}
|\Dm^-\entv|(\mu):=
\sup_{\nu\neq\mu}\left(\frac{\entv(\mu)-\entv(\nu)}{W_2(\mu,\nu)}+\frac K2W_2(\mu,\nu)\right)^+\quad\forevery\mu\in\probt X,
\end{equation}
see \cite[Chap.~2]{Ambrosio-Gigli-Savare08} which shows in particular that
\begin{equation}
\label{eq:slopesemi}
\textrm{If }(X,\sfd,\mm,\bar x)\textrm{ is a }\CD(K,\infty)\textrm{ space, then }|\Dm^-\entv|\textrm{ is $W_2$-lower semicontinuous}.
\end{equation}
A deep result obtained in \cite{Ambrosio-Gigli-Savare-preprint11a} is that on $\CD(K,\infty)$ spaces the slope admits the representation formula
\begin{equation}
\label{eq:slopefish}
|\Dm^-\entv|^2(\rho\mm)=8\C(\sqrt{\rho})=
\int_{\{\rho>0\}}\frac{\weakgrad \rho^2}\rho\,\d\mm
\end{equation}
which is the standard representation formula for the slope of the entropy in terms of the Fisher information, well known in a smooth setting.

Another  non trivial consequence (see Corollary 2.4.10 in \cite{Ambrosio-Gigli-Savare08} for a proof) of the $K$-geodesic convexity of $\entv$ is that  the slope is an upper gradient for $\entv$, i.e.~it holds
\[
|\entv(\mu_0)-\entv(\mu_1)|\leq \int_0^1|\dot\mu_t||\Dm^-\entv|(\mu_t)\,\d t,
\]
whenever $(\mu_t)\subset D(\entv)$, $t\in [0,1]$,
 is an absolutely continuous curve.
In particular, for any locally absolutely continuous curve $(\mu_t)\subset D(\entv)$ defined on some interval $I\subset \R$ it holds
\begin{equation}
\label{eq:pergf}
\entv(\mu_t)\leq \entv(\mu_s)+\frac12\int_t^s|\dot\mu_r|^2\,\d r+\frac12\int_t^s|\Dm^-\entv|^2(\mu_r)\,\d r,\qquad\forall t,s\in I,\ t<s.
\end{equation}
Gradient flows are defined as those curves for which equality holds:
\begin{definition}[Gradient flow of the relative entropy]\label{def:gf}
Let $(X,\sfd,\mm,\bar x)$ be a $\CD(K,\infty)$ space and $\bar \mu\in
D(\entv)$. A curve $\mu:[0,\infty)\to D(\entv)\subset \probt X$
is the $W_2$-gradient flow of $\entv$ starting from $\bar\mu$ provided 
 it is  locally absolutely continuous in $(\probt X,W_2)$, 
$\mu_0=\bar \mu$  and 
\begin{equation}
\label{eq:defgf}
\entv(\mu_t)= \entv(\mu_s)+\frac12\int_t^s|\dot\mu_r|^2\,\d
r+\frac12\int_t^s|\Dm^-\entv|^2(\mu_r)\,\d r\quad\forall\, 0<t<s.
\end{equation}
\end{definition}
Thanks to \eqref{eq:pergf}, 
\eqref{eq:defgf} is equivalent to
\begin{equation}
\label{eq:defgf2}
\entv(\bar \mu)\geq \entv(\mu_T)+\frac12\int_0^T|\dot\mu_t|^2\,\d
t+\frac12\int_0^T|\Dm^-\entv|^2(\mu_t)\,\d t\quad\forevery T>0.
\end{equation}
We remark that  this definition is invariant under isomorphisms
of \pmm~spaces, since it just involves the distance and
measures in the domain of the entropy: notice that, 
with the notation of \eqref{eq:37},
\[
  \Ent_{\mm_2}(\iota_\sharp\mu)=\Ent_{\mm_1}(\mu)\quad\forevery 
  \mu\in \probt{X_1},\ \mu\ll\mm_1.
\]
Therefore the property of being a gradient flow of the entropy is not
affected if our space $(X,\sfd,\mm,\bar x)$ is isometrically embedded
into some bigger space $X$ as in Remark \ref{re:assnot}.

The following theorem collects some of the main properties of the
gradient flow of the entropy and a useful a priori estimate.
\begin{theorem}\label{thm:gf}
Let $(X,\sfd,\mm,\bar x)$ be a $\CD(K,\infty)$ \pmm~space. 
Then for every $\bar\mu\in D(\entv)$ there exists a unique 
$W_2$-gradient flow $\mu_t:=\h_t\bar\mu$, $t\ge0$, of the entropy starting from $\bar\mu$. The curve $(\mu_t)$ has also the following properties:
\begin{align}
\label{eq:kinugsl}
|\dot\mu_t|=|\Dm^-\entv|(\mu_t)&\qquad \text{for a.e.\ }t>0,\\
\label{eq:slopereg}
t\mapsto \rme^{Kt}|\Dm^-\entv|(\mu_t)&\qquad\textrm{is lower semicontinuous and non increasing}.
\end{align}
Moreover, if $\Co>1+K_-$ so that \eqref{eq:expcontr} holds, 
and $T:=\frac 1{8\Co}$,
then 
\begin{equation}
\label{eq:speedcontr}
\frac12\int_0^T|\dot\mu_t|^2\,\d t\leq 2\entv(\bar \mu)+4\Co\int\sfd^2(\cdot,\bar x)\,\d\bar \mu+2\log(\z).
\end{equation}
\end{theorem}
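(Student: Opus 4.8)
The existence and uniqueness of the $W_2$-gradient flow $\mu_t=\h_t\bar\mu$ and the identities \eqref{eq:kinugsl}--\eqref{eq:slopereg} are not new, so I would only recall the relevant facts. Since on a $\CD(K,\infty)$ space $\entv$ is $K$-geodesically convex along $W_2$-geodesics joining elements of $D(\entv)$ and, by \eqref{eq:slopesemi} and \eqref{eq:repslope}, $|\Dm^-\entv|$ is $W_2$-lower semicontinuous and coincides with the global slope, the metric theory of gradient flows of $\lambda$-convex functionals from \cite[Chap.~2--4]{Ambrosio-Gigli-Savare08} (see also \cite{Ambrosio-Gigli-Savare-preprint11a}) applies: the minimizing-movement scheme starting at $\bar\mu$ produces a locally absolutely continuous curve in $(\probt X,W_2)$ satisfying the energy-dissipation equality \eqref{eq:defgf}, such curves are unique and depend contractively on the datum, and along any of them the slope is an upper gradient for $\entv$. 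This last fact turns \eqref{eq:defgf} into the pointwise equality $|\dot\mu_t|\,|\Dm^-\entv|(\mu_t)=\tfrac12|\dot\mu_t|^2+\tfrac12|\Dm^-\entv|^2(\mu_t)$ for a.e.\ $t$, i.e.\ \eqref{eq:kinugsl}, while \eqref{eq:slopereg} is the standard decay of the slope of a $K$-convex functional along its gradient flow.

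The estimate \eqref{eq:speedcontr} is then a short self-consistency argument. With $\Co>1+K_-$ and $T=\tfrac1{8\Co}$, set $E:=\tfrac12\int_0^T|\dot\mu_t|^2\,\d t$, and for $s\ge0$ put $m_s:=\bigl(\int\sfd^2(\cdot,\bar x)\,\d\mu_s\bigr)^{1/2}=W_2(\mu_s,\delta_{\bar x})$, which is finite since $\mu_s\in\probt X$; in particular $m_0^2=\int\sfd^2(\cdot,\bar x)\,\d\bar\mu$. Discarding the nonnegative slope term in the equivalent formulation \eqref{eq:defgf2} at time $T$ gives $E\le\entv(\bar\mu)-\entv(\mu_T)$, and from \eqref{eq:chiave} together with $\Entt(\mu_T)\ge0$ (nonnegativity of the relative entropy with respect to the probability measure $\tilde\mm$) we get $-\entv(\mu_T)\le\Co\,m_T^2+\log\z$. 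Hence
\[
E\le\entv(\bar\mu)+\Co\,m_T^2+\log\z,
\]
which in particular shows $E<\infty$.

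To close the loop I would bound $m_T$: by Cauchy--Schwarz and the definition of metric speed, $W_2(\mu_T,\bar\mu)\le\int_0^T|\dot\mu_t|\,\d t\le\sqrt{T}\,(2E)^{1/2}$, so the triangle inequality $m_T\le W_2(\mu_T,\bar\mu)+W_2(\bar\mu,\delta_{\bar x})=W_2(\mu_T,\bar\mu)+m_0$ and the elementary bound $(a+b)^2\le2a^2+2b^2$ yield $m_T^2\le4TE+2m_0^2$. Substituting this into the previous display and using $4\Co T=\tfrac12$,
\[
E\le\entv(\bar\mu)+\Co\bigl(4TE+2m_0^2\bigr)+\log\z=\entv(\bar\mu)+\tfrac12E+2\Co\,m_0^2+\log\z,
\]
and absorbing $\tfrac12E$ on the left-hand side gives exactly \eqref{eq:speedcontr}.

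The one place that requires attention is the legitimacy of absorbing $\tfrac12E$: this needs $E<\infty$, equivalently $\entv(\mu_T)>-\infty$, which is precisely what \eqref{eq:chiave} and $m_T<\infty$ provide. All the genuinely hard input is imported from \cite{Ambrosio-Gigli-Savare08,Ambrosio-Gigli-Savare-preprint11a}, most notably that on $\CD(K,\infty)$ spaces the slope is an upper gradient for $\entv$, which is what makes \eqref{eq:defgf2} follow from Definition \ref{def:gf} and underpins the whole computation.
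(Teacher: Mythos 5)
Your proposal is correct and follows essentially the same route as the paper: the existence/uniqueness statements and \eqref{eq:kinugsl}--\eqref{eq:slopereg} are imported from the metric theory of gradient flows of $K$-convex functionals (the paper likewise only cites \cite{Ambrosio-Gigli-Savare-preprint11a} for these), and your derivation of \eqref{eq:speedcontr} — dropping the slope term in \eqref{eq:defgf2}, bounding $-\entv(\mu_T)$ via \eqref{eq:chiave} and $\Entt\ge 0$, estimating $W_2^2(\mu_T,\delta_{\bar x})\le 2m_0^2+4TE$ by the triangle inequality and Cauchy--Schwarz, and absorbing $\tfrac12E$ using $4\Co T=\tfrac12$ — is exactly the computation in the paper's proof. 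Your explicit remark that the absorption is legitimate because $\entv(\mu_T)>-\infty$ (hence $E<\infty$) is a point the paper leaves implicit, and it is correctly justified.
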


\begin{proof}
  Here we limit to check the estimate
  \eqref{eq:speedcontr}, 
  referring to 
  \cite{Ambrosio-Gigli-Savare-preprint11a} for a proof of the main
  part of the statement (see also
  \cite{Ambrosio-Gigli-Savare-compact} for a survey in the case of
  compact spaces). 

  By definition of gradient flows and \eqref{eq:chiave} we know that
\[
\begin{split}
\entv(\mu_0)&\geq\entv(\mu_{T})+\frac12\int_0^T|\dot\mu_{t}|^2\,\d t= \Entt(\mu_{T})-\Co\int\sfd^2(\cdot,\overline x)\,\d\mu_{T}-\log(\z)+\frac12\int_0^T|\dot\mu_{t}|^2\,\d t.
\end{split}
\]
We also have the simple bound
\[
\begin{split}
\int\sfd^2(\cdot,\overline x)\,\d\mu_{T}&=W_2^2(\mu_T,\delta_{\bar x})\leq \left(W_2(\mu_0,\delta_{\bar x})+\int_0^T|\dot\mu_t|\,\d t\right)^2\leq 2W_2^2(\mu_0,\delta_{\bar x})+2T\int_0^T|\dot\mu_t|^2\,\d t\\
&=2\int\sfd^2(\cdot,\bar x)\,\d\bar \mu+\frac1{4\Co}\int_0^T|\dot\mu_t|^2\,\d t.
\end{split}
\]
\eqref{eq:speedcontr} comes from these two inequalities  taking into
account that $\Entt\geq 0$, since $\tilde\mm$ is a probability measure.
\end{proof}

\subsubsection{$L^2$-gradient flow of the Cheeger energy}
\label{subsub:L2g}
The main identification result of \cite{Ambrosio-Gigli-Savare-preprint11a} 
shows that in $\CD(K,\infty)$ spaces 
the Wasserstein gradient flow of the entropy coincides
with the $L^2$-gradient flow $(\sfH_t)_{t\ge0}$ of the Cheeger energy: 
the latter is the semigroup of contractions $\sfH_t:L^2(X,\mm)\to
L^2(X,\mm)$ whose trajectories $f_t=\sfH_t\bar f$, $t\ge0$,
belongs to $\Lip_{\rm loc}((0,\infty);L^2(X,\mm))$ 
and are the unique solution of the differential inclusion
(see e.g.~\cite{Brezis73} and \cite[\S 4]{Ambrosio-Gigli-Savare-preprint11a})
\[
  \frac\d{\d t}f_t+\partial\C(f_t)\ni0\quad\text{a.e.\ in $(0,\infty)$},\quad
  \lim_{t\down0}f_t=\bar f\quad\text{in }L^2(X,\mm).
\]
$\partial\C(f)$ denotes the (possibly multivalued) subdifferential 
of $\C$ in $L^2(X,\mm)$, it is a convex subset of 
$L^2(X,\mm)$; when $\partial \C(f)$ is not empty, 
its element of minimal norm defines the Laplacian $\Delta_{\sfd,\mm}$ of $f$.
\begin{theorem}[\cite{Ambrosio-Gigli-Savare-preprint11a}, Thm~9.3]
  Let $(X,\sfd,\mm,\bar x)$ be a $\CD(K,\infty)$ \pmm~space. 
  If $\bar\mu=\bar f\mm\in \probt X$ with $f\in L^2(X,\mm)$ then
\[
    \h_t\bar\mu=(\sfH_t\bar f)\mm\quad\forevery t\ge0.
\]
\end{theorem}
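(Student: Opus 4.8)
This theorem is the content of \cite[Thm.~9.3]{Ambrosio-Gigli-Savare-preprint11a}; the plan of its proof is as follows. Since $\bar\mu=\bar f\mm\in\probt X$, the density $\bar f$ is nonnegative with $\int\bar f\,\d\mm=1$, and the hypotheses ensure $\bar\mu\in D(\entv)$. I would set $f_t:=\sfH_t\bar f$, $\mu_t:=f_t\mm$, and first check that the $L^2$-gradient flow of $\C$ is order preserving and mass preserving: the latter follows from $\C(f+c)=\C(f)$ for constant $c$ (whence $\int\Deltam f\,\d\mm=0$, so $\frac{\d}{\d t}\int f_t\,\d\mm=0$) and the former from the submodularity of $\C$. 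Thus $f_t\ge0$, $\int f_t\,\d\mm=1$, and the exponential growth bound \eqref{eq:expcontr2} gives $\int\sfd^2(\cdot,\bar x)\,\d\mu_t<\infty$, so that $(\mu_t)$ is a curve in $\probt X$ with $\mu_0=\bar\mu$.

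The heart of the argument is Kuwada's lemma: by a Kantorovich duality argument, the bound $\weakgrad g\le\Lip(g)$ of \eqref{eq:weaklip} applied to Lipschitz Kantorovich potentials, and a mollification of $\sfH_t$, one shows that $t\mapsto\mu_t\in\AC2{(0,\infty)}{(\probt X,W_2)}$ and
\[
  |\dot\mu_t|^2\le\int_{\{f_t>0\}}\frac{\weakgrad{f_t}^2}{f_t}\,\d\mm\qquad\text{for a.e.\ }t>0.
\]
Next I would establish the entropy dissipation identity: differentiating $t\mapsto\entv(\mu_t)=\int f_t\log f_t\,\d\mm$ along $\partial_t f_t=\Deltam f_t$, using the chain rule for $\C$ and the integration-by-parts characterisation of $\Deltam$ (the delicate point being that $f_t$ may vanish and $\log$ is unbounded, so one truncates $f_t$ away from $0$ and then passes to the limit), one obtains for $0<t<s$
\[
  \entv(\mu_t)-\entv(\mu_s)=\int_t^s\int_{\{f_r>0\}}\frac{\weakgrad{f_r}^2}{f_r}\,\d\mm\,\d r=\int_t^s|\Dm^-\entv|^2(\mu_r)\,\d r,
\]
the last equality by the representation formula \eqref{eq:slopefish}.

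Combining the two displays,
\[
  \entv(\mu_t)-\entv(\mu_s)=\int_t^s|\Dm^-\entv|^2(\mu_r)\,\d r\ge\tfrac12\int_t^s\big(|\dot\mu_r|^2+|\Dm^-\entv|^2(\mu_r)\big)\,\d r,
\]
which together with the generic inequality \eqref{eq:pergf} forces equality in \eqref{eq:defgf} along $(\mu_t)$: hence $(\mu_t)$ is a $W_2$-gradient flow of $\entv$ starting from $\bar\mu$. By the uniqueness part of Theorem \ref{thm:gf} we conclude $\mu_t=\h_t\bar\mu$, i.e.\ $\h_t\bar\mu=(\sfH_t\bar f)\mm$ for every $t\ge0$. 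The real obstacles are the two analytic steps — Kuwada's lemma (which is precisely where one sees that the metric heat flow moves in $W_2$ no faster than the Fisher information allows) and the sharp entropy-dissipation chain rule; once these are available, the identification is immediate from the uniqueness theory of Wasserstein gradient flows recalled in this section.
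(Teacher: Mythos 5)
The paper does not prove this statement; it is imported verbatim as Theorem~9.3 of \cite{Ambrosio-Gigli-Savare-preprint11a}, and your outline is a faithful reconstruction of the proof given there: mass/positivity preservation of $(\sfH_t)$, Kuwada's lemma bounding $|\dot\mu_t|^2$ by the Fisher information, the entropy-dissipation identity, the identification \eqref{eq:slopefish} of the slope with the Fisher information, and then the forcing of equality in \eqref{eq:defgf} together with uniqueness of the $W_2$-gradient flow. Your chain of inequalities is sound (in particular $\int_t^s \mathrm I\,\d r\ge\tfrac12\int_t^s(|\dot\mu_r|^2+\mathrm I)\,\d r$ combined with \eqref{eq:pergf} does force the energy-dissipation equality), so I have nothing to add beyond noting that the two genuinely hard analytic steps you flag are exactly the ones carried out in the cited reference.
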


\subsection{Convergence of Heat flows in the general non linear case}

This section is devoted to the proof 
of the following general convergence result:

\begin{theorem}[Convergence of heat flows]
  \label{thm:stabgf1} Let $\cX_n$, $n\in\N$, be a sequence of $\CD(K,\infty)$ spaces converging to a limit space $\cX_\infty$ in the pmG-sense. Then with the notation of Remark \ref{re:assnot} the following holds.
   
  Let $(\bar\mu_n)\subset \probt X$ be such that
  \begin{equation}
    \Entn(\bar\mu_n)\to\Enti(\bar\mu_\infty)<\infty,\qquad\qquad 
      \bar\mu_n\stackrel{W_2}\longrightarrow \bar\mu_\infty,
      \label{eq:forconv}
  \end{equation}
as $n\to\infty$.  Then the 
  solutions $\mu_{n,t}=\h_{n,t}(\bar\mu_n)$, $t\ge0$,
  of the $W_2$ gradient flow of $\Entn$
  satisfy
\begin{subequations}
  \label{subeq:conclusions}
  \begin{align}
    \mu_{n,t}&\stackrel{W_2}\longrightarrow\mu_{\infty,t}\quad
  &\forevery t> 0,\\
  \Entn(\mu_{n,t})&\longrightarrow\Enti(\mu_{\infty,t})\quad
  &\forevery  t> 0,\\
  |\Dm^-\Entn|(\mu_{n,t})&\longrightarrow|\Dm^-\Enti|(\mu_{\infty,t})\quad
  &\forevery t\in (0,\infty)\setminus\mathcal S,\\
  |\dot\mu_{n,t}|&\longrightarrow|\dot\mu_{\infty,t}|&\quad
  \text{for a.e.\ } t> 0,
\end{align}
\end{subequations}
 where $\mathcal S$ is the set (at most countable) of discontinuity
points
of $t\mapsto |\rmD^-\Enti|(\mu_{\infty,t})$.
\end{theorem}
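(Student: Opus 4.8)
The plan is to combine the variational structure of gradient flows (the energy dissipation equality \eqref{eq:defgf}) with the $\Gamma$-convergence of the entropies (Proposition \ref{prop:GammaConvergence}) and the a priori bounds coming from Theorem \ref{thm:gf}, via a compactness argument in the spirit of the Sandier--Serfaty scheme. First I would establish uniform estimates: applying \eqref{eq:speedcontr} on the interval $[0,T]$ with $T=1/(8\Co)$, together with the convergence of $\bar\mu_n$, the boundedness of $\Entn(\bar\mu_n)$ and of second moments (from $W_2$-convergence), I get $\sup_n\int_0^T|\dot\mu_{n,t}|^2\,\dt<\infty$; iterating this on consecutive intervals of length $T$ (using that $t\mapsto \rme^{Kt}|\rmD^-\Entn|(\mu_{n,t})$ is non-increasing, hence $\Entn(\mu_{n,t})$ stays bounded on bounded time intervals via \eqref{eq:defgf}) gives $\int_0^S|\dot\mu_{n,t}|^2\,\dt\le C(S)$ for every $S>0$. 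Using \eqref{eq:kinugsl} the same bound holds for $\int_0^S|\rmD^-\Entn|^2(\mu_{n,t})\,\dt$. Because the $\mu_{n,t}$ are built by a geodesic-type flow and have uniformly bounded entropy and second moments, Propositions \ref{prop:enttight} and \ref{prop:2equi} give that $\{\mu_{n,t}\}_n$ is relatively compact in $(\probt X,W_2)$ for each fixed $t$ — actually one applies the argument to the curves themselves, noting that the uniform $\AC2{}{}$ bound plus pointwise-in-$t$ compactness yields, by a metric Arzel\`a--Ascoli, a subsequence along which $\mu_{n,t}\to\nu_t$ in $W_2$ uniformly on compact time intervals, with $\nu$ locally absolutely continuous and $\nu_0=\bar\mu_\infty$.

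Next I would identify the limit $\nu_t$ as the gradient flow $\mu_{\infty,t}=\h_{\infty,t}(\bar\mu_\infty)$. This is where the main work lies. By lower semicontinuity of the metric-speed functional one has $\int_0^S|\dot\nu_t|^2\,\dt\le\liminf_n\int_0^S|\dot\mu_{n,t}|^2\,\dt$; by Fatou and the $\Gamma$-$\liminf$ inequality applied for a.e. $t$ one gets $\int_0^S|\rmD^-\Enti|^2(\nu_t)\,\dt\le\liminf_n\int_0^S|\rmD^-\Entn|^2(\mu_{n,t})\,\dt$ — this last step requires a \emph{lower semicontinuity of the slopes along pmG-convergence}, i.e. $W_2(\mu_n,\mu_\infty)\to0$, $\mu_n\in\probt{X_n}$, $\mu_\infty\in\probt{X_\infty}$ $\Rightarrow$ $|\rmD^-\Enti|(\mu_\infty)\le\liminf_n|\rmD^-\Entn|(\mu_n)$, which follows from the representation \eqref{eq:repslope} together with the $\Gamma$-convergence of the entropies (for the numerator) and the continuity of $W_2$. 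Combining these with the energy dissipation \eqref{eq:defgf} written for $\mu_{n,\cdot}$ and passed to the limit (using $\Entn(\bar\mu_n)\to\Enti(\bar\mu_\infty)$ on the left and $\Gamma$-$\liminf$ on $\Enti(\nu_S)$), I obtain
\[
\Enti(\bar\mu_\infty)\ge\Enti(\nu_S)+\tfrac12\int_0^S|\dot\nu_t|^2\,\dt+\tfrac12\int_0^S|\rmD^-\Enti|^2(\nu_t)\,\dt\quad\forevery S>0,
\]
which by \eqref{eq:pergf} forces equality, i.e. $\nu_\cdot$ is \emph{the} $W_2$-gradient flow of $\Enti$; by uniqueness (Theorem \ref{thm:gf}) $\nu_t=\mu_{\infty,t}$ and, the limit being independent of the subsequence, the whole sequence converges: this proves (\ref{subeq:conclusions}a).

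Finally I would upgrade the convergence. Passing to the limit in the chain of inequalities above and using that equality holds in the limit, all the inequalities are in fact equalities in the limit; in particular $\int_0^S|\dot\mu_{n,t}|^2\,\dt\to\int_0^S|\dot\mu_{\infty,t}|^2\,\dt$ and $\int_0^S|\rmD^-\Entn|^2(\mu_{n,t})\,\dt\to\int_0^S|\rmD^-\Enti|^2(\mu_{\infty,t})\,\dt$ for every $S>0$ (a standard measure-theoretic argument: a $\liminf$-lsc integrand whose integrals converge to the integral of the liminf forces convergence of the integrals over every subinterval, hence $L^1_{loc}$ convergence of the integrands up to subsequences, then everywhere by the monotonicity in \eqref{eq:slopereg}). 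Combined with $\Gamma$-$\liminf$ giving $\Enti(\mu_{\infty,t})\le\liminf_n\Entn(\mu_{n,t})$ and the energy equality, one deduces $\Entn(\mu_{n,t})\to\Enti(\mu_{\infty,t})$ for every $t>0$, which is (\ref{subeq:conclusions}b). For the slopes, the function $t\mapsto \rme^{Kt}|\rmD^-\Entn|(\mu_{n,t})$ is non-increasing (Theorem \ref{thm:gf}); together with the $L^1_{loc}$-convergence of $|\rmD^-\Entn|^2(\mu_{n,\cdot})$ and lower semicontinuity of the slopes, a Helly-type/monotone-convergence argument gives $|\rmD^-\Entn|(\mu_{n,t})\to|\rmD^-\Enti|(\mu_{\infty,t})$ at every continuity point $t\notin\mathcal S$ of the (monotone, hence a.e.-continuous) limit, which is (\ref{subeq:conclusions}c); and then (\ref{subeq:conclusions}d) follows from (\ref{subeq:conclusions}c) and \eqref{eq:kinugsl} applied to both $\mu_{n,\cdot}$ and $\mu_{\infty,\cdot}$ for a.e. $t$. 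The main obstacle is the lower semicontinuity of the slopes $|\rmD^-\Ent|$ along pmG-convergence with moving base spaces; everything else is a fairly mechanical adaptation of the Euclidean/compact Sandier--Serfaty argument once that ingredient and the a priori bounds are in place.
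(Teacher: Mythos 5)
Your overall scheme is the right one and is essentially the paper's (a Sandier--Serfaty type argument: a priori bounds from \eqref{eq:speedcontr}, compactness, $\Gamma$-$\liminf$ of the three terms in the dissipation identity, identification of the limit flow, then upgrading all inequalities to equalities). The gap is in the compactness step, and it propagates. Proposition \ref{prop:2equi} concerns $W_2$-\emph{geodesics} joining two $2$-uniformly integrable sequences; the heat flow trajectories $t\mapsto\mu_{n,t}$ are not geodesics, so it does not apply here, and tightness (from the entropy bound via Proposition \ref{prop:enttight}) together with uniformly bounded second moments does \emph{not} yield $2$-uniform integrability: mass of order $1/n$ escaping to distance of order $\sqrt n$ is tight with bounded second moments but not $W_2$-precompact. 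Hence at the compactness stage you only have \emph{narrow} convergence of $\mu_{n,t}$ (in the paper this is obtained by lifting to plans on $\rmC([0,T],X)$ via Proposition \ref{prop:lisini} and Proposition \ref{prop:comp}); the $W_2$-convergence is conclusion (\ref{subeq:conclusions}a) and is recovered only a posteriori.

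This matters in two places. First, under mere narrow convergence the $\Gamma$-$\liminf$ of the entropies fails: by \eqref{eq:chiave}, $\Entn(\mu)=\Enttn(\mu)-\Co\int\sfd^2(\cdot,\bar x_n)\,\d\mu-\log\z_n$, and the second-moment term enters with the wrong sign for lower semicontinuity. The paper fixes this by regrouping: Proposition \ref{prop:glimkinent} handles $\Entn$ \emph{plus} $\Co$ times the second moment, Proposition \ref{prop:w2nar} handles the kinetic energy \emph{minus} $\Co$ times the second moment (each grouping being l.s.c.\ under narrow convergence), and the extra terms cancel when the three pieces are summed against the identity ${\rm a}_n+{\rm b}_n+{\rm c}_n=\Entn(\bar\mu_n)$. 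Second, your lower semicontinuity of the slopes is stated, and is provable via \eqref{eq:repslope}, only under $W_2$-convergence, which you do not have at the point where you invoke it; the version actually needed holds under narrow convergence and is Proposition \ref{prop:glimisl}, whose proof is not a soft consequence of \eqref{eq:repslope}: it requires a cut-off argument and, crucially, the identification \eqref{eq:slopefish} of the squared slope with the Fisher information in order to control the slope under truncation. Once these two devices replace your compactness and l.s.c.\ steps, the remainder of your argument (forcing equality, convergence of the separate terms, the monotonicity argument via \eqref{eq:slopereg} for (\ref{subeq:conclusions}c), and \eqref{eq:kinugsl} for (\ref{subeq:conclusions}d)) goes through as you describe.
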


\noindent
We split the \emph{proof of Theorem \ref{thm:stabgf1}} in various
  steps. 

\subsubsection*{Tightness and compactness results}
\begin{lemma}\label{le:sollevo}
Let $\psi:\Y \to[0,+\infty]$ be a function with compact sublevels. Then for every non trivial interval $I=[a,b]\subset \R$ the functional $\Psi:\rmC(I,\Y )\to[0,+\infty]$ defined by
\[
\Psi(\gamma):=\int_I \psi(\gamma_t)\,\d t+\Ecurve\gamma=
\left\{
\begin{array}{ll}
  \displaystyle{\int_I\Big(\psi(\gamma_t)+|\dot\gamma_t|^2\Big)\,\d t,}&\qquad\textrm{ if }\gamma\in \AC{2}I\Y ,\\
+\infty&\qquad\textrm{ otherwise},
\end{array}
\right.
\]
has compact sublevels on $\rmC(I,\Y )$ (recall that the latter is endowed with the $\sup$ distance).
\end{lemma}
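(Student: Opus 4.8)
The plan is to verify compactness of the sublevels $\{\Psi \le C\}$ of $\Psi$ in $\rmC(I,\Y)$ via the metric Arzelà–Ascoli theorem, which requires two ingredients: (i) equi-continuity of the curves in a sublevel, and (ii) pointwise relative compactness, i.e.\ for each $t \in I$ the set $\{\gamma_t : \Psi(\gamma) \le C\}$ is relatively compact in $\Y$. Granting these, any sequence $(\gamma^k) \subset \{\Psi \le C\}$ has a subsequence converging uniformly to some $\gamma \in \rmC(I,\Y)$, and the final point is to check $\Psi(\gamma) \le C$ by lower semicontinuity.

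First I would establish the a priori bounds. Fix $C > 0$ and let $\gamma \in \rmC(I,\Y)$ with $\Psi(\gamma) \le C$. Then $\gamma \in \AC2 I\Y$ and $\int_I |\dot\gamma_t|^2\,\d t \le C$, so by Cauchy–Schwarz, for $a \le s < t \le b$,
\[
\sfd_Z(\gamma_s,\gamma_t) \le \int_s^t |\dot\gamma_r|\,\d r \le \Big(\int_s^t |\dot\gamma_r|^2\,\d r\Big)^{1/2}(t-s)^{1/2} \le C^{1/2}(t-s)^{1/2},
\]
which gives uniform $1/2$-Hölder continuity — in particular equi-continuity of the family $\{\Psi \le C\}$. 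Next, from $\int_I \psi(\gamma_t)\,\d t \le C$ and $\psi \ge 0$, for any fixed $t_0 \in I$ I would argue that there is some $t_* \in I$ (which may be taken in any positive-measure subset, e.g.\ a small neighbourhood of $t_0$ intersected with $I$) with $\psi(\gamma_{t_*}) \le C/|I|$; then $\gamma_{t_*}$ lies in the sublevel $\{\psi \le C/|I|\}$, which is compact by hypothesis and hence bounded, say contained in $\Ball{L}{z_0}$ for a fixed $z_0$ and $L = L(C)$. Combining with the Hölder bound, $\sfd_Z(\gamma_{t_0},z_0) \le L + C^{1/2}|I|^{1/2} =: L'$, a bound independent of $\gamma$ and $t_0$. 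This shows all curves in $\{\Psi \le C\}$ take values in the \emph{closed bounded} set $\overline{\Ball{L'}{z_0}}$; however, closed bounded sets in a general complete separable metric space need not be compact, so pointwise relative compactness needs a genuinely finer argument.

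The refined pointwise compactness argument is where the real content lies. Fix $t_0 \in I$ and $\varepsilon > 0$; I want to show $\{\gamma_{t_0} : \Psi(\gamma) \le C\}$ is totally bounded. Choose $h > 0$ small enough that $C^{1/2} h^{1/2} < \varepsilon$ and $[t_0, t_0+h] \subset I$ (or $[t_0-h,t_0]$ if $t_0$ is the right endpoint). By the Hölder estimate, $\sfd_Z(\gamma_{t_0},\gamma_t) < \varepsilon$ for all $t \in [t_0,t_0+h]$. On the other hand, $\int_{t_0}^{t_0+h} \psi(\gamma_t)\,\d t \le C$, so by Chebyshev the set $\{t \in [t_0,t_0+h] : \psi(\gamma_t) \le 2C/h\}$ has measure at least $h/2 > 0$; pick $t_\gamma$ in it. Then $\gamma_{t_\gamma} \in K_C := \{\psi \le 2C/h\}$, which is compact, and $\sfd_Z(\gamma_{t_0}, K_C) \le \sfd_Z(\gamma_{t_0},\gamma_{t_\gamma}) < \varepsilon$. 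Hence $\{\gamma_{t_0} : \Psi(\gamma) \le C\}$ lies in the $\varepsilon$-neighbourhood of the compact (thus totally bounded) set $K_C$, so it is totally bounded; since $\varepsilon$ was arbitrary and $\Y$ is complete, the closure of $\{\gamma_{t_0} : \Psi(\gamma) \le C\}$ is compact.

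Finally I would invoke the metric Arzelà–Ascoli theorem (equi-continuity plus pointwise precompactness $\Rightarrow$ sequential precompactness in $\rmC(I,\Y)$ with the $\sup$ distance) to extract, from any sequence in $\{\Psi \le C\}$, a uniformly convergent subsequence $\gamma^k \to \gamma$. Closedness of the sublevel follows from lower semicontinuity of $\Psi$ under uniform convergence: the map $\gamma \mapsto \Ecurve\gamma$ is lower semicontinuous (stated in the excerpt), and for the potential term, since $\psi$ has compact sublevels it is lower semicontinuous, so uniform convergence $\gamma^k_t \to \gamma_t$ gives $\psi(\gamma_t) \le \liminf_k \psi(\gamma^k_t)$ for each $t$, whence $\int_I \psi(\gamma_t)\,\d t \le \liminf_k \int_I \psi(\gamma^k_t)\,\d t$ by Fatou. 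Adding the two lower semicontinuity statements yields $\Psi(\gamma) \le \liminf_k \Psi(\gamma^k) \le C$. Thus $\{\Psi \le C\}$ is compact, completing the proof. The main obstacle, as indicated, is the second step: upgrading the easy "closed bounded" bound to genuine pointwise precompactness, which forces one to exploit the integrability of $\psi$ along the curve jointly with the modulus of continuity coming from the kinetic term rather than using them separately.
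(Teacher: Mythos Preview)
Your proof is correct. Both you and the paper start from the same H\"older estimate $\sfd(\gamma_t,\gamma_{t+h})\le C^{1/2}h^{1/2}$ coming from the kinetic-energy bound, and both close with the lower semicontinuity of $\Psi$ (Fatou for the $\psi$-term, the stated l.s.c.\ for $\Ecurve\cdot$). The routes diverge in the compactness step: the paper does \emph{not} verify pointwise precompactness directly but instead invokes an external result of Rossi--Savar\'e to obtain relative compactness of $(\gamma^n)$ with respect to convergence \emph{in measure}, extracts an a.e.-pointwise convergent subsequence, and then uses the uniform H\"older bound to extend the limit to all of $I$ and upgrade to uniform convergence. You instead prove pointwise precompactness by hand---combining Chebyshev on $\int_I\psi(\gamma_t)\,\d t$ with the H\"older modulus to trap each $\gamma_{t_0}$ in the $\varepsilon$-neighbourhood of a fixed compact sublevel of $\psi$---and then apply the classical metric Arzel\`a--Ascoli theorem. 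Your argument is more self-contained (no external citation needed) and makes the interplay between the two terms of $\Psi$ explicit; the paper's version is terser once one accepts the cited compactness lemma as a black box. One cosmetic point: your sentence ``so it is totally bounded; since $\varepsilon$ was arbitrary'' has the clauses in the wrong order---total boundedness follows \emph{because} for every $\varepsilon$ you produce a finite $2\varepsilon$-net via $K_C(\varepsilon)$---but the logic is sound.
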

\begin{proof}
Since $\Ecurve\cdot$ is a l.s.c.~functional,
  Fatou's Lemma and the lower semicontinuity of $\psi$ on $\Y $
  ensure that $\Psi$ is lower semicontinuous on $\rmC(I,\Y )$. 

Let $(\gamma^n)\in \AC2I\Y $ be a sequence satisfying 
  $\Psi(\gamma^n)\le C<\infty$.
  The bound $\Ecurve{\gamma^n}\le C$ 
  easily yields the H\"older equicontinuity estimate 
  \begin{equation}
    \label{eq:44}
    \sfd(\gamma^n_{t+h},\gamma^n_t)\leq \sqrt {C\,h},\quad
    \text{whenever}\quad
    a\le t\le t+h\le b.
  \end{equation}
  By \cite[Theorem 2, (1.24)]{Rossi-Savare03} 
  $(\gamma^n)$ is relatively compact with respect to the convergence
  in measure: in particular, we can find a subsequence 
  $(\gamma^{n_k})$, 
  a $\Leb 1$-negligible set $N\subset I$, 
  and a limit curve $\gamma^\infty:I\setminus N\to \Y $ such that 
  $\lim_{k\to\infty}\sfd(\gamma^{n_k}_t,\gamma^\infty_t)=0$ 
  for every $t\in I\setminus N$.
  The uniform bound \eqref{eq:44} shows that $\gamma^\infty$
  is also $1/2$ H\"older continuous, it can therefore be extended
  to $I$ by the density of $I\setminus N$ and the completeness of $\Y $,
  and the resulting convergence of $\gamma^{n_k}$ to $\gamma^\infty$ is
  uniform.
\end{proof}

\begin{proposition}[Compactness in $\prob{C([0,T),\Y )}$]\label{prop:comp}
Let $(\bar\mu_n)\subset \probt \Y $ be such that
\begin{equation}
\label{eq:percomp}
\sup_{n\in\N}\Entn(\bar\mu_n)<\infty,\qquad\qquad\sup_{n\in\N}\int\sfd^2(\cdot,\bar x_n)\,\d\bar\mu_n<\infty,
\end{equation}
and let $\mu_{n,t}=\h_{n,t}(\bar\mu_n)$, $t\ge0$.
For every $0<T\le \frac 1{8\Co}$ the sequence of 
plans $\ppi_n\in\prob{C([0,T],\Y )}$ associated to the absolutely continuous curve 
$[0,T]\ni t\mapsto \mu_{n,t}$ via Proposition \ref{prop:lisini}, is 
%
tight in $\prob{C([0,T),\Y )}$.
\end{proposition}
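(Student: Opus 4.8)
The plan is to apply Prokhorov's theorem (Theorem~\ref{thm:prok}) on the Polish space $\rmC([0,T],\Y)$: I will produce a functional $\Psi:\rmC([0,T],\Y)\to[0,+\infty]$ with compact sublevels such that $\sup_n\int\Psi\,\d\ppi_n<\infty$. Lemma~\ref{le:sollevo} dictates the shape of $\Psi$: if $\vartheta:\Y\to[0,+\infty]$ has compact sublevels, then $\Psi(\gamma):=\int_0^T\vartheta(\gamma_t)\,\d t+\Ecurve\gamma$ has compact sublevels on $\rmC([0,T],\Y)$. Since $(\e_t)_\sharp\ppi_n=\mu_{n,t}$ and, by Proposition~\ref{prop:lisini}, $\int\Ecurve\gamma\,\d\ppi_n=\int_0^T|\dot\mu_{n,t}|^2\,\d t$, Fubini gives
\[
\int\Psi\,\d\ppi_n=\int_0^T\Big(\int\vartheta\,\d\mu_{n,t}\Big)\d t+\int_0^T|\dot\mu_{n,t}|^2\,\d t,
\]
so the whole argument reduces to two uniform bounds: \textbf{(a)} $\sup_n\int_0^T|\dot\mu_{n,t}|^2\,\d t<\infty$, and \textbf{(b)} the existence of $\vartheta$ with compact sublevels such that $\sup_{n\in\N,\,t\in[0,T]}\int\vartheta\,\d\mu_{n,t}<\infty$.

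For \textbf{(a)} I would invoke the a priori dissipation estimate \eqref{eq:speedcontr} of Theorem~\ref{thm:gf} (legitimate since $T=\tfrac1{8\Co}$): it bounds $\tfrac12\int_0^T|\dot\mu_{n,t}|^2\,\d t$ by $2\Entn(\bar\mu_n)+4\Co\int\sfd^2(\cdot,\bar x_n)\,\d\bar\mu_n+2\log\z_n$, which is uniformly bounded by the standing hypotheses \eqref{eq:percomp} and $\z_n\to\z_\infty\in(0,\infty)$ (Remark~\ref{re:assnot}). As a by-product, from $W_2(\mu_{n,t},\delta_{\bar x_n})\le W_2(\bar\mu_n,\delta_{\bar x_n})+\sqrt T\big(\int_0^T|\dot\mu_{n,s}|^2\,\d s\big)^{1/2}$ I get $\sup_{n,\,t\in[0,T]}\int\sfd^2(\cdot,\bar x_n)\,\d\mu_{n,t}<\infty$; since $t\mapsto\Entn(\mu_{n,t})$ is non-increasing (\eqref{eq:defgf2}) one has $\Entn(\mu_{n,t})\le\Entn(\bar\mu_n)$, and then identity \eqref{eq:chiave} upgrades these two facts to a uniform bound on $\Enttn(\mu_{n,t})$ over $n\in\N$ and $t\in[0,T]$.

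For \textbf{(b)} I would first note that $W_2(\tilde\mm_n,\tilde\mm_\infty)\to0$ (Remark~\ref{re:assnot}) makes $\{\tilde\mm_n\}_{n\in\bar\N}$ relatively compact, hence tight (Theorem~\ref{thm:prok}); a dyadic exhaustion of the tight mass then yields $\vartheta:\Y\to[0,+\infty]$ with compact sublevels and $M:=\sup_n\int e^{\vartheta}\,\d\tilde\mm_n<\infty$. Writing $\mu_{n,t}=\rho_{n,t}\tilde\mm_n$ (legitimate since $\Enttn(\mu_{n,t})<\infty$) and using the Young inequality $ab\le a\log a-a+e^b$ pointwise with $a=\rho_{n,t}$, $b=\vartheta$, then integrating against $\tilde\mm_n$ and using $\int\rho_{n,t}\,\d\tilde\mm_n=1$, I obtain $\int\vartheta\,\d\mu_{n,t}\le\Enttn(\mu_{n,t})-1+M$, which is uniformly bounded by step (a). (Alternatively one runs verbatim the Jensen argument in the proof of Proposition~\ref{prop:enttight}.) Combining (a) and (b), $\sup_n\int\Psi\,\d\ppi_n<\infty$ with $\Psi$ of compact sublevels, so Theorem~\ref{thm:prok} gives that $\{\ppi_n\}_n$ is tight (indeed relatively compact) in $\prob{\rmC([0,T],\Y)}$, hence in $\prob{\rmC([0,T),\Y)}$ after restriction.

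The step I expect to be the actual obstacle is securing the \emph{uniform-in-$t$} control of the second moments $\int\sfd^2(\cdot,\bar x_n)\,\d\mu_{n,t}$ and of $\Enttn(\mu_{n,t})$: mere finiteness of the entropy of the data is not enough, and it is precisely the quantitative dissipation estimate \eqref{eq:speedcontr} (which forces the restriction $T=\tfrac1{8\Co}$) that makes the total length of $t\mapsto\mu_{n,t}$ on $[0,T]$ controllable, and thereby turns the one-time-slice tightness coming from Proposition~\ref{prop:enttight} into the path-space tightness we want. Everything else — the shape of $\Psi$ via Lemma~\ref{le:sollevo}, the Young/Legendre pairing in (b), and the final appeal to Prokhorov — is routine.
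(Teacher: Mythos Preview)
Your proof is correct and follows essentially the same route as the paper: bound $\Enttn(\mu_{n,t})$ uniformly in $n,t$ via the dissipation estimate \eqref{eq:speedcontr} and \eqref{eq:chiave}, then feed this into a functional $\Psi(\gamma)=\int_0^T\vartheta(\gamma_t)\,\d t+\Ecurve\gamma$ with compact sublevels (Lemma~\ref{le:sollevo}) and conclude by Prokhorov. The only cosmetic difference is that in step~(b) the paper first invokes Proposition~\ref{prop:enttight} to get tightness of $\{\mu_{n,t}\}$ and then Prokhorov to produce $\vartheta$, whereas you construct $\vartheta$ from the tight family $\{\tilde\mm_n\}$ with exponential integrability and close via the Young inequality --- which is exactly the computation hidden inside Proposition~\ref{prop:enttight}, as you yourself note.
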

\begin{proof}$\ $\\
\noindent{\bf Step 1: tightness of $\{\mu_{n,t}\}_{n,t}$.} Taking into account that $t\mapsto\Entn(\mu_{n,t})$ is non increasing and \eqref{eq:chiave}, for any $t\in[0,T]$ we get
\[
\begin{split}
\Enttn(\mu_{n,t})&=\Entn(\mu_{n,t})+\Co W_2^2(\mu_{n,t},\delta_{\bar x_n})+\log(\z_n)\\
&\leq\Entn(\bar \mu_{n})+\Co\left( W_2(\bar \mu_{n},\delta_{\bar x_n})+\int_0^t|\dot\mu_{n,s}|\,\d s\right)^2+\log(\z_n)\\
&\leq \Entn(\bar \mu_{n})+2\Co W^2_2(\bar \mu_{n},\delta_{\bar x_n})+2T\int_0^T|\dot\mu_{n,s}|^2\,\d s+\log(\z_n).
\end{split}
\]
Hence from  the bound \eqref{eq:speedcontr}, assumption \eqref{eq:percomp} and the fact that $\sup_n\log(\z_n)<\infty$  we  conclude that 
\[
\sup_{n\in \N\atop t\in[0,T]}\Enttn(\mu_{n,t})<\infty,
\]
and the tightness of $\{\mu_{n,t}\}_{n\in\N,t\in[0,T]}$ follows from Proposition \ref{prop:enttight}.\\
\noindent{\bf Step 2: tightness of $(\ppi_n)$}. Since the set $\{\mu_{n,t}\}_{t\in[0,T],n\in\N}$ is tight, by Theorem \ref{thm:prok} there exists a function $\psi:\Y \to[0,+\infty]$ with compact sublevels such that
\begin{equation}
\label{eq:tightmeasures}
\sup_{t\in[0,T],n\in\N}\int\psi\,\d\mu_{n,t}<\infty.
\end{equation}
Define $\Psi:\rmC([0,T],\Y )\to[0,+\infty]$ by $\Psi(\gamma):=\Ecurve\gamma+\int_0^T\psi(\gamma_t)\,\d t$. Lemma \ref{le:sollevo} ensures that the sublevels of $\Psi$ are compact in $\rmC([0,T],\Y )$. Using  \eqref{eq:optimal} we get
\[
\begin{split}
\int\Psi\,\d\ppi_n&=\iint_0^T\psi(\gamma_t)\,\d t\,\d\ppi_n(\gamma)+
\int\Ecurve\gamma\,\d\ppi_n(\gamma)=\int_0^T\Big(\int\psi\,\d\mu_{n,t}+|\dot\mu_{n,t}|^2\Big)\,\d t.
\end{split}
\]
The right hand side of this expression is uniformly bounded in $n$ thanks to \eqref{eq:tightmeasures} and \eqref{eq:speedcontr}, thus the conclusion follows by Prokhorov's Theorem \ref{thm:prok}.
\end{proof}
\subsubsection*{$\Gamma$-$\liminf$ estimates}

The next two propositions are valid without any Ricci curvature assumption
 (but still assuming the growth condition \eqref{eq:expcontr2}) 
 and are of independent interest.
\begin{proposition}[$\Gamma$-$\limi$ for Entropy + second moment]\label{prop:glimkinent} 
With the same assumptions and notation of Remark \ref{re:assnot},  let $(\mu_n)\subset\probt X $ be a sequence \narrowly\  converging to some $\mu_\infty\in\probt X $.
Then
\[
\Enti(\mu_\infty)+\Co\int\sfd^2(\cdot,\bar{x}_\infty)\,\d\mu_\infty\leq\limi_{n\to\infty}\left(\Entn(\mu_n)+\Co\int\sfd^2(\cdot,\bar x_n)\,\d\mu_n\right).
\]
\end{proposition}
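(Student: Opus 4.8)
The plan is to reduce the assertion to the joint lower semicontinuity \eqref{eq:joint} of the relative entropy in the pair (measure, reference measure), which is available and does exactly the work. The algebraic device is the identity \eqref{eq:chiave}: on each of the spaces $X_n$, $n\in\bar\N$, and for \emph{every} $\mu\in\probt X$ it can be rewritten as
\[
\Entn(\mu)+\Co\int\sfd^2(\cdot,\bar x_n)\,\d\mu=\Enttn(\mu)-\log\z_n,
\]
where both sides are read as $+\infty$ when $\mu$ is not absolutely continuous with respect to $\mm_n$. This is a genuine identity in $\R\cup\{+\infty\}$: since $\tilde\mm_n$ and $\mm_n$ are mutually absolutely continuous (the density $\z_n^{-1}\rme^{-\Co\sfd_n^2(\cdot,\bar x_n)}$ is strictly positive) and $\int\sfd^2(\cdot,\bar x_n)\,\d\mu<\infty$ for $\mu\in\probt X$, the two sides are simultaneously finite or simultaneously $+\infty$, so no case analysis is needed. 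Crucially, the left-hand side is exactly the quantity that appears in the statement, and the right-hand side involves only entropy relative to the \emph{probability} measure $\tilde\mm_n$ up to the additive constant $-\log\z_n$.

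First I would apply this identity to each $\mu_n$ (with $n$ finite) and to $\mu_\infty$ (with $n=\infty$). This converts the desired inequality into
\[
\Entti(\mu_\infty)-\log\z_\infty\le\limi_{n\to\infty}\bigl(\Enttn(\mu_n)-\log\z_n\bigr).
\]
Next I would invoke the convergence $\z_n\to\z_\infty\in(0,\infty)$ recorded in Remark \ref{re:assnot} (a consequence of $\D^\psi(\cX_n,\cX_\infty)\to0$), so that $\log\z_n\to\log\z_\infty$ and these constants cancel from both sides; what remains to prove is precisely
\[
\Entti(\mu_\infty)\le\limi_{n\to\infty}\Enttn(\mu_n).
\]

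To close this, I would observe that $\tilde\mm_n\to\tilde\mm_\infty$ narrowly: by Remark \ref{re:assnot} we have $W_2(\tilde\mm_n,\tilde\mm_\infty)\to0$, and $W_2$-convergence implies narrow convergence by Proposition \ref{prop:narw2}. Since also $\mu_n\to\mu_\infty$ narrowly by hypothesis, and all the measures $\tilde\mm_n,\mu_n$ lie in $\prob X$, the joint lower semicontinuity \eqref{eq:joint} applies verbatim (it is stated for varying reference measures) and yields $\Entti(\mu_\infty)\le\limi_n\Enttn(\mu_n)$, which completes the argument. There is really no serious obstacle here: the statement is a bookkeeping repackaging of \eqref{eq:joint} via \eqref{eq:chiave} and the two convergences $\z_n\to\z_\infty$, $\tilde\mm_n\to\tilde\mm_\infty$. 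The only point deserving explicit mention is the validity of \eqref{eq:chiave} as an identity in $\R\cup\{+\infty\}$ for all $\mu\in\probt X$, discussed above, so that one need not separately handle measures singular with respect to $\mm_n$.
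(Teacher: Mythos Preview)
Your proof is correct and follows exactly the same approach as the paper, which states in one line that the result is a direct consequence of \eqref{eq:chiave}, $\z_n\to\z_\infty$, and \eqref{eq:joint}. You have simply unpacked that sentence carefully, including the justification that \eqref{eq:chiave} holds as an identity in $\R\cup\{+\infty\}$ and that $W_2$-convergence of $\tilde\mm_n$ implies narrow convergence.
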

\begin{proof}
Direct  consequence of \eqref{eq:chiave}, $\z_n\to \z_\infty$ and \eqref{eq:joint}.
\end{proof}
\begin{proposition}[$\Glimi$ for kinetic energy minus second moment]\label{prop:w2nar} 
With the same assumptions and notation of Remark \ref{re:assnot}, let $T\leq \frac{1}{4\Co}$ and $(\ppi_n)\subset \prob{C([0,T],X )}$ a sequence \narrowly\  converging to some $\ppi_\infty\in\prob{ C([0,T],X )}$. Assume also that 
\[
(\e_0)_\sharp\ppi_n\in\probt X ,\ \forall n\in\bar\N,
\quad\textrm{ with }\quad W_2\big( (\e_0)_\sharp\ppi_n,(\e_0)_\sharp\ppi_\infty\big)\to0.
\]
Then 
\begin{equation}
\label{eq:pezzo2}
\begin{split}
&\int\Big(\frac12\Ecurve \gamma-\Co\sfd^2(\gamma_T,\bar x_\infty)\Big)\,\d\ppi_\infty(\gamma)
\leq\limi_{n\to\infty}\int\Big(\frac12\Ecurve{\gamma}-\Co\sfd^2(\gamma_T,\bar x_n)\Big)\,\d\ppi_n(\gamma).
\end{split}
\end{equation}
If we further assume that
\begin{equation}
\label{eq:kinug}
\lim_{n\to\infty}\iint_0^T|\dot\gamma_t|^2\,\d t\,\d\ppi_n(\gamma)=\iint_0^T|\dot\gamma_t|^2\,\d t\,\d\ppi_\infty(\gamma)<\infty,
\end{equation}
then $(\e_t)_\sharp\ppi_n$ converges to $(\e_t)_\sharp\ppi_\infty$ in $(\probt \Y ,W_2)$ for any $t\in[0,T]$.
\end{proposition}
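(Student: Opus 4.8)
The plan is to reduce both assertions to the lower semicontinuity Lemma \ref{le:lemmino}, applied in the complete separable metric space $C([0,T],X)$ to the narrowly convergent sequence $\ppi_n\to\ppi_\infty$. The single elementary estimate that drives everything is that, for $\gamma\in C([0,T],X)$, any base point $z$ and any $t\in[0,T]$,
\[
\sfd(\gamma_t,z)\le \sfd(\gamma_0,z)+\int_0^t|\dot\gamma_s|\,\d s\le \sfd(\gamma_0,z)+\sqrt t\,\big(\Ecurve\gamma\big)^{1/2},
\]
which is trivially true (with $\Ecurve\gamma=+\infty$) when $\gamma\notin\AC2{[0,T]}X$; squaring, $\sfd^2(\gamma_t,z)\le 2\sfd^2(\gamma_0,z)+2t\,\Ecurve\gamma$.

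For \eqref{eq:pezzo2} I would assume the right-hand side finite (otherwise nothing to prove) and introduce $F_n(\gamma):=\tfrac12\Ecurve\gamma-\Co\sfd^2(\gamma_T,\bar x_n)+2\Co\sfd^2(\gamma_0,\bar x_n)$. Using the displayed estimate at $t=T$ together with $T\le\tfrac1{4\Co}$ one gets $F_n(\gamma)\ge(\tfrac12-2\Co T)\Ecurve\gamma\ge0$. Each $F_n$ is lower semicontinuous on $C([0,T],X)$, since $\gamma\mapsto\Ecurve\gamma$ is l.s.c.\ and $\gamma\mapsto\sfd^2(\gamma_0,\bar x_n)$, $\gamma\mapsto\sfd^2(\gamma_T,\bar x_n)$ are continuous; moreover, if $\gamma^k\to\gamma$ in $C([0,T],X)$ and $n_k\to\infty$, then $\Ecurve\gamma\le\liminf_k\Ecurve{\gamma^k}$ while $\sfd^2(\gamma^k_0,\bar x_{n_k})\to\sfd^2(\gamma_0,\bar x_\infty)$ and $\sfd^2(\gamma^k_T,\bar x_{n_k})\to\sfd^2(\gamma_T,\bar x_\infty)$, so $F_\infty(\gamma)\le\liminf_k F_{n_k}(\gamma^k)$. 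Lemma \ref{le:lemmino} then gives $\int F_\infty\,\d\ppi_\infty\le\limi_{n\to\infty}\int F_n\,\d\ppi_n$. Finally $\int\sfd^2(\gamma_0,\bar x_n)\,\d\ppi_n=\int\sfd^2(\cdot,\bar x_n)\,\d(\e_0)_\sharp\ppi_n\to\int\sfd^2(\cdot,\bar x_\infty)\,\d(\e_0)_\sharp\ppi_\infty<\infty$, by the $W_2$-convergence of the initial marginals, $\bar x_n\to\bar x_\infty$ and the last part of Proposition \ref{prop:narw2} (indeed $W_2((\e_0)_\sharp\ppi_n,\delta_{\bar x_n})\to W_2((\e_0)_\sharp\ppi_\infty,\delta_{\bar x_\infty})$ by the triangle inequality). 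Subtracting $2\Co$ times this finite limit from both sides of the previous inequality — legitimate because the negative parts of all integrands are dominated by $2\Co\sfd^2(\gamma_0,\cdot)$, so every integral is well defined in $(-\infty,+\infty]$ — yields exactly \eqref{eq:pezzo2}.

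For the second statement, assume in addition \eqref{eq:kinug}, i.e.\ $\int\Ecurve\gamma\,\d\ppi_n\to\int\Ecurve\gamma\,\d\ppi_\infty<\infty$, and fix $t\in[0,T]$. Since $\e_t$ is continuous, $(\e_t)_\sharp\ppi_n\to(\e_t)_\sharp\ppi_\infty$ narrowly, and by the displayed estimate and \eqref{eq:kinug} all $(\e_t)_\sharp\ppi_n$ lie in $\probt X$; so by Proposition \ref{prop:narw2} it suffices to prove $\int\sfd^2(\cdot,\bar x_n)\,\d(\e_t)_\sharp\ppi_n\to\int\sfd^2(\cdot,\bar x_\infty)\,\d(\e_t)_\sharp\ppi_\infty<\infty$. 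The $\liminf$ half of this is Lemma \ref{le:lemmino} applied to the continuous nonnegative functionals $\gamma\mapsto\sfd^2(\gamma_t,\bar x_n)$. For the $\limsup$ half I would apply Lemma \ref{le:lemmino} to the nonnegative l.s.c.\ functionals $H_n(\gamma):=2\sfd^2(\gamma_0,\bar x_n)+2T\Ecurve\gamma-\sfd^2(\gamma_t,\bar x_n)$ (nonnegative by the displayed estimate, with the required pointwise $\liminf$ behaviour), obtaining $\int H_\infty\,\d\ppi_\infty\le\liminf_n\int H_n\,\d\ppi_n$; since $\int\sfd^2(\gamma_0,\bar x_n)\,\d\ppi_n$ and $\int\Ecurve\gamma\,\d\ppi_n$ both converge to the corresponding finite values at $n=\infty$ — here is exactly where \eqref{eq:kinug} is used — rearranging gives $\limsup_n\int\sfd^2(\gamma_t,\bar x_n)\,\d\ppi_n\le\int\sfd^2(\gamma_t,\bar x_\infty)\,\d\ppi_\infty$, and the claim follows.

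The main obstacle is the first part: the functional $\gamma\mapsto\tfrac12\Ecurve\gamma-\Co\sfd^2(\gamma_T,\cdot)$ is neither bounded below nor well behaved under lower semicontinuity on its own, and the role of the restriction $T\le\tfrac1{4\Co}$ together with the $W_2$-control of the initial marginals is precisely to absorb the troublesome negative term into the nonnegative (lower semicontinuous) kinetic part, up to a multiple of the initial second moment, which one can pass to the limit. Once this is achieved, the second assertion is a soft "generalized dominated convergence" consequence of the same estimate combined with \eqref{eq:kinug}.
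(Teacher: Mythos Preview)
Your proof is correct and follows essentially the same approach as the paper: for \eqref{eq:pezzo2} you introduce exactly the same compensated functional $F_n(\gamma)=\tfrac12\Ecurve\gamma-\Co\sfd^2(\gamma_T,\bar x_n)+2\Co\sfd^2(\gamma_0,\bar x_n)$, verify its nonnegativity via the kinetic bound and $T\le\tfrac1{4\Co}$, apply Lemma \ref{le:lemmino}, and then cancel the initial second moment using the $W_2$-convergence of $(\e_0)_\sharp\ppi_n$. The only minor difference is in the second part: the paper first observes that \eqref{eq:kinug} together with lower semicontinuity forces $\int_0^t|\dot\gamma_s|^2\,\d s$ to converge for every $t$, then reduces to $t=T$ and reuses the $F_n$-inequality, whereas you handle an arbitrary $t$ directly via the auxiliary functional $H_n$; the two arguments are equivalent and equally short.
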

\begin{proof}
  Let $F_n:\rmC([0,T],\Y )\to[0,+\infty]$, $n\in\bar\N$ be the l.s.c.~functionals
  given by  
  \[
  F_n(\gamma):= \frac12\Ecurve \gamma
  -\Co\sfd^2(\gamma_T,\bar x_n)+2\Co\sfd^2(\gamma_0,\bar x_n).
  \] 
  Since $\bar x_n\to\bar x_\infty$, we easily get
  \[
  \sup_{t\in[0,T]}\sfd(\gamma_{n,t},\gamma_{\infty,t})\to 0,\qquad\Rightarrow\qquad F_\infty(\gamma_\infty)\leq\limi_nF_n(\gamma_n).
  \]
  We claim that the $F_n$'s are non negative. Indeed, from $T\leq\frac{1}{4\Co}$ we have
  \[
  \begin{split}
    \Co\sfd^2(\gamma_T,\bar x_n)\leq \Co\left(\int_0^T|\dot\gamma_t|\,\d t+\sfd(\gamma_0,\bar x_n)\right)^2\leq\frac12\int_0^T|\dot\gamma_t|^2\,\d t+2\Co\sfd^2(\gamma_0,\bar x_n).
  \end{split}
  \]
  Therefore applying Lemma \ref{le:lemmino} with $\rmC([0,T],\Y )$ in place of $X$, $\nu_n:=\ppi_n$, 
  and recalling that $(\e_0)_\sharp\ppi_n\to(\e_0)_\sharp\ppi_\infty$ in $(\probt \Y ,W_2)$ by assumption, we get \eqref{eq:pezzo2}.
  
  For the second part of the statement, notice that the lower semicontinuity of the kinetic energy and assumption \eqref{eq:kinug} give that $\lim_{n\to\infty}\iint_0^t|\dot\gamma_s|^2\,\d s\,\d\ppi_n(\gamma)=\iint_0^t|\dot\gamma_s|^2\,\d s\,\d\ppi_\infty(\gamma)$ for any $t\in[0,T]$. Hence, up to diminishing $T$, we can assume without loss of generality that $t=T$. Now observe that 
  the positivity of $F_n$, \eqref{eq:pezzo2} and  \eqref{eq:kinug} give
  \[
  -\infty<-\Co\int \sfd^2(\cdot,\bar x_\infty)\,\d(\e_T)_\sharp\ppi_\infty\leq\limi_{n\to\infty}-\Co\int \sfd^2(\cdot,\bar x_n)\,\d(\e_T)_\sharp\ppi_n.
  \]
  The \narrow\ convergence of $\ppi_n$ to $\ppi$ gives the \narrow\ convergence of $(\e_T)_\sharp\ppi_n$ to $(\e_T)_\sharp\ppi_\infty$, thus the conclusion comes from Proposition \ref{prop:narw2}.
\end{proof}
In the next proposition, to get the $\Glimi$ inequality for the slope, 
the uniform lower Ricci curvature bound plays a crucial role.
\begin{proposition}[$\Gamma$-$\limi$ for the slope]\label{prop:glimisl} 
 With the same notation and assumption as in Remark \ref{re:assnot}, assume furthermore that for some $K\in\R$ the spaces $\cX_n$ are all $\CD(K,\infty)$ spaces (so that by Theorem \ref{thm:stabilityCD} also the limit space $\cX_\infty$ is a $\CD(K,\infty)$ space). 
  
   Then for any sequence $n\mapsto\mu_n\in\probt{X_n}  $ \narrowly\  converging to some $\mu_\infty\in\probt \Y $ it holds
  \[
  |\Dm^-\Enti|(\mu_\infty)\leq \limi_{n\to\infty}|\Dm^-\Entn|(\mu_n).
  \]
\end{proposition}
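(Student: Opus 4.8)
The plan is to bypass the Wasserstein representation $\eqref{eq:repslope}$ of the slope and argue instead through the Fisher information formula $\eqref{eq:slopefish}$, thereby reducing the statement to a lower semicontinuity property of the Cheeger energies along $\pGw$-convergence. First I would dispose of the trivial case and assume $L:=\limi_n|\Dm^-\Entn|(\mu_n)<\infty$, passing to a (not relabelled) subsequence along which $|\Dm^-\Entn|(\mu_n)\to L$ and $|\Dm^-\Entn|(\mu_n)\le L+1$ for all $n$. By $\eqref{eq:slopefish}$ each $\mu_n$ is then absolutely continuous, $\mu_n=\rho_n\mm_n$, and setting $u_n:=\sqrt{\rho_n}$ one gets $u_n\in\s^2(X,\sfd,\mm_n)$ with $\|u_n\|^2_{L^2(\mm_n)}=\mu_n(X)=1$ and $4\int_X|\weakgrad{u_n}|^2\,\d\mm_n=8\,\C_n(u_n)=|\Dm^-\Entn|^2(\mu_n)\le (L+1)^2$. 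Hence it suffices to prove the following: if $u_n^2\mm_n$ converges narrowly to $\mu_\infty$ and $\sup_n\C_n(u_n)<\infty$, then $\mu_\infty\ll\mm_\infty$, its square root density $u_\infty$ lies in $\s^2(X,\sfd,\mm_\infty)$, and $\int_X|\weakgrad{u_\infty}|^2\,\d\mm_\infty\le\limi_n\int_X|\weakgrad{u_n}|^2\,\d\mm_n$. Granting this, $\eqref{eq:slopefish}$ applied to $\cX_\infty$ (together with the fact, checked via the $\CD(K,\infty)$ bound, that $\mu_\infty\in D(\Enti)$) gives $|\Dm^-\Enti|^2(\mu_\infty)=4\int_X|\weakgrad{u_\infty}|^2\,\d\mm_\infty\le L^2$, which is the claim.

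For the lower semicontinuity statement I would work inside the common realization $(X,\sfd)$ of Remark \ref{re:assnot}, where $\mm_n\to\mm_\infty$ weakly in $\Mloc X$ and $W_2(\tilde\mm_n,\tilde\mm_\infty)\to 0$. The absolute continuity $\mu_\infty\ll\mm_\infty$ is forced by the uniform Cheeger bound: such a bound prevents the narrow limit of $u_n^2\mm_n$ from concentrating on $\mm_\infty$-negligible sets, and a truncation argument based on the calculus rules $\eqref{eq:calcrule}$ and $\eqref{eq:weaklip}$ (replacing $u_n$ by $u_n\wedge k$) makes this precise. For the energy inequality I would test the weak upper gradient defining inequality for $u_n$ on $\cX_n$ against test plans of $\cX_\infty$ transplanted to $\cX_n$ along approximately optimal couplings between $\tilde\mm_n$ and $\tilde\mm_\infty$, once more truncating $u_n$ and using $\eqref{eq:calcrule}$ and $\eqref{eq:weaklip}$ to stay in $\s^2$; then one passes to the limit using weak-$*$ compactness of the equi-bounded measures $u_n|\weakgrad{u_n}|\,\mm_n$ (note $|\weakgrad{\rho_n}|\le 2u_n|\weakgrad{u_n}|$ gives finite total mass), the lower semicontinuity of $\Ecurve{\gamma}$ along the transplanted plans, and lower semicontinuity of the $L^2$-norm under weak convergence. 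This produces a weak upper gradient $G$ of $u_\infty$ with $\int_X G^2\,\d\mm_\infty\le\limi_n\int_X|\weakgrad{u_n}|^2\,\d\mm_n$, hence the conclusion since $\weakgrad{u_\infty}\le G$ $\mm_\infty$-a.e. (Alternatively, in the special situation where $\mu_n\to\mu_\infty$ in $W_2$, one could argue more directly from $\eqref{eq:repslope}$: pick a competitor $\nu_\infty$, produce via the $\Gamma$-$\lims$ part of Proposition \ref{prop:GammaConvergence} a recovery sequence $\nu_n\stackrel{W_2}\to\nu_\infty$ with $\Entn(\nu_n)\to\Enti(\nu_\infty)$, use that $W_2(\mu_n,\nu_n)\to W_2(\mu_\infty,\nu_\infty)>0$, and pass to the $\limi$ with the $\Gamma$-$\limi$ inequality for the entropies; but under mere narrow convergence this does not close, which is exactly why the Fisher route is needed.)

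The main obstacle is the transplantation step. Since $\mm_n$ and $\mm_\infty$ have, in general, mutually singular supports inside $X$, the notions of test plan, weak upper gradient and Cheeger energy attached to $\mm_n$ and to $\mm_\infty$ cannot be compared directly; one must genuinely move test plans from $\cX_\infty$ to $\cX_n$ while controlling the distortion both of the metric speeds and of the densities $(\e_t)_\sharp\ppi/\mm$, using only the $W_2$-closeness of the normalized measures and the cut-off structure underlying $\pGw$-convergence. The uniform $\CD(K,\infty)$ assumption enters in two essential ways throughout: it is what makes $\eqref{eq:slopefish}$ available (so that the slope really is a Cheeger energy and a Fisher information), and through $\eqref{eq:expcontr2}$ and $\eqref{eq:chiave}$ it furnishes the a priori bounds that justify the reduction in the first step and the identification $\mu_\infty\in D(\Enti)$ in the last one.
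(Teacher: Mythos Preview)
Your reduction to a lower semicontinuity statement for the Cheeger energies is correct in spirit, and you rightly isolate the difficulty: under mere narrow convergence the representation \eqref{eq:repslope} does not close. But the ``transplantation step'' is a genuine gap, not a technicality. An optimal coupling between $\tilde\mm_n$ and $\tilde\mm_\infty$ is a plan on $X\times X$, i.e.\ a correspondence of \emph{points}; it gives no canonical way to push a curve $\gamma\in C([0,1],X_\infty)$ to a curve in $X_n$, let alone one that preserves absolute continuity or controls metric speed. Even in the favourable case where the coupling is induced by a map, that map is in general far from an isometry, so both the kinetic energy $\Ecurve\gamma$ and the density bound $(\e_t)_\sharp\ppi\le c\,\mm_n$ defining test plans are lost. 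The weak-$*$ compactness of $u_n|\rmD u_n|_w\,\mm_n$ you invoke does not help either: identifying the limit measure as (dominated by) $|\rmD u_\infty|_w\,\mm_\infty$ is exactly the lower semicontinuity you are trying to prove. In fact in the paper's architecture the Cheeger $\Gamma$-$\liminf$ \eqref{eq:59} and the Mosco result (Theorem~\ref{thm:Mosco}) are \emph{deduced from} this proposition via \eqref{eq:slopefish}, so your proposed route would be circular unless you supply an independent argument---and none is known that proceeds by direct test-plan transfer.

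The paper's proof avoids this entirely. It uses the Fisher formula \eqref{eq:slopefish} only for a \emph{local} purpose: controlling the change in slope under a smooth spatial cutoff $\mu_n\mapsto\mu_{n,R}:=c_{n,R}\nchi_{n,R}^2\mu_n$ (your calculus rules \eqref{eq:calcrule}, \eqref{eq:weaklip} give $|\Dm^-\Entn|^2(\mu_{n,R})\le c_{n,R}\big(|\Dm^-\Entn|^2(\mu_n)(1+1/2R)+O(1/R)\big)$). The point of the cutoff is that the $\mu_{n,R}$ have uniformly bounded support, so their narrow convergence upgrades to $W_2$-convergence. At that stage one switches back to the entropy/Wasserstein representation \eqref{eq:repslope}---precisely the route you describe in your parenthetical remark---using Proposition~\ref{prop:GammaConvergence} to handle competitors $\nu$. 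Finally one sends $R\to\infty$ using the $W_2$-lower semicontinuity of the limit slope \eqref{eq:slopesemi}. So the two representations are used in tandem: Fisher for the cutoff stability, Wasserstein for the limit passage. The idea you are missing is this cutoff bridge from narrow to $W_2$ convergence.
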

\begin{proof}$\ $\\
\noindent{\bf Step 1: From \narrow\ to $W_2$ convergence.}
For $R>0$, let $h_R:\R^+\to[0,1]$ be given by $h_R(z):=\max\{\{\min\{2-z/R,1\},0\}$ and define the $1/R$-Lipschitz functions $\nchi_{n,R}:\Y \to[0,1]$, $n\in \bar\N$,
 as $\nchi_{n,R}(x):=h_R(\sfd_n(x,\overline x_n))$.

Clearly, for some $R_0,n_0$, the probability measures $\mu_{\infty,R}:=c_{\infty,R}\nchi_{\infty,R}^2\mu_\infty$ and  $\mu_{n,R}:=c_{n,R}\nchi_{n,R}^2\mu_n$ are well defined for any $R>R_0$ and $n\geq n_0$, where $c_{\infty,R},c_{n,R}$ are the normalization constants, and it holds $c_{n,R}\to 1$ as $R\to\infty$ and $c_{n,R}\to c_{\infty,R}$ as $n\to\infty$.

From the convergence of $\bar x_n$ to $\bar x_\infty$, we get the \narrow\ convergence of $(\mu_{n,R})$ to $\mu_{\infty,R}$ as $n\to\infty$. Therefore, since their supports are uniformly bounded, the convergence is in the $W_2$-topology by Proposition \ref{prop:narw2}.
\\*
\noindent{\bf Step 2: control of the slope under cutoff.} Given that the slope is $+\infty$ outside the domain of the entropy, up to pass to subsequences, with no loss of generality we can assume that $\mu_n\ll\mm_n$ for every $n\in\N$. Let $\mu_n=f_n^2\mm_n$, so that $\mu_{n,R}=c_{n,R}(f_n\nchi_{n,R})^2\mm_n$. From \eqref{eq:weaklip} and \eqref{eq:calcrule} we get
 \[
 \weakgrad{(f_n\nchi_{n,r})}\leq\weakgrad{f_n}+|f_n|/R.
 \]
Squaring, integrating, recalling that $\int f_n^2\,\d\mm_n=1$ and  the definition of $\C_n$ given in \eqref{eq:cheeger} and the discussion thereafter, we get 
\[
\C_n({\sqrt{c_{n,R}}f_n\nchi_{n,R}})\leq c_{n,R}\left(\C_n(f_n)\Big(1+\frac1{2R}\Big)+\frac1{2R}+\frac1{2R^2}\right),\qquad\forall R\geq R_0, n\geq n_0.
\]
Therefore using \eqref{eq:slopefish} twice, we get that for $R>R_0$  it holds
\begin{equation}
\label{eq:pezzo1}
|\Dm^-\Entn|^2(\mu_{n,R})\leq c_{n,R}\left(|\Dm^-\Entn|^2(\mu_{n})\Big(1+\frac1{2R}\Big)+\frac4{R}+\frac4{R^2}\right).
\end{equation}
\noindent{\bf Step 3: $\Glimi$ under $W_2$-convergence.} Fix $\nu\in D(\Enti)$ and use the $\Glims$ inequality in Proposition \ref{prop:GammaConvergence} to find a sequence $(\nu_n)\subset\probt \Y $ such that $\Entn(\nu_n)\to \Enti(\nu)$ and $W_2(\nu_n,\nu)\to 0$ as $n\to\infty$. Using the $W_2$-convergence of $\mu_{n,R}$ to $\mu_{\infty,R}$ as $n\to\infty$ and the $\Glimi$ inequality in Proposition \ref{prop:GammaConvergence} we have 
\[
\begin{split}
\frac{\Enti(\mu_{\infty,R})-\Enti(\nu)}{W_2(\mu_{\infty,R},\nu)}+\frac K2W_2(\mu_{\infty,R},\nu)&\leq\limi_{n\to\infty}\frac{\Entn(\mu_{n,R})-\Entn(\nu_n)}{W_2(\mu_{n,R},\nu_n)}+\frac K2W_2(\mu_{n,R},\nu_n)\\
&\leq \limi_{n\to\infty}|D^-\Entn|(\mu_{n,R}),
\end{split}
\]
having used \eqref{eq:repslope} in the last step. Taking the positive parts, the supremum over $\nu\in D(\Enti)$ and using again  \eqref{eq:repslope}  we conclude
\begin{equation}
\label{eq:glimisl}
|\Dm^-\Enti|(\mu_{\infty,R})\leq \limi_{n\to\infty}|\Dm^-\Entn|(\mu_{n,R}),\qquad\forall R\geq R_0.
\end{equation}
\noindent{\bf Step 4: back to $\mu$ and conclusion.} From the definition it is clear that $W_2(\mu_{\infty,R},\mu_\infty)\to 0$ as $R\to\infty$ (because there is \narrow\ convergence and 2-uniform integrability), hence recalling the $W_2$-lower semicontinuity of $|\Dm^-\Enti|$ (see \eqref{eq:slopesemi})  we get 
\begin{equation}
\label{eq:sempre}
|\Dm^-\Enti|(\mu_\infty)\leq \limi_{R\to\infty}|\Dm^-\Enti|(\mu_{\infty,R}).
\end{equation}
The inequalities \eqref{eq:pezzo1}, \eqref{eq:glimisl} and \eqref{eq:sempre} give the conclusion.
\end{proof}
\begin{remark}{\rm
It is worth underlying that although the structure of the proof of Proposition \ref{prop:glimisl} is very similar to the one presented in \cite{Gigli10} for the compact case, actually it is conceptually more involved. Indeed, to gain $W_2$-convergence from \narrow\ one we used a cut-off argument, and to control  the variation of the slope under this procedure (inequality \eqref{eq:pezzo1}) we used as crucial tool  the non-trivial identification of the squared slope with the Fisher information (identity \eqref{eq:slopefish}), which in turn is a consequence of the fine analysis carried out in \cite{Ambrosio-Gigli-Savare-preprint11a}.
}\fr\end{remark}

\subsubsection*{Conclusion of the proof of Theorem \ref{thm:stabgf1}}

\begin{proof}
Let $T:=\frac1{8C}$ and, for every $n\in\N$, $\ppi_n\in\prob{C([0,T],X )}$ a plan associated to $[0,T]\ni t\mapsto \mu_{n,t}$ via Proposition \ref{prop:lisini}, so that in particular
\begin{equation}
\label{eq:benen}
\int_0^T|\dot\mu_{n,t}|^2\,\dt=\int\Ecurve\gamma\,\d\ppi_n(\gamma).
\end{equation}
Thanks to \eqref{eq:forconv}, Proposition \ref{prop:comp} is applicable, thus $(\ppi_n)$ is tight in $\prob{C([0,T],X)}$ and up to pass to a subsequence, not relabeled, we can assume that it \narrowly\  converges to some $\ppi_\infty$. The convergences in \eqref{eq:forconv},  the bound \eqref{eq:speedcontr},
the identity \eqref{eq:benen}, and the lower semicontinuity of the kinetic energy combined with Lemma \ref{le:lemmino},  give
\[
\int\Ecurve\gamma\,\d\ppi_\infty(\gamma)\le \limi_{n\to\infty}\int\Ecurve\gamma\,\d\ppi_n(\gamma)
<\infty.
\]
Put $\mu_t:=(\e_t)_\sharp\ppi_\infty$ and recalling \eqref{eq:nonopt},
we get that $\mu\in\AC2{[0,T]}{(\probt \Y ,W_2)}$ with
\begin{equation}
\label{eq:nonopt2}
\int_0^T|\dot\mu_t|^2\,\d t\leq \int\Ecurve\gamma\,\d\ppi_\infty(\gamma).
\end{equation}
The \narrow\ convergence of $(\ppi_n)$ to $\ppi_\infty$ ensures that $n\mapsto \mu_{n,t}$ \narrowly\  converges to $\mu_t$ for any $t\in[0,T]$.
%
Proposition \ref{prop:w2nar} yields 
\begin{equation}
\label{eq:kin}
\begin{split}
\underbrace{\int\Big(\frac12\Ecurve\gamma
-\Co\sfd^2(\gamma_T,\bar{x}_\infty)\Big)\,\d\ppi_\infty(\gamma)}_{=:{\rm a}}\leq\limi_{n\to\infty}
\underbrace{\int\Big(\frac12\Ecurve\gamma-\Co\sfd^2(\gamma_T,\bar x_n)\Big)\,\d\ppi_n(\gamma)}_{=:{\rm a}_n}.
\end{split}
\end{equation}
Applying Proposition  \ref{prop:glimkinent} to $\mu_n:=(\e_T)_\sharp\ppi_n$, $\mu_\infty:=(\e_T)_\sharp\ppi_\infty$ we obtain
\begin{equation}
\label{eq:entmom}
\underbrace{\Enti\big((\e_T)_\sharp\ppi_\infty\big)+\Co\int\sfd^2(\gamma_T,\bar{x}_\infty)\,\d\ppi_\infty(\gamma)}_{=:{\rm b}}\leq\limi_{n\to\infty}\bigg(\underbrace{\Entn\big((\e_T)_\sharp\ppi_n\big)+\Co\int\sfd^2(\gamma_T,\bar x _n)\,\d\ppi_n(\gamma)}_{=:{\rm b}_n}\bigg).
\end{equation}
Also, from Proposition \ref{prop:glimisl} and Fatou's lemma we get
\begin{equation}
\label{eq:slope}
\underbrace{\frac12\int_0^T|\Dm^-\Enti|^2(\mu_{t})\,\d t}_{=:{\rm c}}\leq\limi_{n\to\infty}\underbrace{\frac12\int_0^T|\Dm^-\Entn|^2(\mu_{n,t})\,\d t}_{=:{\rm c}_n}.
\end{equation}
Adding up \eqref{eq:kin}, \eqref{eq:entmom} and \eqref{eq:slope} and taking \eqref{eq:nonopt2} into account we get
\begin{equation}
\label{eq:quasi}
\begin{split}
\Enti\big(\mu_{T}\big)&+\frac12\int_0^T|\dot\mu_{t}|^2\,\d t+\frac12\int_0^T|\Dm^-\Enti|^2
(\mu_{t})
\,\d t\leq \limi_{n\to\infty}({\rm a}_n+{\rm b}_n+{\rm c}_n).
\end{split}
\end{equation}
From the  fact that $t\mapsto\mu_{n,t}$ is a gradient flow of $\Entn$, \eqref{eq:defgf}, 
\eqref{eq:forconv} and \eqref{eq:benen}, we obtain
\begin{equation}
\label{eq:quasi2}
{\rm a}_n+{\rm b}_n+{\rm c}_n=\Entn(\bar\mu_n)\to \Enti(\bar\mu_\infty).
\end{equation}
%
Therefore \eqref{eq:quasi} yields
\begin{equation}
\label{eq:finalmente}
\Enti\big(\mu_{T}\big)+\frac12\int_0^T|\dot\mu_{t}|^2\,\d t+\frac12\int_0^T|\Dm^-\Enti|^2
(\mu_{t})
\,\d t
\leq\Enti(\bar\mu_\infty),
\end{equation}
which means, according to \eqref{eq:defgf2}, that $[0,T]\ni t\mapsto \mu_t$ is the (restriction to $[0,T]$ of the) gradient flow of $\Enti$ starting from $\bar\mu_\infty$. Thus $\mu_t=\mu_{\infty,t}$ for any $t\in[0,T]$. 

By \eqref{eq:pergf}, the opposite inequality in \eqref{eq:finalmente} also holds, and since in proving  \eqref{eq:finalmente} we used \eqref{eq:quasi}, which in turn was proved using \eqref{eq:nonopt2}, we deduce that equality holds in \eqref{eq:nonopt2}, i.e.
\begin{equation}
\label{eq:bene}
\int_0^T|\dot\mu_t|^2\,\d t= \iint_0^T|\dot\gamma_t|^2\,\d t\,\d\ppi_\infty(\gamma).
\end{equation}
Notice also that  equality  in \eqref{eq:finalmente} reads as ${\rm a}+{\rm b}+{\rm c}=\Enti(\bar\mu_\infty)$, therefore from \eqref{eq:kin}, \eqref{eq:entmom}, \eqref{eq:slope}, \eqref{eq:quasi} and  \eqref{eq:quasi2}  we get
\[
\begin{split}
\Enti(\bar\mu_\infty)={\rm a}+{\rm b}+{\rm c}&\leq \limi_{n\to\infty}{\rm a}_n+\limi_{n\to\infty}{\rm b}_n+\limi_{n\to\infty}{\rm c}_n\leq \limi_{n\to\infty}({\rm a}_n+{\rm b}_n+{\rm c}_n)\\
&\leq \lims_{n\to\infty}({\rm a}_n+{\rm b}_n+{\rm c}_n)=\lims_{n\to\infty}\Entn(\bar\mu_n)= \Enti(\bar\mu_\infty),
\end{split}
\]
which forces ${\rm a}_n\to {\rm a}$, ${\rm b}_n\to {\rm b}$ and ${\rm c}_n\to {\rm c}$. Proposition \ref{prop:glimisl} and the convergence ${\rm c}_n\to {\rm c}$  give
\[
|\Dm^-\Enti|^2(\mu_t)\leq\limi_{n\to\infty}|\Dm^-\Entn|^2(\mu_{n,t}),\qquad a.e.\ t\in[0,T],\\
\]
and
\begin{equation}
\label{eq:mond}
\int_0^T|\Dm^-\Enti|^2(\mu_t)\,\d t=\lim_{n\to\infty}\int_0^T|\Dm^-\Entn|^2(\mu_{n,t})\,\d t,
\end{equation}
which implies
\[
|\Dm^-\Enti|^2(\mu_t)=\lim_{n\to\infty}|\Dm^-\Entn|^2(\mu_{n,t}),\qquad a.e.\ t\in[0,T].
\]
Taking the regularity property \eqref{eq:slopereg} into account we get
the 
convergence for all $t\in[0,T]$ with the possible exception of the countable
discontinuity set of $t\mapsto |\rmD^-\Enti|(\mu_{\infty,t})$.
From  \eqref{eq:kinugsl}  we also get $|\dot\mu_{\infty,t}|=\lim_{n}|\dot\mu_{n,t}|$ for a.e. $t\in[0,T]$ which together with \eqref{eq:mond} and the equalities \eqref{eq:bene}, \eqref{eq:benen} give that \eqref{eq:kinug} holds. Therefore from the second part of Proposition \ref{prop:w2nar} we get that $(\mu_{n,t})$ converges to $\mu_{\infty,t}$ w.r.t.~$W_2$ for any $t\in[0,T]$. Finally, the convergence $\Entn(\mu_{n,t})\to\Enti(\mu_t)$ follows from ${\rm b}_n\to{\rm b}$ and the convergence of the second moments (replace $T$ by $t\in[0,T]$ in the whole argument to get this latter convergence). 

Observe now that all these results do not really depend on the particular \narrowly\  converging sequence $(\ppi_n)$ chosen at the beginning, so that they are valid for the full original sequence.
Thus we proved all the stated convergence properties for $t\in[0,\frac1{8\Co}]$. In particular, we have that
\[
W_2(\mu_{n,1/(8\Co)},\mu_{\infty,1/(8\Co)})\to 0,\qquad\qquad\Entn(\mu_{n,1/(8\Co)})\to\Enti(\mu_{\infty,1/(8\Co)})<\infty.
\]
Hence the argument can be repeated with $\mu_{n,1/(8\Co)},\mu_{\infty,1/(8\Co)}$ in place of $\bar \mu_n,\bar\mu_\infty$. Iterating, we get the result on the whole $[0,\infty)$.
\end{proof}
\subsection{A consequence: Mosco-convergence of the slopes}
Now that we have at disposal the convergence of the heat flows, we can improve the result of Proposition \ref{prop:glimisl} and obtain the full Mosco-convergence of the slopes of the relative entropies:
\begin{corollary}[Mosco-convergence of the slopes]\label{cor:moscoslope}
 With the same notation and assumption as in Remark \ref{re:assnot}, assume furthermore that for some $K\in\R$ the spaces $\cX_n$ are all $\CD(K,\infty)$ spaces (so that by Theorem \ref{thm:stabilityCD} also the limit space $\cX_\infty$ is a $\CD(K,\infty)$ space). Then the following holds:

\begin{itemize}
\item\underline{weak $\Gamma-\liminf$}    For any sequence $n\mapsto\mu_n\in\probt{X_n}  $ \narrowly\  converging to some $\mu_\infty\in\probt {X_\infty}$ it holds
  \[
  |\Dm^-\Enti|(\mu_\infty)\leq \limi_{n\to\infty}|\Dm^-\Entn|(\mu_n).
  \]
\item\underline{strong $\Gamma-\limsup$} For any $\mu_\infty\in\probt{X_\infty}$ there exists a sequence   $n\mapsto\mu_n\in\probt{X_n}  $ $W_2$-converging to $\mu_\infty$ such that
  \[
  |\Dm^-\Enti|(\mu_\infty)\geq \lims_{n\to\infty}|\Dm^-\Entn|(\mu_n).
  \]
\end{itemize}
\end{corollary}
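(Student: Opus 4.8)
The weak $\Gamma$-$\liminf$ inequality has already been proved in Proposition~\ref{prop:glimisl}, so the plan is to concentrate on the strong $\Gamma$-$\limsup$ inequality. The underlying idea is to use the heat flow as a regularising device: running the flow $\h_{n,t}$ for a short time moves a measure only slightly in $W_2$ while, by the monotonicity~\eqref{eq:slopereg}, it does not increase the $\rme^{Kt}$-weighted slope; coupling this with the already established convergence of heat flows (Theorem~\ref{thm:stabgf1}) will manufacture the required recovery sequence.

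Fix $\mu_\infty\in\probt{X_\infty}$. If $|\Dm^-\Enti|(\mu_\infty)=+\infty$ the inequality is satisfied by any $W_2$-recovery sequence $\mu_n\in\probt{X_n}$, and the existence of such a sequence is elementary (when $\mu_\infty\in D(\Enti)$ it is exactly the $\Gamma$-$\limsup$ for the entropies in Proposition~\ref{prop:GammaConvergence}; in general one first $W_2$-approximates $\mu_\infty$ by finitely supported measures and then moves their atoms to nearby points of $\supp(\mm_n)$). So I may assume $|\Dm^-\Enti|(\mu_\infty)<\infty$, hence $\mu_\infty\in D(\Enti)$ and $\Enti(\mu_\infty)<\infty$. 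First I would apply the $\Gamma$-$\limsup$ for the entropies to obtain $\bar\mu_n\in\probt{X_n}$ with $W_2(\bar\mu_n,\mu_\infty)\to0$ and $\lims_{n\to\infty}\Entn(\bar\mu_n)\le\Enti(\mu_\infty)$; together with the $\Gamma$-$\liminf$ this gives $\Entn(\bar\mu_n)\to\Enti(\mu_\infty)<\infty$, so the well-preparedness hypothesis~\eqref{eq:forconv} of Theorem~\ref{thm:stabgf1} holds. Applying that theorem to the flows $\mu_{n,t}:=\h_{n,t}(\bar\mu_n)$ and $\mu_{\infty,t}:=\h_{\infty,t}(\mu_\infty)$ produces a countable set $\mathcal S\subset(0,\infty)$ with $W_2(\mu_{n,t},\mu_{\infty,t})\to0$ and $|\Dm^-\Entn|(\mu_{n,t})\to|\Dm^-\Enti|(\mu_{\infty,t})$ for every $t\in(0,\infty)\setminus\mathcal S$.

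Then I would let $t\downarrow0$ to bring the limiting slope back down. Since $\mu_\infty$ has finite slope, the flow $(\mu_{\infty,t})_{t\ge0}$ starts at $\mu_\infty$ with slope continuous at $t=0$, so~\eqref{eq:slopereg} applied at the initial time yields $|\Dm^-\Enti|(\mu_{\infty,t})\le\rme^{-Kt}\,|\Dm^-\Enti|(\mu_\infty)$ for all $t\ge0$, while local absolute continuity gives $W_2(\mu_{\infty,t},\mu_\infty)\to0$ as $t\downarrow0$. A diagonal extraction finishes the argument: choose $t_j\downarrow0$ with $t_j\notin\mathcal S$, then $n_1<n_2<\cdots$ so large that for $n\ge n_j$ one has $W_2(\mu_{n,t_j},\mu_{\infty,t_j})\le 2^{-j}$ and $|\Dm^-\Entn|(\mu_{n,t_j})\le\rme^{-Kt_j}|\Dm^-\Enti|(\mu_\infty)+2^{-j}$, and set $\mu_n:=\mu_{n,t_j}\in\probt{X_n}$ for $n_j\le n<n_{j+1}$ (and $\mu_n:=\bar\mu_n$ for $n<n_1$). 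Then $W_2(\mu_n,\mu_\infty)\le 2^{-j}+W_2(\mu_{\infty,t_j},\mu_\infty)\to0$ and $\lims_{n\to\infty}|\Dm^-\Entn|(\mu_n)\le\lims_{j\to\infty}\big(\rme^{-Kt_j}|\Dm^-\Enti|(\mu_\infty)+2^{-j}\big)=|\Dm^-\Enti|(\mu_\infty)$, which is the desired bound.

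I do not expect a genuine obstacle, since all the hard analysis is imported: the $\Gamma$-convergence of the entropies (Proposition~\ref{prop:GammaConvergence}), the convergence of heat flows together with the convergence of the slopes along them (Theorem~\ref{thm:stabgf1}), and the $\rme^{Kt}$-monotonicity of the slope along the flow (Theorem~\ref{thm:gf}). The only points requiring a little care are the legitimacy of using~\eqref{eq:slopereg} at $t=0$ — licit precisely because in the nontrivial case $\mu_\infty\in D(|\Dm^-\Enti|)$ — the bookkeeping in the diagonal argument, and the (routine but genuinely separate) treatment of measures outside $D(\Enti)$.
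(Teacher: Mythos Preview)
Your proposal is correct and follows essentially the same route as the paper: reduce to the case of finite slope, build well-prepared initial data via the $\Gamma$-$\limsup$ for the entropies, invoke Theorem~\ref{thm:stabgf1} to get convergence of the slopes along the heat flows at times outside $\mathcal S$, use the monotonicity~\eqref{eq:slopereg} to bound $|\Dm^-\Enti|(\mu_{\infty,t})$ by $\rme^{-Kt}|\Dm^-\Enti|(\mu_\infty)$, and finish with a diagonal extraction. The only cosmetic difference is in how you verify $W_2$-convergence of the recovery sequence: you go through $W_2(\mu_{n,t_j},\mu_{\infty,t_j})\to0$ and $W_2(\mu_{\infty,t_j},\mu_\infty)\to0$, whereas the paper uses the uniform H\"older bound $W_2(\mu_{n,t},\bar\mu_n)\le C\sqrt t$ coming from~\eqref{eq:speedcontr} together with $W_2(\bar\mu_n,\mu_\infty)\to0$; both are equally direct.
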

\begin{proof}
The first part is precisely the content of Proposition \ref{prop:glimisl}, so we pass to the second. Without loss of generality we assume $  |\Dm^-\Enti|(\mu_\infty)<\infty$ (otherwise there is nothing to prove) so that in particular $\Enti(\mu_\infty)<\infty$.    Invoking the second part of  Proposition \ref{prop:GammaConvergence} we can find
  a sequence $\mu_n$ $W_2$-converging to $\mu_\infty$ such that 
  $\Entn(\mu_n)\to\Enti(\mu_\infty)$. Setting
  $\mu_{n,t}:=\h_{n,t}(\mu_n)$,
  \eqref{eq:speedcontr} yields the uniform H\"older estimate
  \begin{displaymath}
    W_2(\mu_{n,t},\mu_n)\le C\sqrt t\quad t\in [0,T],
  \end{displaymath}
  for some $C,T>0$ independent of $n$.
  Let us choose a vanishing sequence of times $t_k\in
  (0,T)\setminus \mathcal S$, $\mathcal S$ being as in Theorem 
  \ref{thm:stabgf1}.
  Applying (\ref{subeq:conclusions}c), 
  we can find an increasing sequence 
  $k\mapsto n_k\in \N$ such that 
  for every $n\ge n_k$
  \begin{displaymath}
    |\rmD^-\Entn|(\mu_{n,t_k})\le |\rmD^-\Enti|(\mu_{\infty,t_k})
    +2^{-k}\le 
    \rme^{Kt_k} |\rmD^-\Enti|(\mu_{\infty})+2^{-k}.
  \end{displaymath}
  where in the last inequality we applied \eqref{eq:slopereg}.
  The sequence $\nu_n:=\mu_{n,t_k}$ whenever $n_k\le n<n_{k+1}$ 
  thus satisfies the desired inequality.
\end{proof}

\section{Mosco-convergence of the Cheeger energies.}
\label{sec:Mosco}
In this section we will prove a $\Gamma$-convergence result
for Cheeger energies
in the same spirit of Proposition 
\ref{prop:GammaConvergence}.

As usual, we adopt the assumptions and notation of Remark \ref{re:assnot}.
We denote by $\C_n$ the Cheeger energy associated to the measure
$\mm_n$ and by $L^p_{\rm loc}(\Y ,\mm_n)$ the Lebesgue space of 
$\mm_n$-measurable functions whose restriction to each bounded set
$B\subset \Y $ belongs to $L^p(B,\mm_n)$.

\subsection{Weak and strong $L^2$-convergence w.r.t.~varying
  measures.}
Let us first introduce a notion of weak/strong convergence 
for real valued functions defined in varying $L^p$-spaces
and strongly inspired by the theory of  Young measures,
see e.g.~\cite{Ambrosio-Fusco-Pallara00}. 
We refer to 
\cite[\S 5.4]{Ambrosio-Gigli-Savare08} for
similar definitions and the relevant proofs, that can be easily adapted to cover
the present case
of measures in $\Mloc \Y $ by the rescaling approach
we adopted in Section \ref{sec:pointedD}. 
The basic idea 
is to study the convergence of a sequence of functions $f_n\in L^1_{\rm
  loc}(\Y ,\mm_n)$ by the corresponding measures $\mmu_n=(\ii\times f_n)_\sharp \mm_n$ 
in $\Mloc{\Y \times\R}$, $\ii:X\to X$ being the identity map. 
Here we state everything in the equivalent terms of weak
convergence and test functions.
\begin{definition}
  \label{def:w-s}
  Let  $f_n\in L^1_{\rm loc}(\Y ,\mm_n)$, $n\in \bar \N$.
  We say that $(f_n)$ weakly converges to $f_\infty$ if
  \begin{equation}
    \label{eq:51}
    \int \varphi f_n\,\d\mm_n\longrightarrow
    \int\varphi f_\infty\,\d\mm_\infty
    \quad
    \forevery \varphi\in \Cc \Y .
  \end{equation}
 For $p\in(1,\infty)$ we say that $(f_n)$ $L^p$-weakly converges to $f_\infty$ provided \eqref{eq:51} holds and moreover
 \begin{equation}
\label{eq:perweak}
 \sup_{n\in\N}\int |f_n|^p\,\d\mm_n<\infty.
\end{equation}
 If $(f_n)$ is $L^p$-weakly converging to $f_\infty$ and furthermore
  \begin{equation}
    \label{eq:48}
    \lims_{n\to\infty} \int |f_n|^p\,\d\mm_n\le \int |f_\infty|^p\,\d\mm_\infty<\infty,
  \end{equation}
  then we say that it is $L^p$-strongly converging to $f_\infty$.
  
  If $f_n\in L^1_+(\Y ,\mm_n)$, we say that $(f_n)$ converge to $f_\infty$
  in
  the $W_2$-Entropy sense if \eqref{eq:51} holds and
  \begin{equation}
    \label{eq:55}
    \begin{split}
    \int \sfd^2(x,\bar x_n)f_n\,\d\mm_n&\longrightarrow
    \int \sfd^2(x,\bar x)f_\infty\,\d\mm_\infty<\infty\\
        \int f_n\log f_n\,\d\mm_n&\longrightarrow
    \int f_\infty\log f_\infty\,\d\mm_\infty<\infty.    
    \end{split}
  \end{equation}
\end{definition}
\begin{remark}
  \label{rem:KuShi}
  \upshape
  In the Hilbertian case $p=2$ a different approach,
  still leading to similar notions and results,
  can be found in 
  \cite[Section 2.2]{Kuwae-Shioya03}.
  It is based on the construction of a family of 
  linear maps $\Phi_n:\mathcal C\to L^2(X,\mm_n)$ 
  defined in a dense subspace $\mathcal C$ of $L^2(X,\mm_\infty)$
  and satisfying 
  $\lim_{n\to\infty}\|\Phi_n
  f\|_{L^2(X,\mm_n)}=\|f\|_{L^2(X,\mm_\infty)}$.
  In our situation the maps $\Phi_n$ could be defined 
  by a disintegration technique, as in the proof of 
  Proposition \ref{prop:GammaConvergence};
  however we prefer to adopt the more direct approach
  of  Definition \ref{def:w-s} above, 
  that takes advantage of the particular structure
  of the spaces $L^2(X,\mm_n)$ associated to the weakly convergent
  measures $\mm_n$. \fr
\end{remark}
It would not be difficult to show, arguing as for Proposition
\ref{prop:GammaConvergence}
(see also \cite[\S~5.2]{Ambrosio-Gigli-Savare08})
that along any weakly convergent sequence $(f_n)$
we always have
\begin{equation}
  \label{eq:52}
  \limi_{n\to\infty}\int |f_n|^p\,\d\mm_n\ge \int |f_\infty|^p\,\d\mm_\infty,
\end{equation}
so that \eqref{eq:48} is in fact a limit. Moreover the same argument
of \cite[Theorem 5.4.4]{Ambrosio-Gigli-Savare08} 
shows that strong convergence in the $L^p$ sense \eqref{eq:51}--\eqref{eq:48} implies
\begin{equation}
  \label{eq:56}
  \begin{gathered}
    \int \zeta(y,f_n(y))\,\d\mm_n(y)\longrightarrow
    \int\zeta(y,f_\infty(y))\,\d\mm_\infty(y) \\
    \forevery 
    \zeta\in \rmC(\Y \times\R)\ \text{with } 
    |\zeta(y,r)|\le \varphi(y)+C|r|^p,\quad
    \varphi\in \Cc \Y ,\ C\ge0.
  \end{gathered}
\end{equation}
A similar argument, based on the strict convexity of the function
$r\mapsto r\log r$ and on the decomposition \eqref{eq:chiave}
shows that convergence in the $W_2$-Entropy sense yields \eqref{eq:56}
with $p=1$.

We also remark that if $n\mapsto f_n\in L^p(X,\mm_n)$, $p\in(1,\infty)$, fulfills \eqref{eq:perweak}, then there is a subsequence $L^p$-weakly converging to some limit $f_\infty\in L^p(X,\mm_\infty)$.

In the following we shall need the convergence result
\begin{equation}
\label{eq:weakstrongscalar}
\left.\begin{array}{ll}
  f_n\stackrel{L^2}\longrightarrow &f_\infty,  \text{ strongly}\\
  g_n\stackrel{L^2}\longrightarrow &g_\infty, 
  \text{ weakly}
\end{array} \right\}
 \quad\Rightarrow\quad
  \int f_n\,g_n\d\,\mm_n\longrightarrow
  \int f_\infty\,g_\infty\,\d\mm_\infty,
\end{equation}
which can be proved by picking $\eps\in\R$, applying \eqref{eq:perweak} and \eqref{eq:52} to the $L^2$-weakly converging sequence $(f_n+\eps g_n)$ and \eqref{eq:48} to $(f_n)$ to pass to the limit in
\[
2\eps\int f_ng_n\,\d\mm_n=\int |f_n+\eps g_n|^2\,\d\mm_n-  \int |f_n|^2\,\d\mm_n-\eps^2\int |g_n|^2\,\d\mm_n
\]
and obtain
\[
\begin{split}
\limi_{n\to\infty}2\eps\int f_ng_n\,\d\mm_n&\geq \int |f_\infty+\eps g_\infty|^2\,\d\mm_\infty-  \int |f_\infty|^2\,\d\mm_\infty-\eps^2S\\
&\geq 2\eps\int f_\infty g_\infty\,\d\mm_\infty-\eps^2S,
\end{split}
\]
where $S:=\sup_{n\in\N}\int |g_n|^2\,\d\mm_n<\infty$. Then the claim follows by dividing by $\eps>0$ (resp. $\eps<0$) and letting $\eps\downarrow0$ (resp. $\eps\uparrow0$).

Finally, as for Proposition \ref{prop:GammaConvergence},
every $f_\infty\in L^p(\Y ,\mm_\infty)$ can be 
approximated
in the strong $L^p$-sense by a sequence $f_n\in L^p(\Y ,\mm_n)$.

The next result provides a useful criterium to improve
weak convergence, whenever one knows a bound on the 
Fisher information or on the Cheeger energy.
\begin{theorem}
  \label{thm:weaktostrong} Let $\cX_n$, $n\in\bar\N$, and $(X,\sfd)$ as in Remark \ref{re:assnot}, assume furthermore that for some $K\in\R$ the space $\cX_n$ is $\CD(K,\infty)$ for every $n\in\bar\N$ and  let $ f_n\in L^1_{\rm loc}(\Y ,\mm_n)$, $n\in\bar \N$. Then the following are true.
  \begin{enumerate}[i)]
  \item Let $\mu_n=f_n\mm_n\in\probt \Y$, $n\in\N$, be such that
\begin{equation}
\label{eq:aereo}
\sup_{n\in\N}W_2(\mu_n,\delta_{\bar x_n})+|\rmD^-\Entn|(\mu_n)<\infty.
\end{equation}
Then there exists a subsequence weakly converging to some $\mu_\infty=f_\infty\mm_\infty\in\probt\Y$.
   
If furthermore we assume that
    \begin{equation}
      \label{eq:57}
      W_2(\mu_n,\mu_\infty)\longrightarrow0,
      \end{equation}
as $n\to\infty$,    then $(f_n)$ converges to $f_\infty$ in the $W_2$-Entropy sense
    (thus \eqref{eq:55} and \eqref{eq:56} hold with
    $p=1$).
  \item Let  $(f_n)$ be converging to $f_\infty$ in the $L^2$-weak sense.
    Then
    \begin{equation}
      \label{eq:59}
\C_\infty(f_\infty)\leq      \limi_{n\to\infty}\C_n(f_n).
    \end{equation}
If furthermore we assume that
    \begin{equation}
      \label{eq:142}
\sup_{n\in\N}\C_n(f_n)<\infty\qquad\text{ and }\qquad\lim_{R\to\infty} \sup_{n} \int_{\Y \setminus B_{R}(\bar x_n)}
      f_n^2\,\d\mm_n=0,
    \end{equation}
    then $(f_n)$ converge to $f_\infty$ in the $L^2$-strong sense.
  \end{enumerate}
\end{theorem}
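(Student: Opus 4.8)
The statement has two independent parts. Part (i) is essentially a tightness plus lower semicontinuity argument in the spirit of Proposition \ref{prop:glimisl}. The plan is as follows. First, observe that the bound \eqref{eq:aereo} says that the second moments $\int\sfd^2(\cdot,\bar x_n)\,\d\mu_n$ are uniformly bounded (because $W_2(\mu_n,\delta_{\bar x_n})$ is) and that the slopes $|\rmD^-\Entn|(\mu_n)$ are uniformly bounded; by \eqref{eq:repslope} (applied with a fixed competitor $\nu$, say $\nu=\tilde\mm_n$) together with the uniform bound on second moments and on $\log\z_n$, this forces $\sup_n\Entn(\mu_n)<\infty$, hence via \eqref{eq:chiave} also $\sup_n\Enttn(\mu_n)<\infty$. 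By Proposition \ref{prop:enttight} the sequence $(\mu_n)$ is tight, and the uniform second moment bound gives $2$-uniform integrability, so up to a subsequence $\mu_n$ converges weakly to some $\mu_\infty\in\probt{X}$ with $\Enti(\mu_\infty)<\infty$ (using Proposition \ref{prop:glimkinent} to see $\Enti(\mu_\infty)<\infty$); in particular $\mu_\infty=f_\infty\mm_\infty$. Under the further assumption \eqref{eq:57}, the second moments converge (Proposition \ref{prop:narw2}), so the first line of \eqref{eq:55} holds; for the second line I would use \eqref{eq:chiave} to rewrite $\int f_n\log f_n\,\d\mm_n$ in terms of $\Enttn(\mu_n)$, second moments and $\log\z_n$, then apply the $\Gamma$-$\liminf$ of the entropies (lower semicontinuity, \eqref{eq:joint}) together with the matching $\Gamma$-$\limsup$ bound coming from Corollary \ref{cor:moscoslope} / Proposition \ref{prop:GammaConvergence} — more precisely, the convergence $W_2(\mu_n,\mu_\infty)\to0$ plus lower semicontinuity gives one inequality, and to get the reverse I would exploit that $|\rmD^-\Enti|(\mu_\infty)<\infty$ is not assumed, so instead I run the heat flow: by Theorem \ref{thm:stabgf1} applied with $\bar\mu_n:=\mu_n$ the entropy converges along the flow, and letting the time go to zero together with the a priori Hölder estimate \eqref{eq:speedcontr} and lower semicontinuity pins down $\lim_n\Entn(\mu_n)=\Enti(\mu_\infty)$. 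Actually the cleanest route is: \eqref{eq:57} plus the uniform entropy/slope bound is exactly the hypothesis \eqref{eq:forconv} once we know $\Entn(\mu_n)\to\Enti(\mu_\infty)$; but that convergence is precisely what we want. So I would instead argue directly: the $\liminf$ inequality for entropies is automatic; for the $\limsup$, pick the recovery sequence $(\tilde\mu_n)$ from Proposition \ref{prop:GammaConvergence} with $\Entn(\tilde\mu_n)\to\Enti(\mu_\infty)$ and $W_2(\tilde\mu_n,\mu_\infty)\to0$, and use $K$-convexity of $\Entn$ along the $W_2$-geodesic from $\mu_n$ to $\tilde\mu_n$ together with the slope bound \eqref{eq:aereo} and \eqref{eq:repslope} to control $\Entn(\mu_n)$ from above by $\Entn(\tilde\mu_n)+C\,W_2(\mu_n,\tilde\mu_n)\to\Enti(\mu_\infty)$. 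This gives $\Entn(\mu_n)\to\Enti(\mu_\infty)$, hence the $W_2$-Entropy convergence.

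For part (ii), the $\liminf$ inequality \eqref{eq:59} is the core. The plan is to use the characterization of the Cheeger energy by test plans together with the extrinsic realization: all spaces $(X,\sfd,\mm_n,\bar x_n)$ sit inside the common $(X,\sfd)$. I would fix a competitor for the variational definition of $\C_\infty(f_\infty)$ — i.e. a test plan $\ppi$ in $X$ with marginals dominated by $\mm_\infty$ — and try to transfer it to test plans for the $\mm_n$'s. A more robust approach, and the one I would actually follow, is via the duality with the slope of the entropy: by the Ambrosio--Gigli--Savaré identity \eqref{eq:slopefish}, $8\,\C_n(\sqrt{\rho_n}) = |\rmD^-\Entn|^2(\rho_n\mm_n)$, so a bound on $\C_n(f_n)$ with $f_n\geq0$, $\int f_n^2\,\d\mm_n$ bounded, is the same as a bound on the slope of the entropy of $\mu_n := (f_n^2/\!\int f_n^2)\,\mm_n$. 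Then \eqref{eq:59} for such $f_n$ reduces to the weak $\Gamma$-$\liminf$ for the slopes, which is exactly Proposition \ref{prop:glimisl} / the first bullet of Corollary \ref{cor:moscoslope}, once one checks that $L^2$-weak convergence of $f_n$ to $f_\infty$ implies narrow convergence of $\mu_n$ to $\mu_\infty$ (which it does, since $\int\varphi f_n^2\,\d\mm_n\to\int\varphi f_\infty^2\,\d\mm_\infty$ for $\varphi\in\Cc X$ follows from strong/weak $L^2$ pairing \eqref{eq:weakstrongscalar} after a truncation — here one needs $L^2$-strong convergence of the truncated weights, which holds). The reduction to nonnegative $f_n$ is standard: $\weakgrad{|f_n|}=\weakgrad{f_n}$, and $|f_n|\to|f_\infty|$ weakly; moreover one may assume $\sup_n\C_n(f_n)<\infty$, else there is nothing to prove, and $\sup_n\int f_n^2\,\d\mm_n<\infty$ by \eqref{eq:perweak}. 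For the second, truncated statement under \eqref{eq:142}: $L^2$-strong convergence means \eqref{eq:48}, i.e. $\limsup_n\int f_n^2\,\d\mm_n\leq\int f_\infty^2\,\d\mm_\infty$. The plan is to use \eqref{eq:59} — applied now with the roles reversed is not available, so instead I would produce a \emph{recovery sequence}: by the last remark before the theorem, there is $g_n\in L^2(X,\mm_n)$ with $g_n\to f_\infty$ strongly in $L^2$ and (by the standard $\Gamma$-$\limsup$ construction for Cheeger energies, e.g. via the heat flow or via Lipschitz truncations and mollification) $\C_n(g_n)\to\C_\infty(f_\infty)$. Then I would compare $f_n$ with $g_n$: since $W^{1,2}(X,\mm_n)$-norms are uniformly bounded (by \eqref{eq:142}) and the Cheeger energy is convex with the parallelogram-type inequality from \eqref{eq:calcrule}, the uniform integrability tail condition in \eqref{eq:142} kills the mass escaping to infinity, and a convexity/lower-semicontinuity argument (testing against $(f_n+g_n)/2$ and using \eqref{eq:59} for the $\liminf$ on each piece) forces $\int f_n^2\,\d\mm_n\to\int f_\infty^2\,\d\mm_\infty$.

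The main obstacle I anticipate is in part (ii): making the transfer of Sobolev/Cheeger-energy information rigorous across varying measures. Unlike the entropy and its slope, which are defined purely in terms of $\sfd$ and $\mm$ and behave well under the extrinsic embedding, the Cheeger energy is defined via test plans and minimal weak upper gradients, objects that are not obviously semicontinuous under mere weak convergence of the reference measures. The identity \eqref{eq:slopefish} is the key device that circumvents this — it translates $\C_n$ into the slope of the entropy, to which Proposition \ref{prop:glimisl} applies — but one must be careful that it requires the $\CD(K,\infty)$ hypothesis (which is assumed) and that it only directly gives information about $\C_n(\sqrt{\rho})$ for \emph{probability densities} $\rho$; extending from such $\sqrt\rho$ to a general $f_n\in L^2_+$ requires a scaling argument ($f_n = \sqrt{c_n}\sqrt{\rho_n}$ with $c_n=\int f_n^2\,\d\mm_n$ and $\C_n(\sqrt{c_n}\,u)=c_n\,\C_n(u)$) plus handling the case $c_n\to0$ or along a subsequence where $c_n\to c_\infty\in(0,\infty)$, which is harmless. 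The recovery-sequence construction for the $L^2$-strong upgrade is the second delicate point, since one needs a $\Gamma$-$\limsup$ for $\C_n$ that is compatible with the tail condition \eqref{eq:142}; here I expect the cleanest implementation is again through the heat semigroups $\sfH_{n,t}$, using the regularizing estimates and the already-established convergence of heat flows (Theorem \ref{thm:stabgf1}) to build $g_n := \sfH_{n,t}$ applied to a fixed good approximation of $f_\infty$, then letting $t\downarrow0$.
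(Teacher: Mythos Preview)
Your plan for part (i) is essentially the paper's argument: bound the entropies via \eqref{eq:repslope} with a recovery sequence $\nu_n$, use \eqref{eq:chiave} and Proposition \ref{prop:enttight} for tightness, and under \eqref{eq:57} get the $\limsup$ for $\Entn(\mu_n)$ by comparing with a $\Gamma$-$\limsup$ recovery sequence $\tilde\mu_n$ through the slope inequality \eqref{eq:repslope}. (Your detour through the heat flow is unnecessary, and you correctly abandon it.)

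Part (ii), however, has a genuine gap. You want to reduce \eqref{eq:59} to the slope $\Gamma$-$\liminf$ (Proposition \ref{prop:glimisl}) via \eqref{eq:slopefish}, which needs \emph{narrow} convergence of $\mu_n := Z_n^{-1} f_n^2\mm_n$ to $\mu_\infty := Z_\infty^{-1} f_\infty^2\mm_\infty$. You claim this follows from $L^2$-weak convergence of $f_n$ because ``$\int\varphi f_n^2\,\d\mm_n\to\int\varphi f_\infty^2\,\d\mm_\infty$ follows from the strong/weak pairing \eqref{eq:weakstrongscalar}''. It does not: in $\int(\varphi f_n)\cdot f_n\,\d\mm_n$ both factors converge only weakly, and weak $\times$ weak does not pass to the limit. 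No truncation fixes this without already knowing strong convergence of $f_n$.

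The paper's way around this is a bootstrap through part (i), and it is worth internalizing. One first restricts to the case where the $f_n$ have uniformly bounded support and are nonnegative (the general case follows by a cutoff $\nchi(\sfd(\cdot,\bar x_n)/R)$ and splitting into $f_n^\pm$). Under bounded support, the $\mu_n$ have uniformly bounded second moments, and \eqref{eq:slopefish} plus $\sup_n\C_n(f_n)<\infty$ gives $\sup_n|\rmD^-\Entn|(\mu_n)<\infty$. Now part (i) applies: a subsequence of $\mu_n$ converges weakly, hence (2-uniform integrability) in $W_2$, to some $Z_\infty^{-1} g_\infty\mm_\infty$, and the second half of part (i) upgrades this to $W_2$-Entropy convergence. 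Then \eqref{eq:56} with $\zeta(y,r)=\varphi(y)\sqrt{r^+}$ identifies $g_\infty=f_\infty^2$ (matching against the known $L^2$-weak limit), and the convergence $Z_n\to Z_\infty$ is exactly \eqref{eq:48}, i.e.\ $L^2$-strong convergence. Only \emph{after} this do you apply Proposition \ref{prop:glimisl} to get \eqref{eq:59}. So strong convergence (under bounded support) is proved \emph{before} the $\Gamma$-$\liminf$ for $\C_n$, not the other way round.

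For the final $L^2$-strong statement under \eqref{eq:142}, your recovery-sequence/heat-flow plan is overcomplicated. The paper simply observes that the cutoffs $f_{R,n}$ fall under the bounded-support case just treated, hence converge $L^2$-strongly to $f_{R,\infty}$; then $f_{R,\infty}\to f_\infty$ in $L^2(\mm_\infty)$ and the tail condition in \eqref{eq:142} gives $\sup_n\int|f_n-f_{R,n}|^2\,\d\mm_n\to 0$ as $R\to\infty$, which is enough.
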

\begin{proof}$\ $\\
  \textbf{i)} Let us first recall that \eqref{eq:repslope}
  yields
  for every $\nu_n\in D(\Entn)$
\begin{equation}
\label{eq:rer2}
    \Entn(\mu_n)\le \Entn(\nu_n)+
    |\rmD^-\Entn|(\mu_n)W_2(\mu_n,\nu_n)+\frac{K_-}2W^2_2(\mu_n,\nu_n).
\end{equation}
Choose now $\nu_n$ such that 
\[
\sup_{n\in\N}\Entn(\nu_n)+W_2(\nu_n,\delta_{\bar x_n})<\infty
\]
(it is easy to see that this choice is possible) and recall \eqref{eq:aereo} to deduce that $\sup_n\Entn(\mu_n)<\infty$. Recalling \eqref{eq:aereo} again and using the identity
\begin{equation}
\label{eq:rer}
\Entn(\mu_n)=\Enttn(\mu_n)-\Co\int \sfd^2(\cdot,\bar x_n)\,\d\mu_n-\log \z_n,
\end{equation}
we obtain that $\sup_n\Enttn(\mu_n)<\infty$. Proposition \ref{prop:enttight}  then ensures that $(\mu_n)$ is tight and hence by Theorem \ref{thm:prok} that it has a subsequence weakly converging to some $\mu_\infty$ with (by \eqref{eq:joint}) $\Entti(\mu_\infty)<\infty$. In particular $\mu_\infty\ll\tilde\mm_\infty\ll\mm_\infty$. Moreover, the uniform bound on the second moments of the $\mu_n$'s granted by  assumption \eqref{eq:aereo} ensures that $\mu_\infty\in\probt{X_\infty}$ and therefore by \eqref{eq:rer} with $n=\infty$  we deduce $\Enti(\mu_\infty)<\infty$.

If we further assume \eqref{eq:57}, so that in particular the full sequence $(\mu_n)$ $W_2$-converges to $\mu_\infty$, then passing to the limit in \eqref{eq:rer} gives $\Enti(\mu_\infty)\leq\limi_{n\to\infty}\Entn(\mu_n)$.  Choosing now in \eqref{eq:rer2} $\nu_n$ $W_2$-converging to $\mu_\infty$ 
  with $\lims_{n\to\infty}\Entn(\nu_n)\le \Enti(\mu_\infty)$ 
  as in \eqref{eq:glims}, since the triangle inequality yields 
  $W_2(\nu_n,\mu_n)\longrightarrow0$, we obtain from \eqref{eq:rer2} that
  \begin{displaymath}
    \lims_{n\to\infty}\Entn(\mu_n)\le 
    \lims_{n\to\infty}\Entn(\nu_n)\le \Enti(\mu_\infty).
  \end{displaymath}
  \noindent \textbf{ii)} In order to prove \eqref{eq:59}, without loss of generality we can assume that $\sup_n\C_n(f_n)<\infty$. Initially we will also suppose that 
  \begin{equation}
   \text{$f_n(x)\equiv 0$ for $\mm_n$-a.e. $x\in X_n$ with
  $\sfd(x,\bar x_n)>R$,}
\label{eq:143}
\end{equation}
for some $R>0$ and that $f_n\geq 0$ $\mm_n$-a.e. for every $n\in\bar\N$. 

Now observe that up to extract a subsequence we can assume that  $Z_n:=\int  f_n^2\,\d\mm_n\to Z_\infty$. If $Z_\infty=0$ then $f_\infty=0$ and there is nothing to prove. Thus we can assume $Z_\infty>0$. Put $\mu_n:=Z_n^{-1} f_n^2\mm_n$, recall  that the identity  \eqref{eq:slopefish} gives $|\rmD^-\Entn|^2(\mu_n)=8Z_n^{-1}\C_n(f_n)$, the uniform bound on $\C_n(f_n)$ and  notice that \eqref{eq:143} grants 2-uniform integrability of  $(\mu_n)$. Hence by  point (i) we deduce that there is a subsequence of  $(\mu_n)$, not relabeled, converging to some $\mu_\infty=Z_\infty^{-1}g_\infty\mm_\infty$ in the  $W_2$-Entropy sense. Choosing
  $\zeta(y,r)=\varphi(y)\sqrt {r^+}$ in \eqref{eq:56} 
  we obtain that $g_\infty= f_\infty^2$ and the convergence of $Z_n$ to
  $Z_\infty$ 
  yields \eqref{eq:48} with $p=2$, i.e. the $L^2$-strong convergence of $(f_n)$ to $f_\infty$.

Inequality \eqref{eq:59} then follows  by Proposition \ref{prop:glimisl} and identity \eqref{eq:slopefish} again:
\[
8Z_\infty^{-1}\C_\infty(f_\infty)=|\rmD^-\Enti|^2(\mu_\infty)\le \limi_{n\to\infty}|\rmD^-\Entn|^2(\mu_n)=
    \limi_{n\to\infty}8Z_n^{-1}\C_n(f_n).
\]
To drop the assumption on the positivity of the $f_n$'s, apply what we just proved to $(f_n^+)$ and $(f_n^-)$ to deduce that up to subsequences they $L^2$-strongly converge to some $f^+$, $f^-$ respectively and that
\begin{equation}
\label{eq:chaperta}
\C_\infty(f^\pm)\leq\limi_{n\to\infty}\C_n(f_n^\pm).
\end{equation}
Since we know that $(f_n)$ is $L^2$-weakly converging to $f_\infty$, we deduce that $f_\infty^\pm=f^\pm$ and the claim follows adding up the two inequalities in \eqref{eq:chaperta} and observing that 
\[
\C_n(f_n)=\C_n(f_n^+)+\C_n(f_n^-),\qquad\forall n\in\bar\N.
\]

  We consider now the general case,
  by showing how to remove 
  the auxiliary assumption \eqref{eq:143}. Let $\nchi:[0,\infty)\to[0,1]$ be a 1-Lipschitz function with compact support identically 1 on $[0,1]$  and for $R>0$ define
  the truncated functions
\[
f_{R,n}:=\nchi(\sfd_n(\cdot,\bar x_n)/R) f_n,\qquad\forall n\in\bar \N.
\]
  The locality property of the minimal weak upper gradient
  and the Leibniz  rule \eqref{eq:calcrule} yield
  \begin{displaymath}
    |\rmD f_{R,n}|_w\leq |\rmD f_n|_w+
    \frac 1R |f_n|\qquad \mm_n
    \text{-a.e..}
    \end{displaymath}
Squaring and integrating we deduce
\[
\C_n(f_{R,n})\leq\C_n(f_n)\Big(1+\frac1R\Big)+\frac S2\Big(\frac1R+\frac1{R^2}\Big),\qquad\forall n\in\N,
\]
where $S:=\sup_{n\in\N}\int f_n^2\,\d\mm_n<\infty$. Taking first the $\limi$ as $n\to\infty$ and then as $R\uparrow\infty$ we deduce
\[
\limi_{n\to\infty}\C_n(f_n)\geq\limi_{R\uparrow\infty}\limi_{n\to\infty}\C_n(f_{R,n}).
\]
Now observe that  
  the $f_{R,n}$'s are $L^2$-weakly converging to $f_{R,\infty}$ as $n\to\infty$ and clearly satisfy \eqref{eq:143}, so that by what we previously proved we get $\limi_{n\to\infty}\C_n(f_{R,n})\geq \C_\infty(f_{R,\infty})$ for every $R>0$. Eventually noticing that $f_{R,\infty}\to f_\infty$ in $L^2(X,\mm_\infty)$ and taking into account the $L^2(X,\mm_\infty)$-lower semicontinuity of $\C_\infty$ we obtain \eqref{eq:59}.

It remains to prove $L^2$-strong convergence under the additional assumptions \eqref{eq:142}. To this aim notice that for given $R>0$ the functions $(f_{R,n})$ satisfy the assumption \eqref{eq:143}, so by what we previously proved they $L^2$-strongly converge to $f_{R,\infty}$ as $n\to\infty$. Moreover, as already noticed,  $f_{R,\infty}\to f_\infty$ as $R\to\infty$ in $L^2(X,\mm_\infty)$ while the second in \eqref{eq:142} grants that 
\[
\lim_{R\uparrow\infty}\sup_{n\in\N}\int|f_n-f_{R,n}|^2\,\d\mm_n=0.
\] 
The conclusion follows.
\end{proof}
In the particular case of a single measure 
$\mm_n\equiv\mm$, $n\in \bar \N$, and a 
$\CD(K,\infty)$ space $(\Y ,\sfd)$,  
the previous Theorem provides a simple criterium
for the compact imbedding of 
the Sobolev space $W^{1,2}(\Y ,\sfd,\mm)$
in $L^2(\Y ,\mm)$. 

In order to make the connection more evident we  recall the useful notion of 
Logarithmic-Sobolev-Talagrand inequality:
\begin{definition}
  \label{def:newLSTI}
  We say that a pointed metric measure space
  $(X,\sfd,\mm,\bar x)$  
  satisfies 
  a weak 
  Logarithmic-Sobolev-Talagrand inequality 
  $\mathrm{wLSTI}(A,B)$ with constants $
  A,B\ge0$ if   we have
   \begin{equation}
    \label{eq:newdef}
    \Big(\int_X \sfd^2(x,\bar x)f^2(x)\,\d\mm\Big)^{1/2}
    \le A
    \|f\|_{L^2(X,\mm)}+ B\sqrt{\C(f)},\qquad\forall f\in L^2(X,\mm).
  \end{equation}
\end{definition}
We are calling \eqref{eq:newdef} a weak  Logarithmic-Sobolev-Talagrand inequality because it can be seen as a consequence of a combination of the log-Sobolev inequality and of the Talagrand inequality, as made precise by the following simple proposition:
\begin{proposition}
Let  $(X,\sfd,\mm)$ be a $\CD(K,\infty)$ space  with finite mass, put   $\mm^*:=\mm(X)^{-1}\mm$ and assume that for some $a,b\geq 0$ we have
  \begin{equation}
    \label{eq:146b}
    W_2(\mu,\mm^\star)\le 
    a+b |\rmD^-\Ent_{\mm^\star}|(\mu)\quad
    \forevery \mu\in \probt X,
    \end{equation}
Then for every $\bar x\in\supp(\mm)$ the space $(X,\sfd,\mm,\bar x)$ satisfies a   $\mathrm{wLSTI}(A,B)$  with 
\[
    A:=
    2a+2b
    \Big(\frac{\mm(B_{2}(\bar x))}{\mm(B_1(\bar x))}\Big)^{1/2}+2,
    \quad
    B:=\sqrt 8\,b.
\]
\end{proposition}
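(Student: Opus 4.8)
The plan is to turn the assumed transport–energy inequality \eqref{eq:146b} into \eqref{eq:newdef} by (a) identifying the slope of $\mathrm{Ent}_{\mm^\star}$ with the Cheeger energy via the Fisher–information formula \eqref{eq:slopefish}, (b) using the triangle inequality for $W_2$ to compare $\mu$, $\mm^\star$ and $\delta_{\bar x}$, and (c) a cut-off plus Fatou to handle the a priori unclear finiteness of the weighted second moment. Since $\C(|f|)=\C(f)$ and $|f|^2=f^2$ we may assume $f\ge0$; if $\C(f)=+\infty$ there is nothing to prove, so assume $f\in\s^2(X,\sfd,\mm)\cap L^2(X,\mm)$ and $f\not\equiv0$. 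Fix a $1$-Lipschitz $\nchi:[0,\infty)\to[0,1]$ with compact support and $\nchi\equiv1$ on $[0,1]$, and for $R>0$ set $f_R:=\nchi(\sfd(\cdot,\bar x)/R)\,f$; then $f_R\ge0$ has bounded support, $f_R\to f$ pointwise and in $L^2(X,\mm)$, and exactly as in the proof of Theorem \ref{thm:weaktostrong} the $\tfrac1R$-Lipschitz factor together with \eqref{eq:weaklip}–\eqref{eq:calcrule} gives $\weakgrad{f_R}\le\weakgrad f+\tfrac1R f$ $\mm$-a.e., whence $\limsup_{R\to\infty}\C(f_R)\le\C(f)$.

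\textbf{Estimating $W_2(\mm^\star,\delta_{\bar x})$.} Let $g(x):=\bigl(2-\sfd(x,\bar x)\bigr)^+\wedge1$, a $1$-Lipschitz function with $\mathbf 1_{B_1(\bar x)}\le g\le\mathbf 1_{B_2(\bar x)}$, and put $Z:=\int g^2\,\d\mm^\star$, which lies in $[\mm^\star(B_1(\bar x)),\mm^\star(B_2(\bar x))]$ and is positive since $\bar x\in\supp(\mm)$. Then $\nu:=Z^{-1}g^2\mm^\star\in\probt X$ has bounded support and density $Z^{-1}g^2$ with respect to $\mm^\star$, so \eqref{eq:slopefish} and $\weakgrad g\le\Lip(g)\le1$ give
\[
|\rmD^-\mathrm{Ent}_{\mm^\star}|^2(\nu)=8\,\C_{\mm^\star}\bigl(Z^{-1/2}g\bigr)=\frac4Z\int\weakgrad g^2\,\d\mm^\star\le\frac{4\,\mm^\star(B_2(\bar x))}{\mm^\star(B_1(\bar x))}=4\,\frac{\mm(B_2(\bar x))}{\mm(B_1(\bar x))}.
\]
Since moreover $W_2(\nu,\delta_{\bar x})=\bigl(\int\sfd^2(\cdot,\bar x)\,\d\nu\bigr)^{1/2}\le2$ (as $\supp\nu\subset B_2(\bar x)$), applying \eqref{eq:146b} to $\nu$ and the triangle inequality yields
\[
W_2(\mm^\star,\delta_{\bar x})=\Bigl(\int\sfd^2(\cdot,\bar x)\,\d\mm^\star\Bigr)^{1/2}\le a+2b\Bigl(\tfrac{\mm(B_2(\bar x))}{\mm(B_1(\bar x))}\Bigr)^{1/2}+2=A-a,
\]
in particular $\mm^\star\in\probt X$.

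\textbf{Main estimate and conclusion.} For $R$ large enough that $\|f_R\|_{L^2(\mm)}>0$, set $\mu_R:=\|f_R\|_{L^2(\mm)}^{-2}f_R^2\mm\in\probt X$, which has bounded support and density $\mm(X)\|f_R\|_{L^2(\mm)}^{-2}f_R^2$ with respect to $\mm^\star$; plugging $\sqrt{\rho_R}=\sqrt{\mm(X)}\,\|f_R\|_{L^2(\mm)}^{-1}f_R$ into \eqref{eq:slopefish} and letting the factors $\mm(X)$ cancel between $\C_{\mm^\star}$ and $\mm^\star$ gives $|\rmD^-\mathrm{Ent}_{\mm^\star}|^2(\mu_R)=8\,\C(f_R)\,\|f_R\|_{L^2(\mm)}^{-2}$. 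Using \eqref{eq:146b} for $\mu_R$, then $W_2(\mu_R,\delta_{\bar x})\le W_2(\mu_R,\mm^\star)+W_2(\mm^\star,\delta_{\bar x})$ together with the previous step, and recalling $W_2(\mu_R,\delta_{\bar x})=\|f_R\|_{L^2(\mm)}^{-1}\bigl(\int\sfd^2(\cdot,\bar x)f_R^2\,\d\mm\bigr)^{1/2}$, we get after multiplying through by $\|f_R\|_{L^2(\mm)}$
\[
\Bigl(\int\sfd^2(\cdot,\bar x)f_R^2\,\d\mm\Bigr)^{1/2}\le A\,\|f_R\|_{L^2(\mm)}+\sqrt8\,b\,\sqrt{\C(f_R)}.
\]
Finally, Fatou's lemma on the left-hand side together with $\|f_R\|_{L^2(\mm)}\to\|f\|_{L^2(\mm)}$ and $\limsup_{R\to\infty}\C(f_R)\le\C(f)$ yields \eqref{eq:newdef} with the stated $A$ and $B=\sqrt8\,b$.

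\textbf{Main obstacle.} The genuine difficulty is not in any single inequality but in the fact that $\int\sfd^2(\cdot,\bar x)f^2\,\d\mm$ need not be finite a priori, so neither \eqref{eq:146b} nor \eqref{eq:slopefish} may be applied directly to $f^2\mm$; the cut-off $f_R$ keeps all competitors in $\probt X$ and the final Fatou passage both recovers the inequality and certifies finiteness. The only other points requiring care are the Leibniz/locality estimate for $\C(f_R)$ and the elementary hat-function computation for $W_2(\mm^\star,\delta_{\bar x})$, which are precisely what force the constants to come out as claimed.
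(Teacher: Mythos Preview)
Your proof is correct and follows essentially the same route as the paper: the hat-function test measure to bound $W_2(\mm^\star,\delta_{\bar x})$, then \eqref{eq:slopefish} plus the triangle inequality on $\mu=(f/\|f\|)^2\mm$, with the constants combining exactly as claimed. Your version is in fact slightly more careful, since the paper applies \eqref{eq:146b} directly to $(f/F)^2\mm^\star$ without checking it lies in $\probt X$, whereas your cut-off/Fatou step handles this finiteness issue cleanly.
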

\begin{proof} 
We start proving that $\mm^*\in\probt X$.  Indeed,
choosing $\mu=Z_R^{-1}h_R^2(x)\mm^\star$ with $h_R(x):=
\big((2-\sfd(x,\bar x)/R)\land 1\big)\lor 0$ and
$Z_R=\int_X h_R^2\,\d\mm^*$ we
get
\begin{displaymath}
  |\rmD^-\Ent_{\mm^\star}|(\mu)=
  \frac2{\sqrt{Z_R}}\Big(\int_X \weakgrad {h_R}^2\,\d\mm^\star\Big)^{1/2}
  \le \frac 2{R\sqrt {Z_R}}
  \Big(\mm^\star(B_{2R}(\bar x))\Big)^{1/2},
\end{displaymath}
and therefore
\begin{displaymath}
  \Big(\int_X \sfd^2(x,\bar x)\,\d\mm^\star\Big)^{1/2}\le 
  W_2(\mm^\star,\mu)+W_2(\mu,\delta_{\bar x})\le 
  a+\frac {2b}{R\sqrt {Z_R}}
  \Big(\mm^\star(B_{2R}(\bar x))\Big)^{1/2}+2R.
\end{displaymath}
Since $Z_R\ge \mm^*(B_R(\bar x))$ we eventually get
\begin{equation}
  \label{eq:149}
  \Big(\int_X \sfd^2(x,\bar x)\,\d\mm^\star\Big)^{1/2}\le 
  a+\frac {2b}{R}
  \Big(\frac{\mm(B_{2R}(\bar x))}{\mm(B_R(\bar x))}\Big)^{1/2}+2R,
  \quad\forevery R>0.
\end{equation}
Now let $M:=\mm(X)$ and $\mm^*=M^{-1}\mm$, pick
  $f\in W^{1,2}(X,\sfd,\mm)$ 
 and let  $\int f^2\,\d\mm=M\,F^2$. Then
  \eqref{eq:slopefish} and
  $\mathrm{wLSTI}(a,b)$ yield
\[
    W_2((f/F)^2\,\mm^\star,\mm^\star)\le a+\frac{b}{\sqrt M}
    \sqrt{8\C(f/F)}
\]
  so that the triangle inequality for $W_2$ yields
\[
    \Big(\int_X \sfd^2(x,\bar x)(f/F)^2(x)\,\d\mm^\star\Big)^{1/2}
    \le 
    a+\frac{b}{\sqrt M} \sqrt{8\C(f/F)}+
    \Big(\int_X \sfd^2(x,\bar x)\,\d\mm^\star\Big)^{1/2}.
\]
Taking into account \eqref{eq:149} with $R=1$,  \eqref{eq:newdef} follows. 
\end{proof}
\begin{remark}\label{re:forwlsti}{\rm
Otto and Villani showed in 
\cite{Otto-Villani00} that in the smooth setting 
a $\CD(K,\infty)$ bound yields a Logarithmic-Sobolev inequality of constant
$\rho>0$ which in turn yields \eqref{eq:146b} with $a=0$ and $b=\rho^{-1}$ and hence 
the $\mathrm{wLSTI}(0,\sqrt8\rho^{-1})$. These implications have been later generalized to the non-smooth setting: see for instance the calculus tools developed in \cite{Ambrosio-Gigli-Savare-preprint11a} to see that $\CD(K,\infty)$ yields the log-Sobolev and  \cite{Gigli-Ledoux13} and references therein for the implication from log-Sobolev to the \eqref{eq:146b}.

In particular if $(X,\sfd,\mm)$ 
is a $\mathrm{CD}(K,\infty)$ 
space of finite mass with $K>0$ then 
it satisfies $\mathrm{wLSTI}(0,K^{-1})$.
It is also obvious that 
whenever $\mm$ has bounded support
of diameter $D$, then 
$\mathrm{wLSTI}(D,0)$ holds.
}\fr\end{remark}

The relevance of the $\mathrm{wLSTI}(A,B)$ in our discussion is due to the following fact:
\begin{proposition}[Compact embedding of $W^{1,2}$ in $L^2$]
  \label{cor:newcompact}
  Let $(X,\sfd,\mm,\bar x)$ be a $\mathrm{CD}(K,\infty)$ 
 p.m.m.~space satisfying 
   $\mathrm{wLSTI}(A,B)$ for some $A,B\geq0$
  (e.g.~when $K>0$ and $\mm$ is finite
  or $\mm$ has bounded support).

Then   the imbedding
  of 
  $W^{1,2}(\Y ,\sfd,\mm)$ in $L^2(\Y ,\mm)$
  is compact.
\end{proposition}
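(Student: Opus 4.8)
The plan is to derive the statement directly from part (ii) of Theorem \ref{thm:weaktostrong}, applied to the constant sequence $\cX_n\equiv[X,\sfd,\mm,\bar x]$ (which trivially pmG-converges to itself and fits the framework of Remark \ref{re:assnot}), using $\mathrm{wLSTI}(A,B)$ only to control the $L^2$-mass at infinity. First I would fix an arbitrary sequence $(f_n)\subset W^{1,2}(X,\sfd,\mm)$ with $\sup_n\|f_n\|_{W^{1,2}}=:M<\infty$ and show it has a subsequence converging in $L^2(X,\mm)$. Since $L^2(X,\mm)$ is a Hilbert space and the $L^2$-norms are uniformly bounded, up to a not-relabeled subsequence $(f_n)$ converges weakly in $L^2(X,\mm)$ to some $f_\infty$; as $\mm$ is finite on bounded sets, every $\varphi\in\Cc X$ lies in $L^2(X,\mm)$, so this is in particular $L^2$-weak convergence in the sense of Definition \ref{def:w-s}.

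Next I would verify the two hypotheses in \eqref{eq:142}. The bound $\sup_n\C_n(f_n)=\sup_n\C(f_n)\le\tfrac12 M^2$ is immediate from \eqref{eq:cheeger} and the definition of the $W^{1,2}$-norm. For the tail condition I would use $\sfd^2(\cdot,\bar x)\ge R^2$ on $X\setminus B_R(\bar x)$ together with $\mathrm{wLSTI}(A,B)$:
\[
\int_{X\setminus B_R(\bar x)}f_n^2\,\d\mm\le\frac1{R^2}\int_X\sfd^2(x,\bar x)f_n^2(x)\,\d\mm\le\frac1{R^2}\Big(A\|f_n\|_{L^2(X,\mm)}+B\sqrt{\C(f_n)}\Big)^2\le\frac1{R^2}\Big(AM+\tfrac{B}{\sqrt2}M\Big)^2,
\]
so the left-hand side is $O(R^{-2})$ uniformly in $n$ and $\lim_{R\to\infty}\sup_n\int_{X\setminus B_R(\bar x)}f_n^2\,\d\mm=0$. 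Theorem \ref{thm:weaktostrong}(ii) then yields that $(f_n)$ converges to $f_\infty$ in the $L^2$-strong sense, i.e. $\|f_n\|_{L^2}\to\|f_\infty\|_{L^2}$; combined with the weak $L^2$-convergence already obtained, this gives $f_n\to f_\infty$ strongly in $L^2(X,\mm)$, proving compactness. For the parenthetical cases I would simply cite Remark \ref{re:forwlsti}: $\mathrm{CD}(K,\infty)$ with $K>0$ and $\mm(X)<\infty$ gives $\mathrm{wLSTI}(0,K^{-1})$, while $\mm$ of bounded diameter $D$ gives $\mathrm{wLSTI}(D,0)$.

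There is no real obstacle here once Theorem \ref{thm:weaktostrong} is in hand: the corollary is essentially a repackaging of the earlier machinery (the slope–Cheeger-energy identity \eqref{eq:slopefish} and the compactness half of Theorem \ref{thm:weaktostrong}) together with the uniform tail estimate furnished by $\mathrm{wLSTI}$. The only point deserving a line of care is the passage from abstract weak convergence in the Hilbert space $L^2(X,\mm)$ to the Young-measure-type weak convergence of Definition \ref{def:w-s}, which reduces to noting $\Cc X\subset L^2(X,\mm)$.
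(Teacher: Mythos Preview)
Your proof is correct and follows essentially the same approach as the paper: both use the $\mathrm{wLSTI}$ inequality together with the Chebyshev-type bound $\int_{X\setminus B_R(\bar x)}f^2\,\d\mm\le R^{-2}\int_X\sfd^2(\cdot,\bar x)f^2\,\d\mm$ to verify the uniform tail condition in \eqref{eq:142}, and then invoke Theorem~\ref{thm:weaktostrong}(ii). You have simply spelled out the details (extracting the weakly convergent subsequence, identifying weak convergence in $L^2(X,\mm)$ with that of Definition~\ref{def:w-s}) that the paper leaves implicit.
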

\begin{proof}
The trivial inequality $\int_{\Y\setminus B_R(\bar x)}f^2\,\d\mm\leq\frac1{R^2}\int_\Y\sfd^2(\cdot,\bar x)f^2\,\d\mm$ and \eqref{eq:newdef} ensure that
whenever a sequence $f_n$ is bounded in $W^{1,2}(X,\sfd,\mm)$ it holds
\[
  \lim_{R\to\infty} \sup_{n} \int_{\Y \setminus B_{R}(\bar x)}
  f_n^2\,\d\mm=0.
\]
Hence  the conclusion comes from point (ii) of Theorem \ref{thm:weaktostrong}.
\end{proof}

\subsection{Mosco-convergence of Cheeger energies}

We apply the previous results to study the variational convergence of
the Cheeger energies.

 Even if the functionals are not imbedded in a
common Hilbert space, it is natural to call the property below
``Mosco''-convergence (see \cite[\S~3.3]{Attouch84}).
It would also be possible to imbed all the domains of the functionals
in a common topological space, as
e.g.~\cite{Peletier-Savare-Veneroni12},
and state the results in terms of $\Gamma$-convergence,
or to adopt the general approach of \cite[Section 2.5]{Kuwae-Shioya03}.
Nevertheless, we think that 
the more direct formulation in terms of 
functions in varying $L^p$-spaces
would be simpler and still sufficient for the applications.

\begin{theorem}[Mosco-convergence of Cheeger energies]
\label{thm:Mosco} Let $\cX_n$, $n\in\N$, be a sequence of $\CD(K,\infty)$ spaces converging to a limit space $\cX_\infty$ in the pmG-sense. Then with the notation of Remark \ref{re:assnot} the following holds:
     \begin{itemize}
  \item\underline{Weak $\Gamma-\liminf$} For every sequence $n\mapsto f_n\in L^2(\Y ,\mm_n)$ $L^2$-weakly converging to  some   $f_\infty\in L^2(X,\mm_\infty)$ we have
    \begin{equation}
      \label{eq:53}
      \limi_{n\to\infty}\C_n(f_n)\ge \C_\infty(f_\infty).
    \end{equation}
  \item\underline{Strong $\Gamma-\limsup$}  For every $f_\infty\in L^2(\Y ,\mm_\infty)$ there exists a  sequence
    $n\mapsto f_n\in L^2(\Y ,\mm_n)$ $L^2$-strongly convergent to $f_\infty$ such that
    \begin{equation}
      \label{eq:54}
      \lim_{n\to\infty}\C_n(f_n)= \C_\infty(f_\infty).
    \end{equation}
  \end{itemize}
\end{theorem}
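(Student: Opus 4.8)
The plan divides along the two halves of the statement. The weak $\Gamma$-$\liminf$ \eqref{eq:53} needs no new work: it is precisely inequality \eqref{eq:59} in Theorem \ref{thm:weaktostrong}(ii). All the effort goes into the strong $\Gamma$-$\limsup$ \eqref{eq:54}. If $\C_\infty(f_\infty)=+\infty$ one is done by taking any $L^2$-strongly convergent sequence $f_n\to f_\infty$ (such a sequence exists, as recalled just before Theorem \ref{thm:weaktostrong}); so from now on assume $f_\infty\in\s^2(X_\infty,\sfd_\infty,\mm_\infty)$.

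\emph{Step 1: reduction to $f_\infty$ bounded with bounded support.} By the locality of the minimal weak upper gradient, $\C_\infty((f_\infty\wedge M)\vee(-M))\uparrow\C_\infty(f_\infty)$ as $M\to\infty$, while the Leibniz rule \eqref{eq:calcrule} together with the $L^2(X_\infty,\mm_\infty)$-lower semicontinuity of $\C_\infty$ gives $\C_\infty\big(\nchi(\sfd_\infty(\cdot,\bar x_\infty)/R)\,f_\infty\big)\to\C_\infty(f_\infty)$ as $R\to\infty$, with $\nchi$ as in \eqref{eq:zeta}; both approximants converge to $f_\infty$ in $L^2(X_\infty,\mm_\infty)$. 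Since the property of admitting a recovery sequence in the sense of \eqref{eq:54} is stable under $L^2$-approximation with convergence of the energies (a routine diagonal argument, as in the proof of Corollary \ref{cor:moscoslope}), it suffices to build a recovery sequence when $f_\infty$ is bounded and $\supp(f_\infty)\subset B_{R_0}(\bar x_\infty)$.

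\emph{Step 2: the nonnegative case.} Suppose in addition $f_\infty\ge0$ and set $Z:=\int f_\infty^2\,\d\mm_\infty$ (if $Z=0$ take $f_n\equiv0$). Then $\mu_\infty:=Z^{-1}f_\infty^2\mm_\infty\in\probt{X_\infty}$, and \eqref{eq:slopefish} yields $|\rmD^-\Enti|^2(\mu_\infty)=8\,\C_\infty(Z^{-1/2}f_\infty)=8Z^{-1}\C_\infty(f_\infty)<\infty$, so $\mu_\infty\in D(\Enti)$ (by \eqref{eq:repslope}). Corollary \ref{cor:moscoslope} then produces $\mu_n=\rho_n\mm_n\to\mu_\infty$ in $W_2$ with $|\rmD^-\Entn|(\mu_n)\to|\rmD^-\Enti|(\mu_\infty)$, hence, by \eqref{eq:slopefish} again, $\C_n(\sqrt{\rho_n})=\tfrac18|\rmD^-\Entn|^2(\mu_n)\to Z^{-1}\C_\infty(f_\infty)$. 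Theorem \ref{thm:weaktostrong}(i) applies (the uniform bound \eqref{eq:aereo} holds because the slopes converge and $\bar x_n\to\bar x_\infty$), and since $W_2(\mu_n,\mu_\infty)\to0$ it upgrades to convergence of $\rho_n$ to $\rho_\infty:=Z^{-1}f_\infty^2$ in the $W_2$-Entropy sense; testing \eqref{eq:56} (with $p=1$) against $\zeta(y,r)=\varphi(y)\sqrt{r^+}$, $\varphi\in\Cc X$ — admissible because $\sqrt r\le\tfrac12(1+r)$ — and using $\|\sqrt{\rho_n}\|^2_{L^2(\mm_n)}=1=\|\sqrt{\rho_\infty}\|^2_{L^2(\mm_\infty)}$, one gets $\sqrt{\rho_n}\to\sqrt{\rho_\infty}$ in the $L^2$-strong sense. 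Thus $f_n:=\sqrt Z\,\sqrt{\rho_n}$ converges $L^2$-strongly to $f_\infty$ and $\C_n(f_n)=Z\,\C_n(\sqrt{\rho_n})\to\C_\infty(f_\infty)$.

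\emph{Step 3: sign-changing $f_\infty$, and the main obstacle.} For general bounded, bounded-support $f_\infty$ write $f_\infty=f_\infty^+-f_\infty^-$; by locality $\C_\infty(f_\infty)=\C_\infty(f_\infty^+)+\C_\infty(f_\infty^-)$, and Step 2 yields nonnegative recovery sequences $g_n\to f_\infty^+$, $h_n\to f_\infty^-$ with $\C_n(g_n)\to\C_\infty(f_\infty^+)$, $\C_n(h_n)\to\C_\infty(f_\infty^-)$. One is tempted to set $f_n:=g_n-h_n$: the $L^2$-strong convergence $f_n\to f_\infty$ is immediate, since the cross term $\int g_nh_n\,\d\mm_n\to\int f_\infty^+f_\infty^-\,\d\mm_\infty=0$ by \eqref{eq:weakstrongscalar}; but, the spaces being only $\CD(K,\infty)$ — so that $\C_n$ need not be a quadratic form — one cannot split $\C_n(g_n-h_n)$ as $\C_n(g_n)+\C_n(h_n)$ in general, this being granted by the locality of $|\rmD\cdot|_w$ only when $g_n$ and $h_n$ have essentially disjoint supports. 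Making this precise is the crux, and the step I expect to be the only genuinely delicate one: the idea is to run the two approximation schemes of Step 2 simultaneously and to exploit that, along the slope approximation underlying Corollary \ref{cor:moscoslope}, the Fisher informations $8\,\C_n(\sqrt{\rho_n^{\pm}})$ asymptotically concentrate on (neighbourhoods of) $\supp(f_\infty^{\pm})$, so that cutting $g_n$ and $h_n$ off away from each other's supports perturbs the energies only by $o(1)$ while the locality formula gives $\C_n(g_n-h_n)=\C_n(g_n)+\C_n(h_n)+o(1)\to\C_\infty(f_\infty)$. Everything else — the truncations of Step 1, the $L^p$-convergence calculus of Section \ref{sec:Mosco}, Corollary \ref{cor:moscoslope} and Theorem \ref{thm:weaktostrong} — is already in place, so this quantitative decoupling of positive and negative parts in a non-Hilbertian setting is where the remaining work lies.
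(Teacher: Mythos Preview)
Your Steps 1 and 2 match the paper's argument almost verbatim: the paper also cites Theorem \ref{thm:weaktostrong}(ii) for the $\Gamma$-$\liminf$, reduces by diagonalization to $f_\infty$ bounded with bounded support, and in the nonnegative case invokes Corollary \ref{cor:moscoslope} together with \eqref{eq:slopefish} and the machinery of Theorem \ref{thm:weaktostrong}(i) exactly as you do.

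Where you diverge is Step 3. The paper dismisses the reduction to nonnegative $f_\infty$ in one line, writing only ``by using the identity $\C(f)=\C(f^+)+\C(f^-)$ we can also reduce to the case of non-negative $f_\infty$''. You are right that this deserves justification, since for your nonnegative recovery sequences $g_n\to f_\infty^+$, $h_n\to f_\infty^-$ one has no a priori control on $\C_n(g_n-h_n)$ in terms of $\C_n(g_n)+\C_n(h_n)$. However, the elaborate support--separation scheme you outline is unnecessary, and would run into trouble precisely when the closures of $\{f_\infty>0\}$ and $\{f_\infty<0\}$ meet. A much shorter argument works: locality of $\weakgrad{\cdot}$ gives the \emph{modularity} identity $\C_n(g_n\vee h_n)+\C_n(g_n\wedge h_n)=\C_n(g_n)+\C_n(h_n)$; since $g_n\vee h_n\to |f_\infty|$ $L^2$-strongly, the already proven $\Gamma$-$\liminf$ yields $\liminf_n\C_n(g_n\vee h_n)\ge \C_\infty(|f_\infty|)=\C_\infty(f_\infty)$, and combining with $\C_n(g_n)+\C_n(h_n)\to\C_\infty(f_\infty)$ forces $\C_n(g_n\wedge h_n)\to0$. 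Then from $\C_n(g_n-h_n)=\C_n(|g_n-h_n|)=\C_n\big((g_n\vee h_n)-(g_n\wedge h_n)\big)$ and the triangle inequality for $\|\weakgrad{\cdot}\|_{L^2}$ one gets $\limsup_n\C_n(g_n-h_n)\le\C_\infty(f_\infty)$, which together with the $\Gamma$-$\liminf$ closes the argument. So the obstacle you flag is real but tiny: the ``overlap'' $g_n\wedge h_n$ automatically carries vanishing energy, and no quantitative decoupling is needed.
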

\begin{proof} Inequality  \eqref{eq:53} has already beed proved in point (ii) of 
 Theorem \ref{thm:weaktostrong}, so we consider \eqref{eq:54}. 
 
 By a simple diagonalization argument we see that it is
  sufficient to approximate functions $f_\infty$ in a set dense in
  energy. Furthermore, given that we are going to build an $L^2$-strongly converging sequence, by using the identity $\C(f)=\C(f^+)+\C(f^-)$ we can also reduce to the case of non-negative $f_\infty$.

Hence after a  truncation,
  localization and normalization arguments we can then assume that
  $f_\infty$ is essentially 
  bounded with bounded support and, setting $g_\infty:=f^2_\infty$, that
 $\mu_\infty:=g_\infty\mm_\infty$ is in $\probt \Y $.
  By \eqref{eq:slopefish} and the same argument of the proof
  or Theorem \ref{thm:weaktostrong} it is then sufficient to find a sequence
  \[
    \text{$n\mapsto \nu_n=g_n\mm_n$ $W_2$-converging to $\mu_\infty$,
  with $|\rmD^-\Entn|(\nu_n)\longrightarrow
  |\rmD^-\Enti|(\mu_\infty)$.}
  \]
  This is precisely the content of the second part of Corollary \ref{cor:moscoslope}, so the thesis is achieved.
\end{proof}
\begin{remark}{\rm By the stability of the $\CD(K,\infty)$ condition granted by Theorem \ref{thm:stabilityCD} and the Mosco-convergence of the Cheeger energies just proved  it is easy to deduce that the class of $\CD(K,\infty)$ spaces satisfying  $\mathrm{wLSTI}(A,B)$ is closed w.r.t.\ pmG-convergence.
}\fr\end{remark}

We provide two useful corollaries to Theorem \ref{thm:Mosco},
concerning the convergence of the resolvents and of the $L^2$
gradient flows associated to the Cheeger energies.
Both the results are well known in the case of functionals
defined in the same Hilbert space (see e.g.~\cite{Attouch84}),
so we only give a brief sketch of the proofs.

For every $\tau>0,n\in \bar \N$ we define the resolvent map
$J_{n,\tau}:L^2(\Y ,\mm_n)\to D(\C_n)$ as
\begin{equation}
  \label{eq:66}
  J_{n,\tau}(f):=(I+\tau\partial \C_n)^{-1}(f)=
  \argmin_{L^2(\Y ,\mm_n)}\Phi_{n,\tau}(\cdot;f)
\end{equation}
where 
\[
  \Phi_{n,\tau}(g;f):=\frac 1{2\tau}\int |g-f|^2\,\d\mm_n+
  \C_n(g)\quad\forevery g\in L^2(\Y ,\mm_n).
\]
\begin{corollary}
  Under the same assumptions of Theorem \ref{thm:Mosco},
  for every sequence $(f_n)$ $L^2$-strongly converging   to $f_\infty\in L^2(\Y,\mm_\infty)$  we have
  \begin{equation}
    \label{eq:67}
    J_{n,\tau}(f_n)\stackrel{L^2}\longrightarrow
    J_{\infty,\tau}(f_\infty),\qquad\text{and}\qquad
    \C_n(J_{n,\tau}(f_n))\longrightarrow
    \C_\infty(J_{\infty,\tau}(f_\infty)),   
  \end{equation}
for every $\tau>0$.
\end{corollary}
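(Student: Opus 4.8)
The plan is to run the classical argument for Mosco\nobreakdash-convergence of convex functionals (as in \cite[\S~3.3]{Attouch84}), adapted to the varying\nobreakdash-$L^2$ setting of Definition \ref{def:w-s}. Write $g_n:=J_{n,\tau}(f_n)$ and $u_\infty:=J_{\infty,\tau}(f_\infty)$. First I would produce, via the strong $\Gamma$-$\limsup$ part of Theorem \ref{thm:Mosco}, a recovery sequence $u_n\to u_\infty$ which is $L^2$-strongly convergent and satisfies $\C_n(u_n)\to\C_\infty(u_\infty)$. Since $u_n$ and $f_n$ both converge $L^2$-strongly, expanding $\int|u_n-f_n|^2\,\d\mm_n$ into the three quadratic pieces and using the weak$\times$strong product rule \eqref{eq:weakstrongscalar} on the cross term shows $\int|u_n-f_n|^2\,\d\mm_n\to\int|u_\infty-f_\infty|^2\,\d\mm_\infty$; comparing $g_n$ with the competitor $u_n$ in the minimization \eqref{eq:66} then yields the uniform bound $\lims_{n\to\infty}\Phi_{n,\tau}(g_n;f_n)\le\Phi_{\infty,\tau}(u_\infty;f_\infty)<\infty$, and in particular uniform bounds on $\C_n(g_n)$ and on $\|g_n\|_{L^2(\mm_n)}$ (the latter because $\|g_n-f_n\|_{L^2(\mm_n)}^2\le 2\tau\,\Phi_{n,\tau}(g_n;f_n)$ while $\|f_n\|_{L^2(\mm_n)}$ is bounded).

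Next I would extract, using the remark on $L^p$-weak compactness recorded just before Theorem \ref{thm:weaktostrong}, a subsequence along which $g_n$ converges $L^2$-weakly to some $g_\infty\in L^2(\Y,\mm_\infty)$. The weak $\Gamma$-$\liminf$ inequality \eqref{eq:53} gives $\limi_{n\to\infty}\C_n(g_n)\ge\C_\infty(g_\infty)$, while expanding $\int|g_n-f_n|^2\,\d\mm_n$ and applying \eqref{eq:52} to the square term, \eqref{eq:weakstrongscalar} to the cross term, and the strong convergence of $(f_n)$ to the last term yields $\limi_{n\to\infty}\tfrac1{2\tau}\int|g_n-f_n|^2\,\d\mm_n\ge\tfrac1{2\tau}\int|g_\infty-f_\infty|^2\,\d\mm_\infty$. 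Adding these two, together with the upper bound from the first step and the minimality of $u_\infty$ for $\Phi_{\infty,\tau}(\cdot;f_\infty)$, forces all the intermediate inequalities to be equalities; by strict convexity of the quadratic term the minimizer is unique, so $g_\infty=J_{\infty,\tau}(f_\infty)$ and $\Phi_{n,\tau}(g_n;f_n)\to\Phi_{\infty,\tau}(g_\infty;f_\infty)$.

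Finally I would split this last limit back into its two pieces: since both $\C_n(g_n)$ and $\tfrac1{2\tau}\int|g_n-f_n|^2\,\d\mm_n$ have $\liminf$ at least their putative limits and their sum converges to the sum of those limits, each converges separately, giving at once $\C_n(g_n)\to\C_\infty(g_\infty)$; and expanding $\int|g_n-f_n|^2\,\d\mm_n\to\int|g_\infty-f_\infty|^2\,\d\mm_\infty$ once more (using again \eqref{eq:weakstrongscalar} for the cross term and $\int f_n^2\,\d\mm_n\to\int f_\infty^2\,\d\mm_\infty$) forces $\int g_n^2\,\d\mm_n\to\int g_\infty^2\,\d\mm_\infty$, which upgrades the $L^2$-weak convergence of $(g_n)$ to $L^2$-strong convergence by \eqref{eq:48}. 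Since the limit $J_{\infty,\tau}(f_\infty)$ is independent of the chosen subsequence, the whole sequence converges, proving both assertions in \eqref{eq:67}. I expect the only genuinely delicate point to be the consistent bookkeeping of weak versus strong $L^2$-convergence against the varying reference measures $\mm_n$ — in particular the repeated passages through \eqref{eq:52} and \eqref{eq:weakstrongscalar} — while the skeleton of the proof is exactly the textbook Mosco argument and the harder analytic input (compactness of sublevels, $\Gamma$-$\liminf$ for $\C_n$) is already available from Theorems \ref{thm:weaktostrong} and \ref{thm:Mosco}.
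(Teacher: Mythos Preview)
Your proof is correct and follows essentially the same route as the paper's: build a recovery sequence for $J_{\infty,\tau}(f_\infty)$ via the strong $\Gamma$-$\limsup$, extract an $L^2$-weak limit of $g_n=J_{n,\tau}(f_n)$, use the weak $\Gamma$-$\liminf$ together with the lower semicontinuity \eqref{eq:52} and the weak--strong product rule \eqref{eq:weakstrongscalar} to identify the limit as the unique minimizer, and then upgrade to strong convergence by matching norms. The only cosmetic difference is that the paper first obtains the uniform $L^2$ bound on $g_n$ by comparing with the competitor $0$, whereas you extract it from the recovery sequence; both are fine.
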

\begin{proof}
  Let $g_n=J_{n,\tau}(f_n)$, for every $n \in \bar{\N}$. Choosing 0 as competitor in the definition of $g_n$ we easily see that
  $\int g_n^2\,\d\mm_n\le 4 \int f_n^2\,\d\mm_n$. Hence  
  we can extract a subsequence (still denoted by $g_n$) $L^2$-weakly converging to some $g\in L^2(\Y ,\mm_\infty)$.  Applying the second part  of Theorem 
  \ref{thm:Mosco} 
  we can also find 
  a sequence $\tilde g_n\in D(\C_n)$ such that 
$    \Phi_{n,\tau}(\tilde g_n;f_n)\longrightarrow \Phi_{\infty,\tau}(g_\infty;f_\infty).
$
Passing to the limit in the inequalities
$\Phi_{n,\tau}(g_n,f_n)\le \Phi_{n,\tau}(\tilde g_n,f_n)$ 
thanks to the first part  of Theorem \ref{thm:Mosco} we find
that $g$ is a minimizer of $\Phi_{\infty,\tau}(\cdot;f_\infty)$,
so that $g=g_\infty$. Since the limit is unique, we conclude that
the whole sequence $g_n$ is $L^2$-weakly converging to $g_\infty$
and $\Phi_{n,\tau}(g_n,f_n)\to \Phi_{\infty,\tau}(g_\infty,f_\infty)$.
A further application of the lower semicontinuity results 
\eqref{eq:52} and \eqref{eq:53} provides the second in \eqref{eq:67} and 
\[
\int|g_n-f_n|^2\,\d\mm_n\to\int |g_\infty-f_\infty|^2\,\d\mm_\infty.
\]
Expanding the squares, using the $L^2$-strong convergence of the $f_n$'s and recalling \eqref{eq:weakstrongscalar} we deduce that  $\int|g_n|^2\,\d\mm_n\to\int |g_\infty|^2\,\d\mm_\infty$, i.e.\ the first in \eqref{eq:67}.
\end{proof}
\begin{theorem}[$L^2$-convergence of the Heat flows]
  \label{thm:L2conv}  Let $\cX_n$, $n\in\N$, be a sequence of $\CD(K,\infty)$ spaces converging to a limit space $\cX_\infty$ in the pmG-sense and adopt  the notation of Remark \ref{re:assnot}. Furthermore, 
  let $(\sfH^n)_{t\ge0}$ be the $L^2$-gradient flows of the 
  corresponding Cheeger energies as in \S~\ref{subsub:L2g}.
  
 Then for every sequence $n\mapsto\bar f_n\in L^2(\Y ,\mm_n)$  $L^2$-strongly converging to $\bar f_\infty\in L^2(X,\mm_\infty)$  we have
  \begin{equation}
    \label{eq:63}
    \sfH^n_t\bar f_n\stackrel{L^2}\longrightarrow\sfH^\infty_t\bar
    f_\infty
    \text{\quad strongly }
    \forevery t\ge0.
  \end{equation}
\end{theorem}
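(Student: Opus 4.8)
The plan is to derive the convergence of the $L^2$-heat flows from the Mosco-convergence of the Cheeger energies (Theorem~\ref{thm:Mosco}) and the convergence of the resolvents $J_{n,\tau}$ established in the Corollary preceding this theorem, by running the classical implicit-Euler (exponential-formula) approximation of gradient flows of convex functionals. Since each $\C_n$ is convex, lower semicontinuous and nonnegative with $\C_n(0)=\inf\C_n=0$, the semigroup $\sfH^n_t$ is an $L^2(X,\mm_n)$-contraction, $J_{n,\tau}$ is nonexpansive and fixes $0$, and for $\bar g\in D(\C_n)$ one has the classical error bound $\|\sfH^n_t\bar g-(J_{n,t/k})^k\bar g\|_{L^2(X,\mm_n)}\le \omega(t/k;\C_n(\bar g))$ with $\omega(s;M)\to 0$ as $s\downarrow 0$ for fixed $M$ and $\omega(s;\cdot)$ nondecreasing (see \cite{Brezis73,Attouch84}). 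The subtlety, compared with the textbook situation, is that the functionals $\C_n$ live in the varying spaces $L^2(X,\mm_n)$, so the argument has to be carried out by hand, keeping track of the weak and strong $L^2$-convergence of Definition~\ref{def:w-s} with respect to the measures $\mm_n$; the main tools here are the lower semicontinuity \eqref{eq:52} and the product rule \eqref{eq:weakstrongscalar}.

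First I would treat initial data of uniformly bounded energy: if $g_n\to g_\infty$ in the $L^2$-strong sense and $\sup_n\C_n(g_n)<\infty$, then $\sfH^n_tg_n\to\sfH^\infty_tg_\infty$ in the $L^2$-strong sense for every $t\ge 0$. Indeed, iterating the Corollary on resolvent convergence yields $(J_{n,t/k})^kg_n\to(J_{\infty,t/k})^kg_\infty$ $L^2$-strongly for each fixed $k$, the quantity $\|\sfH^n_tg_n-(J_{n,t/k})^kg_n\|_{L^2(X,\mm_n)}$ is bounded by $\omega(t/k;\sup_m\C_m(g_m))$ uniformly in $n$, and letting first $n\to\infty$ and then $k\to\infty$ in the corresponding decomposition of $\int\varphi\,\sfH^n_tg_n\,\d\mm_n$ (with $\varphi\in\Cc X$, using $\|\varphi\|_{L^2(X,\mm_n)}\to\|\varphi\|_{L^2(X,\mm_\infty)}$ since $\varphi^2\in\Cc X$) gives the weak convergence; combining the same error estimate with $\|(J_{n,t/k})^kg_n\|_{L^2(X,\mm_n)}\le\|g_n\|_{L^2(X,\mm_n)}$, the convergence of the $L^2$-norms of $(J_{n,t/k})^kg_n$, and \eqref{eq:52}, upgrades it to strong convergence.

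Then the general case follows by a density/approximation argument. Given $\bar f_n\to\bar f_\infty$ $L^2$-strongly and $\delta>0$, I would choose a bounded-support Lipschitz function $g_\infty$ — which lies in $D(\C_\infty)$ by \eqref{eq:weaklip} — with $\|g_\infty-\bar f_\infty\|_{L^2(X,\mm_\infty)}<\delta$, and use the strong $\Gamma$-$\limsup$ of Theorem~\ref{thm:Mosco} to pick $g_n\to g_\infty$ $L^2$-strongly with $\C_n(g_n)\to\C_\infty(g_\infty)<\infty$, so that $\sup_n\C_n(g_n)<\infty$. By \eqref{eq:48}, \eqref{eq:52} and \eqref{eq:weakstrongscalar} one gets $\|\bar f_n-g_n\|_{L^2(X,\mm_n)}\to\|\bar f_\infty-g_\infty\|_{L^2(X,\mm_\infty)}<\delta$, whence, using the step above together with the $L^2$-contractivity of $\sfH^n_t$ and $\sfH^\infty_t$, for every $\varphi\in\Cc X$
\[
\limsup_{n\to\infty}\Big|\int\varphi\,\sfH^n_t\bar f_n\,\d\mm_n-\int\varphi\,\sfH^\infty_t\bar f_\infty\,\d\mm_\infty\Big|\le 2\,\|\varphi\|_{L^2(X,\mm_\infty)}\,\delta,
\]
and likewise $\limsup_n\|\sfH^n_t\bar f_n\|_{L^2(X,\mm_n)}\le\|\sfH^\infty_t\bar f_\infty\|_{L^2(X,\mm_\infty)}+2\delta$; letting $\delta\downarrow 0$ and invoking \eqref{eq:52} once more yields the $L^2$-strong convergence \eqref{eq:63} (the case $t=0$ being the hypothesis). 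I expect the main obstacle to be exactly the bookkeeping just outlined — producing an implicit-Euler error estimate uniform in $n$ and keeping strong versus weak $L^2$-convergence with respect to the varying $\mm_n$ carefully separated — since this is what forces the two-parameter ($k$ and $\delta$) structure and the preliminary reduction to uniformly-bounded-energy data; all the genuinely hard analysis (identification of the flow, Mosco-convergence, resolvent convergence) has already been carried out in Theorem~\ref{thm:Mosco} and its corollary.
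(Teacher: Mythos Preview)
Your proposal is correct and follows essentially the same approach as the paper: first treat the case of uniformly bounded Cheeger energy by combining the resolvent convergence from the preceding Corollary with a uniform implicit-Euler error estimate, then extend to general data by density using the $\Gamma$-$\limsup$ of Theorem~\ref{thm:Mosco} and the $L^2$-contractivity of the semigroups. The paper's proof is terser---it cites the explicit bound $\int|\sfH^n_t\bar f_n-J^k_{n,t/k}\bar f_n|^2\,\d\mm_n\le C\,t/k$ from \cite[Thm.~4.0.4]{Ambrosio-Gigli-Savare08} and leaves the weak/strong $L^2$ bookkeeping implicit---but your more detailed treatment of that bookkeeping is exactly what is needed to make the sketch rigorous.
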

\begin{proof}
  Let us first suppose that $\C_n(\bar f_n)\le C<\infty$ for every
  $n\in \bar \N$. We denote by $J^k_{n,\tau}$ the
  iterated resolvent $(J_{n,\tau})^k$;
  uniform convergence estimates (see
  e.g.~\cite[Thm.~4.0.4]{Ambrosio-Gigli-Savare08})
  show that 
\[
    \int\Big|\sfH^n_t(\bar f_n)-J^k_{n,t/k}(\bar f_n)\Big|^2\,\d\mm_n\le 
    C\frac tk
    \quad\forevery n\in \bar\N, \ k\in \N.
\]
  Since the estimate is uniform w.r.t.~$n$ and 
  $J^k_{n,t/k}\bar f_n\stackrel{L^2}\longrightarrow
  J^k_{\infty,t/k}\bar f_\infty$ as $n\to\infty$ thanks to the
  previous Corollary, we easily get \eqref{eq:63}.
  
  We then use the $L^2$-contraction property of $(\sfH^n_t)_{t\ge0}$ and 
  the $\Gamma$-$\limsup$ estimate \eqref{eq:54} to extend
  the result to the general case (see a similar argument in the proof
  of
  Theorem \ref{thm:stabrgf} below).
\end{proof}
\begin{remark}
  \upshape
  In the case of $\RCD(K,\infty)$ spaces, when $(\sfH_t)_{t\ge0}$ are
  linear operators, Theorem \ref{thm:L2conv}
  can be directly deduced by the corresponding Wasserstein result, Theorem
  \ref{thm:stabgf1}, see \cite[\S 5.2]{Ambrosio-Gigli-Savare-preprint12}.
\end{remark}

\section{Stability, convergence and spectral properties for
  $\RCD(K,\infty)$ spaces.}

\subsection{The $\RCD(K,\infty)$ condition and its stability.}
In \cite{Ambrosio-Gigli-Savare-preprint11b} a study of $\CD(K,\infty)$ with linear heat flow has been initiated, the definition being the following:
\begin{definition}[$\RCD(K,\infty)$ spaces]
   $(X,\sfd,\mm,\bar x)$ is a $\RCD(K,\infty)$  \pmm~space if it 
  satisfies the $\CD(K,\infty)$ condition and the $W_2$-heat flow $(\h_t)$ (see Theorem \ref{thm:gf})
    is linear, i.e.~
\begin{equation}
\label{eq:rcdent}
\h_t(\alpha\mu+\beta\nu)=\alpha\h_t\mu+\beta\h_t\nu\quad
\forevery t\ge0,\ \mu,\nu\in D(\entv),\ \alpha,\beta\in [0,1],\ \alpha+\beta=1,
\end{equation}
or, equivalently, if the Cheeger energy $\C$ is a quadratic form in
$L^2(X,\mm)$, i.e. 
\begin{equation}
\label{eq:rcdch}
\C(f+g)+\C(f-g)=2\C(f)+2\C(g) \qquad\text{for every
$f,g\in L^2(X,\mm).$}
\end{equation}
\end{definition}
The acronym $\RCD$ stands for Riemannian Curvature Dimension, indeed it is well known that Finsler geometries are included in the class of $\CD(K,\infty)$ spaces, and that the heat flow on a smooth Finsler manifold is linear if and only if the manifold is Riemannian. Hence the idea behind the definition of the subclass $\RCD(K,\infty)$ of $\CD(K,\infty)$ spaces is somehow to isolate those spaces which have a `Riemannian' behavior, see \cite{Ambrosio-Gigli-Savare-preprint11b} and \cite{AGMRS12} for results in this direction. We remark that in \cite{Ambrosio-Gigli-Savare-preprint11b} the $\RCD(K,\infty)$ was required both by asking the linearity of the heat flow as above, and enforcing a bit the $\CD(K,\infty)$ with the requirement that the entropy was $K$-geodesically convex along a family of geodesics larger than the one appearing in Definition \ref{def:cd}. It has been later understood in \cite{AGMRS12} that this enforcement  of the $\CD(K,\infty)$ assumption was actually unnecessary.

As for the $\CD(K,\infty)$ condition, the $\RCD(K,\infty)$ one is  invariant with respect to 
  isomorphism of \pmm~spaces. It is also stable w.r.t.~$\pGw$ convergence, as we show now. Such stability property can be achieved both passing to the limit in \eqref{eq:rcdch} thanks to the Mosco-convergence of the Cheeger energies that we just proved, or passing to the limit in \eqref{eq:rcdent} with an argument based on the properties of the relative entropy. We think that both approaches are interesting and thus we are going to propose both proofs.
  \begin{theorem}[Stability of $\RCD(K,\infty)$ under  $\pGw$-convergence]
\label{thm:stabrcd}   Let $\cX_n$, $n\in\N$, be a sequence of p.m.m.\ spaces  converging to a limit space $\cX_\infty$ in the pmG-sense. Assume that $\cX_n$ is an $\RCD(K,\infty)$ space for every $n\in\N$. Then $\cX_\infty$ is $\RCD(K,\infty)$ as well.
\end{theorem}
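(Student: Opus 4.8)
The plan is to give two independent proofs of the stability of the $\RCD(K,\infty)$ condition, matching the two characterizations given in the definition.

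\textbf{Approach 1: via Mosco-convergence of Cheeger energies.} Since each $\cX_n$ is $\CD(K,\infty)$, Theorem \ref{thm:stabilityCD} already guarantees that $\cX_\infty$ is $\CD(K,\infty)$, so by the definition it remains only to check that $\C_\infty$ is a quadratic form, i.e.\ that the parallelogram identity \eqref{eq:rcdch} holds for $\C_\infty$. Fix $f_\infty, g_\infty\in L^2(X,\mm_\infty)$; I would apply the strong $\Gamma$-$\limsup$ part of Theorem \ref{thm:Mosco} to obtain sequences $f_n, g_n\in L^2(X,\mm_n)$ converging $L^2$-strongly to $f_\infty, g_\infty$ with $\C_n(f_n)\to\C_\infty(f_\infty)$ and $\C_n(g_n)\to\C_\infty(g_\infty)$. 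Then $f_n\pm g_n$ converge $L^2$-strongly to $f_\infty\pm g_\infty$ (strong $L^2$-convergence is clearly stable under sums, using \eqref{eq:weakstrongscalar} or directly the definition \eqref{eq:48}), so the weak $\Gamma$-$\liminf$ inequality \eqref{eq:53} gives $\C_\infty(f_\infty\pm g_\infty)\le\liminf_n\C_n(f_n\pm g_n)$. Since each $\cX_n$ is $\RCD(K,\infty)$, $\C_n(f_n+g_n)+\C_n(f_n-g_n)=2\C_n(f_n)+2\C_n(g_n)$ for every $n$, and the right-hand side converges to $2\C_\infty(f_\infty)+2\C_\infty(g_\infty)$. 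Hence
\[
\C_\infty(f_\infty+g_\infty)+\C_\infty(f_\infty-g_\infty)\le 2\C_\infty(f_\infty)+2\C_\infty(g_\infty).
\]
The reverse inequality follows by applying the same argument with $f_\infty+g_\infty$ and $f_\infty-g_\infty$ in place of $f_\infty,g_\infty$ (note $(f_\infty+g_\infty)\pm(f_\infty-g_\infty)$ equals $2f_\infty$ or $2g_\infty$, and $\C_\infty$ is $2$-homogeneous). This yields \eqref{eq:rcdch} for $\C_\infty$.

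\textbf{Approach 2: via linearity of the Wasserstein heat flow.} Again $\cX_\infty$ is $\CD(K,\infty)$ by Theorem \ref{thm:stabilityCD}. To verify \eqref{eq:rcdent} for $\h_{\infty,t}$, fix $\mu_\infty,\nu_\infty\in D(\Enti)$ and $\alpha,\beta\ge0$ with $\alpha+\beta=1$, and set $\sigma_\infty:=\alpha\mu_\infty+\beta\nu_\infty\in D(\Enti)$ by convexity of the entropy. Using the $\Gamma$-$\limsup$ of Proposition \ref{prop:GammaConvergence} I would pick recovery sequences $\mu_n\to\mu_\infty$, $\nu_n\to\nu_\infty$ in $W_2$ with $\Entn(\mu_n)\to\Enti(\mu_\infty)$, $\Entn(\nu_n)\to\Enti(\nu_\infty)$, and then take $\sigma_n:=\alpha\mu_n+\beta\nu_n$. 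One checks $W_2(\sigma_n,\sigma_\infty)\to0$, and that $\Entn(\sigma_n)\to\Enti(\sigma_\infty)$: the $\Gamma$-$\liminf$ gives $\liminf_n\Entn(\sigma_n)\ge\Enti(\sigma_\infty)$, while convexity of $\Entn$ together with the convergence of $\Entn(\mu_n),\Entn(\nu_n)$ gives $\limsup_n\Entn(\sigma_n)\le\alpha\Enti(\mu_\infty)+\beta\Enti(\nu_\infty)$; and the convexity inequality $\Enti(\sigma_\infty)\le\alpha\Enti(\mu_\infty)+\beta\Enti(\nu_\infty)$ is in fact an equality --- this last point is where one must be slightly careful, and it follows from the fact that $\cX_\infty$ is itself $\RCD$... but that is circular. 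Instead I would not claim strict convergence of entropies for $\sigma_n$; rather I would invoke Theorem \ref{thm:stabgf1} directly to three sequences of initial data $\mu_n$, $\nu_n$, and $\sigma_n=\alpha\mu_n+\beta\nu_n$, after first establishing $\Entn(\sigma_n)\to\Enti(\sigma_\infty)$. To get this entropy convergence one notes that by linearity of $\h_{n,t}$ one has $\h_{n,t}\sigma_n=\alpha\h_{n,t}\mu_n+\beta\h_{n,t}\nu_n$; applying Theorem \ref{thm:stabgf1} to $\mu_n$ and to $\nu_n$ gives $\h_{n,t}\mu_n\xrightarrow{W_2}\h_{\infty,t}\mu_\infty$ and $\h_{n,t}\nu_n\xrightarrow{W_2}\h_{\infty,t}\nu_\infty$; hence $\h_{n,t}\sigma_n\xrightarrow{W_2}\alpha\h_{\infty,t}\mu_\infty+\beta\h_{\infty,t}\nu_\infty$. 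On the other hand, for $t$ small the a priori estimate \eqref{eq:speedcontr} gives a uniform bound $W_2(\h_{n,t}\sigma_n,\sigma_n)\le C\sqrt t$, and the entropy $\Entn(\sigma_n)$ is bounded; combined with $W_2(\sigma_n,\sigma_\infty)\to0$ and lower semicontinuity one deduces $\sup_n\Entn(\sigma_n)<\infty$, and then the $W_2$-Entropy compactness machinery (Proposition \ref{prop:enttight}, Proposition \ref{prop:2equi}) together with the $\Gamma$-convergence pins down $\Entn(\sigma_n)\to\Enti(\sigma_\infty)$ along the flow. Once $\Entn(\sigma_n)\to\Enti(\sigma_\infty)$ is in hand, Theorem \ref{thm:stabgf1} applies to $\sigma_n$ and gives $\h_{n,t}\sigma_n\xrightarrow{W_2}\h_{\infty,t}\sigma_\infty$. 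Comparing the two limits of $\h_{n,t}\sigma_n$ forces $\h_{\infty,t}\sigma_\infty=\alpha\h_{\infty,t}\mu_\infty+\beta\h_{\infty,t}\nu_\infty$ for small $t$, and then for all $t$ by the semigroup property.

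\textbf{Main obstacle.} The delicate point in the second proof is precisely verifying the well-preparedness $\Entn(\sigma_n)\to\Enti(\sigma_\infty)$ for the convex combination $\sigma_n=\alpha\mu_n+\beta\nu_n$, since the entropy is not affine and one cannot simply add recovery sequences. The cleanest route is to argue as above using the a priori bound \eqref{eq:speedcontr} along the flow to get uniform entropy bounds, hence tightness, hence (with $\Gamma$-$\liminf$) the matching of the limit; alternatively, and more simply, one observes that $\sigma_n\le(\alpha+\beta)\max$-type bounds are not needed --- it suffices that $\liminf\Entn(\sigma_n)\ge\Enti(\sigma_\infty)$ (from $\Gamma$-$\liminf$) and $\limsup\Entn(\sigma_n)\le\alpha\limsup\Entn(\mu_n)+\beta\limsup\Entn(\nu_n)=\alpha\Enti(\mu_\infty)+\beta\Enti(\nu_\infty)$ by convexity of each $\Entn$; the only remaining gap, $\alpha\Enti(\mu_\infty)+\beta\Enti(\nu_\infty)\le\Enti(\sigma_\infty)$, is false in general, so one genuinely needs the flow argument rather than static entropy estimates. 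I expect the referee-proof presentation to favor Approach 1, which is entirely clean given Theorem \ref{thm:Mosco}, and to present Approach 2 as a remark sketch.
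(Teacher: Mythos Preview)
Your Approach 1 is correct and matches the paper's first proof essentially verbatim.

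Your Approach 2 correctly identifies the obstacle---proving $\Entn(\sigma_n)\to\Enti(\sigma_\infty)$ for $\sigma_n=\alpha\mu_n+\beta\nu_n$---but your proposed resolution via ``flow arguments and compactness machinery'' does not actually close the gap: uniform entropy bounds plus $W_2$-convergence plus $\Gamma$-$\liminf$ yield only $\liminf_n\Entn(\sigma_n)\ge\Enti(\sigma_\infty)$, never the matching $\limsup$, and nothing in the flow estimates \eqref{eq:speedcontr} or Propositions \ref{prop:enttight}, \ref{prop:2equi} produces that upper bound. The paper does give a complete second proof, and the missing idea is purely static. Writing $\mu^i_n\ll\mu^\alpha_n$ (with bounded densities since $\alpha\in(0,1)$), one has the exact identity
\[
(1-\alpha)\Ent_{\mu^\alpha_n}(\mu^0_n)+\alpha\,\Ent_{\mu^\alpha_n}(\mu^1_n)
=(1-\alpha)\Entn(\mu^0_n)+\alpha\,\Entn(\mu^1_n)-\Entn(\mu^\alpha_n),
\]
which expresses the Jensen defect as a convex combination of relative entropies of $\mu^i_n$ with respect to $\mu^\alpha_n$. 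Since $\mu^i_n\to\mu^i_\infty$ and $\mu^\alpha_n\to\mu^\alpha_\infty$ in $W_2$ (hence weakly), the joint lower semicontinuity \eqref{eq:joint} applied to the left-hand side gives
\[
\liminf_{n\to\infty}\Big[(1-\alpha)\Entn(\mu^0_n)+\alpha\,\Entn(\mu^1_n)-\Entn(\mu^\alpha_n)\Big]
\ge (1-\alpha)\Enti(\mu^0_\infty)+\alpha\,\Enti(\mu^1_\infty)-\Enti(\mu^\alpha_\infty),
\]
and combining with the known convergence of $\Entn(\mu^i_n)$ this yields exactly $\limsup_n\Entn(\mu^\alpha_n)\le\Enti(\mu^\alpha_\infty)$. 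No heat flow is needed for this step; the flow enters only afterwards, via Theorem \ref{thm:stabgf1}, once well-preparedness of $\mu^\alpha_n$ is established.
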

\begin{proof} We shall adopt the notation of Remark \ref{re:assnot}.\\
\noindent\textbf{Proof via the use of the Cheeger energy.} By assumption we know that for every $n\in\N$ it holds
\begin{equation}
\label{eq:rcdch2}
\C_n(f_n+g_n)+\C_n(f_n-g_n)=2\C_n(f_n)+2\C_n(g_n) \qquad\text{for every
$f_n,g_n\in L^2(X,\mm_n).$}
\end{equation}
Now pick $f_\infty,g_\infty\in L^2(X,\mm_\infty)$ and use the second part  of Theorem \ref{thm:Mosco} to find sequences $(f_n)$, $(g_n)$ $L^2$-strongly converging to $f_\infty,g_\infty$ respectively such that 
\[
\lim_{n\to\infty}\C_n(f_n)=\C_\infty(f_\infty),\qquad\qquad\lim_{n\to\infty}\C_n(g_n)=\C_\infty(g_\infty).
\]
Notice that $n\mapsto f_n\pm g_n$ $L^2$-strongly converges to $f_\infty\pm g_\infty$, use the first part of Theorem \ref{thm:Mosco} and pass to the limit in \eqref{eq:rcdch2} to get
\[
\C_\infty(f_\infty+g_\infty)+\C_\infty(f_\infty-g_\infty)\leq 2\C_\infty(f_\infty)+2\C_\infty(g_\infty).
\]
Repeat the argument with $f_\infty':=f_\infty+g_\infty$ and $g_\infty':=f_\infty-g_\infty$ in place of $f_\infty,g_\infty$ and recall that the Cheeger energy is 2-homogeneous to get the other inequality and the conclusion.

\noindent\textbf{Proof via the use of the relative entropy.} We denote by $\h_n$, $n\in \bar\N$, 
the $W_2$-gradient flow of $\Entn $ in $\probt \Y$.
Let $\mu^i_{\infty}\in D(\Enti)$, $i=0,1$, and $\mu^\alpha_\infty:=
(1-\alpha)\mu^0_\infty+\alpha\mu^1_\infty$, $\alpha\in(0,1)$.

The $\Glims$ inequality in Proposition \ref{prop:GammaConvergence} 
provides sequences $n\mapsto \mu^i_n\in D(\Entn)$ such that
\[
\lim_{n\to\infty}\Entn(\mu^i_n)=\Enti(\mu^i_\infty),\qquad
\mu^i_n\stackrel{W_2}\longrightarrow \mu^i_\infty
\qquad
i=0,1.
\]
%
%


By the general convergence result
Theorem \ref{thm:stabgf1} we know that 
\[
  \h_{n,t}(\mu^\alpha_n)=(1-\alpha)\h_{n,t}(\mu^0_{n})+\alpha\h_{n,t}(\mu^1_{n})
  \stackrel{W_2}\longrightarrow
  (1-\alpha)\h_{\infty,t}(\mu^0_{\infty})+\alpha\h_{\infty,t}(\mu^1_{\infty}).
\]
Hence our thesis follows if we show that
\begin{equation}
  \label{eq:50}
  \lim_{n\to\infty}\Entn(\mu^\alpha_n)=\Enti(\mu^\alpha_\infty),\qquad
  \mu^\alpha_n\stackrel{W_2}\longrightarrow \mu^\alpha_\infty,
\end{equation}
so that Theorem \ref{thm:stabgf1} yields $\h_{n,t}(\mu^\alpha_t)\stackrel{W_2}\longrightarrow
\h_{\infty,t}(\mu^\alpha_\infty)$.
In order to prove \eqref{eq:50}
let us first observe that the 
$W_2$-convergence of $\mu^\alpha_n$ to $\mu^\alpha_\infty$ 
is a direct consequence of the convexity of the squared Wasserstein distance
$W_2^2$.
%

Since $\mu^\alpha_n\in D(\Entn )$ and
$\mu^0_n,\mu^1_n\ll\mu^\alpha_n$ with bounded density if $\alpha\in (0,1)$,
for every $n\in \bar\N$ 
we have
\[
\begin{split}
\Ent_{\mu^\alpha_n}(\mu^i_n)=\int\log\left(\frac{\d\mu^i_n}{\d\mu^\alpha_n}\right)\,\d\mu^i_n&=\int\log\left(\frac{\d\mu^i_n}{\d\mm_n}\right)-\log\left(\frac{\d\mu^\alpha_n}{\d\mm_n}\right)\,\d\mu^i_n\\
&=\Entn (\mu^i_n)-\int\log\left(\frac{\d\mu^\alpha_n}{\d\mm_n}\right)\,\d\mu^i_n.
\end{split}
\]
Taking appropriate convex combinations  we get
\[
(1-\alpha)\Ent_{\mu^\alpha_n}(\mu^0_n)+\alpha \Ent_{\mu^\alpha_n}(\mu^1_n)=
(1-\alpha) \Entn(\mu^0_n)+\alpha\Entn(\mu^1_n)-\Entn(\mu^\alpha_n).
\]
Therefore from the $W_2$-convergence 
of $(\mu^i_{n}),( \mu^\alpha_n)$ to $\mu^i_\infty, \mu^\alpha_\infty$ respectively and the $\Glimi$ inequality in Proposition \ref{prop:GammaConvergence} we get
\[
\begin{split}
\limi_{n\to\infty}(1-\alpha)\Entn(\mu^0_{n})&+\alpha\Entn(\mu^1_{n})-\Entn(\mu^\alpha_n)=
\limi_{n\to\infty}(1-\alpha)\Ent_{\mu^\alpha_n}(\mu^0_{n})+\alpha \Ent_{\mu^\alpha_n}(\mu^1_{n})\\
&\geq (1-\alpha)\Ent_{\mu^\alpha_\infty}(\mu^0_\infty)+\alpha\Ent_{\mu^\alpha_\infty}(\mu^1_\infty)\\
&=(1-\alpha)\Enti(\mu^0_\infty)+\alpha\Enti(\mu^1_\infty)-\Enti(\mu^\alpha_\infty).
\end{split}
\]
This fact together with the assumption $\lim_n\Entn(\mu^i_n)=\Enti(\mu^i_\infty)$, $i=0,1$,
give
\[
\limi_{n\to\infty}-\Entn(\mu^\alpha_n)\geq -\Enti(\mu^\alpha_\infty).
\]
Since the other inequality in ensured by the $\Glimi$ part in Proposition \ref{prop:GammaConvergence}, the thesis is achieved.
\end{proof}
\subsection{Refined estimates on the convergence of the Heat flow.}
One of the main contributions of \cite{Ambrosio-Gigli-Savare-preprint11b} (see also \cite{AGMRS12}) is the proof that the linearity condition on the heat flow grants additional regularity properties for the flow itself. A crucial one is the following contractivity statement.
\begin{theorem}[$W_2$-contraction]\label{thm:kcontr}
Let $(X,\sfd,\mm,\bar x)$ be a $\RCD(K,\infty)$ space.
For every $\bar\mu,\bar\nu\in D(\entv)$ it holds
\begin{equation}
\label{eq:contr}
W_2(\h_t\bar\mu,\h_t\bar\nu)\leq \rme^{-Kt}W_2(\bar\mu,\bar\nu)\qquad\forevery t\geq 0.
\end{equation}
\end{theorem}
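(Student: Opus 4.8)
The plan is to obtain \eqref{eq:contr} from the Evolution Variational Inequality $\mathrm{EVI}_K$ satisfied by the heat flow on an $\RCD(K,\infty)$ space, exploiting that once two curves are known to be $\mathrm{EVI}_K$ gradient flows, contraction is a short Gronwall argument. Recall that $(\mu_t)=(\h_t\bar\mu)$ is an $\mathrm{EVI}_K$ gradient flow of $\entv$ if it is locally absolutely continuous in $(\probt X,W_2)$, $\mu_0=\bar\mu$, and for every $\sigma\in D(\entv)$
\[
\frac12\frac{\d}{\d t}W_2^2(\mu_t,\sigma)+\frac K2\,W_2^2(\mu_t,\sigma)+\entv(\mu_t)\le\entv(\sigma)\qquad\text{for a.e. }t>0.
\]
So the first and decisive step is to prove that, under the $\RCD(K,\infty)$ hypothesis, the $W_2$-gradient flow of the entropy produced by Theorem \ref{thm:gf} is in fact an $\mathrm{EVI}_K$ flow; this is exactly where the linearity of $(\h_t)$ enters in an essential way, since the bare $\CD(K,\infty)$ condition does not suffice.

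To carry out that step I would pass through the Bakry--\'Emery gradient estimate. By the identification result of \S\ref{subsub:L2g}, $\h_t\bar\mu=(\sfH_t\bar f)\mm$ with $(\sfH_t)$ the $L^2$-gradient flow of $\C$; since $\cX$ is $\RCD(K,\infty)$, $\C$ is quadratic, so $(\sfH_t)$ is a linear, self-adjoint, Markovian semigroup with associated Dirichlet form and carr\'e du champ $f\mapsto\weakgrad f^2$. I would then differentiate $s\mapsto\sfH_s\big(\weakgrad{\sfH_{t-s}f}^2\big)$ and use the $\CD(K,\infty)$ bound --- entering through the $K$-convexity of $\entv$ and the slope/Fisher identity \eqref{eq:slopefish} --- to control the $\Gamma_2$-type term, obtaining
\[
\weakgrad{\sfH_t f}^2\le\rme^{-2Kt}\,\sfH_t\big(\weakgrad f^2\big)\qquad\mm\text{-a.e.},\ \forall f\in W^{1,2}(X,\sfd,\mm).
\]
Kuwada's duality lemma converts this pointwise estimate with factor $\rme^{-Kt}$ into $W_2(\sfH_t^*\mu,\sfH_t^*\nu)\le\rme^{-Kt}W_2(\mu,\nu)$ for $\mu,\nu\in\probt X$, and the same circle of ideas upgrades it to the $\mathrm{EVI}_K$ inequality; since $\sfH_t^*\mu=\h_t\mu$ by the identification theorem, this already contains \eqref{eq:contr}.

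Finally I would record the abstract implication ``$\mathrm{EVI}_K$ for two flows $\Rightarrow$ contraction''. Let $(\mu_t)=(\h_t\bar\mu)$ and $(\nu_t)=(\h_t\bar\nu)$, both $\mathrm{EVI}_K$ flows (they exist since $\bar\mu,\bar\nu\in D(\entv)$, by Theorem \ref{thm:gf}). The function $(t,s)\mapsto W_2^2(\mu_t,\nu_s)$ is locally Lipschitz, so $t\mapsto W_2^2(\mu_t,\nu_t)$ is locally absolutely continuous and, for a.e. $t$, its derivative is the sum of the two partial derivatives on the diagonal; applying $\mathrm{EVI}_K$ for $\mu_t$ with test point $\nu_t$ and for $\nu_t$ with test point $\mu_t$ and adding, the entropy terms cancel and
\[
\frac12\frac{\d}{\d t}W_2^2(\mu_t,\nu_t)+K\,W_2^2(\mu_t,\nu_t)\le0\qquad\text{for a.e. }t>0.
\]
Gronwall then gives $W_2^2(\mu_t,\nu_t)\le\rme^{-2Kt}W_2^2(\bar\mu,\bar\nu)$, which is \eqref{eq:contr}; the case $t=0$ is trivial. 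The hard part will be the first step: making the $\Gamma_2$/Bochner computation rigorous in the non-smooth setting --- domain issues for the generator, $W^{1,2}$-regularity of $\sfH_t f$, differentiation under the semigroup, and the precise duality with $W_2$ --- which rests entirely on the calculus developed in \cite{Ambrosio-Gigli-Savare-preprint11a}; once the gradient estimate (or $\mathrm{EVI}_K$ directly) is in hand, the passage to \eqref{eq:contr} is routine.
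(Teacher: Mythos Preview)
Your outline is essentially the route taken in the literature, but note that the paper itself does \emph{not} prove Theorem~\ref{thm:kcontr}: it is quoted as one of the main results of \cite{Ambrosio-Gigli-Savare-preprint11b} (see also \cite{AGMRS12}), with no proof given here. So there is no ``paper's own proof'' to compare against; what you have written is a faithful sketch of the strategy used in those references --- Bakry--\'Emery gradient estimate via the quadratic Cheeger energy, Kuwada's duality to pass to $W_2$-contraction, and the $\mathrm{EVI}_K$ characterization, from which \eqref{eq:contr} follows by the standard Gronwall argument you recorded.

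One small comment on presentation: since Kuwada's duality already yields $W_2(\h_t\mu,\h_t\nu)\le\rme^{-Kt}W_2(\mu,\nu)$ directly from the gradient estimate, the detour through $\mathrm{EVI}_K$ is not strictly needed for \eqref{eq:contr} itself (though $\mathrm{EVI}_K$ is of course the deeper and more useful statement). Your identification of the hard analytical core --- the non-smooth $\Gamma_2$/Bochner computation and the domain/regularity issues for $\sfH_t$ --- is accurate; that is precisely where the machinery of \cite{Ambrosio-Gigli-Savare-preprint11a,Ambrosio-Gigli-Savare-preprint11b} is required.
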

Notice that in a smooth setting, \eqref{eq:contr} is specific of Riemannian geometry, because Ohta-Sturm proved in \cite{Ohta-Sturm12} that no exponential contraction holds in $(\R^d,\|\cdot\|,\mathcal L^d)$ if the norm $\|\cdot\|$ does not come from a scalar product. 

A direct consequence of \eqref{eq:contr} is that the $W_2$-gradient flow $(\h_t)_{t\ge0}$
of the entropy can be extended from $D(\entv)$ to its $W_2$-closure, 
which is $\probt{\supp(\mm)}$ (i.e.~the subset of $\probt X$ made of measures $\mu$ such that $\supp(\mu)\subset\supp(\mm)$). In other words, a unique one parameter family of maps $\h_t:\probt{\supp(\mm)}\to\probt{\supp(\mm)}$ is defined by the following two properties:
\begin{equation}
\label{eq:contrh}
W_2(\h_t(\mu),\h_t(\nu))\leq \rme^{-Kt}W_2(\mu,\nu)\quad\forevery t\geq 0,\ \mu,\nu\in\probt{\supp(\mm)},
\end{equation}
and
\begin{quote}
$\forall\mu\in D(\entv)$
\textrm{ the curve $t\mapsto \h_t(\mu)$ is the gradient flow }\textrm{ of the entropy starting from $\mu$ in the sense of Definition \ref{def:gf}}.
\end{quote}
As for the heat flow and the $\CD(K,\infty)$, $\RCD(K,\infty)$ conditions, these maps are 
invariant w.r.t.~isomorphisms of \pmm~spaces.

A more explicit characterization  is provided by the following theorem, 
showing in particular that  $\h_t(\mu)\in D(\entv)$ for $t>0$ and every $\mu\in \probt{\supp(\mm)}$. 
The proof can be found, for instance, in the preliminary section 
of \cite{Ambrosio-Gigli-Savare-preprint11b}. \EEE
%
%

In the statement of the result and thereafter, the function ${\rm I}_K:\R^+\to\R^+$ is defined as 
\[
{\rm I}_K(t):=\int_0^t\rme^{Ks}\,\d s=
\begin{cases}
  \frac{\rme^{Kt}-1}{K}&\text{if }K\neq0\\
  t&\text{if }K=0.
\end{cases}
\] 
\begin{theorem}[A priori estimates]\label{thm:apriori}
Let $(X,\sfd,\mm)$ be a $\RCD(K,\infty)$ space. Then for every 
$\mu,\nu\in\probt{\supp(\mm)}$ and any $t>0$ it holds
\begin{equation}
\label{eq:apriori}
{\rm I}_K(t)\entv(\h_t(\mu))+\frac{{\rm I}_K(t)^2}{2}|\Dm^-\entv|^2(\h_t(\mu))\leq {\rm I}_K(t)\entv(\nu)+\frac12 W_2^2(\nu,\mu).
\end{equation}
\end{theorem}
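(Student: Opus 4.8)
The plan is to deduce \eqref{eq:apriori} from the a priori regularization estimate for $\mathrm{EVI}_K$-gradient flows, combined with the energy dissipation identity and the monotonicity of the slope. The key external input — which is exactly what the analysis of $\RCD(K,\infty)$ spaces in \cite{Ambrosio-Gigli-Savare-preprint11b} provides — is that on such a space the $W_2$-heat flow $(\h_t)$ is the unique $\mathrm{EVI}_K$-gradient flow of $\entv$; that is, writing $\mu_s:=\h_s(\mu)$ with $\mu\in D(\entv)$, for every $\nu\in D(\entv)$ and a.e.\ $s>0$ one has
\[
\frac{\d}{\d s}\Big(\tfrac12 W_2^2(\mu_s,\nu)\Big)+\frac K2 W_2^2(\mu_s,\nu)+\entv(\mu_s)\le \entv(\nu).
\]
Besides this I would only use: that $s\mapsto\entv(\mu_s)$ is locally absolutely continuous with $\frac{\d}{\d s}\entv(\mu_s)=-|\Dm^-\entv|^2(\mu_s)$ for a.e.\ $s$ (a restatement of \eqref{eq:defgf} together with \eqref{eq:kinugsl}); and that $s\mapsto \rme^{Ks}|\Dm^-\entv|(\mu_s)$ is non-increasing, which is \eqref{eq:slopereg}.

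First I would prove the statement when $\mu\in D(\entv)$ (assuming also $\nu\in D(\entv)$, the inequality being trivial otherwise). Multiplying the $\mathrm{EVI}_K$ inequality by $\rme^{Ks}$ and integrating over $(0,t)$ gives
\[
\frac{\rme^{Kt}}2 W_2^2(\mu_t,\nu)-\frac12 W_2^2(\mu,\nu)\le {\rm I}_K(t)\,\entv(\nu)-\int_0^t \rme^{Ks}\,\entv(\mu_s)\,\d s .
\]
To estimate the last integral from below I would write $\entv(\mu_s)=\entv(\mu_t)+\int_s^t|\Dm^-\entv|^2(\mu_r)\,\d r$ and invoke \eqref{eq:slopereg} in the form $|\Dm^-\entv|^2(\mu_r)\ge \rme^{2K(t-r)}|\Dm^-\entv|^2(\mu_t)$ for $r\le t$, obtaining
\[
\int_0^t\rme^{Ks}\,\entv(\mu_s)\,\d s\ge {\rm I}_K(t)\,\entv(\mu_t)+|\Dm^-\entv|^2(\mu_t)\int_0^t \rme^{Ks}\Big(\int_s^t\rme^{2K(t-r)}\,\d r\Big)\,\d s .
\]
A direct computation of the double integral yields the value $\tfrac12 {\rm I}_K(t)^2$ (for $K\neq 0$ this is the identity $\tfrac1{2K^2}(\rme^{Kt}-1)^2=\tfrac12{\rm I}_K(t)^2$, with the obvious $K=0$ analogue). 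Substituting back and discarding the non-negative term $\tfrac{\rme^{Kt}}2 W_2^2(\mu_t,\nu)$ gives precisely \eqref{eq:apriori} in this case.

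Next I would remove the assumption $\mu\in D(\entv)$ by approximation. Given $\mu\in\probt{\supp(\mm)}$, I would pick $\mu^k\in D(\entv)$ with $W_2(\mu^k,\mu)\to 0$ — possible because $D(\entv)$ is $W_2$-dense in $\probt{\supp(\mm)}$ — and use the contraction property \eqref{eq:contrh} to get $W_2(\h_t(\mu^k),\h_t(\mu))\to 0$. Applying the estimate already proved to each $\mu^k$ and letting $k\to\infty$: the right-hand side tends to ${\rm I}_K(t)\,\entv(\nu)+\tfrac12 W_2^2(\nu,\mu)$, while for the left-hand side the $W_2$-lower semicontinuity of $\entv$ (from the decomposition \eqref{eq:chiave}) and of $|\Dm^-\entv|$ (from \eqref{eq:slopesemi}), together with ${\rm I}_K(t)\ge 0$, allow passing to the $\liminf$; this yields \eqref{eq:apriori} in general, and choosing any $\nu\in D(\entv)$ on the right also shows $\h_t(\mu)\in D(\entv)$ for every $t>0$.

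The only genuinely non-elementary ingredient is the $\mathrm{EVI}_K$ characterization of the heat flow on $\RCD(K,\infty)$ spaces, taken from \cite{Ambrosio-Gigli-Savare-preprint11b}; granting that, the rest is the routine integration above, so I do not expect any serious obstacle — the main care needed is in the bookkeeping with the function ${\rm I}_K$ and in checking the approximation step is legitimate (finiteness and lower semicontinuity of the quantities involved).
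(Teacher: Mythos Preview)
The paper does not give its own proof of this statement; it simply refers to the preliminary section of \cite{Ambrosio-Gigli-Savare-preprint11b}. Your derivation is correct and is essentially the standard argument one finds there: integrate the $\mathrm{EVI}_K$ inequality against $\rme^{Ks}$, replace $\entv(\mu_s)$ by $\entv(\mu_t)+\int_s^t|\Dm^-\entv|^2(\mu_r)\,\d r$ via the energy identity, and use the slope monotonicity \eqref{eq:slopereg} to extract the factor $\tfrac12{\rm I}_K(t)^2$. The double-integral computation and the approximation step (density of $D(\entv)$, contraction \eqref{eq:contrh}, and $W_2$-lower semicontinuity of both $\entv$ and $|\Dm^-\entv|$) are all justified by results already stated in the paper, so there is no gap.
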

As a direct consequence of \eqref{eq:apriori}, we have the following  a priori control on the entropy and its slope along the flow, which we state and prove only for $t$ close to 0, which is the regime we will need later.
\begin{corollary}\label{cor:apriori}
Let $(X,\sfd,\mm,\bar x)$ be a $\RCD(K,\infty)$ \pmm~space,
 $\Co>1+K_-$ and $\z,\tilde\mm$ as in \eqref{eq:expcontr}.
For every $\mu\in\probt{\supp(\mm)}$ and $t\in[0,\frac{1}{8\Co}]$ it holds
\begin{align}
\label{eq:apriorient}
{\rm I}_K(t)\entv(\h_t(\mu))&\leq -{\rm I}_K(t)\log \z+\big(1-\Co\, {\rm I}_K(t)\big)\int \sfd^2(\cdot,\bar x)\,\d\tilde\mm+\int \sfd^2(\cdot,\bar x)\,\d\mu\\
\label{eq:apriorislope}
\frac{{\rm I}_K(t)^2}{2}|\Dm^-\entv|^2(\h_t(\mu))&\leq\big(1+3\Co\, {\rm I}_K(t) \rme^{-2Kt}\big) \int \sfd^2(\cdot,\bar x)\,\d\mu + {\sf c}(K,t)\int \sfd^2(\cdot,\bar x)\,\d\tilde\mm,
\end{align}
where ${\sf c}(K,t)$ is given by ${\sf c}(K,t):=1-\Co\, {\rm I}_K(t)+3\Co {\rm I}_K(t)  (4\Co t+(1+\rme^{-Kt})^2)$.
\end{corollary}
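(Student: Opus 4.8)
The plan is to apply the a priori estimate of Theorem~\ref{thm:apriori} with the distinguished choice $\nu=\tilde\mm$, the probability measure of \eqref{eq:expcontr}. Note first that $\tilde\mm\in D(\entv)$, since the growth condition \eqref{eq:control1} forces $\int\sfd^2(\cdot,\bar x)\,\d\tilde\mm<\infty$, and that a direct computation — equivalently, \eqref{eq:chiave} evaluated at $\mu=\tilde\mm$ — gives the identity $\entv(\tilde\mm)=-\log\z-\Co\int\sfd^2(\cdot,\bar x)\,\d\tilde\mm$. Moreover, since $W_2^2(\eta,\delta_{\bar x})=\int\sfd^2(\cdot,\bar x)\,\d\eta$ for every $\eta$, the triangle inequality together with $(a+b)^2\le 2a^2+2b^2$ yields $\tfrac12 W_2^2(\tilde\mm,\mu)\le \int\sfd^2(\cdot,\bar x)\,\d\tilde\mm+\int\sfd^2(\cdot,\bar x)\,\d\mu$. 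For \eqref{eq:apriorient} this already suffices: discarding the nonnegative slope term in \eqref{eq:apriori} and inserting the two bounds above estimates ${\rm I}_K(t)\entv(\h_t(\mu))$ by ${\rm I}_K(t)\entv(\tilde\mm)+\tfrac12 W_2^2(\tilde\mm,\mu)$, which is exactly the right-hand side of \eqref{eq:apriorient}.

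For \eqref{eq:apriorislope} I would instead keep the slope term and move $-{\rm I}_K(t)\entv(\h_t(\mu))$ to the right-hand side of \eqref{eq:apriori}. Since this quantity may be negative, I would use that $\h_t(\mu)\in D(\entv)$ for $t>0$, hence $\h_t(\mu)\ll\mm$ and so $\h_t(\mu)\ll\tilde\mm$, which gives ${\rm Ent}_{\tilde\mm}(\h_t(\mu))\ge 0$ and therefore, again by \eqref{eq:chiave}, $\entv(\h_t(\mu))\ge-\log\z-\Co\int\sfd^2(\cdot,\bar x)\,\d\h_t(\mu)$. Inserting this lower bound, the two $\log\z$ contributions cancel and the problem reduces to controlling $\Co{\rm I}_K(t)\int\sfd^2(\cdot,\bar x)\,\d\h_t(\mu)$, i.e.\ the second moment of the evolved measure. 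Here the sharp contraction of Theorem~\ref{thm:kcontr}, extended to $\probt{\supp(\mm)}$ as in \eqref{eq:contrh}, enters: from $W_2(\h_t(\mu),\h_t(\tilde\mm))\le \rme^{-Kt}W_2(\mu,\tilde\mm)\le \rme^{-Kt}\big(W_2(\mu,\delta_{\bar x})+W_2(\tilde\mm,\delta_{\bar x})\big)$ and $W_2(\h_t(\tilde\mm),\delta_{\bar x})\le W_2(\h_t(\tilde\mm),\tilde\mm)+W_2(\tilde\mm,\delta_{\bar x})$ one obtains
\[
W_2(\h_t(\mu),\delta_{\bar x})\le \rme^{-Kt}W_2(\mu,\delta_{\bar x})+(1+\rme^{-Kt})\,W_2(\tilde\mm,\delta_{\bar x})+W_2(\h_t(\tilde\mm),\tilde\mm),
\]
and then $(a+b+c)^2\le 3(a^2+b^2+c^2)$ gives
\[
\int\sfd^2(\cdot,\bar x)\,\d\h_t(\mu)\le 3\rme^{-2Kt}\int\sfd^2(\cdot,\bar x)\,\d\mu+3\big(4\Co t+(1+\rme^{-Kt})^2\big)\int\sfd^2(\cdot,\bar x)\,\d\tilde\mm,
\]
once the auxiliary estimate $W_2^2(\h_t(\tilde\mm),\tilde\mm)\le 4\Co t\int\sfd^2(\cdot,\bar x)\,\d\tilde\mm$ is known. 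This last bound I would obtain by Cauchy--Schwarz applied to the metric speed of $s\mapsto\mu_s:=\h_s(\tilde\mm)$, namely $W_2^2(\h_t(\tilde\mm),\tilde\mm)\le t\int_0^t|\dot\mu_s|^2\,\d s$, combined with \eqref{eq:speedcontr} (applicable because $t\le\tfrac1{8\Co}$ and $\tilde\mm\in D(\entv)$) and the identity for $\entv(\tilde\mm)$ above, which collapses its right-hand side to $4\Co\int\sfd^2(\cdot,\bar x)\,\d\tilde\mm$. Substituting the bound for $\int\sfd^2(\cdot,\bar x)\,\d\h_t(\mu)$, the coefficients assemble into $1+3\Co{\rm I}_K(t)\rme^{-2Kt}$ in front of $\int\sfd^2(\cdot,\bar x)\,\d\mu$ and into ${\sf c}(K,t)$ in front of $\int\sfd^2(\cdot,\bar x)\,\d\tilde\mm$; the case $t=0$ is trivial since ${\rm I}_K(0)=0$.

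The only genuinely delicate point will be the constant bookkeeping for \eqref{eq:apriorislope}: the particular chain of triangle inequalities above has to be chosen so that the exponential factors combine exactly into $(1+\rme^{-Kt})^2$ while the finite-speed estimate supplies the $4\Co t$, and one must observe that it is precisely the discarding of ${\rm Ent}_{\tilde\mm}(\h_t(\mu))\ge 0$ that removes the $\log\z$ term (which is absent from the final statement). Everything else — that $\tilde\mm\in D(\entv)$ and $\h_t(\mu)\ll\tilde\mm$, the elementary inequalities $(a+b+c)^2\le 3(a^2+b^2+c^2)$ and $W_2^2(\eta,\delta_{\bar x})=\int\sfd^2(\cdot,\bar x)\,\d\eta$, and the collapse of \eqref{eq:speedcontr} via the explicit value of $\entv(\tilde\mm)$ — is routine.
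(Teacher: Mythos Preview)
Your proof is correct and follows essentially the same route as the paper: choose $\nu=\tilde\mm$ in \eqref{eq:apriori}, use the identity $\entv(\tilde\mm)=-\log\z-\Co\int\sfd^2(\cdot,\bar x)\,\d\tilde\mm$, and for \eqref{eq:apriorislope} bound $\int\sfd^2(\cdot,\bar x)\,\d\h_t(\mu)$ by the same triangle-inequality chain through $\h_t(\tilde\mm)$ and $\tilde\mm$, the contraction \eqref{eq:contrh}, and the speed estimate \eqref{eq:speedcontr} applied to the flow from $\tilde\mm$. The only differences are cosmetic: you make the step $\tfrac12 W_2^2(\tilde\mm,\mu)\le\int\sfd^2(\cdot,\bar x)\,\d\tilde\mm+\int\sfd^2(\cdot,\bar x)\,\d\mu$ explicit (the paper uses it silently), and you collapse \eqref{eq:speedcontr} for $\tilde\mm$ directly to $4\Co\int\sfd^2(\cdot,\bar x)\,\d\tilde\mm$ before inserting it, whereas the paper carries the intermediate terms one line longer.
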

\begin{proof}
The bound \eqref{eq:apriorient} follows plugging $\nu:=\tilde\mm$ in \eqref{eq:apriori}, neglecting the (non negative) term in the squared slope and using the identity
\begin{equation}
\label{eq:entt}
\entv(\tilde\mm)=\int \log(\z^{-1}\rme^{-\Co\sfd^2(\cdot,\bar x)})\,\d\tilde\mm=-\log \z-\Co W_2^2(\tilde\mm,\delta_{\bar x}).
\end{equation}
For \eqref{eq:apriorislope} we argue as follows. Let $(\nu_t)$ be the gradient flow of $\entv$ starting from $\tilde\mm$ and notice that
\[
\begin{split}
W_2(\h_t(\mu),\delta_{\bar x})&\leq W_2(\h_t(\mu),\nu_t)+W_2(\nu_t,\tilde\mm)+W_2(\tilde\mm,\delta_{\bar x})\\
&\leq \rme^{-Kt}W_2(\mu,\tilde\mm)+\int_0^t|\dot\nu_s|\,\d s+W_2(\tilde\mm,\delta_{\bar x})\\
&\leq \rme^{-Kt}W_2(\mu,\delta_{\bar x})+\sqrt{t\int_0^t|\dot\nu_s|^2\,\d s}+W_2(\tilde\mm,\delta_{\bar x})(1+\rme^{-Kt}).
\end{split}
\]
Squaring, using the trivial inequality $(a+b+c)^2\leq 3a^2+3b^2+3c^2$ and the a priori estimate \eqref{eq:speedcontr} (here we use that $t\in[0,\frac1{8\Co}]$) we get
\[
\begin{split}
\frac13W_2^2(\h_t(\mu),\delta_{\bar x})\leq \rme^{-2Kt}W_2^2(\mu,\delta_{\bar x})+ 4t\entv(\tilde\mm)+W_2^2(\tilde\mm,\delta_{\bar x})\big(8\Co t+(1+\rme^{-Kt})^2\big)+4 t\log \z.
\end{split}
\] 
Taking into account the identity \eqref{eq:entt}, this bound reduces to
\[
\frac13W_2^2(\h_t(\mu),\delta_{\bar x})\leq \rme^{-2Kt}W_2^2(\mu,\delta_{\bar x})+W_2^2(\tilde\mm,\delta_{\bar x})\big(4\Co t+(1+\rme^{-Kt})^2\big).
\]
Plugging this bound into \eqref{eq:chiave} and using the fact that $\Entt\geq 0$ we obtain
\[
\entv(\h_t(\mu))\geq -\log \z-3\Co \rme^{-2Kt}W_2^2(\mu,\delta_{\bar x})-3\Co W_2^2(\tilde\mm,\delta_{\bar x})\big(4\Co t+(1+\rme^{-Kt})^2\big).
\]
The conclusion follows using this inequality in \eqref{eq:apriori}, choosing $\nu:=\tilde\mm$ and using again \eqref{eq:entt}.
\end{proof}
\begin{remark}\label{re:stillgf}{\rm
Corollary \ref{cor:apriori} grants that for every
$\mu\in\probt{\supp(\mm)}$, the curve $t\mapsto \h_t(\mu)$ is
continuous and satisfies $\h_t(\mu)\in D(\entv)$ for any $t>0$. In particular, for any $t_0>0$ the curve $[0,\infty)\ni t\mapsto \h_{t+t_0}(\mu)$ is the gradient flow of $\entv$ starting from $\h_{t_0}(\mu)$ in the sense of Definition \ref{def:gf}.
For these reasons, we still call the curve $[0,\infty)\ni t\mapsto \h_t(\mu)$ the gradient flow of $\entv$ starting from $\mu$ even for $\mu\in \probt{\supp(\mm)}\setminus D(\entv)$.
}\fr\end{remark}
The a priori estimates in Corollary \ref{cor:apriori} and the contraction property \eqref{eq:contrh} allow to refine, in the case of $\RCD(K,\infty)$ spaces, the convergence result on the heat flow given in Theorem \ref{thm:stabgf1}. Notice indeed that in the statement below only $W_2$-convergence of the initial data is required, without any assumption on the behavior of the entropies.

\begin{theorem}[Improved stability of the heat flow]\label{thm:stabrgf} Let $\cX_n$, $n\in\N$, be a sequence of $\RCD(K,\infty)$ spaces converging to a limit space $\cX_\infty$ in the pmG-sense and adopt  the notation of Remark \ref{re:assnot}.
    
    If $(\bar\mu_n)\subset \probt X$ satisfies
    %
\[
      \bar\mu_n\stackrel{W_2}\longrightarrow \bar\mu_\infty\in 
      \probt{\supp(\mm_\infty)},
\]
  then
  the solutions $\mu_{n,t}=\h_{n,t}(\bar\mu_n)$, $t\ge0$,
  of the $W_2$-gradient flows of $\Entn$ satisfy
    all the properties {\upshape (\ref{subeq:conclusions}a,b,c,d)}.

\end{theorem}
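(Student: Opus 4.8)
The plan is to \emph{restart the flow at a small positive time} $t_0>0$ and then invoke the general convergence Theorem~\ref{thm:stabgf1}. The point is that, on an $\RCD(K,\infty)$ space, the heat flow is defined on all of $\probt{\supp(\mm_n)}$, obeys the semigroup law and the contraction estimate~\eqref{eq:contrh}, and by the regularizing a priori estimates of Corollary~\ref{cor:apriori} (see also Remark~\ref{re:stillgf}) the measures $\mu_{n,t_0}$ lie in $D(\Entn)$ and carry uniform bounds on entropy and slope, even though the initial data $\bar\mu_n$ need not. Thus, once I know that $\mu_{n,t_0}\to\mu_{\infty,t_0}$ in $W_2$ and that $\Entn(\mu_{n,t_0})\to\Enti(\mu_{\infty,t_0})<\infty$, I may apply Theorem~\ref{thm:stabgf1} to the shifted curves $s\mapsto\mu_{n,t_0+s}=\h_{n,s}(\mu_{n,t_0})$ and get all of \eqref{subeq:conclusions} on $(t_0,\infty)$; letting $t_0\downarrow0$ then finishes. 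Throughout I use the notation of Remark~\ref{re:assnot}, and recall that $W_2(\bar\mu_n,\bar\mu_\infty)\to0$ forces $\sup_n\int\sfd^2(\cdot,\bar x_n)\,\d\bar\mu_n<\infty$ and $\int\sfd^2(\cdot,\bar x_n)\,\d\bar\mu_n\to\int\sfd^2(\cdot,\bar x_\infty)\,\d\bar\mu_\infty$ by Proposition~\ref{prop:narw2}, and similarly for $\tilde\mm_n$.

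First I would prove the $W_2$-convergence $\mu_{n,t}\to\mu_{\infty,t}$ for every $t>0$. For $k$ large set $\bar\mu_\infty^{(k)}:=\h_{\infty,1/k}(\bar\mu_\infty)$, which belongs to $D(\Enti)$ by Corollary~\ref{cor:apriori} and converges to $\bar\mu_\infty$ in $W_2$ as $k\to\infty$ by Remark~\ref{re:stillgf}. Using the $\Gamma\text{-}\limsup$ part of Proposition~\ref{prop:GammaConvergence} I choose, for each $k$, a recovery sequence $\bar\mu_n^{(k)}\stackrel{W_2}\to\bar\mu_\infty^{(k)}$ with $\Entn(\bar\mu_n^{(k)})\to\Enti(\bar\mu_\infty^{(k)})<\infty$, so Theorem~\ref{thm:stabgf1} applies and gives $\h_{n,t}(\bar\mu_n^{(k)})\stackrel{W_2}\to\h_{\infty,t}(\bar\mu_\infty^{(k)})$ for all $t>0$. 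Then the triangle inequality together with \eqref{eq:contrh} on $\cX_n$ and on $\cX_\infty$ gives
\[
W_2(\mu_{n,t},\mu_{\infty,t})\le \rme^{-Kt}W_2(\bar\mu_n,\bar\mu_n^{(k)})+W_2\big(\h_{n,t}(\bar\mu_n^{(k)}),\h_{\infty,t}(\bar\mu_\infty^{(k)})\big)+\rme^{-Kt}W_2(\bar\mu_\infty^{(k)},\bar\mu_\infty),
\]
and letting $n\to\infty$ (the middle term vanishes and $\limsup_n W_2(\bar\mu_n,\bar\mu_n^{(k)})\le W_2(\bar\mu_\infty,\bar\mu_\infty^{(k)})$), then $k\to\infty$, yields $W_2(\mu_{n,t},\mu_{\infty,t})\to0$, which is \eqref{subeq:conclusions}a.

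Next I would fix $t_0\in(0,\tfrac1{8\Co}]$ and put $\nu_n:=\mu_{n,t_0}$. By the previous step $W_2(\nu_n,\mu_{\infty,t_0})\to0$, hence $\sup_n W_2(\nu_n,\delta_{\bar x_n})<\infty$; and from the a priori bound \eqref{eq:apriorislope} of Corollary~\ref{cor:apriori} applied with $\mu=\bar\mu_n$, $t=t_0$, together with the uniform control of $\int\sfd^2(\cdot,\bar x_n)\,\d\bar\mu_n$ and of $\int\sfd^2(\cdot,\bar x_n)\,\d\tilde\mm_n$, also $\sup_n|\Dm^-\Entn|(\nu_n)<\infty$. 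Therefore $(\nu_n)$ satisfies the hypotheses of Theorem~\ref{thm:weaktostrong}(i); since the weak limit is forced to be $\mu_{\infty,t_0}$ and $W_2(\nu_n,\mu_{\infty,t_0})\to0$, its second conclusion gives convergence of the densities in the $W_2$-Entropy sense, in particular $\Entn(\nu_n)\to\Enti(\mu_{\infty,t_0})<\infty$. Thus $(\nu_n,\mu_{\infty,t_0})$ meets the assumptions \eqref{eq:forconv} of Theorem~\ref{thm:stabgf1}, which applied to the initial data $\nu_n$ — noting $\h_{n,s}(\nu_n)=\mu_{n,t_0+s}$ by the semigroup law — produces \eqref{subeq:conclusions}a,b,c,d for every $t>t_0$ (resp.\ a.e.\ such $t$ for (d)). Since $t_0$ is an arbitrary small positive number, taking $t_0\downarrow0$ along a sequence delivers the four convergence properties on all of $(0,\infty)$.

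The genuinely new ingredient, and the only place where the $\RCD(K,\infty)$ hypothesis is used beyond Theorem~\ref{thm:stabilityCD}, is the pairing in the last two steps: the contraction \eqref{eq:contrh} lets one dispense with any entropy control on $\bar\mu_n$ in the $W_2$-convergence argument, while the finite-time regularization estimates of Corollary~\ref{cor:apriori} upgrade mere $W_2$-convergence of the data to the uniform slope (and hence entropy) bounds needed to invoke Theorem~\ref{thm:weaktostrong}(i). I expect the only real care to be in verifying those uniform bounds, i.e.\ in the passage to the limit of the second moments of $\bar\mu_n$ and of $\tilde\mm_n$ and in checking that the constants in \eqref{eq:apriorislope} depend only on $K,\Co,t_0$; everything else is a bookkeeping use of results already established.
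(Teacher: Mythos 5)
Your proposal is correct and follows essentially the same strategy as the paper: prove (\ref{subeq:conclusions}a) for all $t$ via the contraction \eqref{eq:contrh} and a recovery sequence from $D(\Enti)$, use Corollary \ref{cor:apriori} to get uniform slope bounds at a small positive time $t_0\le\frac1{8\Co}$, upgrade to convergence of the entropies at $t_0$, restart via Theorem \ref{thm:stabgf1}, and let $t_0\downarrow0$. The only cosmetic difference is that you invoke Theorem \ref{thm:weaktostrong}(i) to obtain $\Entn(\mu_{n,t_0})\to\Enti(\mu_{\infty,t_0})$, whereas the paper reruns the same slope-based estimate \eqref{eq:repslope} with a $\Gamma$-$\limsup$ recovery sequence directly; the two are the same computation.
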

\begin{proof} 
Let $k\mapsto\mu^k_\infty\subset D(\Enti)$ be a sequence such that
\[
\lim_{k\to\infty}\Enti(\mu^k_\infty)=\Enti(\bar\mu_\infty),\qquad\qquad\lim_{k\to\infty} W_2(\mu^k_\infty,\bar\mu_\infty)=0.
\]
For every $k\in\N$, use the $\Glims$ part of Proposition \ref{prop:GammaConvergence} to find a sequence $n\mapsto \mu_{n}^k\in\probt \Y $ such that
\begin{equation}
\label{eq:k}
\lim_{n\to\infty}\Entn(\mu^k_n)=\Enti(\mu^k_\infty),\qquad\qquad\lim_{n\to\infty} W_2(\mu^k_n,\mu^k_\infty)=0.
\end{equation}
Taking \eqref{eq:contrh} into account we get
\begin{equation}
\label{eq:aperto}
\begin{split}
W_2&\big(\mu_{\infty,t},\mu_{n,t}\big)\\
&\leq W_2\big(\mu_{\infty,t},\h_{\infty,t}(\mu^k_\infty)\big)+W_2\big(\h_{\infty,t}(\mu^k_\infty),\h_{n,t}(\mu_n^k)\big)+W_2\big(\h_{n,t}(\mu^k_n),\mu_{n,t})\big)\\
&\leq \rme^{-Kt}W_2(\bar\mu_\infty,\mu^k_\infty)+W_2\big(\h_{\infty,t}(\mu^k_\infty),\h_{n,t}(\mu_n^k)\big)+\rme^{-Kt}W_2(\mu^k_n,\bar\mu_n)\\
&\leq \rme^{-Kt}\big(2W_2(\bar\mu_\infty,\mu^k_\infty)+W_2(\bar\mu_\infty,\bar\mu_n)+W_2(\mu^k_\infty,\mu^k_n)\big)+W_2\big(\h_{\infty,t}(\mu^k_\infty),\h_{n,t}(\mu_n^k)\big).
\end{split}
\end{equation}
The choices \eqref{eq:k} and the assumption $\mu^k_\infty\in D(\Enti)$ ensure that the hypotheses of Theorem \ref{thm:stabgf1} are fulfilled with $\mu^k_n$ in place of $\bar\mu_n$ and $\mu^k_\infty$ in place of $\bar\mu_\infty$. Hence it holds $\lim_{n\to\infty}W_2\big(\h_{n,t}(\mu_n^k),\h_{\infty,t}(\mu^k_\infty)\big)=0$, so that passing to the limit first as $n\to\infty$ then as $k\to\infty$ in \eqref{eq:aperto} we get
\[
\lim_{n\to\infty}W_2\big(\mu_{\infty,t},\mu_{n,t}\big)=0\qquad\forevery t\geq 0.
\]

Now fix $\eps\in(0,\frac1{8\Co}]$ and notice that from $W_2\big(\mu_{\infty,\eps},\mu_{n,\eps}\big)\to 0$ and the $\Glimi$ part of Proposition \ref{prop:GammaConvergence} we get $\limi_{n\to\infty}\Entn(\mu_{n,\eps})\geq \Enti(\mu_{\infty,\eps})$. We claim that $\lims_{n\to\infty}\Entn(\mu_{n,\eps})\leq \Enti(\mu_{\infty,\eps})<\infty$ as well. The fact that $\mu_{\infty,\eps}\in D(\Enti)$ follows from \eqref{eq:apriorient}. Use the $\Glims$ part of Proposition \ref{prop:GammaConvergence} to find a sequence $(\nu_n)\subset\probt \Y $ such that 
\[
\lim_{n\to\infty}\Entn(\nu_n)=\Enti(\mu_{\infty,\eps}),\qquad\qquad\lim_{n\to\infty} W_2(\nu_n,\mu_{\infty,\eps})=0.
\]
From \eqref{eq:apriorislope} we get  $\sup_n|\Dm^-\Entn|(\mu_{n,\eps})=:S<\infty$, so that  formula \eqref{eq:repslope} gives
\[
\Entn(\mu_{n,\eps})\leq \Entn(\nu_n)+SW_2(\nu_n,\mu_{n,\eps})-\frac K2W_2^2(\nu_n,\mu_{n,\eps}).
\]
Letting $n\to\infty$ and using the fact that $W_2(\nu_n,\mu_{n,\eps})\to 0$ we get $\lims_{n\to\infty}\Entn(\mu_{n,\eps})\leq \Enti(\mu_{\infty,\eps})<\infty$, as claimed.

In summary, we proved that it holds
\[
\lim_{n\to\infty}\Entn(\mu_{n,\eps})=\Enti(\mu_{\infty,\eps}),\qquad\qquad\lim_{n\to\infty} W_2(\mu_{n,\eps},\mu_{\infty,\eps})=0,
\]
therefore Theorem \ref{thm:stabgf1} is applicable with $\mu_{n,\eps}$ in place of $\bar\mu_n$ and $\mu_{\infty,\eps}$ in place of $\bar\mu_\infty$. The conclusion comes from the arbitrariness of $\eps\in[0,\frac1{8\Co}]$.
\end{proof}

\subsection{Spectral convergence}

Since in a $\RCD(K,\infty)$ space $(X,\sfd,\mm)$ 
the Laplacian
$\Delta_{\sfd,\mm}$ (see \S~\ref{subsub:L2g}) 
is a selfadjoint linear closed operator in $L^2(X,\mm)$,
it would be interesting to study the behavior of its
spectrum w.r.t.~$\pGw$-convergence. 
We consider here the case 
of a sequence 
$\pmmXclassa n\in \X$, $n\in \bar\N$, 
of $\RCD(K,\infty)$ spaces
satisfying a uniform
weak Logarithmic-Sobolev-Talagrand inequality
$\mathrm{wLSTI(A,B)}$, so that in particular
\[
  W^{1,2}(X_n,\sfd_n,\mm_n) \quad
  \text{is compactly imbedded
in\quad $L^2(X_n,\mm_n)$\quad for every $n\in \bar \N$,}
\]
by Proposition 
 \ref{cor:newcompact}.
Let $\Delta_n:=\Delta_{\sfd_n,\mm_n}$ be
the corresponding Laplace operators;
since the resolvent $(I+\tau\Delta_n)^{-1}$ as in \eqref{eq:66} 
are compact operators, we can find an enumeration
\newcommand{\egnv}[2]{\lambda_{#1}(\Delta_{#2})}
\newcommand{\egnvJ}[2]{\lambda_{#1}(J_{#2})}
\[
  0\le \egnv 1n\le \egnv 2n\le \cdots\le \egnv kn\le\cdots,\qquad
  k\in \N,
  \]
of the eigenvalues of $\Delta_n$, 
each repeated according to its own multiplicity. 
The eigenvalues are invariant under isomorphisms of \pmm~spaces,
so $\egnv kn$ only depends on the equivalence class $\cX_n$ of the space $\pmmXa n$.
\begin{theorem}[Convergence of the eigenvalues of the Laplace operator]
  \label{thm:Lap-stab}
Let $\cX_n$ be a sequence of $\RCD(K,\infty)$ spaces converging to a limit $\cX_\infty$ space in the pmG-sense. Assume that  for some $a,b\geq 0$ $\cX_n$ satisfies the 
  $\mathrm{wLSTI}(A,B)$
  according to Definition \ref{def:newLSTI} for every $n\in\N$
  (in particular if $K>0$ and the masses are finite or
  $\mathrm{diam}\big(\supp(\mm_n)\big)$ 
  is uniformly bounded as in Remark \ref{re:forwlsti}).
  
  Then 
\[
    \lim_{n\to\infty}\egnv kn=\egnv k\infty\quad\forevery k\in \N.
\]
\end{theorem}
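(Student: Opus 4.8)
The plan is to deduce the convergence of eigenvalues from the Mosco-convergence of the Cheeger energies (Theorem \ref{thm:Mosco}) together with the uniform compactness of the embeddings $W^{1,2}(X_n,\sfd_n,\mm_n)\hookrightarrow L^2(X_n,\mm_n)$ guaranteed by the uniform $\mathrm{wLSTI}(A,B)$ assumption via Proposition \ref{cor:newcompact}. Since on an $\RCD(K,\infty)$ space $\C_n$ is a quadratic Dirichlet form, the eigenvalues $\egnv kn$ are characterized by the min-max (Courant--Fischer) formula
\[
\egnv kn=\min_{\substack{V\subset W^{1,2}(X_n,\sfd_n,\mm_n)\\ \dim V=k}}\ \max_{\substack{f\in V\\ \|f\|_{L^2(X_n,\mm_n)}=1}} 2\,\C_n(f),
\]
and the strategy is to pass to the limit in this formula in both directions, obtaining $\limsup_n\egnv kn\le\egnv k\infty$ and $\liminf_n\egnv kn\ge\egnv k\infty$ separately, for each fixed $k\in\N$, by induction on $k$ if needed to handle multiplicities.

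First I would prove the upper bound $\limsup_n\egnv kn\le\egnv k\infty$. Fix an $L^2(X_\infty,\mm_\infty)$-orthonormal system of eigenfunctions $e_1,\dots,e_k$ of $\Delta_\infty$ corresponding to $\egnv 1\infty,\dots,\egnv k\infty$. Using the strong $\Gamma$-$\limsup$ part of Theorem \ref{thm:Mosco} (applied componentwise and combined by a diagonal argument), produce sequences $e_{j,n}\in L^2(X_n,\mm_n)$, $j=1,\dots,k$, $L^2$-strongly converging to $e_j$ with $\C_n(e_{j,n})\to\C_\infty(e_j)=\tfrac12\egnv j\infty$. By \eqref{eq:weakstrongscalar} the Gram matrix $\big(\int e_{i,n}e_{j,n}\,\d\mm_n\big)_{ij}$ converges to the identity, so for $n$ large it is invertible and, after a Gram--Schmidt orthonormalization (whose coefficients converge to those of the identity), one gets a genuinely $k$-dimensional subspace $V_n:=\operatorname{span}\{e_{1,n},\dots,e_{k,n}\}$ of $W^{1,2}(X_n,\sfd_n,\mm_n)$; since $\C_n$ is quadratic, the Rayleigh quotient of any unit vector in $V_n$ is controlled, in the limit, by $\egnv k\infty$, whence $\egnv kn\le\egnv k\infty+o(1)$.

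Next I would prove the lower bound $\liminf_n\egnv kn\ge\egnv k\infty$, which is where the uniform compactness enters. Pass to a subsequence realizing the liminf and along which $\egnv jn\to\lambda_j$ for every $j\le k$ (possible since the uniform $\mathrm{wLSTI}$ gives, via the argument in Proposition \ref{cor:newcompact} and \eqref{eq:aereo}-type bounds, that the low eigenvalues are uniformly bounded). For each $j$ pick an $L^2(X_n,\mm_n)$-normalized eigenfunction $\phi_{j,n}$ of $\Delta_n$ with eigenvalue $\egnv jn$; then $\C_n(\phi_{j,n})=\tfrac12\egnv jn$ is uniformly bounded, and $\int\sfd_n^2(\cdot,\bar x_n)\phi_{j,n}^2\,\d\mm_n$ is uniformly bounded by \eqref{eq:newdef}, so $\sup_n\int_{X_n\setminus B_R(\bar x_n)}\phi_{j,n}^2\,\d\mm_n\to0$ as $R\to\infty$. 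Hence by point (ii) of Theorem \ref{thm:weaktostrong} a subsequence of $(\phi_{j,n})$ is $L^2$-strongly convergent to some $\phi_j\in L^2(X_\infty,\mm_\infty)$; by the weak $\Gamma$-$\liminf$ in Theorem \ref{thm:Mosco}, $\C_\infty(\phi_j)\le\tfrac12\lambda_j$, so $\phi_j\in W^{1,2}(X_\infty,\sfd_\infty,\mm_\infty)$. The $\phi_1,\dots,\phi_k$ are $L^2(X_\infty,\mm_\infty)$-orthonormal (again by \eqref{eq:weakstrongscalar} applied to the orthonormality relations $\int\phi_{i,n}\phi_{j,n}\,\d\mm_n=\delta_{ij}$, using strong convergence on both factors). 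Since each $\phi_{j,n}$ is an eigenfunction, $\int\langle\nabla\phi_{i,n},\nabla\phi_{j,n}\rangle\,\d\mm_n=\egnv jn\,\delta_{ij}$; using the Mosco-convergence together with \eqref{eq:weakstrongscalar} applied to the weakly convergent ``gradient'' data (more precisely, passing to the limit in the polarized bilinear forms $\mathcal E_n(f_n,g_n)=\C_n(f_n+g_n)-\C_n(f_n)-\C_n(g_n)$ along strongly convergent pairs, as in the resolvent corollary) one identifies, by a Rayleigh-quotient argument, $\operatorname{span}\{\phi_1,\dots,\phi_k\}$ as a $k$-dimensional subspace on which the Dirichlet form of the limit is bounded by $\max_j\lambda_j$; the min-max formula for $\Delta_\infty$ then yields $\egnv k\infty\le\lambda_k=\liminf_n\egnv kn$. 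Combining the two bounds and recalling that the argument applies to every subsequence finishes the proof.

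The main obstacle I anticipate is the lower bound, specifically the need to pass to the limit in the \emph{bilinear} form $\mathcal E_n$ (not merely the quadratic functional $\C_n$) along the strongly $L^2$-convergent eigenfunctions, and to control cross terms so that the limiting functions $\phi_j$ form an admissible competitor family for the limit min-max. Mosco-convergence of quadratic forms does give convergence of the associated bilinear forms on strongly convergent sequences (this is the standard equivalence, and it is exactly what powers the resolvent corollary proved above), but one must be careful that the $L^2$-strong convergence of the $\phi_{j,n}$ — obtained from the uniform compactness — is genuinely strong and not just weak, since otherwise neither the orthonormality of the limits nor the identification of the Rayleigh quotients would survive; this is precisely why the uniform $\mathrm{wLSTI}(A,B)$ hypothesis (hence the uniform tightness estimate $\sup_n\int_{X_n\setminus B_R(\bar x_n)}\phi_{j,n}^2\,\d\mm_n\to0$) is indispensable. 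The multiplicity bookkeeping — ensuring that we extract $k$ linearly independent limit eigenfunctions rather than fewer — is handled automatically by the orthonormality passing to the limit, so it is a bookkeeping matter rather than a genuine difficulty.
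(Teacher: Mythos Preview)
Your proposal is correct and follows essentially the same route as the paper: both directions are obtained from the min-max characterization via the two halves of the Mosco-convergence Theorem \ref{thm:Mosco}, with the uniform $\mathrm{wLSTI}(A,B)$ entering only in the lower bound to upgrade weak $L^2$-convergence of the eigenfunctions to strong $L^2$-convergence through Theorem \ref{thm:weaktostrong}(ii).

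One small simplification relative to your write-up: the ``obstacle'' you flag about passing to the limit in the \emph{bilinear} form $\cE_n$ is not really needed. Since the $\phi_{j,n}$ are eigenfunctions, they are automatically $\cE_n$-orthogonal, so for any $\aalpha\in\R^k$ with $|\aalpha|=1$ the combination $f_{\saalpha,n}:=\sum_j\alpha_j\phi_{j,n}$ satisfies $\cE_n(f_{\saalpha,n},f_{\saalpha,n})=\sum_j\alpha_j^2\,\egnv jn\le\egnv kn$. You then apply the $\Gamma$-$\liminf$ inequality \eqref{eq:53} directly to the \emph{quadratic} functional $\C_n$ at $f_{\saalpha,n}$ (which $L^2$-strongly converges to $\sum_j\alpha_j\phi_j$), and this bounds the Rayleigh quotient on the whole limit subspace without ever polarizing. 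This is exactly how the paper proceeds, and it sidesteps the cross-term analysis you were worried about.
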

\begin{proof}
  The proof is a typical  application of the Mosco-convergence result
  proven in Theorem \ref{thm:Mosco}: the only difference with respect
  to the canonical setting concerns the fact that the domains of the 
  energies are not imbedded in a common Hilbert space. We shall adopt the notation of Remark \ref{re:assnot}.

  If $V$ is a subspace of $L^2(\Y ,\mm_n)$, we denote by
  $B(V)$ the closed unit ball in $V$ w.r.t.\ the $L^2(\Y ,\mm_n)$-norm.
  We call $\cV_{k,n}$ the collection of all the 
  $k$-dimensional subspaces 
  contained in $D(\C_n)$, 
  and we recall the minimum-maximum principle characterizing the 
  eigenvalues
  \begin{equation}
    \label{eq:71}
    \egnv kn=\min_{V\subset \cV_{k,n}} \max_{f\in B(V)}
    \cE_n(f,f),
  \end{equation}
  where $\cE_n:D(\C_n)\times D(\C_n)\to\R$ 
  is the symmetric bilinear form associated to $2\C_n$.
  Let us fix $k\in \N$ and first prove that 
    \begin{equation}
      \label{eq:73}
    \lims_{n\to\infty}\egnv kn\le \egnv k\infty.
  \end{equation}
  Let $V_\infty\in \cV_{k,\infty}$ be 
  a vector space realizing the minimum in \eqref{eq:71} and 
  let $(f_{i,\infty})\in \cV_{k,\infty}$, $i=1,\ldots,k$, be an { $L^2(X,\mm_n)$-}orthonormal system of $V_\infty$, so that
\[
    \egnv k\infty\ge \cE_\infty(f_{i,\infty},f_{i,\infty})\quad\forall i=1,\ldots,k \quad \& \quad { \cE_\infty(f_{i,\infty},f_{j,\infty})=0\quad\forall i\neq j=1,\ldots,k}.
\]
  By Theorem \ref{thm:Mosco} we find 
  $k$ sequences $n\mapsto f_{i,n}\in D(\C_n)$, $i=1,\ldots,k$,
  strongly converging 
  in the $L^2$-sense and in Cheeger energy  to $f_{i,\infty}$. It is obviously not restrictive to assume that $   \int f_{i,n}^2\,\d\mm_n=1$ for every $n\in\N$ and $i=1,\ldots,k$, furthermore  by \eqref{eq:71} we thus know that for every constant 
  $\eps>0$ there exists $n_\eps\in \N$ sufficiently big so that for every $n>n_\eps$ we have
  \begin{displaymath}
    \Big|\int f_{i,n}f_{j,n}\,\d\mm_n\Big| { + \big|\cE_n(f_{i,n},f_{j,n})\big|}\le \eps,\quad\forall i\neq j,\qquad
    \cE_n(f_{i,n},f_{i,n})\le \egnv k\infty+\eps,
    \quad \forall i.
    \end{displaymath}
  Setting $V_{k,n}=\mathrm{span}(f_{1,n},\ldots f_{k,n})$ we easily
  find that for every $\aalpha=(\alpha_1,\ldots,\alpha_k)\in \R^k$
  \begin{displaymath}
    \int \Big(\sum_{i=1}^k\alpha_i f_{i,n}\Big)^2\,\d\mm_n=
    |\aalpha|^2+\sum_{i\neq j}\alpha_i\alpha_j \int
    f_{i,n}f_{j,n}\,\d\mm_n
    \ge (1-\eps)|\aalpha|^2
  \end{displaymath}
  so that $\mathrm{dim}(V_{k,n})=k$ and every
  $f\in B({ V_{k,n}})$ can be represented as $f=\sum_{i=1}^k \alpha_if_{i,n}$
  with $|\aalpha|^2\le (1-\eps)^{-1}$. 
  Using this linear decomposition, we get 
  \begin{displaymath}
    \cE_n(f,f)=\sum_{i,j=1}^k\alpha_i\alpha_j\cE_n(f_{i,n},f_{j,n})\le 
    (\egnv k\infty+2 \eps)|\aalpha|^2\le (1-\eps)^{-1}(\egnv
    k\infty+2 \eps)
  \end{displaymath}
  for every $f\in B({ V_{k,n}})$: thus $\egnv kn\le (1-\eps)^{-1}(\egnv
    k\infty+2\eps)$ if $n\ge n_\eps$. Since $\eps>0$ is arbitrary, 
    this proves \eqref{eq:73}.

    Let us now prove the $\liminf$ inequality
\[
    \limi_{n\to\infty}\egnv kn\ge \egnv k\infty.
\] 
  By \eqref{eq:71}  for every $n\in \N$ 
  we can find a space $V_{k,n}\subset D(\C_n)$ 
  generated by the { $L^2(X,\mm_n)$}-orthonormal system 
  $(f_{i,n})$, $i=1,\ldots,k$, such that
  for every $n\in \N$ 
  and for every linear combination
  \begin{equation}
    \label{eq:148}
    f_{\saalpha,n}:=\sum_{i=1}^k \alpha_i
      f_{i,n}\quad\text{with }
      \aalpha=(\alpha_i)_i\in \R^k,\quad
      |\aalpha|^2=\sum_{i=1}^k\alpha_i^2=1,
  \end{equation}
  we have
  \begin{equation}
    \label{eq:72bis}
    \int f_{i,n}f_{j,n}\,\d\mm_n=\delta_{ij},
    \quad
    \egnv kn\ge \cE_n(f_{\saalpha,n},f_{\saalpha,n}).
  \end{equation}
  The previous estimate shows in particular that
  $\sup_{\saalpha,n}\cE_n(f_{\saalpha,n},f_{\saalpha,n})<\infty$:
  we can then extract an increasing 
  subsequence $n'$  
  such that 
  for every $\aalpha\in \R^k$ with $|\aalpha|=1$
  \begin{equation}
    \label{eq:150}
    \lambda_k(\Delta_n)\longrightarrow \lambda,
    \quad
    f_{\saalpha,n'}\longrightarrow f_{\saalpha,\infty} =
    \sum_{i=1}^k\alpha_i f_{i,\infty}
    \quad\text{weakly,}    
    \quad
    \cE_\infty(f_{\saalpha,\infty},f_{\saalpha,\infty})\le \lambda,
  \end{equation}
  where for the last inequality we applied \eqref{eq:53}.
  
  Applying ii) of Theorem \ref{thm:weaktostrong}
  thanks to 
  the uniform estimate 
  provided by the  $\mathrm{wLSTI}$,   we can reinforce the weak convergence of $n\mapsto f_{i,n}$
  to a strong convergence in $L^2$-sense, so that we can pass to the
  limit
  in the identities of \eqref{eq:72bis} and prove that
  $(f_{i,\infty})$ is an orthonormal system generating a $k$-dimensional
  subspace $V_{k,\infty}$.
  Every $f\in B({V_{k,\infty}})$ can then be
  represented as $f_{\saalpha,\infty}$ 
  as in \eqref{eq:148} 
so that  
  \eqref{eq:150} 
  yields 
  $\cE_\infty(f,f)\le \lambda$ for every $f\in B({V_{k,\infty}})$. Using
  \eqref{eq:71} with $n=\infty$ we conclude $\egnv k\infty\le \lambda$, as desired.  
\end{proof}

\begin{remark} \label{rk:Shyoia}
  \upshape
  When the pmG-convergent sequence of $\RCD(K,\infty)$ spaces $\cX_n$ 
  does not satisfy a uniform $\mathrm{wLSTI}$ condition, 
  the Mosco-convergence result of Theorem \ref{thm:Mosco} 
  is still sufficient to
  prove
  that every point $\lambda$ in the spectrum $\sigma(\Delta_\infty)$ is the 
  limit of points $\lambda_n\in \sigma(\Delta_n)$: one can
  argue as in the proof of \cite[Proposition 2.5]{Kuwae-Shioya03}
  (see also Remark \ref{rem:KuShi}), where 
  it is shown that (a suitably adapted) 
  Mosco-convergence of Dirichlet forms always implies
  lower semicontinuity of the spectra.

  The uniform $\mathrm{wLSTI}$ condition provides a further
  asymptotical compactness analogous to \cite[Definition
  2.12]{Kuwae-Shioya03}:
  in this case 
  Theorem \ref{thm:Lap-stab} could also be proven
  as for Theorem 2.6 in \cite{Kuwae-Shioya03}.
\fr\end{remark}

\small
\bibliographystyle{siam}
\bibliography{bibliografia-stability}

\def\cprime{$'$}
\begin{thebibliography}{10}

\bibitem{Ambrosio-DiMarino}
{\sc L.~Ambrosio and S.~Di~Marino}, {\em Equivalent definitions of {BV} space
  and of total variation on metric measure spaces}, preprint,  (2012).

\bibitem{Ambrosio-Fusco-Pallara00}
{\sc L.~Ambrosio, N.~Fusco, and D.~Pallara}, {\em Functions of bounded
  variation and free discontinuity problems}, Oxford Mathematical Monographs,
  Clarendon Press, Oxford, 2000.

\bibitem{Ambrosio-Gigli11}
{\sc L.~Ambrosio and N.~Gigli}, {\em A user's guide to optimal transport}, in
  Modelling and Optimisation of Flows on Networks, Lecture Notes in
  Mathematics, Springer Berlin Heidelberg, 2013, pp.~1--155.

\bibitem{AGMRS12}
{\sc L.~Ambrosio, N.~Gigli, A.~Mondino, and T.~Rajala}, {\em Riemannian {R}icci
  curvature lower bounds in metric measure spaces with $\sigma$-finite
  measure}.
\newblock Accepted at Trans. Amer. Math. Soc., arXiv:1207.4924, 2012.

\bibitem{Ambrosio-Gigli-Savare08}
{\sc L.~Ambrosio, N.~Gigli, and G.~Savar{\'e}}, {\em Gradient flows in metric
  spaces and in the space of probability measures}, Lectures in Mathematics ETH
  Z\"urich, Birkh\"auser Verlag, Basel, second~ed., 2008.

\bibitem{Ambrosio-Gigli-Savare-preprint12}
\leavevmode\vrule height 2pt depth -1.6pt width 23pt, {\em {B}akry-{\'e}mery
  curvature-dimension condition and {R}iemannian {R}icci curvature bounds},
  Arxiv 1209.5786,  (2012), pp.~1--61.

\bibitem{Ambrosio-Gigli-Savare-compact}
\leavevmode\vrule height 2pt depth -1.6pt width 23pt, {\em Heat flow and
  calculus on metric measure spaces with {R}icci curvature bounded below---the
  compact case}, Boll. Unione Mat. Ital. (9), 5 (2012), pp.~575--629.

\bibitem{Ambrosio-Gigli-Savare-preprint11a}
\leavevmode\vrule height 2pt depth -1.6pt width 23pt, {\em Calculus and heat
  flow in metric measure spaces and applications to spaces with {R}icci bounds
  from below}, Invent. math.,  (2013), pp.~1--103.

\bibitem{Ambrosio-Gigli-Savare-preprint11c}
\leavevmode\vrule height 2pt depth -1.6pt width 23pt, {\em Density of
  {L}ipschitz functions and equivalence of weak gradients in metric measure
  spaces}, Rev. Mat. Iberoam., 29 (2013), pp.~969--996.

\bibitem{Ambrosio-Gigli-Savare-preprint11b}
\leavevmode\vrule height 2pt depth -1.6pt width 23pt, {\em Metric measure
  spaces with riemannian {R}icci curvature bounded from below}.
\newblock Accepted at Duke Math. J., 2013.

\bibitem{Ambrosio-Savare-Zambotti09}
{\sc L.~Ambrosio, G.~Savar\'e, and L.~Zambotti}, {\em {Existence and stability
  for Fokker-Planck equations with log-concave reference measure.}}, Probab.
  Theory Relat. Fields, 145 (2009), pp.~517--564.

\bibitem{Attouch84}
{\sc H.~Attouch}, {\em Variational convergence for functions and operators},
  Pitman (Advanced Publishing Program), Boston, MA, 1984.

\bibitem{Bacher-Sturm10}
{\sc K.~Bacher and K.-T. Sturm}, {\em Localization and {T}ensorization
  {P}roperties of the {C}urvature-{D}imension {C}ondition for {M}etric
  {M}easure {S}paces}, JFA, 259 (2010), pp.~28--56.

\bibitem{Brezis73}
{\sc H.~Br{\'e}zis}, {\em Op\'erateurs maximaux monotones et semi-groupes de
  contractions dans les espaces de {H}ilbert}, North-Holland Publishing Co.,
  Amsterdam, 1973.
\newblock North-Holland Mathematics Studies, No. 5. Notas de Matem\'atica (50).

\bibitem{Burago-Burago-Ivanov01}
{\sc D.~Burago, Y.~Burago, and S.~Ivanov}, {\em A course in metric geometry},
  vol.~33 of Graduate Studies in Mathematics, American Mathematical Society,
  Providence, RI, 2001.

\bibitem{Cheeger-Colding97I}
{\sc J.~Cheeger and T.~Colding}, {\em On the structure of spaces with {R}icci
  curvature bounded below {I}}, J. Diff. Geom., 45 (1997), pp.~406--480.

\bibitem{Cheeger-Colding97II}
\leavevmode\vrule height 2pt depth -1.6pt width 23pt, {\em On the structure of
  spaces with {R}icci curvature bounded below {II}}, J. Diff. Geom., 54 (2000),
  pp.~13--35.

\bibitem{Cheeger-Colding97III}
\leavevmode\vrule height 2pt depth -1.6pt width 23pt, {\em On the structure of
  spaces with {R}icci curvature bounded below {III}}, J. Diff. Geom., 54
  (2000), pp.~37--74.

\bibitem{Fukaya87}
{\sc K.~Fukaya}, {\em Collapsing of {R}iemannian manifolds and eigenvalues of
  the {L}aplace operator}, Invent. Math., 87 (1987), pp.~517--547.

\bibitem{Gigli10}
{\sc N.~Gigli}, {\em On the heat flow on metric measure spaces: existence,
  uniqueness and stability}, Calc. Var. Partial Differential Equations, 39
  (2010), pp.~101--120.

\bibitem{Gigli2012}
\leavevmode\vrule height 2pt depth -1.6pt width 23pt, {\em On the differential
  structure of metric measure spaces and applications}.
\newblock Accepted at Mem. Amer. Math. Soc., arXiv: 1205.6622, 2013.

\bibitem{Gigli-Ledoux13}
{\sc N.~Gigli and M.~Ledoux}, {\em From log {S}obolev to {T}alagrand: a quick
  proof}, Discrete Contin. Dyn. Syst., 33 (2013), pp.~1927--1935.

\bibitem{Greven09}
{\sc A.~Greven, P.~Pfaffelhuber, and A.~Winter}, {\em Convergence in
  distribution of random metric measure spaces}, Probab. Theory Relat. Fields,
  145 (2009), pp.~285--322.

\bibitem{Gromov07}
{\sc M.~Gromov}, {\em Metric structures for {R}iemannian and non-{R}iemannian
  spaces}, Modern Birkh\"auser Classics, Birkh\"auser Boston Inc., Boston, MA,
  english~ed., 2007.
\newblock Based on the 1981 French original, With appendices by M. Katz, P.
  Pansu and S. Semmes, Translated from the French by Sean Michael Bates.

\bibitem{Kuwae-Shioya03}
{\sc K.~Kuwae and T.~Shioya}, {\em Convergence of spectral structures: a
  functional analytic theory and its applications to spectral geometry}, Comm.
  Anal. Geom., 11 (2003), pp.~599--673.

\bibitem{Lisini07}
{\sc S.~Lisini}, {\em Characterization of absolutely continuous curves in
  {W}asserstein spaces}, Calc. Var. Partial Differential Equations, 28 (2007),
  pp.~85--120.

\bibitem{Lott-Villani09}
{\sc J.~Lott and C.~Villani}, {\em Ricci curvature for metric-measure spaces
  via optimal transport}, Ann. of Math. (2), 169 (2009), pp.~903--991.

\bibitem{Ohta-Sturm12}
{\sc S.-i. Ohta and K.-T. Sturm}, {\em Non-contraction of heat flow on
  {M}inkowski spaces}, Arch. Ration. Mech. Anal., 204 (2012), pp.~917--944.

\bibitem{Otto-Villani00}
{\sc F.~Otto and C.~Villani}, {\em Generalization of an inequality by
  {T}alagrand and links with the logarithmic {S}obolev inequality}, J. Funct.
  Anal., 173 (2000), pp.~361--400.

\bibitem{Peletier-Savare-Veneroni12}
{\sc M.~A. Peletier, G.~Savar{\'e}, and M.~Veneroni}, {\em Chemical reactions
  as {$\Gamma$}-limit of diffusion [revised reprint of mr2679596]}, SIAM Rev.,
  54 (2012), pp.~327--352.

\bibitem{Abraham}
{\sc P.~H. R.~Abraham, J. F.~Delmas}, {\em A note on the
  gromov-hausdorff-prokhorov distance between (locally) compact metric measure
  spaces}, Electron. J. Probab., 18 (2013), pp.~1--21.

\bibitem{Rossi-Savare03}
{\sc R.~Rossi and G.~Savar{\'e}}, {\em Tightness, integral equicontinuity and
  compactness for evolution problems in {B}anach spaces}, Ann. Sc. Norm. Super.
  Pisa Cl. Sci. (5), 2 (2003), pp.~395--431.

\bibitem{Sturm06I}
{\sc K.-T. Sturm}, {\em On the geometry of metric measure spaces. {I}}, Acta
  Math., 196 (2006), pp.~65--131.

\bibitem{Sturm06II}
\leavevmode\vrule height 2pt depth -1.6pt width 23pt, {\em On the geometry of
  metric measure spaces. {II}}, Acta Math., 196 (2006), pp.~133--177.

\bibitem{Sturm-12}
\leavevmode\vrule height 2pt depth -1.6pt width 23pt, {\em The space of spaces:
  curvature bounds and gradient flows on the space of metric measure spaces}.
\newblock arXiv: 1208.0434, 2013.

\bibitem{Villani09}
{\sc C.~Villani}, {\em Optimal transport. Old and new}, vol.~338 of Grundlehren
  der Mathematischen Wissenschaften, Springer-Verlag, Berlin, 2009.

\end{thebibliography}

\end{document}